\DeclareMathSymbol{\shortminus}{\mathbin}{AMSa}{"39}
\DeclareMathSymbol{\shortplus}{\mathbin}{AMSa}{"2B}
\definecolor{gr}{rgb}{0.7, 1, 0.7}
\definecolor{rr}{rgb}{1, 0.7, 0.7}
\newcommand\reallywidehat[1]{%
\savestack{\tmpbox}{\stretchto{%
  \scaleto{%
    \scalerel*[\widthof{\ensuremath{#1}}]{\kern-.6pt\bigwedge\kern-.6pt}%
    {\rule[-\textheight/2]{1ex}{\textheight}}
  }{\textheight}%
}{0.5ex}}%
\stackon[1pt]{#1}{\tmpbox}%
}
\theoremstyle{plain} 
\newtheorem{theorem}{Theorem}[section]
\newtheorem*{thmA}{Theorem A}
\newtheorem*{thmB}{Theorem B}
\newtheorem*{apthm}{Theorem: Trivial a-priori bounds}
\newtheorem*{napthm}{Theorem: Nontrivial a-priori bounds}
\newtheorem*{napthmLt}{Theorem: Nontrivial a-priori bounds in $L^t$}
\newtheorem*{FKRW}{Frechet-Kolmogorov-Riesz-Weil Compactness Theorem}
\newtheorem*{thmYR}{Young's Convolution Inequality for Weighted $R^p$-spaces}
\newtheorem{lemma}[theorem]{Lemma}
\newtheorem{corollary}[theorem]{Corollary}
\newtheorem{proposition}[theorem]{Proposition}
\theoremstyle{definition} 
\newtheorem{definition}[theorem]{Definition}
\theoremstyle{remark} 
\newtheorem{remark}[theorem]{Remark}
\newcolumntype{C}[1]{>{\centering\arraybackslash$}p{#1}<{$}}
\renewcommand{\Im}{\,\mathrm{Im}\,}
\newcommand{\ignore}[1]{}
\renewcommand{\Im}{\operatorname{Im}}
\newcommand{\cC}{{\mathcal C}}
\newcommand{\cE}{{\mathcal E}}
\newcommand{\cF}{{\mathcal F}}
\newcommand{\cI}{{\mathcal I}}
\newcommand{\cM}{{\mathcal M}}
\newcommand{\cN}{{\mathcal N}}
\newcommand{\cP}{{\mathcal P}}
\newcommand{\cR}{{\mathcal R}}
\newcommand{\cT}{{\mathcal T}}
\newcommand{\cK}{{\mathcal K}}
\newcommand{\bL}{{\mathbf L}}
\newcommand{\fL}{{\mathfrak L}}
\newcommand{\bPL}{{\mathbf{PL}}}
\newcommand{\fRL}{{\mathfrak{RL}}}
\newcommand{\bR}{{\mathbf{R}}}
\newcommand{\fR}{{\mathfrak{R}}}
\newcommand{\bH}{{\mathbf{H}}}
\newcommand{\bW}{{\mathbf{W}}}
\newcommand{\RR}{{\mathbb R}}
\newcommand{\CC}{{\mathbb C}}
\newcommand{\ZZ}{{\mathbb Z}}
\newcommand{\NN}{{\mathbb N}}
\newcommand{\PP}{{\mathbb P}}
\newcommand{\AAA}{{\mathbb A}}
\newcommand{\mt}[1]{{\boldsymbol{#1}}}
\def\epsilon{\varepsilon}
\def\ii{\mathrm{i}}
\def\ee{\mathrm{e}}
\def\PV{\mathrm{PV}\int}
\def\dif{ {\mbox{\rm d}} }
\def\pd{ \partial }
\def\l@section{\@tocline{1}{0pt}{1pc}{}{}}
\def\l@subsection{\@tocline{2}{0pt}{1pc}{4.6em}{}}
\def\l@subsubsection{\@tocline{3}{0pt}{1pc}{7.6em}{}}
\renewcommand{\tocsection}[3]{%
  \indentlabel{\@ifnotempty{#2}{\makebox[2.3em][l]{%
    \ignorespaces#1 #2.\hfill}}}#3}
\renewcommand{\tocsubsection}[3]{%
  \indentlabel{\@ifnotempty{#2}{\hspace*{1.3em}\makebox[2.3em][l]{%
    \ignorespaces#1 #2.\hfill}}}#3}
\renewcommand{\tocsubsubsection}[3]{%
  \indentlabel{\@ifnotempty{#2}{\hspace*{4.6em}\makebox[3em][l]{%
    \ignorespaces#1 #2.\hfill}}}#3}
\title[Renormalization and  Leray solution for generalized  Navier-Stokes]{Renormalization and {\it a-priori} bounds for Leray self-similar solutions to the generalized mild Navier-Stokes equations
}
\author{Denis Gaidashev}
\address{Uppsala University, Uppsala, Sweden}
\email{gaidash@math.uu.se}
\subjclass[2010]{}
\keywords{}
\date{\today}
\begin{document}

\begin{abstract}
  We demonstrate that the problem of existence of Leray self-similar blow up solutions in a generalized  mild  Navier-Stokes system with the fractional Laplacian $(-\Delta)^{\gamma/2}$ can be stated as a fixed point problem for a ``renormalization'' operator. We proceed to construct {\it a-priori} bounds, that is a renormalization invariant precompact set in an appropriate weighted $L^p$-space.

As a consequence of {\it a-priori} bounds, we prove existence of  renormalization fixed points for $d \ge 2$ and $d<\gamma <2 d+2$, and existence of non-trivial Leray self-similar mild solutions in $C^\infty([0,T),(H^k)^d \cap (L^p)^d)$, $k>0, p \ge 2$, whose $(L^p)^d$-norm becomes unbounded in finite time $T$.     
\end{abstract}
\maketitle

\setcounter{tocdepth}{2}
\tableofcontents

\section{Introduction}

Bakhtin, Dinaburg and Sinai \cite{BDS} proposed a renormalization approach to the question of existence of the blow-up solutions in the generalized Navier-Stokes equations,
\begin{equation}\label{eq:NS}
\partial_t u + (u \cdot \nabla) u + \nabla p = - \Lambda^\gamma u\,, 
\qquad
\nabla \cdot u =0\,, 
\qquad
u(\cdot,0)=u_0,
\end{equation}
where $\Lambda^\gamma=\left(-\Delta\right)^{ \gamma \over 2}$ is the fractional Laplacian.  In their approach, a solution to an initial value problem with a carefully chosen, self-similar, initial conditions is ``shadowed'' by a solution of certain fixed point problem for an integral nonlinear ``renormalization'' operator in an appropriate functional space.

Later the renormalization approach was applied to several hydrodynamics models by Li and Sinai \cite{LS0,LS1,LS2,LS3}. The method of these publications is a development of that of  \cite{BDS}, unlike the authors of \cite{BDS}, Li and Sinai derive an exact, and not an approximate, renormalization fixed point equation: existence of a fixed point for such equation implies existence of finite time blow up solutions for the hydrodynamic models considered. The solutions obtain by Li and Sinai are, however, complex-valued.

In all these works the form of the solution sought for is exactly the one proposed by J. Leray in his classical work \cite{Leray}:
\begin{equation}
\label{Leraysol}u(x,t)=  (T-t)^{1 -\gamma \over \gamma} U \left( x (T-t)^{-{1 \over \gamma}} \right).
\end{equation}
The Bakhtin-Dinaburg-Sinai-Li renormalization operator can be viewed as a Picard-Lindel\"off integral operator for Leray self-similar solutions in the Fourier space.

In this paper we further study this renormalization for the $d$-dimensional generalized mild Navier-Stokes system with the goal of understanding its applicability to the problem of  existence of Leray solutions for some $d$ and $\gamma$.

Some strong regularity results are known for the system $(\ref{eq:NS})$. The case of $\gamma>2$ is commonly called the {\it hyperdissipative} Navier-Stokes system, that  of $\gamma<2$ - {\it hypodissipative}.  The strong solutions of $(\ref{eq:NS})$  are globally regular in the case $\gamma={(d+2)/2}$ (the critical exponent) and $\gamma>{(d+2)/2}$ (the subcritical hyperdissipative case): see \cite{KaPa}. 

Global regularity in ``barely'' supercritical regimes have been obtained in \cite{Roy}, \cite{Tao2007} and \cite{Tao2009}. 

Estimate on the dimensions of the set of singular points, i.e. points where the solution is not locally essentially bounded, are known for the Navier-Stokes itself, see \cite{CKN} for $\gamma=2$ and \cite{KaPa} for $\gamma>2$.

In this paper we investigate possible mild Leray blow ups in the hyperdissipative regime.

We derive an equation for the Fourier transform of certain self-similar solutions of the mild generalized Navier-Stokes system, and demonstrate that existence of  blow-up solutions within that self-similar class reduces to a fixed point problem for a renormalization operator on an appropriate functional space for Fourier images of solutions.

We would like to emphasize that our approach to  the renormalization problem is quite different from that of Li and Sinai: rather than looking for solutions in the form of a power series and deriving a fixed point equation for the coefficients of this series as in \cite{LS0}, we use  general considerations of compactness and the notion of {\it a-priori} bounds borrowed from the renormalization theory in complex dynamics within a class of {\it real} solutions different from \cite{LS0}.

The main result of the paper is  existence of {\it a-priori} bounds for this renormalization operator, that is, existence of a renormalization invariant, precompact set. The following is a rather non-rigorous statement of this result, the detailed version can be found in Section $\ref{results}$. We emphasize again that the renormalization operator acts in a space of Fourier images of functions from the physical space.

\hspace{2mm}

\begin{adjustwidth}{6mm}{}
  \noindent \textit{\textbf{A-priori bounds.}  For $d \ge 2$ and $d<\gamma<2 d+2$, the renormalization operator  admits an invariant precompact convex set $\cN \not \ni 0$ in an appropriate weighted $\bL^p_u:=(L^p_u(\RR^d,\RR))^d$ space.}

    \textit{As a consequence, the renormalization operator has a non-trivial fixed point in $\bigcap_{1 \le m  \le 2} \bL^m$.}
\end{adjustwidth}

\hspace{2mm}

An immediate observation is that the {\it a-priori} bounds do not cover the case of $d=3$ and $\gamma=2$. This is not surprising as the Leray solutions have been shown to be largely trivial in this case. Specifically, \cite{NRS} shows that  for $d=3$ and $\gamma=2$, any weak Leray self-similarity solution $(\ref{Leraysol})$ with $U \in L^3(\RR^3)$ has to be trivial.  The condition $U \in L^3(\RR^3)$ holds, in particular, if the solution satisfies a global energy estimate of the form
\begin{equation}
\nonumber \| u( \cdot, t ) \|^2_2+2\int_{t_1}^t \| \nabla u ( \cdot ,s) \|_2^2  \ d s \le \| u( \cdot, t_1 )\|^2_2.
\end{equation}

Furthermore, \cite{Tsai} demonstrate that for $d=3$ and $\gamma=2$, any weak Leray self-similarity solution $(\ref{Leraysol})$ with $U \in L^m$, $m \in (3,\infty)$,  has to be trivial, and it is constant if $m=\infty$.  Additionally, \cite{Tsai} shows that any weak Leray self-similar solution  satisfying a local energy estimate
\begin{equation}
  \nonumber {\rm ess sup}_{t \in (\tau,T)}  \int_{B_r(0)} |u(x, t |^2 \ d x+2 \int_{\tau}^T  \int_{B_r(0)} | \nabla u ( x,t)|^2 \ d x \ d t < \infty
\end{equation}
for some ball $B_r(0)$ and some $\tau < T$,  is trivial.

In contrast to these results, non-trivial Leray self-similar solutions in the hyperdisipative subcritical case, $d \ge 2$ and $\gamma>(d+2)/2$, have no chance of satisfying even the local energy estimate since the function
$$\int_{B_r(0)} |u(x, t) |^2 \ d x=(T-t)^{{2+d \over \gamma}-2} \int_{B_{\rho(t)}(0)} |U(y)|^2 d y,$$
where $\rho(t)=r (T-t)^{-{1 \over \gamma}}$, is essentially unbounded on $(0,T)$.

We, therefore, ask a similar question about yet a weaker class of solutions: Are there any non-trivial Leray self-similar {\it mild} solutions of the generalized Navier-Stokes system in the hyperdissipative subcritial case? This is a class of solutions which has nice local itegrability properties with repsect to time, but not necessarily global (that is, over the maximal time-domain of existence of such solutions), see \cite{FuKa}, \cite{Kato}, \cite{Giga}.   Existence of non-trivial {\it a-priori} bounds allows us to deduce existence of such similarity solutions for the mild generalized  Navier-Stokes system
\begin{equation}
  \label{mildNS}  u(x,t)=e^{-t \PP \Lambda^\gamma} u(x,0)-\int_{0}^t e^{-(t-\tau) \PP \Lambda^\gamma} \ \PP(u(x,\tau) \cdot \nabla u(x,\tau)  ) d \tau,
\end{equation}
(here $\PP: \bL^2 \mapsto  \bPL^2$, is the Leray projector, $\bPL^p$ being the subspace of divergence-free $\bL^p$-functions) in the hyperdissipative case  $d \ge 2$ and $d<\gamma <2 d+2$. 

\hspace{2mm}
  
\begin{adjustwidth}{6mm}{}
  \noindent \textit{\textbf{Existence of finite-time hyperdissipative subcritical mild blow ups.}}

  \textit{For every $d \ge 2$ and $d<\gamma <2 d+2$ there exists a $T>0$ and a function
    $$\Psi \in \bigcap_{l \ge 2} \bPL^l$$
    with the following property. Set $$u(x,t)=  (T-t)^{1 -\gamma \over \gamma} \Psi \left( x (T-t)^{-{1 \over \gamma}} \right).$$  Then
    $$u \in C^\infty([0,T),  X), \quad X=\left(  \hspace{1.0mm} \bigcap_{k >0} \bH^k \right) \bigcap \left( \hspace{1.0mm} \bigcap_{m \ge 2}  \bPL^m   \right)\bigcap C^\infty(\RR^d),$$
      with derivatives vanishing at infinity, and  satisfies $(\ref{mildNS})$ for all  $0 <t <T$.}

   \textit{ Additionally, for any $m \ge 2$, the norm $\| u(\cdot, t) \|_m$ is finite for  $t \in [0,T)$ and becomes unbounded as $t \rightarrow T$.}
\end{adjustwidth}

\hspace{2mm}

We remark that the last result does not contradict strong regularity for the subcritical Navier-Stokes system: this solution clearly fails to be strong (see Remark $(\ref{notstrong})$.


Our existence proof uses the results of  Y. Giga \cite{Giga}  and T. Kato \cite{Kato}. In \cite{Giga} the author has considered an initial value problem with the initial data in  class $\bPL^m$ for the general parabolic equation satisfying a certain set of bounds (in particular, the generalized Navier-Stokes system with the fractional Laplacian satisfies these bounds). \cite{Giga} proves local existence and uniqueness of solutions in  the class $BC([0,T), \bPL^m)$ - a result which is essential to our proof.

  The paper is organized as follows.  In Section $\ref{renormNS}$ we derive a renormalization operator acting in a space of Fourier images of the solutions. The domain of this operator is the weighted space $\bL^p_u$: in Section $\ref{invset}$ we demonstrate that this space is renormalization invariant. Section $\ref{rensym}$ describes some straightforward symmetries of renormalization, while Section $\ref{trivapb}$ demonstrates that the trivial point $0$ is the unique renormalization fixed point in a certain neighborhood of itself.   The key proofs of the paper: those of existence of a non-trivial renormalization fixed point, and its integrability properties are contained in Sections $\ref{nontrivapb}$, $\ref{properties}$ and $\ref{blowups}$.

  The statement of results, more precise than those announced in this Introduction, can be found in Section $\ref{results}$. 

\section{Renormalization problem for $d$-dimensional Navier-Stokes equations} \label{renormNS}

\subsection{The generalized Navier-Stokes equations}

Recall the initial value problem for a system of the generalized Navier-Stokes equations
\begin{equation}\label{eq:NS}
\partial_t u + (u \cdot \nabla) u + (-\Delta)^{\gamma \over 2}  u + \nabla p = g\,, 
\qquad
\nabla \cdot u =0\,, 
\qquad
u(\cdot,0)=u_0
\end{equation}
where 
$g : \RR^d \times \RR \rightarrow \RR^d$ 
is an external force, and $u_0: \RR^d \rightarrow \RR^d$ is the initial datum. The unknowns are
$u : \RR^d \times \RR \rightarrow \RR^d$ and $p : \RR^d \times \RR \rightarrow \RR$. The  fractional Laplacian of order $s$ in $\RR^d$, $d \ge 1$ is defined as
\begin{equation}\label{eq:FracDelta}
(-\Delta)^\alpha u(x)=c_{d,\alpha} \PV_{\RR^d}
\frac{u(x)-u(y)}{|x-y|^{d+2\alpha}} \dif y\,,\qquad
c_{d,\alpha} = \frac{4^\alpha \Gamma(d/2+\alpha)}{\pi^{d/2} |\Gamma(-\alpha)|}\,,
\end{equation}
where $\Gamma$ is the Gamma function.

We will be interested in $d \geq 2$ and $g \equiv 0$. We formally apply the  Fourier transform  $v(y)=\cF[u](y)$, given by
\[
v(y,t)=\int_{\RR^d} u(x,t) \ee^{-\ii y \cdot x}  d x\,,
\qquad
u(x,t)={1 \over (2 \pi)^d }\int_{\RR^d} v(y,t) \ee^{\ii y \cdot x} d y\,,
\]
so that the Navier-Stokes system  becomes the following system of integro-differential equations:
\begin{equation}\label{eq:sist:FT}
\frac{\pd v(y,t)}{\pd t} = -|y|^\gamma v(y,t) - i \int_{\RR^d} (y \cdot v(y-z,t)) P_y v(z,t) d z \,,
\qquad
\end{equation}
where $P_y$ is the {\it Leray} orthogonal projection to the subspace orthogonal to $y$, i.e.
\begin{equation}\label{eq:leray}
P_y v = v - \frac{v \cdot y}{|y|^2} y\,.
\end{equation}

\begin{remark}
The incompressibility condition $\nabla \cdot u=0$ reads $v(y,t) \cdot y =0$ in
Fourier space. We observe that $P_y v(y,t)=v(y,t)$ for a function that satisfies the incompressibility condition.
\end{remark}

\begin{remark}
If $u : \RR^d \times \RR \rightarrow \RR$ then $v:\RR^d\times \RR \rightarrow \CC$
satisfies the symmetry property
\[
v(y,t)=\overline{v(-y,t)}\,,\qquad \forall t \in \RR\,.
\]
\end{remark}

Integrating \eqref{eq:sist:FT} with respect to $t$, we obtain the integral equations:
\begin{equation}\label{eq:integral:NSgamma}
v(y,t) = \ee^{-|y|^\gamma t} v_0(y) - i \int_0^t \int_{\RR^d} (y \cdot v(y-z,s)) P_y v(z,s)\ee^{-|y|^\gamma (t-s)} d z  \ d s\,,
\end{equation}
where $v$ is transversal, i.e. $v(y,t) \cdot y=0$, and  $v_0=v(y,0)$ stands for the initial datum at $t=0$.

At this point we will recall the definition of a {\it mild solution} (see, e.g. \cite{LiMa})

Let
\begin{equation}
  \nonumber   \bL^p  =  (L^p(\RR^d,\RR))^d, \quad   \bW^{l,p} =  (W^{l,p}(\RR^d,\RR))^d,
\end{equation}
equipped with a product norm.

\begin{definition}
  Let $u \in L^\infty([0,T),\bL^d)$. Then $u$ is a mild solution for the generalized Navier-Stokes system with the initial condition $u_0 \in \bL^d$, if $u$ satisfies $(\ref{mildNS})$, where the domain of the generalized Stokes operator $\AAA=\PP (-\Delta)^{\gamma \over 2}$ is
$$D(\AAA)=\overline{\{ u \in C_0^\infty(\RR^d,\RR^d): {\rm div (u) }=0\}} \cap \bW_0^{1,p} \cap \bW^{\gamma,p},$$ 
and where $p>1$ and the closure is taken in $\bL^p$.
\end{definition}

We will, however, modify this definition somewhat and consider solutions in  $u \in  C^\infty([0,T),\bL^d) \cap \left(\cap_{\epsilon>0} L^\infty([0,T-\epsilon],\bL^d) \right)$. 

P.-L. Lions and N. Masmoudi have proved in \cite{LiMa} uniqueness of mild solutions in $C([0,T),\bL^d)$ for $d \ge 3$ (in $L^\infty([0,T),\bL^d)$ for $d \ge 4$).

Equations $(\ref{eq:integral:NSgamma})$ are a formal Fourier transform of $(\ref{mildNS})$, however, we will construct solutions $v$ of the system $(\ref{eq:integral:NSgamma})$ in  $\cap_{1 \le l \le 2} \bL^l$: an inverse Fourier transform on this intersection is a well-defined operator into  $\cap_{ \ge 2} \bL^l$, additionally, we will show that such inverse Fourier transforms are  in $C^\infty([0,T),\bL^d) \cap  \left(\cap_{\epsilon>0} L^\infty([0,T-\epsilon],\bL^d) \right)$, and hence are mild solutions.

\subsection{Derivation of the renormalization operator.} We will make an assumption that
\begin{equation}\label{vform}
  v(y,t)= {\theta(y \tau(t))  \over \tau(t)^c} + i {\psi(y \tau(t))  \over \tau(t)^k} ,
\end{equation}
where $\theta, \psi: \RR^d \mapsto \CC^d$, and
$$\tau(t)=(T-t)^{1 \over \gamma}.$$
We further assume that the $\theta$ is even, and $\psi$ is odd: this ensures that the inverse Fourier transform of function $v$ is real.

We can now write the integral equation   $(\ref{eq:integral:NSgamma})$ for the initial value problem with the initial condition
$$v_0(y)= {\theta(y \tau_0)  \over \tau_0^c} +  i {\psi(y \tau_0)  \over \tau_0^k}, \quad \tau_0 \equiv \tau(0)=T^{1 \over \gamma}$$
{\it under the assumption that  $v(y,t)$ remains of the form $(\ref{vform})$ for $t \in (0,T)$}. Setting
\begin{equation} \label{etanotation}
\eta=y \tau_0, \quad \xi= y \tau(t), \quad \zeta=y \tau(s), \quad x=z \tau(s), \quad \beta={ \tau(t)  \over \tau_0 },
\end{equation}
we get the following two equations, after multiplication by $\tau(t)$; one for the real part of $v(y,t)$:
\begin{eqnarray}
  \nonumber    {\theta(\xi) \over \tau(t)^{c-1}} \hspace{-1.5mm}& \hspace{-0.5mm}= \hspace{-0.5mm}& \hspace{-1.5mm}  e^{|\xi|^\gamma-|\eta|^\gamma} {\theta(\eta) \tau(t) \over \tau_0^c}- \gamma   \hspace{-1.0mm}\int_{|\xi|}^{|\eta|}   \hspace{-1.5mm}  {|\zeta|^{\gamma-1}   e^{|\xi|^\gamma-|\zeta|^\gamma}     \over \tau(s)^{d+c+k} |y|^\gamma}   \int_{\RR^d}  \hspace{-1.0mm}{\xi \cdot \theta(\zeta-x) P_\zeta \psi(x)}   \ d x   \ d |\zeta| \\
   \nonumber    & \hspace{-0.5mm} \phantom{=} \hspace{-0.5mm}& \hspace{-1.5mm}  \phantom{e^{|\xi|^\gamma-|\eta|^\gamma} {\theta(\eta) \tau(t) \over \tau_0^c}}- \gamma   \hspace{-1.0mm}\int_{|\xi|}^{|\eta|}   \hspace{-1.5mm}  {|\zeta|^{\gamma-1}   e^{|\xi|^\gamma-|\zeta|^\gamma}     \over \tau(s)^{d+c+k} |y|^\gamma}   \int_{\RR^d}  \hspace{-1.0mm}{\xi \cdot \psi(\zeta-x) P_\zeta \theta(x)}   \ d x   \ d |\zeta|,
\end{eqnarray}
where $\cdot$ denotes the scalar product, and an equation for the imaginary part of $v$:
\begin{eqnarray}
  \nonumber    {\psi(\xi) \over \tau(t)^{k-1}} \hspace{-1.5mm}& \hspace{-0.5mm}= \hspace{-0.5mm}& \hspace{-1.5mm}  e^{|\xi|^\gamma-|\eta|^\gamma} {\psi(\eta) \tau(t) \over \tau_0^k}+ \gamma   \hspace{-1.0mm}\int_{|\xi|}^{|\eta|}   \hspace{-1.5mm}  {|\zeta|^{\gamma-1}   e^{|\xi|^\gamma-|\zeta|^\gamma}     \over \tau(s)^{d+2c} |y|^\gamma}   \int_{\RR^d}  \hspace{-1.0mm}{\xi \cdot \theta(\zeta-x) P_\zeta \theta(x)}   \ d x   \ d |\zeta| \\
   \label{impart1}    & \hspace{-0.5mm} \phantom{=} \hspace{-0.5mm}& \hspace{-1.5mm}  \phantom{e^{|\xi|^\gamma-|\eta|^\gamma} {\theta(\eta) \tau(t) \over \tau_0^k}}- \gamma   \hspace{-1.0mm}\int_{|\xi|}^{|\eta|}   \hspace{-1.5mm}  {|\zeta|^{\gamma-1}   e^{|\xi|^\gamma-|\zeta|^\gamma}     \over \tau(s)^{d+2k} |y|^\gamma}   \int_{\RR^d}  \hspace{-1.0mm}{\xi \cdot \psi(\zeta-x) P_\zeta \psi(x)}   \ d x   \ d |\zeta|.
\end{eqnarray}
We emphasize that $\xi$, $\eta$ and $\zeta$ here are colinear, and that the equations hold for $0 \le t<T$.

If we now set
$$k=\gamma-1-d$$
in the first equation, it becomes after multiplication by $|y|^{1-c}$:
\begin{eqnarray}
  \nonumber    |\xi|^{-c+1} {\theta(\xi)} \hspace{-1.5mm}& \hspace{-0.5mm}= \hspace{-0.5mm}& \hspace{-1.5mm}  e^{|\xi|^\gamma-|\eta|^\gamma}  |\eta|^{-c}|\xi| {\theta(\eta)}- \gamma   \hspace{-1.0mm}\int_{|\xi|}^{|\eta|}   \hspace{-1.5mm}  {|\zeta|^{-c}   e^{|\xi|^\gamma-|\zeta|^\gamma}}   \int_{\RR^d}  \hspace{-1.0mm}{\xi \cdot \theta(\zeta-x) P_\zeta \psi(x)}   \ d x   \ d |\zeta| \\
   \label{repart2}    & \hspace{-0.5mm} \phantom{=} \hspace{-0.5mm}& \hspace{-1.5mm}  \phantom{  e^{|\xi|^\gamma-|\eta|^\gamma}  |\eta|^{-c}|\xi| {\theta(\eta)}}- \gamma   \hspace{-1.0mm}\int_{|\xi|}^{|\eta|}   \hspace{-1.5mm}  {|\zeta|^{-c}   e^{|\xi|^\gamma-|\zeta|^\gamma}}   \int_{\RR^d}  \hspace{-1.0mm}{\xi \cdot \psi(\zeta-x) P_\zeta \theta(x)}   \ d x   \ d |\zeta|.
\end{eqnarray}
For $(\ref{impart1})$ to be an equation for the functions that depend only on $\xi$ and $\eta$, as $(\ref{repart2})$, we must necessarily have
$$c=k=\gamma-1-d,$$
and then $(\ref{impart1})$ becomes, after multiplication by $|y|^{1-c}$:
\begin{eqnarray}
 \nonumber    |\xi|^{-c+1} {\psi(\xi)} \hspace{-1.5mm}& \hspace{-0.5mm}= \hspace{-0.5mm}& \hspace{-1.5mm}  e^{|\xi|^\gamma-|\eta|^\gamma}  |\eta|^{-c}|\xi| {\psi(\eta)}+ \gamma   \hspace{-1.0mm}\int_{|\xi|}^{|\eta|}   \hspace{-1.5mm}  {|\zeta|^{-c}   e^{|\xi|^\gamma-|\zeta|^\gamma}}   \int_{\RR^d}  \hspace{-1.0mm}{\xi \cdot \theta(\zeta-x) P_\zeta \theta(x)}   \ d x   \ d |\zeta| \\
   \label{impart3}    & \hspace{-0.5mm} \phantom{=} \hspace{-0.5mm}& \hspace{-1.5mm}  \phantom{ |\eta|^{-c}|\xi| e^{|\xi|^\gamma-|\eta|^\gamma} \psi(\eta)}- \gamma   \hspace{-1.0mm}\int_{|\xi|}^{|\eta|}   \hspace{-1.5mm}  {|\zeta|^{-c}   e^{|\xi|^\gamma-|\zeta|^\gamma}}   \int_{\RR^d}  \hspace{-1.0mm}{\xi \cdot \psi(\zeta-x) P_\zeta \psi(x)}   \ d x   \ d |\zeta|.
\end{eqnarray}
We eventually obtain a pair of coupled equations
\begin{eqnarray}
  \nonumber    {\theta(\xi)} \hspace{-1.5mm}& \hspace{-0.5mm}= \hspace{-0.5mm}& \hspace{-1.5mm}  e^{|\xi|^\gamma-|\eta|^\gamma}  {|\xi|^c \over |\eta|^{c}} {\theta(\eta)}- \gamma |\xi|^{c-1}   \hspace{-1.0mm}\int_{|\xi|}^{|\eta|}   \hspace{-1.5mm}  {r^{-c}   e^{|\xi|^\gamma-r^\gamma}}   \int_{\RR^d}  \hspace{-1.0mm}{\xi \cdot \theta\left({\eta \over |\eta|} r-x \right) P_\zeta \psi(x)}   \ d x   \ d r \\
   \label{repart4}    & \hspace{-0.5mm} \phantom{=} \hspace{-0.5mm}& \hspace{-1.5mm}  \phantom{  e^{|\xi|^\gamma-|\eta|^\gamma}  {|\xi|^c \over |\eta|^{c}} {\theta(\eta)}}- \gamma |\xi|^{c-1}  \hspace{-1.0mm}\int_{|\xi|}^{|\eta|}   \hspace{-1.5mm}  {r^{-c}   e^{|\xi|^\gamma-r^\gamma}}   \int_{\RR^d}  \hspace{-1.0mm}{\xi \cdot \psi\left({\eta \over |\eta|} r-x \right) P_\zeta \theta(x)}   \ d x   \ d r.
\end{eqnarray}
\begin{eqnarray}
 \nonumber    {\psi(\xi)} \hspace{-1.5mm}& \hspace{-0.5mm}= \hspace{-0.5mm}& \hspace{-1.5mm}  e^{|\xi|^\gamma-|\eta|^\gamma}   {|\xi|^c \over |\eta|^{c}} {\psi(\eta)}+ \gamma   |\xi|^{c-1}   \hspace{-1.0mm}\int_{|\xi|}^{|\eta|}   \hspace{-1.5mm}  {r^{-c}   e^{|\xi|^\gamma-r^\gamma}}   \int_{\RR^d}  \hspace{-1.0mm}{\xi \cdot \theta\left({\eta \over |\eta|} r-x \right) P_\zeta \theta(x)}   \ d x   \ d r \\
   \label{impart4}    & \hspace{-0.5mm} \phantom{=} \hspace{-0.5mm}& \hspace{-1.5mm}  \phantom{  {|\xi|^c \over |\eta|^{c}} e^{|\xi|^\gamma-|\eta|^\gamma} \psi(\eta)}- \gamma   |\xi|^{c-1}   \hspace{-1.0mm}\int_{|\xi|}^{|\eta|}   \hspace{-1.5mm}  {r^{-c}   e^{|\xi|^\gamma-r^\gamma}}   \int_{\RR^d}  \hspace{-1.0mm}{\xi \cdot \psi\left({\eta \over |\eta|} r-x \right) P_\zeta \psi(x)}   \ d x   \ d r.
\end{eqnarray}
These equations are equivalent to equations $(\ref{eq:integral:NSgamma})$ for the special form $(\ref{vform})$ for times $0 \le t<T$. A function $\vartheta=\theta+i \psi$ solving this equation for all $\eta \in \RR^d$ and all $\xi \in \RR^d$ such that $|\xi| \le |\eta|$, provides a  solution
$$v(y,t)=  (T-t)^{{d+1-\gamma \over \gamma}} \left( \theta\left(y (T-t)^{1 \over \gamma} \right) + i  \psi\left(y (T-t)^{1 \over \gamma} \right)  \right),$$
to the initial value problem $(\ref{eq:integral:NSgamma})$  with the initial data
\begin{equation}
\label{IC} v_0(y)=  T^{{d+1-\gamma \over \gamma}} \left( \theta\left(y T^{1 \over \gamma} \right) + i  \psi\left(y T^{1 \over \gamma} \right)  \right).
\end{equation}

The problem of existence of solution of $(\ref{repart4})-(\ref{impart4})$ can be stated as a fixed point problem for a family of operators

\begin{eqnarray}
 \label{operatorR1} \hspace{10mm} \cR_\beta[\vartheta](\eta) \hspace{-1.5mm} &\hspace{-1.5mm}=\hspace{-1.5mm}& \hspace{-1.5mm} \beta^c \cP_\beta[\vartheta]\left({\eta \over \beta} \right),   \\
 \label{operatorP1}  \hspace{10mm}  \cP_\beta[\vartheta](\eta)\hspace{-1.5mm}  & \hspace{-1.5mm} = \hspace{-1.5mm} & \hspace{-1.5mm} e^{|\beta \eta|^\gamma-|\eta|^\gamma} \vartheta(\eta)+ i |\eta|^{c-1}\gamma  \hspace{-1.0mm} \int \displaylimits_{\beta |\eta|}^{|\eta|}  \hspace{-1.0mm}{ e^{|\beta \eta|^\gamma-r^\gamma} \over  r^{c} } \hspace{-1.5mm}  \int \displaylimits_{\RR^d} \hspace{-1.0mm} \eta \cdot \vartheta  \hspace{-0.5mm}  \left({\eta \over |\eta|} r-x \right) P_\eta \vartheta(x)   \ d x   \ d r,
\end{eqnarray}
or
\begin{equation}
 \label{operatorR2}  \cR_\beta[\vartheta](\eta)   =  \beta^c e^{|\eta|^\gamma \left(1- {1 \over \beta^\gamma}  \right)} \vartheta \left(\eta \over \beta \right)+ i \gamma  \int \displaylimits_{1}^{1 \over \beta}  { e^{|\eta|^\gamma \left(1-t^\gamma\right)} \over  t^{c} }   \int \displaylimits_{\RR^d}  \eta \cdot \vartheta  \hspace{-0.5mm}  \left(\eta t - x \right) P_\eta \vartheta(x)   \ d x   \ d t,
\end{equation}
for a vector field $\vartheta: \RR^d  \mapsto \CC^d$.

\vspace{2mm}

\begin{remark}
  The fact that the exponent $c=\gamma-d-1$ is nothing but the consequence of the well-known scaling property of the solutions of the Navier-Stokes system: a function $u_{\beta}(x,t)=\beta^{\gamma-1} u(\beta x , \beta^{\gamma} t)$ is a solution for the initial data $u_{0,\beta}(x)= \beta^{\gamma-1} u_0(\beta x)$ whenever $u$ solves the initial value problem for $u_0$.  
\end{remark}

\begin{remark}
  For $d=3$ and $\gamma=2$, the scaling exponent
  $$c=-2.$$
\end{remark}

\vspace{2mm}

We will refer to the operator $\cR_\beta$ as renormalization, or the renormalization operator, since it exhibits all the requisite properties found in renormalization in dynamics. The fixed point equation $\psi_*=\cR_\beta[\psi_*]$ says informally that the time evolution of the initial data in the Fourier space at a {\it later time} looked at {\it at a larger scale} is equivalent to the initial data itself.  In the physical space the fixed point equation exactly captures the key idea behind renormalization in dynamics: equivalence of ``later'' dynamics at smaller scale with the original dynamics. This comes as no surprise as this has been already written in Leray self-similar solution $(\ref{Leraysol})$.

\section{Statement of results} \label{results}
We will now state the main conclusions of this paper. Theorem A will be stated in a rather informal way. For the exact statement see subsection $\ref{apriori_subsection}$. 
\begin{thmA} \label{theoremA}(\underline{A-priori bounds.})
  For $\beta_0 \in (0,1)$, $d \ge 2$ and $d<\gamma<2 d+2$,  there exist $p>2$, $b>0$,  $\nu \in \RR$, such that  for any $\beta \in (\beta_0,1)$, the operator $\cR_\beta$ admits an invariant bounded convex precompact subset $\cN \not\ni 0$ in  $L^p_{v_\nu}(\RR^d,\RR)$  of {\it azimuthal} vector fields
  $$\vartheta=i \psi e_{d-1},$$
where $\psi: \RR^d \mapsto \RR$ and  $e_{d-1}$ is the azimthal coordinate unit vector,   equipped with the norm
  $$ \|f \|_{v_\nu}^p= \int \displaylimits_{\RR_d} |f(x)|^p v_\nu^p(x) d x, \quad v_\nu(x)=e^{{b \over 2} |\eta|} |\eta|^\nu.$$
\end{thmA}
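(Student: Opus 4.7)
The plan is to construct the invariant set $\mathcal{N}$ by working with the azimuthal ansatz throughout and exploiting the strong Gaussian suppression from the factors $e^{|\eta|^\gamma(1-t^\gamma)}$ inside $\mathcal{R}_\beta$. First, I would verify that the azimuthal form $\vartheta = i\psi\,e_{d-1}$ is preserved by $\mathcal{R}_\beta$: using the rotational symmetry of the kernel together with $P_\eta \vartheta = \vartheta - (\vartheta\cdot\eta/|\eta|^2)\eta$, one checks that, for $\psi$ satisfying appropriate parity, both the linear dilation-heat term and the bilinear convolution-type term produce an azimuthal vector field. This reduces the vector problem to a scalar problem for $\psi$, which is essential for the later symmetry and compactness arguments.

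Next, I would decompose $\mathcal{R}_\beta = \mathcal{L}_\beta + \mathcal{B}_\beta$ into the linear part
\[
\mathcal{L}_\beta[\vartheta](\eta) = \beta^c e^{|\eta|^\gamma(1-\beta^{-\gamma})}\,\vartheta(\eta/\beta)
\]
and the bilinear double integral. Since $1-\beta^{-\gamma}<0$ for $\beta<1$, $\mathcal{L}_\beta$ is strongly contractive in $\|\cdot\|_{v_\nu}$ once $b$ is chosen so that the ratio $v_\nu(\eta)/v_\nu(\eta/\beta) = \beta^{-\nu}e^{(b/2)|\eta|(1-\beta^{-1})}$ is dominated by the Gaussian factor. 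For $\mathcal{B}_\beta$, the inner integral $\int \vartheta(\eta t - x)\,P_\eta \vartheta(x)\,dx$ is controlled by the weighted Young's convolution inequality for power-exponential weights quoted in the preamble, giving a pointwise bound $C(t)\|\vartheta\|_{v_\nu}^2$; the outer $t$-integral from $1$ to $1/\beta$ is then finite precisely in the regime $d<\gamma<2d+2$, with $\gamma>d$ needed to tame the factor $|\eta|^{c-1}t^{-c}$ near $t=1$ (note $c=\gamma-d-1>-1$), and $\gamma<2d+2$ needed so that the power-weight exponent $\nu$ can be chosen to keep the weighted convolution bounded.

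The invariant set would be taken as
\[
\mathcal{N} = \{\vartheta = i\psi\,e_{d-1} : \psi \in K,\ A \le \|\vartheta\|_{v_\nu} \le B\},
\]
where $K$ is a convex closed cone of scalar profiles with suitable sign/parity and pointwise control, and $0<A<B$ are adjusted from the quantitative estimates so that $\mathcal{R}_\beta(\mathcal{N})\subset \mathcal{N}$ uniformly for $\beta\in(\beta_0,1)$. The upper bound $B$ is enforced by the contraction of $\mathcal{L}_\beta$ combined with the Young estimate on $\mathcal{B}_\beta$; the more delicate lower bound $A>0$, which prevents $0\in \mathcal{N}$, is obtained by exhibiting a fixed linear functional against which $\mathcal{B}_\beta[\vartheta,\vartheta]$ has a definite nonzero sign for every $\psi\in K$, so that $\|\mathcal{R}_\beta[\vartheta]\|_{v_\nu}\ge A$ whenever $\|\vartheta\|_{v_\nu}\ge A$. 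Convexity of $\mathcal{N}$ forces one to work with level sets of appropriate convex functionals rather than norms alone when imposing the lower bound.

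Precompactness in $L^p_{v_\nu}$ would then follow from the Frechet-Kolmogorov-Riesz-Weil criterion stated in the preamble: tightness at infinity is automatic from the exponential weight, and equicontinuity in translation comes from the $W^{1,p}_{v_\nu}$-type regularity transferred to the image by the Gaussian smoothing inside $\mathcal{R}_\beta$. In my view the main obstacle is not compactness but the simultaneous fine-tuning of the parameters $(p,b,\nu,A,B)$: one must ensure (i) that the weighted Young bound on $\mathcal{B}_\beta$ closes strictly below $B$ uniformly in $\beta\in(\beta_0,1)$, (ii) that the sign/cancellation argument producing the lower bound $A>0$ survives the bilinear term, and (iii) that the cone $K$ of azimuthal profiles with its symmetry conditions is itself invariant under $\mathcal{R}_\beta$. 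The constraints $d\ge 2$ and $d<\gamma<2d+2$ should enter exactly as the admissibility conditions for this joint tuning.
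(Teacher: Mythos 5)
Your skeleton (azimuthal reduction, linear/bilinear splitting, weighted Young's inequality for the convolution, FKRW compactness, a convex functional to exclude $0$) matches the paper's architecture, but two of your key mechanisms are not the ones that actually work, and as stated they would fail.

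First, the non-triviality. You propose to secure the lower bound by showing that the bilinear term pairs with a fixed linear functional with a definite sign for every $\psi$ in the cone. Nothing of the sort is available: after projection onto $e_{d-1}$ the bilinear term carries the oscillatory angular factor $\cos\hat x_{d-1}$ multiplied by the sine of the azimuthal angle of $\eta t-x$, and has no usable sign. The paper's mechanism is entirely different: it uses the functional $\cI(\psi)=\int_{\RR^d}\psi(\eta)e^{-|\eta|^\gamma}|\eta|^\alpha\,d\eta$ and observes that the \emph{linear} part of $\cR_\beta$ multiplies $\cI$ by $\beta^{c+\alpha+d}$, which exceeds $1$ once $\alpha<1-\gamma=-c-d$; the bilinear term is then only estimated in absolute value, by $Ck^2(1-\beta)$, and absorbed for $\mu\gtrsim k^2$. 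This is also exactly where the upper bound $\gamma<2d+2$ enters (the admissible range $-2d-1<\alpha<1-\gamma$ must be non-empty), not from tuning $\nu$ in the Young inequality as you suggest. Your picture, in which $\cL_\beta$ is ``strongly contractive'' in norm and the lower bound must therefore be extracted from the bilinear term, cannot close.

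Second, equicontinuity. The factor $e^{|\eta|^\gamma(1-t^\gamma)}$ is a pointwise multiplier on the Fourier side, not a mollifier: the linear term is exactly as rough as $\vartheta$ itself, and the bilinear term is a convolution of two functions with no a priori derivative bounds, so no $W^{1,p}_{v_\nu}$-type regularity is ``transferred to the image.'' The paper instead propagates a H\"older modulus $|\psi(\eta)-\psi(\eta-\delta)|\le\omega_\psi(\eta)|\delta|^\theta$ with $\|\omega_\psi\|_{v_\nu,p}\le K$ and a pointwise decay condition on $\omega_\psi$ through $\cR_\beta$; this is the longest estimate in the paper (Proposition \ref{non_triv_equicont}) and is the source of most of the parameter constraints in $(\ref{H12})$. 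Equitightness is likewise not automatic from the weight alone but comes from the invariant pointwise envelope $|\psi(\eta)|\le k|\eta|^\sigma e^{-b|\eta|}$ of the set $\cM$, whose invariance (Proposition \ref{inv3}) is itself a nontrivial computation involving hypergeometric bounds on the self-convolution of the envelope, and which supplies the lower constraint $\gamma>d$ through $c\ge\sigma$. So the part you dismiss as ``not the main obstacle'' is in fact where most of the work, and most of the hypotheses on $(d,\gamma,p,\nu,\sigma,\theta)$, live.
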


Our second result deals with the existence of Leray blow up solutions for an $L^m$, $m \ge 2$, initial data and describes their  regularity for $d \ge 2$ and certain $\gamma$.

\begin{thmB} \label{theoremB}
  For any  $d \ge 2$ and $d<\gamma<2 d+2$ there exists $0<\beta_0<1$, and  a function
  $$\psi_* \in \bigcap_{1 \le l \le2} L^l(\RR^d,\RR),$$
  for which the following holds.

  \vspace{2mm}
  
  \noindent $\mathbf{(1)}$ Let $\tau(t)=(T-t)^{1 \over \gamma}$. For any $T>0$  the function
  $$v_*(y,t)=i \tau(t)^{d+1-\gamma} \psi_* (y \tau(t)) \ e_{d-1}(y \tau(t))$$
  satisfies equation $(\ref{eq:integral:NSgamma})$ for $t \in (0,(1-\beta_0^\gamma) T)$; 

  \vspace{2mm}
  
  \noindent  $\mathbf{(2)}$ the inverse Fourier transform
  \begin{equation}
\label{ubeta}   u_*(x,t)=\cF^{-1}[v_*](x,t)
   \end{equation}
  is the unique solution in $BC\left([0,T),\bPL^m \right)$, $m>2$,  of the mild Navier-Stokes system $(\ref{mildNS})$ with the initial data $u_*(x,0)$.

    Additionally, this solution belongs to  $C^\infty\left([0,T),X \right)$ where
      $$X \in \left( \hspace{1mm} \bigcap_{k>0} \bH^k \right) \bigcap \left( \hspace{1mm} \bigcap_{m \ge 2} \bPL^m\right) \bigcap C^\infty(\RR^d),$$
      specifically:
      
  \vspace{2mm}
    
\noindent  $\mathbf{(3)}$  the functions  $u_*(x,t)$ is a $C^\infty(\RR^d \times[0,T),\RR^d)$-function whose derivatives vanish at infinity, in particular
  \begin{equation}
    \label{decay} |u_*(x,t)| < C \ \tau(t)^{1+\theta -\gamma}  |x|^{-\theta}
  \end{equation}
  for some $0<\theta \le 1$;

  \vspace{2mm}
    
    \noindent  $\mathbf{(4)}$   $u_*(x,t)$ is a holomorphic function in a tube $\{z:=x \tau(t)^{-1} \in \CC^d: |\Im(z)|<b/2\}$;

  \vspace{2mm}
    

\vspace{2mm}



\noindent  $\mathbf{(5)}$ for any $m \ge 2$, the $\bL^m$-norm of $u_*( \cdot,t)$ is bounded for any fixed  $t \in [0,T)$, and diverges as $t \rightarrow T$:
\begin{equation}
  \label{energy2}  \|u_*\|_m  \ge {C  \over \tau(t)^{{\gamma-1 }-{d \over m}}}.
\end{equation}
\end{thmB}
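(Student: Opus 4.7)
The plan is to extract a fixed point $\psi_*$ from the a-priori bounds of Theorem A, verify that the self-similar ansatz $v_*$ solves the Fourier integral equation, invert the Fourier transform to obtain $u_*$, and then establish regularity, uniqueness, and blow-up in turn. First I would apply the Schauder--Tychonoff fixed point theorem to $\cR_\beta$ on the convex, bounded, precompact, invariant set $\cN$ produced by Theorem A, using continuity of $\cR_\beta$ that comes as part of the a-priori bounds construction. This yields $\psi_* \in \cN$ with $\cR_\beta \psi_* = \psi_*$. Because the weight $v_\nu(\eta) = e^{b|\eta|/2}|\eta|^\nu$ is exponential, H\"older's inequality places $\psi_*$ and every polynomial multiple of it in $\bigcap_{l \ge 1} L^l$, giving the stated integrability. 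The identity $\cR_\beta \psi_* = \psi_*$ with $\beta = \tau(t)/\tau_0$ is exactly equation $(\ref{eq:integral:NSgamma})$ evaluated on $v_*(y,t) = i\tau(t)^{d+1-\gamma}\psi_*(y\tau(t))\, e_{d-1}$ by the derivation in Section \ref{renormNS}, establishing (1).

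Next, since $\psi_* \in L^1 \cap L^2$, the inverse Fourier transform $u_* = \cF^{-1}[v_*]$ is well defined; a change of variables produces the Leray form $u_*(x,t) = \tau(t)^{1-\gamma}\Psi(x/\tau(t))$ with $\Psi = \cF^{-1}[i\psi_* e_{d-1}]$. The pointwise orthogonality $e_{d-1}(\eta) \perp \eta$ makes $u_*$ divergence-free. The exponential decay of $\psi_*$ is precisely a Paley--Wiener condition on $\Psi$, yielding holomorphic extension to the tube $\{|\Im z| < b/2\}$, which is claim (4). Since $|\eta|^k \psi_*$ remains in $L^p_{v'_\nu}$ for a slightly smaller weight, Hausdorff--Young and interpolation give $\Psi \in C^\infty(\RR^d) \cap \bigcap_{k > 0} \bH^k \cap \bigcap_{m \ge 2} \bPL^m$ with derivatives vanishing at infinity and polynomial decay of the form $(\ref{decay})$, establishing (3). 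Claim (5) then follows from the Leray scaling: a direct change of variables yields $\|u_*(\cdot, t)\|_m = \tau(t)^{d/m - (\gamma - 1)}\|\Psi\|_m$, and the hypotheses $\gamma > d \ge 2$ and $m \ge 2$ force $\gamma - 1 - d/m > 0$, so the norm is finite for $t < T$ and diverges as $t \to T$.

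The most delicate step is (2), identifying $u_*$ as the unique mild solution. Since $(\ref{eq:integral:NSgamma})$ is the Fourier image of $(\ref{mildNS})$ and $u_*(\cdot, 0) \in \bPL^m$ for every $m \ge 2$, $u_*$ solves the mild equation, and Giga's local existence and uniqueness theorem \cite{Giga} in $BC([0, T_1), \bPL^m)$ for $m > 2$, combined with a continuation argument, identifies $u_*$ with the unique such solution on $[0, T)$. The main obstacle will be establishing continuity in time of $t \mapsto \Psi(\cdot/\tau(t))$ in $\bPL^m$-norm at $t = 0$ and throughout $[0, T)$, which is needed to place $u_*$ in Giga's functional class; this will follow from a dominated-convergence argument controlled by the exponential decay inherited from $\psi_*$. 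A secondary technical point is verifying that Fourier inversion commutes with the bilinear convolution integrand in $(\ref{eq:integral:NSgamma})$ at the fixed point, which follows from $\psi_* \in L^1 \cap L^2$ and Young's convolution inequality.
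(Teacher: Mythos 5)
There is a genuine gap at the very first step, and it is the one the paper spends most of Section \ref{blowups} repairing. Applying Schauder--Tychonoff to a single operator $\cR_\beta$ gives you a fixed point of \emph{that one} operator, which under the self-similar ansatz yields equation $(\ref{eq:integral:NSgamma})$ at the \emph{single} time $t$ with $\tau(t)/\tau_0=\beta$. For $v_*$ to satisfy the integral equation for all $t\in(0,(1-\beta_0^\gamma)T)$ you need one function $\psi_*$ that is \emph{simultaneously} fixed by $\cR_\beta$ for every $\beta\in(\beta_0,1)$; a priori Tychonoff hands you a different $\psi_\beta$ for each $\beta$. Your write-up silently promotes ``$\cR_\beta\psi_*=\psi_*$'' to an identity valid for all $\beta=\tau(t)/\tau_0$. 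The paper closes this by (i) using precompactness of $\overline{\cN^\alpha_\mu}$ to extract a limit $\psi_*$ of fixed points $\psi_{\beta_i}$ with $\beta_i\to1$, (ii) proving the lemma that a fixed point of $\cR_\beta$ is also a fixed point of $\cR_{\beta^n}$ for all $n\in\NN$, and (iii) observing that suitable powers $\beta_{i_k}^{n_k}$ accumulate at any prescribed $\beta\in(\beta_0,1)$, so that passing to the limit in $\cR_{\beta_{i_k}^{n_k}}[\psi_{\beta_{i_k}}]=\psi_{\beta_{i_k}}$ gives $\cR_\beta[\psi_*]=\psi_*$ for all $\beta$. Without some version of this argument part $\mathbf{(1)}$ does not follow.

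Two secondary points. First, for part $\mathbf{(2)}$ the phrase ``combined with a continuation argument'' hides the essential mechanism: Giga's theorem is local, with existence/uniqueness time $T_0$ controlled by $\|u_*(\cdot,0)\|_m$, and since the solution blows up at $T$ no single application reaches $[0,T)$. The paper's induction rescales the initial value problem at $t_k=(1-\beta_1^{k\gamma})T$ back to the original one using the exact self-similarity of $v_*$ and the scaling covariance of $(\ref{eq:integral:NSgamma})$, thereby propagating uniqueness to $[0,(1-\beta_1^{n\gamma})T)$ for every $n$; you should make this step explicit. Second, the claim that H\"older's inequality against the weight $v_\nu$ places $\psi_*$ in $\bigcap_{l\ge1}L^l$ overreaches: the dual integral $\int v_\nu^{-lq}$ requires $\nu\,lp/(p-l)<d$, which for $l\le2$ contradicts the standing hypothesis $\nu>d(p-2)/(2p)$ of $(\ref{H12})$; the paper instead interpolates between the $L^1_{w}$ bound ($w(\eta)=|\eta|^\alpha$) built into $\cM^\alpha_\mu$ and the $L^p_{v_\nu}$ bound (Section \ref{properties}), and only obtains $1\le l\le 2$, which is all the theorem asserts. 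The remaining parts of your outline --- Paley--Wiener for $\mathbf{(4)}$, the modulus-of-continuity decay for $(\ref{decay})$, Plancherel/scaling for $\mathbf{(5)}$ --- match the paper.
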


We will say that a solution of $(\ref{eq:NS})$ is {\it strong}, if  $u \in C^1\left([0,T),\bH^\gamma \right)$, and, additionally,
  $$\partial_t u, \ (u \cdot \nabla) u, \  (-\Delta)^{\gamma \over 2} u \in L^2((0,T); \bL^2),$$
  (in particular, these conditions are sufficient for an energy iequality to hold). As Remark $\ref{notstrong}$ demonstrates, the Leray self-similar solution from Theorem B {\it is not strong}.

\section{Outline of the proof}\label{outline}

Our proof consists of constructing a precompact renormalization invariant set in an appropriate functional space, that does not contain a trivial function, and demonstrating that the renormalization operator has one and the same fixed point in that set for all $\beta \in (0,1)$. From that we deduce existence of Leray self-similar solutions for all $d \ge 2$ and $d<\gamma <2 d+2$.

We start by considering the action of the renormalization operator on a weighted space $\fRL^p_{u_\nu}$, $u_\nu(\eta)=e^{|\eta|} |\eta|^\nu$, of complex valued vector fields from $\RR^2$ to $\CC^d$ which can be bounded by a radial function, and use a version of Young's convolution inequality to demonstrate that there exists a choice of the weight and a choice of $p>1$ such that this space is renormalization invariant. This is done in Section $\ref{invset}$. Incidentally, in Section  $\ref{trivapb}$, we show that the family of renormalization operators $\cR_\beta$  does not contain any common fixed points in a ball around zero of certain size this space. 

Next, we consider the azimuthal vector fields, that is the vector fields whose only non-zero component is the azimuthal one, in Section $\ref{nontrivapb}$. We construct a renormalization invariant convex set $\cM^\alpha_\mu$ in $L^p_{v_\nu}$, $v_\nu(\eta)=|\eta|^\nu e^{b|\eta|/2}$, as the intersection of two convex sets: the first one being a set of exponentially decaying function bounded in the absolute value by $k |\eta|^\sigma e^{-b |\eta|}$, with a mild singularity $|\eta|^\sigma$ at zero: the second one - a set of functions $\psi \in L^p_{v_\nu}$ for which the integral $\cI[\psi]=\int_{\RR^d} \psi(\eta) e^{-|\eta|^\gamma} |\eta|^\alpha   d \eta$ is larger than some positive constant $\mu$. As part of the construction we show that there exist choices of constants such that both sets are renormalization invariant. We also show that the set $\cM^\alpha_\mu$ is {\it equitight} - that is the $L^p_{v_\nu}$-norms of the functions of sets $(R,\infty)$ vanish as $R \rightarrow \infty$,uniformly in $\cM^\alpha_\mu$.

The set $\cM^\alpha_\mu$ is further intersected with a convex set $\cN$ of functions whose translates satisfy a bound of the form
$$|\psi(\eta)-\psi(\eta-\delta)| \le \omega_\psi (\eta) |\delta|^\theta,$$
with a function $\omega_\psi$ having a bounded $L^p_{v_\nu}$-norm, and a fast decay for large $\eta$.  We demonstrate that this set is also renormalization invariant. This set is also clearly {\it equicontinuity}, that is the $L^p_{v_\nu}$-norms of $\psi(\eta)-\psi(\eta-\delta)$ vanish as $\delta \rightarrow 0$, uniformly in $\cN$.

The key ingredient of the proof is the Frechet-Kolmogorov-Riesz-Weil compactness theorem, which says that a bounded, equitight and equicontinuity set in $L^p_{v_\nu}$ is precompact. This is precompactness of the set $\cN^\alpha_\mu=\cN \cap \cM^\alpha_\mu$ is formulated at the end of Section $\ref{nontrivapb}$ in the form of {\it a-priori bounds}.

After demonstrating the existence of a compact renormalization invariant set $\cN^\alpha_\mu$, we proceed to first embed it into $L^p$ with $1 \le  p \le 2$.
Having done this, we pass back to the physical space $L^m$, $m \ge 2$, and invoke an existence and regularity result by Giga \cite{Giga}, to claim that there is a choice of $T>0$ and a $\beta_1$, sufficiently close to $1$, such that the unique mild solution of the generalized mild Navier-Stokes system for times $t \in (0,(1-\beta_1^\gamma)T)$  coincides with a function
$$(T-t)^{1 -\gamma \over \gamma} \Psi_* \left( x (T-t)^{-{1 \over \gamma}} \right),$$
where $\Psi_*$ is the inverse Fourier transform of the fixed point for $\cR_{\beta_1}$ (see Section $\ref{blowups}$).  Eventually, we extend the same solution to all times $t \in (0,(1-\beta_1^{n \gamma})T)$, $n \in \NN$, using the following key renormalization property: a fixed point for $\cR_\beta$ is also a fixed point for $\cR_{\beta^n}$ for all $n \in \NN$.

Finally, the regularity properties of the solution listed in Theorem B, follow from the exponential boundedness and equicontinuity of the renormalization fixed points.

\section{Renormalization symmetries.}\label{rensym}  In this section we will collect some straightforward properties of $\cR_\beta$, mostly related to renormalization symmetries.

Renormalization clearly preserves parity.

\begin{proposition}
The set of vector fields  with the real and imaginary parts of $\vartheta$  even and odd, respectively, is invariant under $ \cR_\beta$ for all $0<\beta<1$.
\end{proposition}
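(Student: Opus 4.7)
The plan is to rewrite the parity constraint as a single complex symmetry and verify that this symmetry is preserved by $\cR_\beta$. Explicitly, for $\vartheta = \theta + i\psi$ with $\theta$ even and $\psi$ odd one has the pointwise identity $\vartheta(-y) = \overline{\vartheta(y)}$, and conversely any complex vector field satisfying this identity has the prescribed parities. I would introduce the involution $\iota[\vartheta](y) := \overline{\vartheta(-y)}$ and aim to show that $\iota[\cR_\beta[\vartheta]] = \cR_\beta[\vartheta]$ whenever $\iota[\vartheta] = \vartheta$. In practice this amounts to computing $\overline{\cR_\beta[\vartheta](-\eta)}$ from $(\ref{operatorR2})$ and checking that it equals $\cR_\beta[\vartheta](\eta)$, term by term.

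The linear summand $\beta^c\, e^{|\eta|^\gamma(1-1/\beta^\gamma)}\,\vartheta(\eta/\beta)$ is handled by inspection: the scalar factor is real and radial, and $\overline{\vartheta(-\eta/\beta)} = \vartheta(\eta/\beta)$ by the assumed symmetry. For the bilinear integral I would use two elementary facts. First, the Leray projector depends only on the unoriented line through $\eta$, so $P_{-\eta} = P_\eta$, and its coefficients are real, so complex conjugation passes through it. Second, the inner integration over $\RR^d$ is invariant under $x \mapsto -x$; this change of variables together with $\vartheta(-y) = \overline{\vartheta(y)}$ converts each occurrence of $\vartheta(-\eta t - x)$ and $\vartheta(x)$ into the conjugate of $\vartheta(\eta t - x)$ and $\vartheta(x)$ respectively. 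The outer scalar $e^{|\eta|^\gamma(1-t^\gamma)}/t^c$ is real and radial and so is unaffected.

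Carrying out these two operations and tracking signs, two minus signs appear: one from replacing $\eta \cdot$ with $(-\eta)\cdot$ after the substitution $\eta \mapsto -\eta$ in the outer inner product, and one from the overall complex conjugation turning the prefactor $i$ into $-i$. These cancel, so the bilinear summand is also fixed by $\iota$, completing the verification. The main obstacle, such as it is, will be the bookkeeping of these signs and of the conjugation commuting past $P_\eta$ and the real variables $\eta, r, t$; I would write the substitution out explicitly to avoid a slip. The argument is purely algebraic, uses only $0 < \beta < 1$ to make the $t$-range in $(\ref{operatorR2})$ well defined, and therefore yields invariance of the parity-constrained subspace uniformly in $\beta$, as asserted.
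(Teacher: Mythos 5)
Your verification is correct: encoding the parity condition as $\vartheta(-y)=\overline{\vartheta(y)}$ and checking term by term that $\overline{\cR_\beta[\vartheta](-\eta)}=\cR_\beta[\vartheta](\eta)$ (using $P_{-\eta}=P_\eta$, the realness and radiality of the scalar factors, the substitution $x\mapsto -x$, and the cancellation of the sign from $(-\eta)\cdot$ against the sign from $\bar{\imath}=-i$) is exactly the computation the paper has in mind when it dismisses this as "clearly" preserving parity and states the proposition without proof. Your writeup is the natural direct argument and is, if anything, more explicit than the paper's.
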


 We will eventually restrict the search of fixed points of the operator $\cR_\beta$ to the subset of symmetric transversal imaginary fields $\vartheta= i \psi$. To that end we quote some simple properties of the renormalization operator for such fields.

To be specific, a vector field $\psi$ will be called {\it transversal}, if $e_\eta(\eta) \cdot \psi(\eta)=0$ where $e_\eta(\eta)$ is the radial  unit vector field.

Let $\eta=(r,\hat \eta)$ be the representation of $\eta$ in the spherical coordinates, with  $\hat \eta = (\hat \eta_1, \hat \eta_2, \ldots, \hat \eta_{d-1})  \in S_{d-1}$ being the polar  ($\hat \eta_k$, $1\le k \le d-2$)  and  azimuthal ($\hat \eta_{d-1}$)  angles. Clearly any transversal vector field has a unique representation
\begin{equation}
\psi(\eta)= \sum_{i=1}^{d-1} \psi_{i}(r,\hat \eta) e_{_i }(\hat \eta),  
\end{equation}
where $e_i(\hat \eta)$ are the coordinate unit vector fields on the sphere.

We continue with the following collection of some straightforward renormalization properties.

\begin{proposition}\label{symmetries}(\underline{Symmetries of renormalization}) 
\vspace{1mm}
  
  \noindent $i)$ The operator $\cR_\beta$ preserves the set of imaginary vector fields $\vartheta=i \psi$, where $\psi: \RR^d \mapsto \RR^d$:
\begin{equation}
 \label{operatorR3}  \cR_\beta[\psi](\eta)   =  \beta^c e^{|\eta|^\gamma \left(1- {1 \over \beta^\gamma}  \right)} \psi \left(\eta \over \beta \right)- \gamma  \int \displaylimits_{1}^{1 \over \beta}  { e^{|\eta|^\gamma \left(1-t^\gamma\right)} \over  t^{c} }   \int \displaylimits_{\RR^d}  \eta \cdot \psi  \hspace{-0.5mm}  \left(\eta t - x \right) P_\eta \psi(x)   \ d x   \ d t.
\end{equation}

\vspace{2mm}

\noindent $ii)$  The operator $(\ref{operatorR3})$  preserves the set of odd vector fields $\psi(\eta)=-\psi(-\eta)$. 

\vspace{2mm}

\noindent $iii)$  The operator $(\ref{operatorR3})$  preserves the set of  rotationally invariant vector fields $\psi$, i. e. for all $d \ge 2$,  a fixed $n$, $2 \le  n \le d$ and all for all $R \in  SO(n)$,
$$R^{-1} \cR_\beta[\psi] \circ R = \psi$$ 
whenever
$$R^{-1} \psi \circ R = \psi.$$
\vspace{2mm}

\noindent $iv)$  The operator $(\ref{operatorR3})$  preserves the set of transversal vector fields $\psi$, i.e. 
$$e_\eta \cdot \cR_\beta[\psi] =0$$ 
whenever
$$e_\eta \cdot \psi =0$$ 
for all $\eta \in \RR^d$.

\vspace{2mm}

\noindent $v)$  The operator $(\ref{operatorR3})$  preserves the set of transversal $SO(2)$-invariant vector fields which project trivially on the polar directions, i. e. for any rotation $R$ in the subgroup  $SO(2)$ preserving the azimuthal plane,
$$e_{k} \cdot \cR_\beta[\psi] =0,$$ 
$1 \le k \le d-2$, whenever
$$e_{k} \cdot \psi =0$$ 
for all $\eta \in \RR^d$. Additionally,
\begin{equation}
  \label{Ronphi} e_{d-1}(\eta) \cdot \cR_\beta[\psi](\eta)=\beta^c e^{-{ |\eta|^\gamma \over \beta^\gamma}   } e_{d-1}(\eta) \cdot \psi \left({\eta \over \beta}  \right).  
\end{equation}
\end{proposition}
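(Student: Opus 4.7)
The plan is to verify each of (i)--(v) by direct substitution into the definition (\ref{operatorR2}), using standard identities for orthogonal transformations of the Leray projector $P_\eta$; parts (i)--(iv) are a few lines each, and only part (v) needs a dedicated reflection argument.

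For (i) I would substitute $\vartheta=i\psi$ in (\ref{operatorR2}): the linear term picks up one factor of $i$, while the two $\vartheta$-slots in the quadratic integrand contribute $i^2=-1$ which, combined with the explicit $i\gamma$, yields $-i\gamma$ times the $\psi$-bilinear integral. Factoring out the overall $i$ produces (\ref{operatorR3}). For (ii), I would replace $\eta\to-\eta$ in (\ref{operatorR3}) and then substitute $x\to-x$ in the inner integral; using $|{-\eta}|^\gamma=|\eta|^\gamma$, $P_{-\eta}=P_\eta$, and the oddness $\psi(-\,\cdot\,)=-\psi$, both the linear and quadratic terms flip sign. For (iii), using $|R\eta|=|\eta|$, the covariance $R^{-1}P_{R\eta}R=P_\eta$, and the orthogonality $R\eta\cdot Rv=\eta\cdot v$, I would substitute $\eta\to R\eta$ in (\ref{operatorR3}) and $x\to Rx$ inside: the assumption $\psi(Rw)=R\psi(w)$ pulls $R$ out of every $\psi$-occurrence, and applying $R^{-1}$ to both sides yields the invariance. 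For (iv), I would project (\ref{operatorR3}) onto $e_\eta(\eta)$: since $e_\eta(\eta)=e_{\eta/\beta}(\eta/\beta)$, transversality of $\psi$ makes the linear term vanish, while the identity $e_\eta(\eta)\cdot P_\eta v\equiv 0$ makes the quadratic integrand vanish pointwise.

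For part (v), transversality together with $e_k\cdot\psi=0$ for $1\le k\le d-2$ forces the representation $\psi(\eta)=\psi_{d-1}(\eta)\,e_{d-1}(\eta)$, and $SO(2)$-invariance then forces the scalar $\psi_{d-1}$ to be independent of the azimuthal angle. Parts (iii) and (iv) already guarantee that $\cR_\beta[\psi]$ is transversal and $SO(2)$-invariant, so what remains is an analysis of its $e_{d-1}$- and polar projections. My strategy is to introduce, for each $\eta$, the reflection $\sigma=\sigma_\eta$ in the hyperplane containing $\eta$ and the rotation axis of the $SO(2)$ action: $\sigma$ fixes $\eta$, preserves $|\cdot|$ and the polar angles, reverses the azimuthal angle, and satisfies $\sigma e_{d-1}(w)=-e_{d-1}(\sigma w)$ while fixing the polar and radial basis vectors at $\eta$. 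Consequently $\psi_{d-1}$ is $\sigma$-invariant while $\psi$ is anti-invariant, $\psi(\sigma w)=-\sigma\psi(w)$. Running the change of variables of (iii) with $\sigma$ in place of $R$, and using $\sigma P_\eta=P_\eta\sigma$ (from $\sigma\eta=\eta$) together with $\eta\cdot\sigma v=\eta\cdot v$, one obtains $\sigma N[\psi](\eta)=N[\psi](\eta)$, where $N$ denotes the quadratic term of (\ref{operatorR3}). Hence $N[\psi](\eta)$ lies in the $+1$ eigenspace of $\sigma_\eta$, which is orthogonal to $e_{d-1}(\eta)$; thus $e_{d-1}(\eta)\cdot N[\psi](\eta)=0$, and (\ref{Ronphi}) follows from the linear term alone.

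The main anticipated obstacle sits entirely in part (v): the sign bookkeeping for $\sigma e_{d-1}(w)=-e_{d-1}(\sigma w)$ and its propagation through every $\psi$-occurrence in $N[\psi]$ is the only non-mechanical step. I also expect the claim about the vanishing of the polar components of $\cR_\beta[\psi]$ to require an extra symmetry input beyond $\sigma_\eta$ alone --- either an $SO(d-2)$ rotation among the polar coordinates or a second reflection --- since $\sigma_\eta$ by itself only constrains $N[\psi](\eta)$ to the hyperplane through $\eta$ and the rotation axis, which already contains all polar basis vectors.
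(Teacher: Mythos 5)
Your treatment of parts $(i)$--$(iv)$ is correct and coincides with the paper's route: the paper dismisses $(i)$--$(ii)$ as trivial and proves $(iii)$--$(iv)$ in Appendix A by exactly the substitutions you describe. Your proof of $(\ref{Ronphi})$ is also correct, and is a cleaner packaging of what the paper actually does: the paper pairs $x$ with $R_x^2x$, where $R_x\in SO(2)$ rotates $x$ into the azimuthal half-plane of $\eta$, and $R_x^2x$ is precisely your $\sigma_\eta x$; your formulation as the single equivariance statement $N[\psi](\eta)=\sigma_\eta N[\psi](\eta)$, which confines $N[\psi](\eta)$ to the fixed hyperplane of $\sigma_\eta$ and hence kills its $e_{d-1}(\eta)$-component, replaces the paper's pointwise pairing of the two halves of the azimuthal integral by one change of variables.

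The genuine gap is the one you flag yourself: the vanishing of the polar components $e_k(\eta)\cdot \cR_\beta[\psi](\eta)$, $1\le k\le d-2$, and you should be aware that the ``extra symmetry input'' you hope for does not appear to exist. Your own sign bookkeeping shows that under $x\mapsto\sigma_\eta x$ \emph{both} scalar factors $\eta\cdot\psi(\eta t-x)$ and $e_k(\eta)\cdot\psi(x)$ change sign, so the integrand of $e_k(\eta)\cdot N[\psi](\eta)$ is $\sigma_\eta$-\emph{even} and the two halves of the azimuthal integral add rather than cancel. The paper's Appendix A closes this step by folding the azimuthal integral with a relative minus sign between the two halves; but the substitution $\hat x_{d-1}\mapsto 2\hat\eta_{d-1}-\hat x_{d-1}$ reverses both the orientation and the limits, so that sign should be a plus and the asserted cancellation evaporates. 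Concretely, in $d=3$ with $\eta$ at azimuthal angle $0$ and polar angle $\theta_0$, writing $\psi=g\,e_{d-1}$ with $g=g(|\cdot|,\theta)$, the angular part of the integrand of $e_1(\eta)\cdot N[\psi](\eta)$ equals $-|\eta_a|\cos\theta_0\,x_2^2/\bigl(|x_a|\,|(\eta t-x)_a|\bigr)$, where $x_2$ is the component of $x$ normal to the reflection hyperplane; this is sign-definite, so for sign-definite $g$ no symmetry can make the integral vanish. Hence this sub-claim cannot be established along your lines (or the paper's), and you should either restrict part $(v)$ to the azimuthal component --- which is all that $(\ref{Ronphi})$ and the constructions of Section $\ref{nontrivapb}$ actually use, via the projected operator $(\ref{operatorR4})$ --- or produce an argument of a genuinely different nature.
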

\begin{proof}
  The proof is relegated to Appendix A.
\end{proof}

Transversal vector fields that project trivially on the polar directions will be called {\it azimuthal}. The set of azimuthal $SO(2)$-invariant vector fields will be referred to as $\cT$:
\begin{eqnarray}
  \nonumber \cT:&=&\left\{ \psi: \RR^d \mapsto  \RR^d: e_\eta(\eta) \cdot \psi(\eta)=0, \ e_k(\eta) \cdot \psi(\eta)=0, 1 \le k \le d-2; \right. \\
  \nonumber &&  \left.  \phantom{\psi: \RR^d \mapsto  \RR^d:} \hspace{3mm}   \psi(\eta)=-\psi(-\eta);  \  \psi( R \eta)= R \psi(\eta) \ {\rm for} \ \forall R \in SO(2) \right\},
  \end{eqnarray}
here $SO(2)$ is the rotational group in the azimuthal plane. The following is the direct consequence of $(\ref{Ronphi})$.

\begin{corollary}(\underline{Renormalization fixed points for azimuthal fields}.)
   Any  vector field of the form
  \begin{equation}
    \label{triv_sols} \psi(\eta)=|\eta|^c \ e^{|\eta|^\gamma} f(\hat \eta_p) \  e_{d-1}(\eta),
  \end{equation}
  where $f$ is any function of the polar angles $\hat \eta_p=\hat \eta_1, \ldots, \hat \eta_{d-2}$ are the polar angles,  is a renormalization fixed point in the set $\cT$.
\end{corollary}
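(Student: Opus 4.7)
The plan is a direct verification based on identity \eqref{Ronphi} of Proposition~\ref{symmetries}. I would first check that the candidate $\psi(\eta)=|\eta|^{c}\, e^{|\eta|^\gamma}\, f(\hat\eta_p)\, e_{d-1}(\eta)$ falls under part~(v) of that proposition: it is transversal because $e_{d-1}(\eta)\perp\eta$, its projections onto the polar unit vectors $e_{1},\dots,e_{d-2}$ vanish identically, and its scalar envelope $|\eta|^{c}e^{|\eta|^\gamma}f(\hat\eta_p)$ depends only on the radius $|\eta|$ and on the polar angles, hence is invariant under the $SO(2)$-action on the azimuthal plane. (Membership in $\cT$ also requires $\psi(-\eta)=-\psi(\eta)$, which amounts to a mild symmetry of $f$ under the antipodal action on polar angles; it restricts $f$ a little but is otherwise harmless.) By parts~(iv)--(v) of the proposition, $\cR_\beta[\psi]$ likewise is transversal, purely azimuthal and $SO(2)$-invariant, so the identity $\cR_\beta[\psi]=\psi$ reduces to matching the single scalar $e_{d-1}(\eta)$-component on both sides.

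For that scalar comparison I would substitute the explicit form of $\psi$ into the right-hand side of \eqref{Ronphi}. Since positive scaling preserves both $e_{d-1}(\eta)$ and the polar angles $\hat\eta_p$,
\[
 e_{d-1}(\eta)\cdot\psi\!\left(\eta/\beta\right)=\beta^{-c}\,|\eta|^{c}\,e^{|\eta|^\gamma/\beta^\gamma}\,f(\hat\eta_p).
\]
Multiplying by the prefactor prescribed by \eqref{Ronphi}, the $\beta^{c}$ and $\beta^{-c}$ cancel and the exponentials in $|\eta|^\gamma/\beta^\gamma$ collapse against the corresponding factor in the prefactor, leaving precisely $|\eta|^{c}e^{|\eta|^\gamma}f(\hat\eta_p)=e_{d-1}(\eta)\cdot\psi(\eta)$. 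This is the required fixed-point identity.

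There is no real obstacle in the proof of the corollary itself; all the substantive content has already been absorbed into \eqref{Ronphi}, whose nontrivial point is that the quadratic convolution term of $\cR_\beta$ contributes nothing to the $e_{d-1}$-component when $\psi$ is azimuthal and $SO(2)$-invariant. This latter fact is typically established by a reflection argument in the hyperplane perpendicular to $e_{d-1}(\eta)$ (which fixes $\eta$ and reverses $e_{d-1}(\eta)$, producing one sign change from $\eta\cdot\psi(\eta t-x)$ and none from $e_{d-1}(\eta)\cdot P_\eta\psi(x)$, so the integrand is odd and the integral vanishes). Granted \eqref{Ronphi}, the exponent $c=\gamma-1-d$ and the exponential profile $e^{|\eta|^\gamma}$ in the ansatz are dictated uniquely by the requirement that they absorb the $\beta$-dependent scaling prefactor, whereas $f$ is arbitrary because renormalization acts on the radial coordinate alone.
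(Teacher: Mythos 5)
Your proposal is correct and follows the paper's route exactly: the corollary is obtained by direct substitution into $(\ref{Ronphi})$, whose real content — the vanishing of the azimuthal component of the quadratic term for $SO(2)$-invariant azimuthal fields — is established in Appendix A by precisely the reflection argument you sketch (the substitution $\hat x_{d-1}\mapsto-\hat x_{d-1}$ in the azimuthal frame of $\eta$, which makes the integrand odd). Two minor remarks: your cancellation of the exponentials tacitly uses the linear-term prefactor $\beta^c e^{|\eta|^\gamma(1-1/\beta^\gamma)}$ from $(\ref{operatorR3})$ rather than the $\beta^c e^{-|\eta|^\gamma/\beta^\gamma}$ printed in $(\ref{Ronphi})$ — the former is the one consistent with the operator and with the claimed fixed point — and your observation that oddness of $\psi$ imposes a symmetry condition on $f$ under the antipodal action on the polar angles is a legitimate point that the statement of the corollary glosses over.
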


\begin{remark}
  Whenever $d$ is odd, the class of fully rotationally invariant transversal vector fields, i.e., invariant under all $R \in SO(d)$, coincides with the zero vector field. Indeed, according to the Hairy Ball Theorem, there is no non-trivial constant tangent vector field on a sphere. However, the class of transversal vector fields invariant under the subgroup $SO(d-1)$ is non-trivial, and contains smooth vector fields.

\end{remark}

\section{Renormalization as an operator on weighted $L^p$-spaces} \label{invset}

\subsection{Notation and preliminaries } Given a non-negative function $w: \RR^d \mapsto \RR_+ \cup \{0\}$,  we will consider the weighted $L^p_{w}(\RR^d, \RR)$ space, equipped with the norm
\begin{equation}
  \label{wnorm} \|f\|_{w,p}= \left( \int_{\RR^d} |f(x)|^p w(x)^p \ d x    \right)^{1 \over p},
\end{equation}
Additionally, the subspace of  functions in $L^p_{w}(\RR^d, \RR)$ which depend only on the radial coordinate in $\RR^d$ will be denoted $R^p_{w}(\RR^d, \RR)$; such functions will be referred to as radial.

We will consider the action of the renormalization operator on complex vector valued fields
$$\vartheta=\theta+i \psi,$$
where $\theta, \ \psi: \RR^d \mapsto \RR^d$.

As before, the product spaces of vector-valued functions will be denoted as follows
$$\bL^p_w=(L^p_{w}(\RR^d, \RR))^d, \quad \bR^p_w=(R^p_{w}(\RR^d, \RR))^d.$$

We will not require the complex structure for $\vartheta$, and will simply consider $\vartheta$ as and element of $\fL^p_w$, where larger product spaces are def
ined as
$$\fL^p_w=(L^p_{w}(\RR^d, \RR))^{2 d}, \quad \fR^p_w=(R^p_{w}(\RR^d, \RR))^{2 d}.$$

The subset of functions $f \in \fL^p_{w}$ such there exists a function $\widehat{f} \in  \fR^p_w$ satisfying $|f| \le |\widehat{f} |$ will be denoted as  $\fRL^p_w$. We will also denote
$$\tilde{f}(\rho):=\widehat{f}(x).$$
In this Section we will demonstrate existence of a renormalization invariant space $\fRL^p_{w}$ for a specific choice of $w$. We will first require two preliminary results.

\begin{lemma}\label{radial} (Lemma 4.1 \cite{NDD}.)
  Let $x \in S^{d-1}=\{x \in \RR^d: |x|=1\}$ and consider an integral of the form
  $$I(x)= \int_{S^{d-1}}   f(x \cdot y) d y$$
  where $f: [-1,1] \mapsto \RR$ is in $L^{1}_{(1-t^2)^{d-3 \over 2 }}([-1,1],\RR)$. Then, $I(x)$ is a constant independent of $x$, and, moreover,
  $$I(x)=\omega_{d-2} \int_{-1}^1 f(t) (1-t^2)^{d -3 \over 2} d t,$$
  where $\omega_d$ is the Lebesgue measure of the $d$-dimensional unit sphere.
\end{lemma}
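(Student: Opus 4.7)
The plan is to establish the two claims in sequence: first, rotation-invariance of the surface measure on $S^{d-1}$ gives independence of $x$; then a polar parametrization of the sphere around a distinguished unit vector produces the explicit formula.

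For the first step, I would fix $x_0 = e_d = (0, \ldots, 0, 1)$ and, for an arbitrary $x \in S^{d-1}$, choose any $R \in SO(d)$ with $R e_d = x$. Rotation-invariance of the surface measure on $S^{d-1}$ then gives
\begin{equation*}
I(x) = \int_{S^{d-1}} f(R e_d \cdot y)\,dy = \int_{S^{d-1}} f(e_d \cdot R^{-1} y)\,dy = \int_{S^{d-1}} f(e_d \cdot z)\,dz = I(e_d),
\end{equation*}
after the substitution $z = R^{-1} y$. Thus $I$ is constant on $S^{d-1}$, and it suffices to compute $I(e_d)$.

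For the explicit formula I parametrize $S^{d-1}$ around $e_d$: writing $y = \cos\theta \, e_d + \sin\theta \, \omega$ for $\theta \in [0,\pi]$ and $\omega \in S^{d-2} \subset \{e_d\}^\perp$, the classical decomposition of the spherical surface element reads
\begin{equation*}
dy = (\sin\theta)^{d-2}\, d\omega \, d\theta,
\end{equation*}
where $d\omega$ is the surface measure on $S^{d-2}$ with total mass $\omega_{d-2}$. Since $e_d \cdot y = \cos\theta$, integrating out $\omega$ first yields
\begin{equation*}
I(e_d) = \omega_{d-2} \int_0^\pi f(\cos\theta)(\sin\theta)^{d-2}\,d\theta.
\end{equation*}

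Finally, the substitution $t = \cos\theta$ converts $(\sin\theta)^{d-2}\,d\theta$ into $-(1-t^2)^{(d-3)/2}\,dt$, and flipping the limits of integration produces the claimed identity. The hypothesis $f \in L^1_{(1-t^2)^{(d-3)/2}}$ is exactly what ensures the right-hand side is well-defined and that Fubini can be applied during the separation of the $\omega$- and $\theta$-integrations. There is no substantive obstacle: the argument reduces entirely to the classical polar-coordinate decomposition on $S^{d-1}$, which can be verified by direct computation using iterated spherical coordinates, and the result is explicitly cited from \cite{NDD}.
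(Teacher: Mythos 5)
Your proof is correct. The paper does not prove this lemma at all --- it is quoted verbatim as Lemma 4.1 of \cite{NDD} and used as a black box --- so there is no in-paper argument to compare against; your two-step argument (rotation invariance of the surface measure to reduce to $x=e_d$, then the polar decomposition $dy=(\sin\theta)^{d-2}\,d\omega\,d\theta$ followed by the substitution $t=\cos\theta$) is the standard derivation and all the steps check out, including the role of the hypothesis $f\in L^1_{(1-t^2)^{(d-3)/2}}$ in guaranteeing integrability.
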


A consequence of Lemma $\ref{radial}$ is the following
\begin{lemma}\label{radialconv}
  For any two radial functions $\widehat{f}, \widehat{g}:\RR^d  \mapsto \CC^d$,  any unit vector $e \in S^{d-1}$, and $r \in \RR_+$, the following convolution
\begin{equation}
 \nonumber  H(r e):= \int_{\RR^d} \widehat{f}(x) \widehat{g}(r e - x) d x,
\end{equation}
is also a radial funnction, whenever defined, and is independent of $e$:
$$ H(r e)= h(r),$$
where $h: \RR_+ \cup \{0\} \mapsto \RR_+ \cup \{0\}$.
\end{lemma}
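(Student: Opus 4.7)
The statement is essentially a rotational invariance result. My plan is to show that for fixed $r>0$, the value of $H(re)$ is constant as $e$ ranges over $S^{d-1}$, by exploiting invariance of Lebesgue measure under rotations together with the definition of radiality (every radial function satisfies $\widehat{f}(Rx)=\widehat{f}(x)$ for all $R\in SO(d)$, since $|Rx|=|x|$). Then $h(r):=H(re)$ is the desired scalar function; non-negativity is inherited from the non-negativity of the integrand.

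\textbf{Main step.} Fix $r>0$ and pick any $e,e'\in S^{d-1}$. Choose $R\in SO(d)$ with $Re=e'$ (such an $R$ exists because $SO(d)$ acts transitively on $S^{d-1}$ for $d\ge 2$, and for $d=1$ the unit sphere has only two points and one handles the sign by a reflection, but the setting here has $d\ge 2$). In the integral
\begin{equation*}
H(re') \;=\; \int_{\RR^d} \widehat{f}(x)\,\widehat{g}(re'-x)\,dx,
\end{equation*}
perform the change of variables $x = Ry$. Since $|\det R|=1$ we have $dx=dy$, and $re'-Ry=R(re-y)$. Radiality of $\widehat{f}$ and $\widehat{g}$ (together with $|Ry|=|y|$ and $|R(re-y)|=|re-y|$) then gives $\widehat{f}(Ry)=\widehat{f}(y)$ and $\widehat{g}(R(re-y))=\widehat{g}(re-y)$, so
\begin{equation*}
H(re') \;=\; \int_{\RR^d} \widehat{f}(y)\,\widehat{g}(re-y)\,dy \;=\; H(re).
\end{equation*}
Since $e,e'$ were arbitrary, $H(re)$ depends only on $r$, and we set $h(r):=H(re)$.

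\textbf{Wrap-up and remarks.} The non-negativity of $h$ follows immediately from the non-negativity of $\widehat{f}$ and $\widehat{g}$ (these are the pointwise moduli appearing in the definition of the radial majorants used to define $\fRL^p_w$). There is essentially no obstacle: the only point requiring a line of comment is the interpretation of ``radial'' in the vector-valued setting, which I would dispatch by noting that the product $\widehat{f}(x)\widehat{g}(re-x)$ is understood componentwise (or as a scalar product of radial majorants), and in either reading $\widehat{f}(R\,\cdot)=\widehat{f}(\,\cdot)$ follows from the defining property that radial functions depend only on $|x|$.
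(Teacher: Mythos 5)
Your proof is correct, but it takes a genuinely different route from the paper. You argue by symmetry: choose $R\in SO(d)$ with $Re=e'$, substitute $x=Ry$, and use $|\det R|=1$ together with radiality of $\widehat f$ and $\widehat g$ to conclude $H(re')=H(re)$. The paper instead passes to spherical coordinates $x=(\rho,\hat x)$ and invokes Lemma \ref{radial} to evaluate the angular integral, arriving at the explicit one-dimensional representation
$$h(r)=\omega_{d-2}\int_{\RR_+}\rho^{d-1}\tilde f(\rho)\int_{-1}^{1}\tilde g\bigl(\sqrt{\rho^2+r^2-2\rho r t}\,\bigr)(1-t^2)^{\frac{d-2}{2}}\,dt\,d\rho.$$
Your argument is shorter and more conceptual, and it establishes the claim of the lemma with less machinery; what it does not deliver is the explicit formula, which is the real payoff of the paper's computation --- essentially the same spherical reduction is reused later (e.g.\ in the convolution estimates of Proposition \ref{inv3}), so the author's choice to derive the formula here is a matter of economy rather than necessity. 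Your closing remark on the interpretation of ``radial'' for $\CC^d$-valued functions and on the non-negativity of $h$ is a fair observation about an imprecision in the statement itself (the paper's proof silently treats $\widehat f,\widehat g$ as scalar radial profiles $\tilde f,\tilde g$), and your resolution is consistent with how the lemma is actually used.
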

\begin{proof}
Let $x=(\rho,\hat x)$, $\hat x \in S^{d-1}$, be the spherical coordinates in $\RR^d$. Then
  \begin{eqnarray}
    \nonumber \int_{\RR^d} \widehat{f}(x) \widehat{g}(r e - x) d x &=& \int_{\RR_+} d \rho \  \rho^{d-1} \tilde{f}(\rho) \ \int_{S^{d-1}} \tilde{g}( \sqrt{\rho^2 +r^2 -2 \rho r e \cdot \hat x    }) d \hat x \\
    \nonumber &=& \omega_{d-2} \int_{\RR_+} \hspace{-2.4mm}  d \rho   \rho^{d-1} \tilde{f}(\rho) \hspace{-1mm} \int_{-1}^1  \hspace{-1.5mm} \tilde{g}( \sqrt{\rho^2 +r^2 -2 \rho r t    }) (1-t^2)^{d -2 \over 2}  d t,
  \end{eqnarray}
  where we have used Lemma $\ref{radial}$. The last expression  clearly becomes a function of $r$ only after the integration.
  \end{proof}

\begin{corollary}\label{nicebound}
Let $h : \RR^d \mapsto \RR_+$, and let  $\widehat{f}, \widehat{g}:\RR^d  \mapsto \RR^d$ be radial. Then
  \begin{equation}
\nonumber    \int_{\RR^d} h(y) \int_{\RR^d} \widehat{f}(x) \widehat{g}(|y| e - x) d x  d y =  \int_{\RR^d} h(y)  \int_{\RR^d} \widehat{f}(x) \widehat{g}(y - x) d x  d y,
  \end{equation}
  whenever both integrals in both sides exist.
\end{corollary}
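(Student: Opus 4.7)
The plan is to apply Lemma \ref{radialconv} directly. That lemma already establishes that for radial $\widehat{f}, \widehat{g}$ the convolution
\[
H(r e) \;=\; \int_{\RR^d} \widehat{f}(x) \widehat{g}(r e - x)\,dx
\]
is a radial function of its argument, depending only on the modulus $r$ and independent of the unit vector $e$. Writing $H(re) = h(r)$ with $h$ as in the statement of Lemma \ref{radialconv}, the strategy is simply to observe that both inner integrals in the desired identity coincide pointwise in $y$, after which the identity for the outer integrals is immediate by integrating against the non-negative $h$.

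Concretely, I would proceed as follows. Fix an arbitrary unit vector $e \in S^{d-1}$ (the same one as on the left-hand side of the corollary). For any $y \in \RR^d \setminus \{0\}$, apply Lemma \ref{radialconv} twice: first with the given $e$ and $r = |y|$, giving
\[
\int_{\RR^d} \widehat{f}(x)\, \widehat{g}(|y| e - x)\,dx \;=\; h(|y|);
\]
and second with the unit vector $e' = y/|y|$ and the same $r = |y|$, giving
\[
\int_{\RR^d} \widehat{f}(x)\, \widehat{g}(y - x)\,dx \;=\; \int_{\RR^d} \widehat{f}(x)\, \widehat{g}(|y| e' - x)\,dx \;=\; h(|y|),
\]
where we have used that $h$ is independent of the choice of unit vector. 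Hence the two inner integrals coincide as functions of $y$ (the set $\{y=0\}$ has measure zero, so it can be ignored).

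Multiplying this common value by $h(y)$ and integrating over $\RR^d$ yields the desired identity; the hypothesis that both iterated integrals exist justifies the manipulation (in particular, it guarantees that the pointwise equality of the inner integrals can be integrated against $h(y)$ without issue, and Fubini-type concerns are moot since we are only comparing the two sides as given). There is essentially no obstacle here beyond correctly invoking Lemma \ref{radialconv}: the corollary is really just a restatement that the convolution of two radial functions is itself radial, so evaluation at $y$ or at $|y| e$ produces the same value.
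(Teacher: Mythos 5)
Your proof is correct and is essentially the argument the paper intends: the corollary is stated without proof precisely because it is an immediate consequence of Lemma \ref{radialconv}, namely that the inner convolution depends only on $|y|$ and not on the direction, so the two integrands agree pointwise. (Only a cosmetic point: you reuse the letter $h$ both for the weight in the corollary and for the radial profile from Lemma \ref{radialconv}; it is clear from context but worth disambiguating.)
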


We will also require a version of Young's inequality for radial functions,  \cite{BiSwa} (see Proposition 2.2 and Example 3.2) and \cite{NaDr} (see Theorem 5):

\begin{thmYR} \label{young}
  Assume that
  \begin{itemize}
  \item[$1)$]  $1 < p,q,r < \infty$, ${1\over r} \le {1 \over p}+{1 \over q}$, ${1 \over r}={1 \over p}+{1 \over q}+{\nu+\mu+\kappa \over d}-1$;
  \item[$2)$] $\nu <{d \over p'}$,  $\mu <{d \over q'}$,  $\kappa <{d \over r}$;
  \item[$3)$] $\nu+\mu\ge (d-1)\left(1-{1 \over p} -{1 \over q}\right)$,  $\nu+\kappa \ge (d-1)\left({1 \over r} -{1 \over p}\right)$, $\mu+\kappa \ge (d-1)\left({1 \over r} -{1 \over q}\right)$;
  \item[$4)$] $\max\{\nu,\mu,\kappa\} >0$ or $\nu=\mu=\kappa=0$,
  \end{itemize}
  where $p'$ and $q'$ denote the H\"older conjugates of $p$ and $q$, respectively.

  Furthermore, suppose that $\phi_i: \RR_+ \mapsto \RR$, $i=1,2,3$ are such that
\begin{equation}
\label{phis}  \phi_3(|x|) \le K + \phi_1(|y|)+\phi_2(|x-y|)
\end{equation}
  for some $K \in \RR$  and any $x,y \in \RR^d$.

  Set
  \begin{equation}
\label{weights}    w_1(x)=e^{\phi_1(|x|)} |x|^\nu, \quad  w_2(x)=e^{\phi_2(|x|)} |x|^\mu, \quad  w_3(x)=e^{\phi_3(|x|)} |x|^{-\kappa}, \quad C=e^K.
  \end{equation}
  
  Then
  $$R^p_{w_1}(\RR^d, \RR) \star R^q_{w_2}(\RR^d, \RR) \subset R^p_{w_3}(\RR^d, \RR),$$
  and there is a constant $C>0$, such that
  \begin{equation}
    \| f \star g \|_{w_3,r} \le C \|f \|_{w_1,p} \| g\|_{w_2,q}.
  \end{equation}  
\end{thmYR}

\vspace{2mm}

In particular, the hypothesis $(\ref{phis})$ of both instances of the inequality is satisfied for $\phi_i=id$ and $K=0$ (triangle inequality). We will denote the weights $(\ref{weights})$  corresponding to this choice of $\phi_i$  as $u_\nu$, $u_\mu$ and $u_{-\kappa}$:
\begin{equation}
  u_\nu(x)=e^{|x|} |x|^\nu, \quad   u_\mu(x)=e^{|x|} |x|^\mu, \quad   u_{-\kappa}(x)=e^{|x|} |x|^{-\kappa}.
\end{equation}

\vspace{2mm}

Additionally, in our computations below, we will set $p=q$ and $\nu=\mu$.

\vspace{2mm}

Throughout the rest of this Section, $C$ will denote positive functions of parameters $d$, $p$, $q$, $r$, $\nu$, etc, which are bounded away from $0$ and from above, and whose values have no bearing on our arguments.   

\vspace{2mm}

\subsection{Existence of a renormalization invariant $\fRL^p_{u_\nu}$-space.} We are now ready to prove renormalization invariance of the space  $\fRL^p_{u_\nu}$.

\begin{proposition} \label{inv1}
  Let $r>1$, $p>1$, $\nu$ and $\kappa$ be such that
  \begin{equation}
    \tag{H1} \label{H1}  \left\{ p>1, \quad    {1 \over r}={2 \over p}+ {2 \nu+\kappa  \over d  }-1, \quad  \kappa < {d \over r}, \quad  { d-1 \over 2} \left(1 - {2 \over p} \right) < \nu  < d \ {p-1 \over p}, \atop
     r>1, \quad  2 r \ge p, \quad  \max\{\nu,\kappa\} >0, \quad \nu+\kappa \ge (d-1) \left({1 \over r} - {1 \over p} \right),
    \right.
  \end{equation}
  and
  \begin{equation}
    \tag{H2} \label{H2} \max\{0,d+p(c+\nu) \}<{d (r-p)\over r} +p(\nu + \kappa +1) <{\gamma p (r-1) \over r}.
  \end{equation}
  Then  $\fRL^p_{u_\nu}$ is invariant under $\cR_\beta$ for all $\beta \in (0,1)$.
\end{proposition}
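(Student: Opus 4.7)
The plan is to construct, for each $\vartheta \in \fRL^p_{u_\nu}$ with radial majorant $\widehat{\vartheta}$, an explicit radial pointwise majorant $\widehat{M}$ for $\cR_\beta[\vartheta]$, and then to bound $\|\widehat{M}\|_{u_\nu, p}$ by combining the weighted Young convolution inequality for radial functions with a H\"older split of the remaining radial integral and a Laplace-type estimate. Since $|P_\eta w| \le |w|$ and $|\vartheta| \le \widehat{\vartheta}$, the non-linear integrand in $\cR_\beta[\vartheta](\eta)$ is dominated in absolute value by $|\eta|\,\widehat{\vartheta}(\eta t - x)\widehat{\vartheta}(x)$. Integrating in $x$ and applying Lemma \ref{radialconv}, the inner integral equals a radial quantity $h(t|\eta|) := (\widehat{\vartheta} \star \widehat{\vartheta})(\eta t)$. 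The change of variable $u = t|\eta|$ then yields
\[
|\cR_\beta[\vartheta](\eta)| \le \widehat{M}(\eta) := \beta^c e^{|\eta|^\gamma(1-\beta^{-\gamma})}\,\widehat{\vartheta}(\eta/\beta) + \gamma |\eta|^c \int_{|\eta|}^{|\eta|/\beta}\frac{e^{|\eta|^\gamma - u^\gamma}}{u^c}\,h(u)\,du,
\]
which is a radial function of $|\eta|$, so it suffices to show $\widehat{M} \in \fR^p_{u_\nu}$.

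Next, a change of variable $\eta = \beta y$ in the $p$-norm of the linear summand of $\widehat{M}$ produces the integrand factor $e^{p|y|^\gamma(\beta^\gamma - 1) - p(1 - \beta)|y|} \le 1$ (valid for $\beta \in (0,1)$, $\gamma > 0$), so the linear part is bounded by $\beta^{c + (d+p\nu)/p}\,\|\widehat{\vartheta}\|_{u_\nu, p}$. For the non-linear part, hypothesis \eqref{H1} is precisely what is needed to apply the weighted Young inequality for radial functions, yielding $\|h\|_{u_{-\kappa}, r} \le C\,\|\widehat{\vartheta}\|_{u_\nu, p}^2$. Splitting off the weight $\phi(u) := e^u u^{-\kappa + (d-1)/r}$ and applying H\"older with exponents $r, r'$ in the radial integral gives
\[
\int_\rho^{\rho/\beta}\frac{e^{\rho^\gamma - u^\gamma}}{u^c}\,h(u)\,du \le \omega_{d-1}^{-1/r}\|h\|_{u_{-\kappa}, r}\cdot J(\rho)^{1/r'},\quad J(\rho) := \int_\rho^{\rho/\beta}\frac{e^{(\rho^\gamma - u^\gamma - u) r'}}{u^{(c - \kappa + (d-1)/r) r'}}\,du,
\]
and bounding the non-linear term of $\widehat{M}$ reduces to finiteness of $\cI := \int_0^\infty \rho^{pc + p\nu + d - 1}\,e^{p\rho}\,J(\rho)^{p/r'}\,d\rho$.

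To handle $\cI$, I would estimate $J(\rho)$ separately at infinity and at the origin. For large $\rho$, a Laplace-type bound at the peak $u = \rho$---where $e^{(\rho^\gamma - u^\gamma - u)r'}$ attains the value $e^{-\rho r'}$ and the $u$-derivative of the exponent has magnitude $\gamma \rho^{\gamma-1} r'$---yields $J(\rho) \lesssim (\gamma r')^{-1} e^{-\rho r'}\,\rho^{-(\gamma - 1) - (c - \kappa + (d-1)/r) r'}$. Substituted back, the factor $e^{-p\rho}$ kills the weight $e^{p\rho}$ and leaves a polynomial tail with exponent $p(\nu+\kappa+1) + d(r-p)/r - \gamma p(r-1)/r - 1$, so integrability at $\infty$ is equivalent to the upper inequality in \eqref{H2}. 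For small $\rho$, the exponential is bounded between two positive constants on $[\rho, \rho/\beta]$ and $J(\rho) \asymp C_\beta \rho^\alpha$ with $\alpha := 1 - (c - \kappa + (d-1)/r) r'$; the clause $d + p(c+\nu) < d(r-p)/r + p(\nu+\kappa+1)$ in \eqref{H2} is exactly $\alpha > 0$, and the clause $0 < d(r-p)/r + p(\nu+\kappa+1)$ is exactly the integrability at $0$ of $\rho^{p(\nu+\kappa+1) + d(r-p)/r - 1}$.

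The main obstacle is the Laplace-type large-$\rho$ estimate: the rate $\gamma \rho^{\gamma-1}$ must be tracked carefully to extract precisely the polynomial decay $\rho^{-\gamma(r-1)/r}$ that matches the right-hand side of \eqref{H2}, and all implicit constants must be shown to be uniform in $\beta \in (\beta_0, 1)$, so that the resulting norm bound survives the degeneration of the interval $[\rho, \rho/\beta]$ as $\beta \to 1$---the regime in which the common renormalization fixed point is eventually constructed.
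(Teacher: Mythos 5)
Your proposal is correct and follows essentially the same route as the paper: the same radial majorant $L+N$, the same use of Lemma \ref{radialconv} and of the weighted Young inequality for radial functions under \eqref{H1}, and the same H\"older split with conjugate exponents $r$ and $r'=s$, with your Laplace-type large-$\rho$ and power-law small-$\rho$ estimates of $J(\rho)$ being exactly the paper's asymptotics for the incomplete Gamma function difference in Lemma \ref{Ik}. Your identification of the two clauses of \eqref{H2} with integrability at the origin and at infinity (and of $\alpha>0$ with the paper's condition $0<s\,l(c)$) is also accurate.
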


\begin{proof}
  Consider the renormalization
  \begin{equation}
    \nonumber  \cR_\beta[\vartheta](\eta)=\beta^{c}  e^{-\left({ 1 \over \beta^\gamma}-1 \right) |\eta|^\gamma }  \vartheta\left({\eta \over \beta} \right) - \hspace{0.2mm} \gamma  \hspace{0.2mm}  |\eta|^{c-1}   \hspace{-1.0mm} \int \displaylimits_{ |\eta|}^{|\eta| \over \beta}  \hspace{-1.0mm}{ e^{-r^\gamma} \over  r^{c} } \hspace{-1.5mm}  \int \displaylimits_{\RR^d} \hspace{-1.0mm} \eta \cdot \vartheta  \hspace{-0.5mm}  \left(e_\eta r-x \right) P_\eta \vartheta(x)   \ d x   \ d r.
  \end{equation}

  Let $\vartheta \in  \fRL^p_{u_\nu}$, and let $\widehat{\vartheta} \in  \fR^p_{ u_\nu}$ be a bounding function.
  \begin{align}
    \nonumber \left|  \cR_\beta[\vartheta](\eta) \right| & \le \beta^{c}  e^{-\left({ 1 \over \beta^\gamma}-1 \right) |\eta|^\gamma } \left| \widehat{\vartheta}\left({\eta \over \beta} \right) \right| + \gamma \hspace{0.2mm}  |\eta|^{c}   \hspace{-1.0mm} \int \displaylimits_{ |\eta|}^{|\eta| \over \beta} { e^{-r^\gamma} \over  r^{c} } \hspace{-1.5mm}  \int \displaylimits_{\RR^d} \left|\vartheta    \left(e_\eta r-x \right) \right|  \left| P_\eta \vartheta(x) \right|   \ d x   \ d r \\
    \nonumber  & \le \beta^{c}  e^{-\left({ 1 \over \beta^\gamma}-1 \right) |\eta|^\gamma }   \left| \widehat{\vartheta}\left({\eta \over \beta} \right) \right| + C \hspace{0.2mm}  |\eta|^{c} \hspace{-1.0mm} \int \displaylimits_{ |\eta|}^{|\eta| \over \beta} { e^{-|y|^\gamma} \over  |y|^{c+d-1} } \hspace{-1.5mm}  \int \displaylimits_{\RR^d} \left| \widehat{\vartheta}   \left(y-x \right) \right|  \left| \widehat{\vartheta}(x) \right|   \ d x   \ d y \\
    &=: L(\eta)+ N(\eta),
  \end{align}
  where we have used Corollary $\ref{nicebound}$, and the fact that projection $P_\eta$ does not increase the norm. Notice, that both $L$ and $N$ are radial functions. It remains to show that $L+N \in R^p_{u_\nu}(\RR^d, \RR)$. 
  
  We first consider the linear term $L$:
  \begin{eqnarray}
    \nonumber   \|L\|_{u_\nu,p}&=& \beta^{c} \left( \int_{\RR^d}  e^{-\left({1 \over \beta^\gamma} -1 \right)  p {|\eta|^\gamma } }  e^{p  |\eta|}|\eta|^{p \nu} \left|\widehat{\vartheta}\left({\eta \over \beta} \right) \right|^p    \ d \eta  \right)^{1 \over p} \\
    \nonumber   &\le & \beta^{c+\nu + {d \over p}}  \left( \int_{\RR^d}  e^{-\left({1 \over \beta^\gamma} -1 \right)  p {|\eta|^\gamma } }  e^{p  {|\eta|\over \beta}} \left|{\eta \over \beta} \right|^{p \nu} \left|\widehat{\vartheta}\left({\eta \over \beta} \right) \right|^p    \ d \left( {\eta \over \beta} \right)  \right)^{1 \over p} \\
    \nonumber &\le&  \beta^{c+\nu + {d \over p}}  \ \| \widehat{\vartheta} \|_{u_\nu,p}.
    \end{eqnarray}
 We now turn to the non-linear term. 
\begin{eqnarray}
  \nonumber   \|N\|_{u_\nu,p}^p & \le & C  \int_{\RR^d} |\eta|^{p(c+\nu)}  e^{ p |\eta|^\gamma+ p |\eta|}  \hspace{-1.0mm} \left( \int \displaylimits_{ |\eta|}^{|\eta| \over \beta}  \hspace{-1.0mm}{ e^{-|y|^\gamma} \over  |y|^{c+d-1} } \int_{\RR^d} |\widehat{\vartheta}(y-x)| |\widehat{\vartheta}(x)|  d x   \ d y \right)^p \ d \eta \\
  \nonumber   & \le & C  \int_{\RR^d} |\eta|^{p(c+\nu)}  e^{ p |\eta|^\gamma+ p |\eta|}  I_{c,\widehat{\vartheta},\widehat{\vartheta}}(|\eta|)^p \ d \eta.
\end{eqnarray}
We rearrange the terms in $I_{c,\widehat{\vartheta},\widehat{\vartheta}}(|\eta|)$, and use H\"older inequality with $1/s+1/r=1$, to set up an instance of Young's convolution inequality in weighted $L^p$-spaces. In fact, since this estimate will be used several times in the course of the paper, we will state it as a separate Lemma:
\begin{lemma}\label{Ik}
  Let $\widehat{\omega}, \widehat{\vartheta} \in \fR_{u_\nu}^p$, and $e$ - any unit vector in $\RR^d$, and suppose that parameters $d$, $p$, $\nu$ and $\kappa$ satisfy hypothesis $(\ref{H1})$ for some $r>1$.

  Then the integral
  \begin{eqnarray}
        \nonumber I_{k,\widehat{\omega},\widehat{\vartheta}} (|\eta|) &= &  \int \displaylimits_{ |\eta|}^{|\eta| \over \beta}  \hspace{-1.0mm}{ e^{-|y|^\gamma} \over  |y|^k } \int_{\RR^d} |\widehat{\omega}(e r-x)| |\widehat{\vartheta}(x)|  d x   \ d r \\
    \nonumber  &= &  {1 \over \omega_{d-1} }\int \displaylimits_{ |\eta|}^{|\eta| \over \beta}  \hspace{-1.0mm}{ e^{-|y|^\gamma} \over  |y|^{k+d-1} } \int_{\RR^d} |\widehat{\omega}(y-x)| |\widehat{\vartheta}(x)|  d x   \ d y
  \end{eqnarray}
  satisfies
  \begin{equation}
    \nonumber  I_{k,\widehat{\omega},\widehat{\vartheta}}(|\eta|) \le C  w_k(|\eta|) \| \widehat{\omega} \|_{u_\nu,p}   \| \widehat{\vartheta} \|_{u_\nu,p},
  \end{equation}
  where
  \begin{equation}
    \label{wk} w_k(\eta):= e^{-|\eta|}  \left( \Gamma\left({d \over \gamma}-{s  \over \gamma}  \left(k+d-1-\kappa \right), s t^\gamma \right) \Bigg|_{ |\eta| \over \beta}^{|\eta|} \right)^{1 \over s}.
  \end{equation}
  Additionally, under the hypothesis $(\ref{H2})$,
  $$w_k(\eta) e^{|\eta|^\gamma} |\eta|^{k} \in R_{u_\nu}^p(\RR^d,\RR_+).$$
\end{lemma}
\begin{proof}
\begin{eqnarray}
  \nonumber  I_{k,\widehat{\omega},\widehat{\vartheta}}(|\eta|) & \le & C  \int \displaylimits_{ |\eta|}^{|\eta| \over \beta}  \hspace{-1.0mm}{ e^{-|y|^\gamma- |y|} \over  |y|^{k+d-1-\kappa}} \    e^{|y| } |y|^{-\kappa} \left(|\widehat{\omega} | \star |\widehat{\vartheta} | \right)(y)  \ d y \\
  \nonumber  & \le & C \left(  \int \displaylimits_{ |\eta|}^{|\eta| \over \beta} { e^{-s  |y|^\gamma- s   |y|} \over  |y|^{s \left(k+d-1-\kappa \right)} } \ d y  \right)^{1 \over s}   \left( \int_{\RR^d}   e^{r  |y| } |y|^{ -r\kappa }  \left(| \widehat{\omega} | \star |\widehat{\vartheta} | \right)^{r}(y) \ d y \right)^{1 \over r} \\
  \nonumber  & \le & C  e^{-|\eta|}  \left(  \int \displaylimits_{ |\eta|^\gamma}^{|\eta|^\gamma \over \beta^\gamma} { e^{-s t} \over  t^{{s\over \gamma} \left(k+d-1-\kappa \right)  +{\gamma-d \over \gamma}} } \ d t  \right)^{1 \over s} \ \| \widehat{\omega} \|_{u_\nu,p}  \| \widehat{\vartheta} \|_{u_\nu,p} \\
  \label{gamma_f}  & \le & C  e^{-|\eta|}   \| \widehat{\omega} \|_{u_\nu,p} \| \widehat{\vartheta} \|_{u_\nu,p} \left( \Gamma\left({d \over \gamma}-{s  \over \gamma}  \left(k+d-1-\kappa \right), s t^\gamma \right) \Bigg|_{ |\eta| \over \beta}^{|\eta|} \right)^{1 \over s}.
\end{eqnarray}
Set
\begin{equation}
    \label{lk} l(k) := {d \over s}-\left(k+d-1-\kappa \right).
  \end{equation}
Under hypothesis $(\ref{H2})$,
\begin{equation}
\label{H5} 0<s l(c),
\end{equation}
then the difference of the incomplete Gamma functions in $(\ref{gamma_f})$, entering the definition of $w_k$, can be bounded from above as follows:
\begin{eqnarray}
  \label{w_k} w_k(\eta) & \le &  \left\{  \left({1 \over \beta^{l(k)s} }-1  \right)^{1 \over s} e^{-|\eta|-|\eta|^\gamma} |\eta|^{l(k)}, \hspace{19.5mm}  {|\eta|/\beta}< 1, \atop  e^{-|\eta|-|\eta|^\gamma} |\eta|^{l(k)-{\gamma \over s}}  \left(1- e^{ -s \left({1 \over \beta^\gamma} -1  \right) |\eta|^\gamma}   \right)^{1 \over s}, \ {|\eta|/\beta}  \ge  1.  \right.
\end{eqnarray}
Now, consider the integral
$$ \int \displaylimits_{\RR^d} w_k(\eta)^p e^{p|\eta|^\gamma} |\eta|^{p k} u_\nu^p(\eta) d \eta =  \int \displaylimits_{\RR^d} w_k(\eta)^p e^{p|\eta|^\gamma+p |\eta|} |\eta|^{p (k+\nu)} d \eta$$ 
The conditions of convergence of this integral at infinity  and at zero coincide with hypothesis $(\ref{H2})$, and are independent of $k$.
\end{proof}

We have, therefore,
\begin{equation}
  \label{Nestimate} \left| N(\eta) \right| \le  C \ \| \widehat{\vartheta} \|_{u_\nu,p}^{2} w_c(\eta) e^{|\eta|^\gamma} |\eta|^c,
\end{equation}
and, according to Lemma $\ref{Ik}$,
\begin{equation}
  \label{cIeta}    \|N\|_{u_\nu,p}^p  \le  C  \ \| \widehat{\vartheta} \|_{u_\nu,p}^{2 p}.
\end{equation}
\end{proof}

\begin{remark}\label{rmk1}
  For $d=3$ and $\gamma=2$, the set of condition $(\ref{H1})-(\ref{H2})$ is non-empty. Specifically, for the choice $p=2$, the conditions are satisfied by
  \begin{eqnarray}
&& {1 \over 2} < \nu \le 1, \quad r \ge {4 \over 2 \nu -1}, \quad \kappa={3 \over r} -2 \nu \\
&& 1 <\nu \le {3 \over 2}, \quad {4 \over 2 \nu -1}< r \le {1 \over \nu-1}, \quad \kappa={3 \over r} -2 \nu.
  \end{eqnarray}
\end{remark}

\vspace{2mm}

\section{A-priori bounds at the trivial fixed point}\label{trivapb}

In the next two sections we will construct {\it a-priori} bounds - a precompact convex renormalization invariant set in two settings.

The first instance, discussed in this Section $\ref{trivapb}$,  which we choose to call {\it trivial a-priori} bounds, will be that of a precompact invariant  set defined through a bounded norm condition in a weighted $\fL^p_{u_\nu}$-space. Unfortunately, thus constructed set necessarily contains only the trivial fixed point.

In Section $\ref{nontrivapb}$, however, we will build a precompact invariant set of azimuthal vector fields which does not contain the zero vector field. This construction will explicitly provide a positive function $\pmb{\psi}$ with a singularity of type $|\eta|^{\sigma}$ in zero which serves as an upper bound on the absolute value of the azimuthal component of the vector fields in the set; such function  $\pmb{\psi} \in \fL^p_{u_\nu}$ whenever $p(\nu+\sigma)+d>0$.

\subsection{Existence of an invariant bounded convex set} \label{bounded_set}
We will now restrict the invariant domain of the operator $\cR_\beta$ to a bounded subset  in $\fRL^p_{u_\nu}$.

\begin{proposition} \label{inv2}
  Let $r>1$, $p>1$, $\nu$ and $\kappa$  satisfy the hypothesis of Proposition $\ref{inv1}$, and additionally
\begin{equation}
  \tag{H3} \label{H3}  {\gamma p (r-1) \over r}-{d (r-p)\over r} -p(\nu + \kappa +1)>\gamma,
\end{equation}
\begin{equation}
  \tag{H4} \label{H4}  p > {r \over r-1}.
\end{equation}
Then for every $0<\beta<1$ there exists $A>0$, such that the convex set
 \begin{equation}
   \cE_{u_\nu,p,A}:=\{\vartheta \in \fRL^p_{u_\nu}: \| \widehat{\vartheta} \|_{u_\nu,p}  \le A\}
 \end{equation}
where, as before, $\widehat{\vartheta}$ is the bounding function in $\fR^p_{u\nu}$, is invariant under $\cR_\beta$.
\end{proposition}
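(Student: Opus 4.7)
I would reuse the decomposition $\cR_\beta[\vartheta] = L + N$ and the two estimates obtained in the proof of Proposition~\ref{inv1}. For $\vartheta \in \fRL^p_{u_\nu}$ with a radial bounding function $\widehat{\vartheta} \in \fR^p_{u_\nu}$ satisfying $\|\widehat{\vartheta}\|_{u_\nu,p} \le A$, that proof provides, after the substitution $y = \eta/\beta$ and dropping the non-positive factor $e^{-(1-\beta^\gamma)p|y|^\gamma - p(1-\beta)|y|}$, the linear estimate $\|L\|_{u_\nu,p} \le \beta^{c+\nu+d/p}\,\|\widehat{\vartheta}\|_{u_\nu,p}$, and by the pointwise radial bound $(\ref{Nestimate})$ combined with Lemma~\ref{Ik} (finite under \ref{H2}), the quadratic estimate $\|N\|_{u_\nu,p} \le C(\beta)\,\|\widehat{\vartheta}\|_{u_\nu,p}^{2}$ with a finite $\beta$-dependent constant. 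Summing the radial majorants of $L$ and $N$ produces a single radial bound for $\cR_\beta[\vartheta]$ of $u_\nu$-weighted $L^p$-norm at most $\beta^{c+\nu+d/p}\,A + C(\beta)\,A^{2}$.

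Invariance of $\cE_{u_\nu,p,A}$ therefore reduces to the scalar inequality
\[
\beta^{c+\nu+d/p}\,A + C(\beta)\,A^{2} \le A, \qquad \text{or equivalently} \qquad C(\beta)\,A \le 1 - \beta^{c+\nu+d/p}.
\]
The crucial step is to verify that $\beta^{c+\nu+d/p} < 1$ for every $\beta \in (0,1)$, i.e.\ $c+\nu+d/p > 0$. I would extract this positivity from \ref{H3} and \ref{H4} as follows: \ref{H4} ensures that the factor $p(r-1)/r - 1 = (pr-p-r)/r$ appearing when one rewrites \ref{H3} as $\gamma\bigl[p(r-1)/r - 1\bigr] > d(r-p)/r + p(\nu+\kappa+1)$ is strictly positive, so one can divide through; combining this with the upper bound $\kappa < d/r$ from \ref{H1} and the identity $l(c) = d/s - c - d + 1 + \kappa$ used in Lemma~\ref{Ik} isolates, after rearrangement, $\gamma - 1 - d + \nu + d/p > 0$. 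Granted this, the choice $A(\beta) := (1 - \beta^{c+\nu+d/p})/C(\beta)$ is strictly positive, and any $A \in (0, A(\beta)]$ makes $\cE_{u_\nu,p,A}$ invariant under $\cR_\beta$. Convexity of $\cE_{u_\nu,p,A}$ is immediate from the triangle inequality in $\fR^p_{u_\nu}$: if $\widehat{\vartheta}_1, \widehat{\vartheta}_2$ are radial bounds of norm at most $A$ for $\vartheta_1, \vartheta_2$, then for $\lambda \in [0,1]$ the radial function $\lambda|\widehat{\vartheta}_1| + (1-\lambda)|\widehat{\vartheta}_2|$ pointwise bounds $\lambda\vartheta_1 + (1-\lambda)\vartheta_2$ with norm at most $A$.

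The main obstacle I anticipate is the algebraic extraction of $c+\nu+d/p > 0$ from the combined hypotheses: this is not a one-line consequence of \ref{H3} and \ref{H4}, and requires juggling the Young-type constraints of \ref{H1}, the restriction $\kappa < d/r$, the precise $+\gamma$ strengthening in \ref{H3} over the right inequality of \ref{H2}, and the positivity of $pr-p-r$ from \ref{H4}. Once this positivity is secured, the remainder of the argument---the scalar quadratic inequality for $A$ and the convexity check via Minkowski---is routine, and the Proposition follows.
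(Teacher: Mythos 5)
Your outline coincides with the paper's: decompose $\cR_\beta[\vartheta]=L+N$, keep the linear bound $\|L\|_{u_\nu,p}\le\beta^{c+\nu+d/p}\,\|\widehat{\vartheta}\|_{u_\nu,p}$ and a quadratic bound $\|N\|_{u_\nu,p}\le C(\beta)\,\|\widehat{\vartheta}\|_{u_\nu,p}^2$, and close the scalar inequality $\beta^{c+\nu+d/p}A+C(\beta)A^2\le A$. The gap is in the step you yourself flag as the main obstacle. Your proposed route to $c+\nu+d/p>0$ via $(\ref{H3})$ and $(\ref{H4})$ cannot work: rewriting $(\ref{H3})$ as $\gamma\bigl[\tfrac{p(r-1)}{r}-1\bigr]>\tfrac{d(r-p)}{r}+p(\nu+\kappa+1)$ and inserting the left-hand inequality of $(\ref{H2})$, i.e.\ $\tfrac{d(r-p)}{r}+p(\nu+\kappa+1)>d+p(c+\nu)$ with $c=\gamma-d-1$, yields after simplification $\gamma\,\tfrac{p+r}{r}<d(p-1)+p(1-\nu)$, which is an \emph{upper} bound on $\gamma$; the inequality you need is the \emph{lower} bound $\gamma>d+1-\nu-d/p$, and no rearrangement of $(\ref{H3})$--$(\ref{H4})$ produces it. The actual role of $(\ref{H3})$--$(\ref{H4})$ in the paper is different: they guarantee that the exponent $m$ defined in $(\ref{powers})$ and the exponent $p/s$ are both larger than $1$, which controls the $\beta$-dependence of the quadratic coefficient. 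What positivity of $c+\nu+d/p$ there is comes from $(\ref{H1})$: the identity $\tfrac1r=\tfrac2p+\tfrac{2\nu+\kappa}{d}-1$ together with $\kappa<\tfrac dr$ forces $\nu>\tfrac d2\,\tfrac{p-2}{p}$, whence $c+\nu+\tfrac dp>\gamma-1-\tfrac d2$; one still needs $\nu$ (equivalently $\gamma$) large enough, a point the paper itself leaves implicit when it asserts $(\ref{bound7})$. So your plan, as written, replaces an unproved assertion of the paper by an argument that demonstrably fails.

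Separately, by treating $C(\beta)$ as a black box imported from Proposition $\ref{inv1}$ you lose the quantitative content of the paper's proof: the paper re-estimates $N$ using the piecewise bound $(\ref{v_c})$ over the three ranges of $|\eta|$ to obtain $C(\beta)\asymp(1/\beta^{s\,l(c)}-1)^{1/s}\beta^{\gamma(1/s-m/p)}+(1/\beta^\gamma-1)^{m/p}$ and hence the explicit admissible range $A\le C_*\,(1/\beta^\gamma-1)^{-m/p}\bigl(1-\beta^{c+\nu+d/p}\bigr)$ of $(\ref{bound7})$--$(\ref{bound8})$. That explicit form, not mere finiteness, is what the contraction statement and the equitightness and equicontinuity propositions that follow all lean on, so even after the positivity issue is repaired your version establishes only the bare invariance statement rather than the estimate the rest of the section uses.
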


\begin{proof}
Consider the bound $(\ref{Nestimate})$. Set
 \begin{equation}
\label{vc} v_c(\eta):=w_c(\eta) e^{|\eta|^\gamma +|\eta|} |\eta|^{c+\nu}  
  \end{equation}
 We can use the bound $(\ref{w_k})$ in the function $v_c$. Additionally, we will somewhat simplistically bound the difference 
 $$\left(1- e^{-s \left({1 \over \beta^\gamma}-1 \right) |\eta|^\gamma }  \right)^{1 \over s}$$
by $C \  \left({1 / \beta^\gamma}-1 \right)^{1 \over s} \ |\eta|^{\gamma \over s}$  for $|\eta| \le  \left({1 / \beta^\gamma}-1 \right)^{ -{1 \over \gamma} }$, and by $1$ for  $|\eta| > \left({1 / \beta^\gamma}-1 \right)^{ -{1 \over \gamma} }$. Therefore,
 \begin{equation}
   \label{v_c} v_c(\eta)  \le  \begin{cases}
     &   \left({1 \over \beta^{l(c)s} }-1  \right)^{1 \over s} |\eta|^{l(c)+c+\nu}, \hspace{10.4mm} {|\eta|}< \beta, \\
     &   \left({1 \over \beta^\gamma}-1 \right)^{1 \over s} |\eta|^{l(c)+c+\nu},  \hspace{6mm}  \beta \le {|\eta| } \le   \left( {1 \over \beta^\gamma} -1 \right)^{-{1 \over \gamma}}, \\
     &    |\eta|^{l(c)+c+\nu-{\gamma \over s}} , \hspace{29.5mm} {|\eta|} > \left( {1 \over \beta^\gamma} -1 \right)^{-{1 \over \gamma}}.
   \end{cases}
 \end{equation}
 Therefore,
 \begin{eqnarray}
   \nonumber   \|N\|_{u_\nu,p}^p  \hspace{-1.5mm}   &   \hspace{-1.5mm}   \le  \hspace{-1.5mm}   &    \hspace{-1.5mm}  C   \| \widehat{\vartheta} \|_{u_\nu,p}^{2 p} \hspace{-1mm} \left( \hspace{-1.5mm} \left({1 \over \beta^{s l(c)}}-1 \right)^{p \over s}   \hspace{-4mm}  \int \displaylimits_{{|\eta| } <   \beta}   \hspace{-2mm}  |\eta|^{p(l(c)+c+\nu)} \ d \eta +     \left({1 \over \beta^\gamma}-1 \right)^{p \over s} \  \hspace{-14mm}  \int \displaylimits_{{\beta \le |\eta| } \le   \left( {1 \over \beta^\gamma} -1 \right)^{-{1 \over \gamma}}}   \hspace{-11mm}  |\eta|^{p(l(c)+c+\nu)} \ d \eta + \right.\\
   \nonumber  \hspace{-1.5mm}   &   \hspace{-1.5mm}  \phantom{ \le}  \hspace{-1.5mm}   &    \hspace{-1.5mm} \phantom{  C   \| \widehat{\vartheta} \|_{u_\nu,p}^{2 p}\left( \hspace{-1.5mm} \left({1 \over \beta^{s l(c)}}-1 \right)^{p \over s}   \hspace{-4mm}  \int \displaylimits_{{|\eta| } <   \beta}   \hspace{-2mm}  |\eta|^{p(l(c)+c+\nu)} \ d \eta \right. } + \hspace{-6mm}\left. \int \displaylimits_{|\eta|  >  \left({1 \over \beta^\gamma}-1 \right)^{ -{1 \over \gamma} }} \hspace{-6mm} |\eta|^{p( l(c) +c+\nu)-{\gamma  p\over s}}  \ d \eta    \right).
\end{eqnarray}
 All integrals converge under the hypothesis $(\ref{H2})$.
 \begin{equation}
  \nonumber   \|N\|_{u_\nu,p}^p   \le  C  \ \|  \widehat{\vartheta} \|_{u_\nu,p}^{2 p} \left( \hspace{-0.7mm}  \left({1 \over \beta^{s l(c)}}-1 \right)^{p \over s} \hspace{-2mm} \beta^{d{r-p \over r} +p(1+\kappa+\nu)} + \left({1 \over \beta^\gamma}-1 \right)^{{p \over s}+d {p-r \over r \gamma } -{p \over \gamma}(1+\kappa+\nu)  } \right).
\end{equation}
The power
\begin{equation}
\label{powers}  m:={p (r-1) \over r}- {p \over \gamma} (\kappa +1 + \nu)-{d (r-p) \over r \gamma}
\end{equation}
of  $\left({1 / \beta^\gamma}-1 \right)$ is larger than $1$, according to $(\ref{H3})$ and $(\ref{H4})$. So is the power $p/s$ of $\left({1 / \beta^{s l(c)}}-1 \right)$ 
Finally,
\begin{equation}
  \label{bound6} \| \widehat{\cR_\beta[\vartheta]} \|_{u_\nu,p}  \le \beta^{c+\nu+{d \over p}} \| \widehat{\vartheta} \|_{u_\nu,p} + C_1 \ \| {\widehat{\vartheta}} \|_{u_\nu,p}^{2}  \left(  \left({1 \over \beta^{s l(c)}}-1 \right)^{1 \over s} \beta^{\gamma \left( {1 \over s} -{m \over p}\right)}+   \left({1 \over \beta^\gamma}-1 \right)^{m \over p} \right),
\end{equation}
where, as before, $\widehat{\cR_\beta[\vartheta]}$ stands for a function in $\fR^p_{u\nu}$ bounding the function $\cR_\beta[\vartheta]$ in $\fRL^p_{u_\nu}$.

Notice that by  definition $(\ref{powers})$ of $m$, and by the hypothesis $(\ref{H2})$, $p/s>m$, hence, the power $\gamma ( 1/s -m / p)$ of $\beta$ is positive, and for every choice of constants $\nu$, $p$, $r$, $\kappa$ satisfying the hypothesis, there exists $A=A(\beta)>0$, such that
\begin{equation}
  \label{bound7}  \beta^{c+\nu+{d \over p}} A + C_2 A^{2} \left({1 \over \beta^\gamma} - 1\right)^{m \over p}  \le A    \Leftrightarrow    A \le C_* \ \left({1 \over \beta^\gamma}-1 \right)^{- {m \over p}} \left( 1-  \beta^{c+\nu+{d \over p}}\right).
\end{equation}
additionally, if we require that  $\| \widehat{\cR_\beta[\vartheta]}\|_{u_\nu,p} \le \alpha \| \widehat{\vartheta}\|_{u_\nu,p}$ for some $\beta^{c+\nu+{d \over p}}<\alpha<1$, then
\begin{equation}
  \label{bound8}    \beta^{c+\nu+{d \over p}}  + C_2 A \left({1 \over \beta^\gamma} - 1\right)^{m \over p} \hspace{-2mm} \le \alpha    \iff    A \le C_* \left({1 \over \beta^\gamma}-1 \right)^{- {m \over p}} \hspace{-1mm} \left( \alpha-  \beta^{c+\nu+{d \over p}}\right).
\end{equation}
\end{proof}

\begin{remark}
The upper bound on $A(\beta)$ degenerates to $0$ as $\beta \rightarrow 1$.
\end{remark}
  
In fact, a stronger result for the radial functions holds
\begin{proposition} \label{trivial}
  Let $r>1$, $p>1$, $\nu$ and $\kappa$  satisfy the hypothesis of Propositions $\ref{inv1}$ and $\ref{inv2}$.   Then for every $0<\beta<1$ there exists $\widehat{A}>0$, such that $\cR_\beta$ is a contraction on the convex set
 \begin{equation}
   \widehat \cE_{u_\nu,p,\widehat A}:=\{\widehat{\vartheta} \in \fR^p_{u_\nu}: \| \widehat{\vartheta} \|_{u_\nu,p}  \le \widehat A\}.
 \end{equation}
In particular, the trivial solution is the only renormalization fixed point.
\end{proposition}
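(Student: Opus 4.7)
The plan is to set up a majorant operator $\widehat{\cR_\beta}$ on the radial space $\fR^p_{u_\nu}$, defined via the pointwise bound already derived in the proof of Proposition \ref{inv1}:
$$\widehat{\cR_\beta}[\widehat{\vartheta}](\eta) := \beta^c e^{-(1/\beta^\gamma - 1)|\eta|^\gamma} \widehat{\vartheta}(\eta/\beta) + C|\eta|^c \int_{|\eta|}^{|\eta|/\beta} \frac{e^{-|y|^\gamma}}{|y|^{c+d-1}} (\widehat{\vartheta} * \widehat{\vartheta})(y)\, dy.$$
Lemma \ref{radialconv} guarantees $\widehat{\cR_\beta}$ is a genuine self-map of $\fR^p_{u_\nu}$, and the invariance estimate $(\ref{bound7})$ from Proposition \ref{inv2} already shows it preserves the convex ball $\widehat\cE_{u_\nu,p,\widehat A}$ once $\widehat A$ is taken small.

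For the contraction step I would exploit the bilinearity of the nonlinear piece: write $f_1 * f_1 - f_2 * f_2 = (f_1-f_2)*f_1 + f_2*(f_1-f_2)$ with $f_i = \widehat{\vartheta_i}$, then apply Lemma \ref{Ik} with $\widehat{\omega}=\widehat{\vartheta_1}-\widehat{\vartheta_2}$ and $\widehat{\vartheta}$ replaced by each of $\widehat{\vartheta_i}$, and finally mimic the weighted $L^p$ chain $(\ref{bound6})$--$(\ref{bound8})$. This produces
$$\|\widehat{\cR_\beta}[\widehat{\vartheta_1}] - \widehat{\cR_\beta}[\widehat{\vartheta_2}]\|_{u_\nu,p} \le \bigl[\beta^{c+\nu+d/p} + 2C_1 \widehat A\, \mathcal B(\beta)\bigr]\,\|\widehat{\vartheta_1}-\widehat{\vartheta_2}\|_{u_\nu,p},$$
where $\mathcal B(\beta) = \bigl(1/\beta^{s l(c)} - 1\bigr)^{1/s}\beta^{\gamma(1/s - m/p)} + \bigl(1/\beta^\gamma - 1\bigr)^{m/p}$ is exactly the $\beta$-factor already appearing in $(\ref{bound8})$. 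Choosing $\widehat A$ strictly inside the threshold of $(\ref{bound8})$ (say with $\alpha = (1+\beta^{c+\nu+d/p})/2$) forces the bracket strictly below $1$.

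The Banach fixed point theorem then produces a unique fixed point of $\widehat{\cR_\beta}$ in $\widehat\cE_{u_\nu,p,\widehat A}$, and that fixed point must be $0$ since $\widehat{\cR_\beta}[0]=0$. To transfer this to $\cR_\beta$ itself I would use the pointwise domination $|\cR_\beta[\vartheta](\eta)| \le \widehat{\cR_\beta}[\widehat{\vartheta}](\eta)$ from the proof of Proposition \ref{inv1}: any fixed point $\vartheta \in \fRL^p_{u_\nu}$ of $\cR_\beta$ whose majorant $\widehat{\vartheta}$ lies in $\widehat\cE_{u_\nu,p,\widehat A}$ satisfies $|\vartheta| = |\cR_\beta^n[\vartheta]| \le \widehat{\cR_\beta}^n[\widehat{\vartheta}]$ for every $n$, and the right-hand side converges to $0$ in $\fR^p_{u_\nu}$-norm by the contraction, so $\vartheta = 0$ almost everywhere.

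The hard part will be bookkeeping: one must verify that the bilinear difference reproduces \emph{precisely} the $\beta$-dependent factor $\mathcal B(\beta)$ governing $(\ref{bound8})$, so that the contraction threshold for $\widehat A$ is attainable under the very hypotheses $(\ref{H1})$--$(\ref{H4})$ and that no new restrictions on $p$, $\nu$, $\kappa$, $r$ are forced. This is essentially automatic from the bilinearity built into Lemma \ref{Ik}, but requires rewriting the convolution difference in the exact radial-Young form that lemma demands.
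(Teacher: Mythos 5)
Your proposal is correct and follows essentially the same route as the paper: the contraction comes from the bilinear splitting $f_1\star f_1-f_2\star f_2=(f_1-f_2)\star f_1+f_2\star(f_1-f_2)$ fed into Lemma $\ref{Ik}$, which reproduces exactly the $\beta$-dependent factor of $(\ref{bound6})$--$(\ref{bound8})$, after which choosing $\widehat A$ below the threshold of $(\ref{bound8})$ and invoking the Banach fixed point theorem yields triviality. The only cosmetic difference is that you package the estimate as a separate majorant operator with a monotone-domination step, whereas the paper estimates $\cR_\beta[\widehat{\vartheta}_1]-\cR_\beta[\widehat{\vartheta}_2]$ directly on $\fR^p_{u_\nu}$; both versions hinge on the same fact that for radial inputs the difference $\widehat{\vartheta}_1-\widehat{\vartheta}_2$ is its own radial bounding function, which is precisely why the paper remarks that the argument does not extend to $\fRL^p_{u_\nu}$.
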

\begin{proof}
Consider the difference $\cR_\beta[\widehat{\vartheta}_1]-\cR_\beta[\widehat{\vartheta}_2]$ with $\widehat{\vartheta}_i \in  \widehat \cE_{u_\nu,p,\widehat A}$. Largely repeating the argument of Proposition $\ref{inv2}$,  we get that
\begin{eqnarray}
  \nonumber  \|\cR_\beta[\widehat{\vartheta}_1] \hspace{-0.4mm}  - \hspace{-0.4mm} |\cR_\beta[\widehat{\vartheta}_2] \|_{u_\nu,p} \hspace{-1mm}   & \hspace{-1mm} \le \hspace{-1mm} & \hspace{-1mm} \beta^{c+\nu+{d \over p}} \| {\vartheta}_1\hspace{-0.4mm}- \hspace{-0.4mm} {\vartheta}_2 \|_{u_\nu,p} \hspace{-0.7mm} + \hspace{-0.7mm} C_1  \| \widehat{\vartheta_1 \hspace{-0.4mm} -\hspace{-0.4mm} \vartheta_2} \|_{u_\nu,p} \hspace{-0.4mm} \left( \| \widehat{\vartheta}_1 \|_{u_\nu,p}\hspace{-0.4mm} + \hspace{-0.4mm}\|\widehat{\vartheta}_2 \|_{u_\nu,p} \right)  \times \\
  \nonumber && \hspace{18.5mm} \times \left(  \left({1 \over \beta^{s l(c)}}-1 \right)^{1 \over s} \beta^{\gamma \left( {1 \over s} -{m \over p}\right)}+   \left({1 \over \beta^\gamma}-1 \right)^{m \over p} \right),
\end{eqnarray}
where $C_1$ can be taken identical to that in $(\ref{bound6})$. We obtain, that a sufficient condition for the operator $\cR_\beta$ to be a contraction is
$$ \beta^{c+\nu+{d \over p}} + 2 C_2 \widehat A \left({1 \over \beta^\gamma} -1 \right)^{m \over p}  \le 1  \iff     \widehat A \le {C_* \over 2} \left({1\over \beta^\gamma}-1  \right)^{-{m \over p}} \left(1-\beta^{c+\nu+{d \over p}}   \right),$$
with $C_*$ as in $(\ref{bound7})$.
\end{proof}

Notice that the space $\fR^p_{u\_nu}$ can not be substituted by $\fRL^p_{u\_nu}$ in the above argument, since in  the convolution term will necessarily have a factor $\|\widehat{\vartheta_1-\vartheta}_2 \|_{u_\nu,p}$, and the resulting bound would not be a bound involving exclusivley $\| \vartheta_1 - \vartheta_2\|_{u_\nu,p}$.

\begin{corollary}(\it \underline{Non-existence of radial Leray blow ups with small amplitude}). \label{trivial}
  There exists $\widehat A$, such that there are no non-trivial solutions to $(\ref{eq:integral:NSgamma})$   of the form
  $$v(y,t)=(T-t)^{d+1-\gamma \over \gamma} \widehat{\vartheta}\left(y (T-t)^{1 \over \gamma}  \right)$$
with $\widehat{\vartheta} \in  \fR^p_{u_\nu}$ such that $\| \widehat{\vartheta} \|_{u_\nu,p} < \widehat A$.
\end{corollary}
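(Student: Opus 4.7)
The plan is to reduce the statement to the preceding proposition via the observation that a Leray self-similar profile is precisely a common fixed point of the entire family of renormalization operators $\cR_\beta$, $\beta \in (0,1)$.

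First I would recall the derivation carried out in Section~\ref{renormNS}: substituting the ansatz
$$v(y,t) = (T-t)^{\frac{d+1-\gamma}{\gamma}} \widehat{\vartheta}\bigl(y(T-t)^{1/\gamma}\bigr)$$
into the integral equation~$(\ref{eq:integral:NSgamma})$ and performing the change of variables~$(\ref{etanotation})$ forces the scaling exponent $c = \gamma - 1 - d$ and yields the identity $\widehat{\vartheta} = \cR_\beta[\widehat{\vartheta}]$ for every $\beta = \tau(t)/\tau_0 \in (0,1)$. Thus any such self-similar radial profile is necessarily a simultaneous fixed point of the whole family $\cR_\beta$.

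Next I would fix any $\beta_0 \in (0,1)$ and invoke the preceding proposition to obtain a radius $\widehat A = \widehat A(\beta_0) > 0$ such that $\cR_{\beta_0}$ is a strict contraction on the closed convex set $\widehat\cE_{u_\nu, p, \widehat A}$. A direct inspection of the formula for $\cR_{\beta_0}$ shows $\cR_{\beta_0}[0] = 0$, so the trivial profile is a fixed point inside this set. If $\widehat{\vartheta} \in \fR^p_{u_\nu}$ is a self-similar solution of the form stated in the corollary with $\|\widehat{\vartheta}\|_{u_\nu,p} < \widehat A$, then by the first step $\widehat{\vartheta}$ is also a fixed point of $\cR_{\beta_0}$ lying inside $\widehat\cE_{u_\nu, p, \widehat A}$. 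The uniqueness of the fixed point of a contraction on a complete metric space (the set $\widehat\cE_{u_\nu, p, \widehat A}$ being closed and hence complete in $\fR^p_{u_\nu}$) then forces $\widehat{\vartheta} = 0$, establishing the corollary.

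I do not anticipate a genuine obstacle here: the preceding proposition already carries out all the analytic work of bounding the linear and nonlinear parts of $\cR_{\beta_0}[\widehat{\vartheta}_1] - \cR_{\beta_0}[\widehat{\vartheta}_2]$, and the present statement is essentially a translation of that fixed-point uniqueness back into the physical-space self-similar solution language via the renormalization derivation of Section~\ref{renormNS}.
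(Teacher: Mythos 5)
Your proposal is correct and follows essentially the same route the paper intends: the corollary is stated as an immediate consequence of the preceding contraction proposition, and your two steps (a self-similar profile of the given form is a fixed point of $\cR_\beta$ by the derivation of Section~\ref{renormNS}, and a contraction on the ball $\widehat\cE_{u_\nu,p,\widehat A}$ containing the fixed point $0$ admits no other fixed point there) are exactly the intended argument. The only cosmetic remark is that you do not even need completeness or the full Banach fixed point theorem — the contraction estimate applied to the pair $(\widehat{\vartheta},0)$ already forces $\widehat{\vartheta}=0$.
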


\subsection{Existence of an invariant bounded convex precompact set}  \label{precompactness}

In what follows we will  demonstrate that the set $\cE_{u_\nu,p,A}$ constructed in Proposition $\ref{inv2}$ has a precompact convex subset. The key result that  allows us to claim precompactness, and, consequently, existence of fixed points in the closure of this subset of  $\cE_{u_\nu,p,A}$, is the  following extension of the classical Frechet-Kolmogorov-Riesz-Weil Compactness Theorem (see \cite{COY}, Theorem 2.8. We quote only the sufficient conditions given in the theorem):

\medskip

\begin{FKRW}
  Let $p \in (0, \infty)$, and let $w$ be a weight on $\RR^d$ such that $w$, $w^{-\lambda} \in L_{loc}^1(\RR^d)$ for some $\lambda \in (0,\infty)$.    Let $\cE$ be a subset of  $\fL^{p}_w$, and let $\tau_\delta f$ denote the translation by $\delta$, $(\tau_\delta f)(x):=f(x-\delta)$. The subset $\cE$ is relatively compact if the following properties hold:

    \medskip
    
    \begin{itemize}
       \item[$1)$] {\it Uniform boundedness}: $sup_{f \in \cE} \| f \|_{w,p} < \infty$;

      \vspace{2mm}
         
       \item[$2)$] {\it Equicontinuity}: $\lim_{|\delta| \rightarrow 0} \| \tau_\delta f - f \|_{w,p}=0$ uniformly in $\cE$;

      \vspace{2mm}
      
    \item[$3)$] {\it Equitightness}: $\lim_{r \rightarrow \infty} \int_{|x|>r} |f|^p w^p =0$ uniformly on $\cE$.
    \end{itemize}
      
\end{FKRW}

\medskip

By analogy with the renormalization theory in complex dynamics, existence of a renormalization-invariant precompact convex set will be called {\it a-priori} bounds.

\medskip

In the following Proposition we construct an {\it equitight} renormalization invariant  subset of  $\cE_{u_\nu,p,A}$.
\medskip

\begin{proposition} \label{equitight} \underline{(\it Equitightness.)} Suppose that $\nu$ and $p$ satisfy $(\ref{H1})-(\ref{H4})$ for some $\kappa$ and $r$. Then there exists $M=M(\beta)>0$ such that  the convex subset $\cE_{u_\nu,p,A}^M$ of  $\cE_{u_\nu,p,A}$, defined by
\begin{equation}
\cE_{u_\nu,p,A}^M:=\left\{ \vartheta \in \cE_{u_\nu,p,A}: \left( \int_{|\eta|>\rho} |\vartheta(\eta)|^p u_\nu(\eta)^p d \eta \right)^{1 \over p}  \le  {M \over \rho^{\gamma m \over p}}, \ \rho>\beta \right\},
\end{equation}
is invariant under $\cR_\beta$.
\end{proposition}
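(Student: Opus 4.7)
The plan is to replicate the strategy of Proposition $\ref{inv2}$ with global $L^p_{u_\nu}$-norms replaced by tail norms over the exterior balls $\{|\eta|>\rho\}$. Starting from the pointwise bound
\[
|\cR_\beta[\vartheta](\eta)| \le L(\eta) + N(\eta), \qquad L(\eta)=\beta^c e^{-(\beta^{-\gamma}-1)|\eta|^\gamma}|\vartheta(\eta/\beta)|,
\]
established in the proof of Proposition $\ref{inv1}$, the triangle inequality reduces the problem to tail estimates for $L$ and $N$ separately. The $L^p_{u_\nu}$-norm bound needed for $\cR_\beta[\vartheta] \in \cE_{u_\nu,p,A}$ is already provided by Proposition $\ref{inv2}$, so the only new ingredient is the $\rho^{-\gamma m/p}$ decay of the tail.

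For the linear tail, the substitution $\eta\mapsto\eta/\beta$ yields, after discarding the favorable damping factors $e^{-p(1-\beta^\gamma)|\eta'|^\gamma}$ and $e^{-p(1-\beta)|\eta'|}$,
\[
\left(\int_{|\eta|>\rho} L(\eta)^p u_\nu(\eta)^p\, d\eta\right)^{1/p} \le \beta^{c+\nu+d/p}\left(\int_{|\eta'|>\rho/\beta}|\vartheta(\eta')|^p u_\nu(\eta')^p\, d\eta'\right)^{1/p}.
\]
Since $\rho>\beta$ entails $\rho/\beta>1>\beta$, the tail hypothesis on $\vartheta$ applies at $\rho/\beta$ and produces the bound $M\,\beta^{c+\nu+d/p+\gamma m/p}\rho^{-\gamma m/p}$.

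For the nonlinear tail, the pointwise estimate $(\ref{Nestimate})$ combined with the definition $(\ref{vc})$ of $v_c$ yields $|N(\eta)|^p u_\nu(\eta)^p \le C A^{2p} v_c(\eta)^p$, so it suffices to show that $\bigl(\int_{|\eta|>\rho}v_c^p\, d\eta\bigr)^{1/p} \le C\,\rho^{-\gamma m/p}$ for all $\rho>\beta$. The crucial algebraic identity
\[
p(l(c)+c+\nu) + d - \gamma p/s = -\gamma m,
\]
which follows from $(\ref{lk})$ and $(\ref{powers})$, turns the third-case integrand from $(\ref{v_c})$ into $r^{-\gamma m - 1}$. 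I split the integration into the intermediate regime $\rho < r \le (\beta^{-\gamma}-1)^{-1/\gamma}$ (middle case of $(\ref{v_c})$) and the outer regime $r > (\beta^{-\gamma}-1)^{-1/\gamma}$ (third case). The outer integral is $C\rho^{-\gamma m}$ directly from the identity above. For the intermediate regime, the antiderivative evaluated at $(\beta^{-\gamma}-1)^{-1/\gamma}$ equals $C(\beta^{-\gamma}-1)^m$, and the constraint $\rho \le (\beta^{-\gamma}-1)^{-1/\gamma}$ forces $(\beta^{-\gamma}-1)^m \le \rho^{-\gamma m}$, producing the same bound. Convergence and correct signs for these exponents are precisely the content of hypothesis $(\ref{H2})$.

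Putting everything together,
\[
\left(\int_{|\eta|>\rho}|\cR_\beta[\vartheta]|^p u_\nu^p\, d\eta\right)^{1/p} \le \left(M\beta^{c+\nu+d/p+\gamma m/p} + CA^2\right)\rho^{-\gamma m/p},
\]
so invariance holds provided $M\bigl(1-\beta^{c+\nu+d/p+\gamma m/p}\bigr) \ge CA^2$. Hypothesis $(\ref{H2})$ gives $c+\nu+d/p>0$, so $\beta^{c+\nu+d/p+\gamma m/p}<1$ for $\beta\in(0,1)$ and such $M=M(\beta)$ exists; convexity of $\cE_{u_\nu,p,A}^M$ is immediate as an intersection of two convex conditions. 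The main obstacle is the case analysis for the tail of $v_c$: one must verify that the constant in the $\rho^{-\gamma m/p}$ bound stays controlled uniformly across the transition at $(\beta^{-\gamma}-1)^{-1/\gamma}$, and this is precisely where the cancellation encoded in the identity $p(l(c)+c+\nu)+d-\gamma p/s=-\gamma m$ does all the work.
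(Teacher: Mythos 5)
Your proof is correct and follows essentially the same route as the paper: the linear tail is handled by the change of variables $\eta\mapsto\eta/\beta$ together with the tail hypothesis applied at $\rho/\beta$, the nonlinear tail by integrating the pointwise bound $(\ref{v_c})$ over $\{|\eta|>\rho\}$ using the exponent identity $p(l(c)+c+\nu)+d-\gamma p/s=-\gamma m$, and the conclusion by a fixed-point inequality for $M$. The only loose point is your claim that $(\ref{H2})$ yields $c+\nu+d/p>0$ — $(\ref{H2})$ only bounds $d+p(c+\nu)$ from above — but this positivity is in any case implicitly required for Proposition $\ref{inv2}$ to produce a positive $A$ in $(\ref{bound7})$, so your argument stands under the same tacit assumption the paper itself makes.
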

\begin{proof}
  We first address the linear term,
  $$L(\eta)=\beta^c e^{-\left({1 \over \beta^\gamma}-1\right) |\eta|^\gamma} \vartheta \left({\eta \over \beta}  \right),$$
  in the renormalization operator.  Assume that
$$  \int \displaylimits_{|\eta| > \rho} |\vartheta(\eta)|^p u_\nu^p(\eta) \ d \eta  \le  {M^p \over \rho^{p n}}$$
for some positive $n$.  Then
  \begin{eqnarray}
    \nonumber \int \displaylimits_{|\eta| > \rho} |L(\eta)|^p u_\nu^p(\eta) \ d \eta  & \le & \beta^{p (c + \nu) } e^{-p \left({1 \over \beta^\gamma}-1\right) \rho^\gamma- p \left({1 \over \beta} -1 \right) \rho} \int_{|\eta|>\rho} \left|\vartheta \left({ \eta  \over \beta  }\right)\right|^p e^{p{|\eta| \over \beta}} {|\eta|^{p \nu} \over \beta^{p \nu}} \ d \eta \\
    \nonumber  & \le & \beta^{p (c + \nu) +d } e^{-p \left({1 \over \beta^\gamma}-1\right) \rho^\gamma- p \left({1 \over \beta} -1 \right) \rho} \int_{|z|>{\rho \over \beta}} |\vartheta (z)|^p e^{p|z} |z|^{p \nu} \ d z \\
      \label{Lpdecay}  & \le & \beta^{p (c + \nu) +d } e^{-p \left({1 \over \beta^\gamma}-1\right) \rho^\gamma- p \left({1 \over \beta} -1 \right) \rho} {M^p \beta^{n p} \over \rho^{p n}}.
  \end{eqnarray}
To compute the bound on  $ \int_{|\eta| > \rho} |N(\eta)|^p u_\nu^p(\eta) \ d \eta$ we reuse the estimate $(\ref{v_c})$, also taking into account that $\rho>\beta$. First, suppose $\rho <(1/\beta^\gamma-1)^{-1/\gamma}$. Then
 \begin{eqnarray}
      \nonumber    \int \displaylimits_{|\eta| > \rho} \hspace{-2mm} |N(\eta)|^p u_\nu^p(\eta) \ d \eta \hspace{-2.5mm} & \hspace{-2.5mm} \le \hspace{-2.5mm}  &  \hspace{-2.5mm}  C   \| { \widehat{\vartheta}} \|_{u_\nu,p}^{2 p}\hspace{-1mm} \left( \hspace{-1.5mm}  \left({1 \over \beta^\gamma}-1 \right)^{p \over s} \  \hspace{-15mm}  \int \displaylimits_{\rho<{|\eta| } \le   \left( {1 \over \beta^\gamma} -1 \right)^{-{1 \over \gamma}}}   \hspace{-12mm}  |\eta|^{p(l(c)+c+\nu)} \ d \eta  + \hspace{-9mm}  \int \displaylimits_{|\eta|  \ge   \left({1 \over \beta^\gamma}-1 \right)^{ -{1 \over \gamma} }} \hspace{-10mm} |\eta|^{p( l(c) +c+\nu)-{\gamma  p\over s}}  \ d \eta  \hspace{-1mm}   \right) \\
   \nonumber    & \le &  C  \ \|{ \widehat{\vartheta}} \|_{u_\nu,p}^{2 p} \int \displaylimits_{|\eta| > \rho}  |\eta|^{p( l(c) +c+\nu)-{\gamma  p\over s}}  \ d \eta\\
   \label{Npdecay}    & \le &  C  \ A^{2 p}  \ \rho^{-\gamma m},
 \end{eqnarray}
 where $m$ is defined in $(\ref{powers})$ and, as before,  $\widehat{\vartheta} \in \fR^p_{u_\nu}$ is a bounded function on $\vartheta \in \fRL^p_{u_\nu}$.

We can combine estimates $(\ref{Lpdecay})$ and $(\ref{Npdecay})$ to conclude that the sufficient condition for the invariance of the set   $\cE_{u_\nu,p,A}^M$ is 
\begin{equation}
\beta^{c + \nu +{d \over p} } e^{-\left({1 \over \beta^\gamma}-1\right) \rho^\gamma- \left({1 \over \beta} -1 \right) \rho} {M \beta^{n} \over \rho^{n}} + C \ A^2  \rho^{-\gamma {m \over p}} \le {M \over \rho^n}.
\end{equation}
We can see that the inequality holds if $\gamma m /p \ge n$, and if $M$ is chosen sufficiently large.

If $\rho \ge (1/\beta^\gamma-1)^{-1/\gamma}$, then we have right away,
 \begin{eqnarray}
   \nonumber    \int \displaylimits_{|\eta| > \rho} |N(\eta)|^p u_\nu^p(\eta) \ d \eta    & \le &  C  \ \| { \widehat{\vartheta}} \|_{u_\nu,p}^{2 p} \int \displaylimits_{|\eta|>\rho}  |\eta|^{p( l(c) +c+\nu)-{\gamma  p\over s}}  \ d \eta\\
   \nonumber   & \le &  C  \ A^{2 p}  \ \rho^{-\gamma m},
 \end{eqnarray}
and an identical conclusion follows.
\end{proof}

\medskip

The key result in proving precompactness is the following

\medskip

\begin{proposition} \underline{(\it Equicontinuity.)} \label{equicont}

\medskip
Suppose that $\nu$ and $p$ satisfy $(\ref{H1})-(\ref{H4})$ for some $\kappa$ and $r$, and additionally
\begin{equation}
  \tag{H5} \label{H5} p<{d (r-p)\over r} +p(\nu + \kappa +1) \hspace{2mm} {\rm and} \hspace{2mm} \nu<1,
\end{equation}
\begin{equation}
  \tag{H6} \label{H6} c-\theta+\nu+{d \over p} >0,
\end{equation}
for some
\begin{equation}
  \tag{H7} \label{H7} 0 < \theta  \le  \min\left\{1,\gamma-1, {\gamma m \over p(\gamma-1)}, \nu, {1 \over p} \right\}
\end{equation}
with $m$ as in $(\ref{powers})$.

Then, given $\delta_0>0$ and $0<\beta <1$, $\chi>\theta$ and $A$ as in $(\ref{bound7})$, there exist  constants $K>0$ and $k>0$,
\begin{equation} \label{Kineqq}
  K \ge    {C \ A  \ (1-\beta)^k \over 1-\beta^{c+\nu+{d \over p}-\theta}- C_2 A \left({1 \over \beta^\gamma}-1   \right)^{m \over p}  },
 \end{equation}
where $C_2$ is as in $(\ref{bound7})$, such that the subset $\cE^{M,K,\delta_0}_{u_\nu,p,A}$ of $\cE_{u_\nu,p,A}^M$ of H\"older  functions,
$$| \vartheta(\eta)-\tau_\delta \vartheta(\eta)|  < \omega_\vartheta(\eta) |\delta|^\theta,$$
where $|\delta| < \delta_0$ and  $\omega_\vartheta \in R^p_{u_\nu}(\RR^d,\RR_+)$ with $\| \omega_\vartheta \|_{u_\nu,p} \le K$, is $\cR_\beta$-invariant.

\end{proposition}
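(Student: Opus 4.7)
The strategy is to decompose $\cR_\beta = L + N$ into its linear and nonlinear parts and, for each part, construct a radial majorant $\omega \in R^p_{u_\nu}$ of the pointwise translation difference $|T[\vartheta](\eta)-T[\vartheta](\eta-\delta)|/|\delta|^\theta$. The target is an estimate of the form
\[ \|\omega_{\cR_\beta[\vartheta]}\|_{u_\nu,p} \leq \Bigl(\beta^{c+\nu+d/p-\theta} + C_2 A \bigl(1/\beta^\gamma - 1\bigr)^{m/p}\Bigr) K + C A(1-\beta)^k, \]
after which the invariance condition $\|\omega_{\cR_\beta[\vartheta]}\|_{u_\nu,p}\leq K$ rearranges exactly to (\ref{Kineqq}).

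For the linear term $L(\eta)=\beta^{c}e^{-(1/\beta^\gamma-1)|\eta|^\gamma}\vartheta(\eta/\beta)$, I would use the telescoping identity
\[ L(\eta)-L(\eta-\delta) = \beta^c\bigl[e^{-(1/\beta^\gamma-1)|\eta|^\gamma}-e^{-(1/\beta^\gamma-1)|\eta-\delta|^\gamma}\bigr]\vartheta(\eta/\beta) + \beta^c e^{-(1/\beta^\gamma-1)|\eta-\delta|^\gamma}\bigl[\vartheta(\eta/\beta)-\vartheta((\eta-\delta)/\beta)\bigr]. \]
The first summand is handled by the mean-value theorem on the exponential, producing $|\delta||\eta|^{\gamma-1}$, which is traded for $|\delta|^\theta$ via $|\delta|\leq \delta_0^{1-\theta}|\delta|^\theta$ and the hypothesis $\theta\leq\gamma-1$ from (\ref{H7}); the resulting majorant is proportional to $|\widehat\vartheta(\eta/\beta)|$ and contributes to the $CA(1-\beta)^k$ inhomogeneity. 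The second summand is dominated pointwise by $\omega_\vartheta(\eta/\beta)(|\delta|/\beta)^\theta$, and after rescaling $\zeta=\eta/\beta$ in the $L^p_{u_\nu}$ norm one extracts the factor $\beta^{c+\nu+d/p-\theta}\|\omega_\vartheta\|_{u_\nu,p}\leq \beta^{c+\nu+d/p-\theta}K$, which is strictly less than $K$ by (\ref{H6}) and so gets absorbed into the left-hand side of the invariance inequality.

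The nonlinear term is the main obstacle. Its $\eta$-dependence enters through four distinct places: the scalar prefactor $|\eta|^{c-1}$ and the dot product with $\eta$; the $r$-endpoints $|\eta|$ and $|\eta|/\beta$; the radial direction $e_\eta$ inside $\vartheta(e_\eta r-x)$; and the Leray projection $P_\eta$. I would split $N(\eta)-N(\eta-\delta)$ into four corresponding telescoping summands. In exactly one of them the perturbation falls on a $\vartheta$-factor, producing a convolution $\omega_\vartheta\star\widehat\vartheta$ (or its symmetric counterpart) which, by Lemma \ref{Ik} applied with $\widehat\omega:=\omega_\vartheta$, is dominated pointwise by $CAKv_c(\eta)$; the bound (\ref{v_c}) and the weighted-$L^p$ computation of Proposition \ref{inv2} then yield the coefficient $C_2 A(1/\beta^\gamma-1)^{m/p}K$. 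The remaining three summands leave two copies of $\widehat\vartheta$ unchanged and contain a structural perturbation of size $O(|\delta|/|\eta|)$, which after taking $\theta$-powers costs an extra factor $|\eta|^{-\theta}$; combined with the interval length $|\eta|(1-\beta)/\beta$ these produce the inhomogeneity $CA(1-\beta)^k$ for some $k>0$.

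The genuine difficulty is the angular variation of $e_\eta$ and $P_\eta$, whose $\eta$-derivatives are singular like $|\eta|^{-1}$: one must verify that the extra $|\eta|^{-\theta}$ produced pointwise remains integrable against the weight $u_\nu$ inside the estimate of Lemma \ref{Ik}. This is precisely the role of the hypotheses $\theta\leq\nu$ and $c-\theta+\nu+d/p>0$ in (\ref{H6})--(\ref{H7}): they keep the majorant in $R^p_{u_\nu}$ and preserve all the exponents entering $v_c$. Convexity of $\cE^{M,K,\delta_0}_{u_\nu,p,A}$ is immediate from the convexity of the pointwise Hölder condition with a fixed majorant of norm $\leq K$, so once the pointwise and $L^p$ estimates above are assembled, the invariance reduces to the rearranged inequality (\ref{Kineqq}).
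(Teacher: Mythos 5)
Your overall architecture coincides with the paper's Appendix B proof: split $\cR_\beta=L+N$, telescope each part, let the summand in which the translation falls on a $\vartheta$-factor generate the convolution $\omega_\vartheta\star\widehat{\vartheta}$ handled by Lemma \ref{Ik} (this is the paper's $J_1$ and produces the coefficient $C_2AK(1/\beta^\gamma-1)^{m/p}$), extract $\beta^{c+\nu+d/p-\theta}K$ from the rescaled translation in the linear term, and rearrange to (\ref{Kineqq}). However, there is a concrete gap in your inventory of the $\eta$-dependence of the nonlinear term. You list four places (prefactor and dot product, $r$-endpoints, radial direction $e_\eta$ in the argument of $\vartheta$, Leray projection) but omit the exponential factor $e^{|\beta\eta|^\gamma-r^\gamma}$ — equivalently $e^{|\eta|^\gamma(1-t^\gamma)}$ in the form (\ref{operatorR2}). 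The resulting difference of exponentials is the term $J_3$ of the paper's proof, it is not absorbed by any of your four summands, and it is not a routine piece: bounding $\|J_3\|_{u_\nu,p}$ requires averaging $\bigl|1-e^{(|\eta|^\gamma-|\eta-\delta|^\gamma)(1/\beta^\gamma-1)}\bigr|^p$ over the sphere via Lemma \ref{radial} and incomplete-Gamma asymptotics, and it is precisely the source of the exponents $\gamma m/(p(\gamma-1))$ and $1/p$ in the hypothesis (\ref{H7}). Your sketch cannot explain why those two constraints on $\theta$ appear, which signals that a necessary contribution to $\omega_{\cR_\beta[\vartheta]}$ is missing.

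Two further steps are asserted but not carried out. First, near $\eta=0$ the perturbations of $e_\eta$ and $P_\eta$ are of size $O(|\delta|/|\eta|)$ and are not small; you propose to interpolate to $(|\delta|/|\eta|)^\theta$ and check integrability of the extra $|\eta|^{-\theta}$ against the weight, which could be made to work under (\ref{H5}), but the paper instead isolates the ball $|\eta|\le 3|\delta|$ and bounds $N(\eta)$ and $N(\eta-\delta)$ there separately, using the smallness of the ball to extract the powers $|\delta|^p$ and $|\delta|^{p\nu}$ — this is where the constraint $\theta\le\nu$ in (\ref{H7}) actually originates, which again your route does not account for. Second, for $\beta\delta_0\le|\delta|<\delta_0$ the rescaled increment $|\delta|/\beta$ exceeds $\delta_0$, so the H\"older hypothesis on $\vartheta$ no longer applies to the linear term; the paper closes this range with the trivial bound $\|\vartheta-\tau_\delta\vartheta\|_{u_\nu,p}\le 2A$ and enlarges $K$ accordingly, a step absent from your proposal.
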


\begin{remark}  \underline{(\it Modulus of continuity.)} \label{modulus}
  It is clear that from the previous Proposition, that for any $\delta >0$, 
  $$|\vartheta(\eta) - \tau_\delta \vartheta(\eta)| \le \omega(\eta) \sigma(\delta),$$
  where $\omega \in L^p_{u_\nu}(\RR^d, \RR_+)$ and
  $$\sigma(\delta)=\left\{|\delta|^\theta, \hspace{5.5mm} |\delta| \le \delta_0, \atop  {\rm const},  \  |\delta|> \delta_0   \right.$$
\end{remark}

The long proof of Proposition $\ref{equicont}$ is marginally different from the proof of Propositon $\ref{non_triv_equicont}$, and will not be included here.

As it turns out, equicontinuity is of limited usefulness, since the only fixed point in the constructed precompact set is trivial.

\begin{remark}  \underline{(\it Triviality of the a-priori bounds.)} \label{trivial}
  According to $(\ref{Kineqq})$, $A$ necessarily satisfies
\begin{eqnarray}
  \nonumber   1-\beta^{c+\nu+{d \over p}-\theta}- C_2 A \left({1 \over \beta^\gamma}-1   \right)^{m \over p}>0  &\implies & A < C_* \left({1 \over \beta^\gamma} -1   \right)^{-{m \over p}} \left(1-\beta^{c+\nu+{d \over p} -\theta}   \right)\\
\nonumber  &\Longleftrightarrow& A < C_* \left({1 \over \beta^\gamma} -1   \right)^{-{m \over p}} \left(\alpha-\beta^{c+\nu+{d \over p}}   \right),
  \end{eqnarray}
where $C_*$ is as in $(\ref{bound7})$ and
$$\alpha =1-\left(\beta^{c+\nu+{d \over p}-\theta}-\beta^{c+\nu+{d \over p}}\right) < 1$$
for any fixed $\theta>0$.  This is a condition of type $(\ref{bound8})$, which means that
    $$ \| \widehat{\cR_\beta^n[\vartheta]} \|_{u_\nu,p} \le \alpha^n  \|\widehat{\vartheta} \|_{u_\nu,p}$$
for all $\vartheta \in \cE^{M,K,\delta_0}_{u_\nu,p,A}$. Consequently, the only fixed point in this precompact set is $0$.
\end{remark}

The immediate consequence of the preceding discussion is the non-existence of the non-trivial self-similar blow up solutions of small, but definite norm of the initial condition.

\begin{corollary}  \underline{(\it Triviality of blow ups.)} \label{trivial}
There exists a constant $D>0$ such that the equation $(\ref{eq:integral:NSgamma})$ admits no non-trivial solutions in $\fRL_{u_\nu}^p$ with the norm less than $D$.
\end{corollary}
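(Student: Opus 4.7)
The plan is to exploit the quadratic structure of the renormalization estimate of Proposition $\ref{inv2}$ combined with the fixed point relation satisfied by any self-similar solution. By the derivation of Section $\ref{renormNS}$, a solution of $(\ref{eq:integral:NSgamma})$ of the stated self-similar form is precisely a $\vartheta \in \fRL^p_{u_\nu}$ satisfying $\vartheta = \cR_\beta[\vartheta]$ for the value $\beta = \tau(t)/\tau_0$, for each $t$ in the time domain of existence. I will fix one such $\beta$ and equip $\fRL^p_{u_\nu}$ with the natural norm $\|\vartheta\|_{\star} := \inf\{\|\widehat{\vartheta}\|_{u_\nu,p} : \widehat{\vartheta} \in \fR^p_{u_\nu},\ |\vartheta| \le |\widehat{\vartheta}|\}$, which I interpret as ``the norm'' in the statement.

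The key observation is that the proof of Proposition $\ref{inv1}$ produces, from \emph{any} radial bounding function $\widehat{\vartheta}$ of $\vartheta$, an explicit radial majorant $L + N$ of $|\cR_\beta[\vartheta]|$ whose $u_\nu$-norm obeys the quadratic bound $(\ref{bound6})$,
\begin{equation*}
\|L+N\|_{u_\nu,p} \le \beta^{c+\nu+d/p}\,\|\widehat{\vartheta}\|_{u_\nu,p} + C_2(\beta)\,\|\widehat{\vartheta}\|_{u_\nu,p}^{2},
\end{equation*}
where $C_2(\beta)$ collects the bracketed $\beta$-dependent factor multiplying $\|\widehat{\vartheta}\|^2$ in $(\ref{bound6})$. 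When $\vartheta = \cR_\beta[\vartheta]$, the majorant $L + N$ is in particular a radial bounding function for $\vartheta$, so its $u_\nu$-norm dominates $\|\vartheta\|_{\star}$. Since the right-hand side above is monotone increasing in $\|\widehat{\vartheta}\|_{u_\nu,p}$, taking the infimum over admissible bounding functions of $\vartheta$ yields the self-referential estimate
\begin{equation*}
\|\vartheta\|_{\star} \le \beta^{c+\nu+d/p}\,\|\vartheta\|_{\star} + C_2(\beta)\,\|\vartheta\|_{\star}^{2}.
\end{equation*}

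Rearranging, I get $(1 - \beta^{c+\nu+d/p})\,\|\vartheta\|_{\star} \le C_2(\beta)\,\|\vartheta\|_{\star}^{2}$. Under the hypotheses $(\ref{H1})$--$(\ref{H4})$ the exponent $c + \nu + d/p$ is strictly positive, so the coefficient $1-\beta^{c+\nu+d/p}$ lies in $(0,1)$; this is in fact exactly what already made the linear part of $\cR_\beta$ contractive in Proposition $\ref{inv2}$ (cf. Remark $\ref{rmk1}$). Setting
\begin{equation*}
D := \frac{1-\beta^{c+\nu+d/p}}{C_2(\beta)} > 0,
\end{equation*}
the above quadratic inequality forces the dichotomy $\|\vartheta\|_{\star} = 0$ or $\|\vartheta\|_{\star} \ge D$, so any self-similar solution with $\|\vartheta\|_{\star} < D$ must vanish identically. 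The only subtlety that requires care is the passage to the infimum norm: I must not apply the contraction estimate to some arbitrary bounding function and then conclude for a different one -- monotonicity of the quadratic right-hand side in $\|\widehat{\vartheta}\|_{u_\nu,p}$ is what rescues this. Past that bookkeeping point, the corollary is a purely algebraic consequence of the estimates already secured for Propositions $\ref{inv1}$--$\ref{inv2}$.
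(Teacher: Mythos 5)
Your argument is correct and is essentially the paper's: both exploit the quadratic estimate $(\ref{bound6})$ together with the fixed-point relation $\vartheta=\cR_\beta[\vartheta]$ to force the dichotomy ``norm zero or norm at least $D(\beta)$'' --- the paper merely packages this as the $\alpha$-contraction condition $(\ref{bound8})$ applied at the maximizer of $F(\beta)=C_*(\beta^{-\gamma}-1)^{-m/p}\bigl(1-\beta^{c+\nu+d/p-\theta}\bigr)$ over $\beta\in(0,1)$, whereas you fix a single $\beta$ and write the one-shot quadratic self-inequality in the infimum norm (your handling of the bounding-function bookkeeping is in fact more careful than the paper's). The one small caveat is that the positivity of $c+\nu+d/p$, which you need for $D(\beta)>0$, does not literally follow from $(\ref{H1})$--$(\ref{H4})$ but from $(\ref{H6})$/$(\ref{H8})$; the paper makes the same implicit assumption when asserting the existence of $A(\beta)>0$ in $(\ref{bound7})$, so this is not a defect of your proof relative to the paper's.
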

\begin{proof}
  Set
$$F(\beta)= C_* \left( {1 \over \beta^\gamma}-1    \right)^{-{m \over p}} \left(1-\beta^{c+\nu+{d \over p}-\theta}  \right),$$
and consider 
  $$D= \sup_{\beta \in (0,1)} F(\beta),$$
Suppose $\beta_m$ is the maximizer.  Notice, $0<\beta_m<1$, since
$$\lim_{ \beta \rightarrow 0} F(\beta)=\lim_{ \beta \rightarrow 1} F(\beta)=0.$$

  Then any solution
  $$v(y,t)=i (T-t)^{d+1-\gamma \over \gamma} \vartheta\left(y (T-t)^{1 \over \gamma}  \right)$$
  with $\vartheta \in \fRL_{u_\nu}^p$, has to satisfy the equation  $(\ref{eq:integral:NSgamma})$ for $t=T(1-\beta_m^\gamma)$, i. e. such $\vartheta$ has to coincide with a fixed point $\vartheta_{\beta_m}$. By Remark $\ref{trivial}$ any such fixed point with $\|\vartheta_{\beta_m} \|_{u_\nu,p} <D$ is trivial. 
\end{proof}

\subsection{Trivial a-priori bounds}\label{apriori_subsection}

We would like to collect the hypotheses  $(\ref{H1})-(\ref{H7})$ again:

\begin{equation}
\label{H8} \tag{H8} \begin{cases}
  & {1 \over r}={2 \over p}+ {2 \nu+\kappa  \over d  }-1, \quad {r \over r-1} <p \le 2 r, \quad 0 < \theta \le \min\left\{\gamma-1,  {\gamma m \over p \ (\gamma-1)}, \nu, {1 \over p} \right\}, \\ 
  &  { d-1 \over 2} \left(1 - {2 \over p} \right) < \nu<1, \hspace{6mm} d \ {p-1 \over p}+1+\theta-\gamma  < \nu <  d \ {p-1 \over p}, \\
  & \max\{\nu,\kappa\} >0,  \quad \kappa < {d \over r}, \quad   \nu+\kappa \ge (d-1) \left({1 \over r} - {1 \over p} \right), \\
  & \max\{d+p(c+\nu),p\}<{d (r-p)\over r} +p(\nu + \kappa +1) <{\gamma p (r-1) \over r}- \gamma.
\end{cases}
\end{equation}


In subsections $\ref{bounded_set}$ and $\ref{precompactness}$ we have proved the following theorem
\begin{apthm} \label{aptheorem}
Let $\gamma$, $p$, $\nu$ and $\theta$ satisfy the hypothesis $(\ref{H8})$ for some $\kappa$ and $r$. Set
\begin{equation}
  u_\nu(x)=e^{|x|} |x|^\nu.
\end{equation}    
Then for every $\beta \in (0,1)$ there are constants $A>0$, $M>0$, $K>0$, $\delta_0>0$ and a convex set $\cE^{M,K,\delta_0}_{u_\nu,p,A}$ of functions $\vartheta \in \fL^p_{u_\nu}$  satisfying
\begin{itemize}
\item[$1)$] $\cE^{M,K,\delta_0}_{u_\nu,p,A}$ is $\cR_\beta$-invariant;
\item[$2)$] $\| \vartheta \|_{u_\nu,p} \le A$ for all $\vartheta \in \cE^{M,K,\delta_0}_{u_\nu,p,A}$;
\item[$3)$] $\vartheta \in \cE^{M,K,\delta_0}_{u_\nu,p,A}$ is H\"older-continuous:
  $$|\vartheta(\eta)-\vartheta(\eta-\delta)| \le \omega_\vartheta(\eta) |\delta|^\theta,$$
for all $|\delta|<\delta_0$,  with $\omega_\vartheta \in L^p_{u_\nu}$ satisfying  $\| \omega_\vartheta \|_{u_\nu,p} \le K$;
\item[$4)$] $\vartheta \in \cE^{M,K,\delta_0}_{u_\nu,p,A}$ satisfies
  $$\int \displaylimits_{|\eta|>r} \vartheta(\eta)^p u_\nu(\eta)^p \ d \eta  \le {M \over r^{\gamma \over p}};$$
\item[$5)$] the $\fL^p_{u_\nu}$-closure of $\cE^{M,K,\delta_0}_{u_\nu,p,A}$ is compact. 
\end{itemize}
\end{apthm}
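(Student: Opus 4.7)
The plan is to assemble the theorem directly from the three invariance propositions already established in Sections \ref{bounded_set}--\ref{precompactness}, together with the Frechet-Kolmogorov-Riesz-Weil compactness theorem. Concretely, I would define
$$\cE^{M,K,\delta_0}_{u_\nu,p,A}:=\cE_{u_\nu,p,A}\cap \cE^M_{u_\nu,p,A}\cap \cE^{M,K,\delta_0,\mathrm{H}}_{u_\nu,p,A},$$
where the first factor is the ball of radius $A$ from Proposition \ref{inv2}, the second factor is the equitight subset from Proposition \ref{equitight}, and the third factor is the H\"older subset from Proposition \ref{equicont}. Each of these factors is convex by inspection (the defining inequality in each case is either a norm bound, an integral bound on the tail, or a pointwise bound on $|\vartheta(\eta)-\tau_\delta\vartheta(\eta)|$ with $\omega_\vartheta$ linear in $\vartheta$), and the intersection of convex sets is convex.

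The hypothesis $(\ref{H8})$ is precisely the union of $(\ref{H1})$--$(\ref{H7})$, so Propositions \ref{inv1}, \ref{inv2}, \ref{equitight}, \ref{equicont} all apply for any $\beta \in (0,1)$. Choosing $A$ as in $(\ref{bound7})$, then $M=M(\beta,A)$ as in Proposition \ref{equitight}, and finally $K=K(\beta,A,\delta_0)$ as in $(\ref{Kineqq})$, each factor is $\cR_\beta$-invariant; hence so is the intersection. Items $(2)$, $(3)$, $(4)$ of the theorem are then just the defining conditions of the three factors, so only item $(5)$ remains.

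For item $(5)$, I would verify the hypotheses of the Frechet-Kolmogorov-Riesz-Weil theorem for the weight $u_\nu(x)=e^{|x|}|x|^\nu$ on $\RR^d$: the local integrability of $u_\nu$ and $u_\nu^{-\lambda}$ is trivial since the only possible obstruction is the singularity at $0$, which is integrable for $\nu<d/p$ and any $\lambda>0$ with $\lambda\nu<d$, both comfortably allowed by $(\ref{H8})$. Uniform boundedness is item $(2)$. Equitightness follows from item $(4)$: for any $\ep>0$ choose $\rho$ so large that $M/\rho^{\gamma m/p}<\ep$, uniformly in $\vartheta\in\cE^{M,K,\delta_0}_{u_\nu,p,A}$. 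Equicontinuity in $\bL^p_{u_\nu}$ follows from item $(3)$: for $|\delta|<\delta_0$,
$$\|\tau_\delta\vartheta-\vartheta\|_{u_\nu,p}\le |\delta|^\theta\|\omega_\vartheta\|_{u_\nu,p}\le K|\delta|^\theta\to 0,$$
uniformly in the set, as $\delta \to 0$; for $|\delta|\ge\delta_0$ one combines item $(4)$ (to truncate the integral outside a large ball $B_R$) with the local-equicontinuity bound inside $B_R$ applied in chained $\delta_0$-steps, as in Remark \ref{modulus}. This gives compactness of the $\bL^p_{u_\nu}$-closure.

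The only real subtlety I anticipate is checking that the simultaneous choice of $A$, $M$, $K$ remains feasible for every fixed $\beta\in(0,1)$: the bounds $(\ref{bound7})$ and $(\ref{Kineqq})$ constrain $A$ from above, and this is where hypothesis $(\ref{H6})$--$(\ref{H7})$ must be used to ensure that the denominator in $(\ref{Kineqq})$ is strictly positive for the chosen $A$, so that a finite $K$ exists. Once these compatibility relations are verified from $(\ref{H8})$ (which has been arranged precisely so the earlier propositions simultaneously apply), the remaining work is bookkeeping.
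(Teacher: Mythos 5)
Your proposal is correct and matches the paper's own route exactly: the paper proves this theorem simply by assembling Propositions \ref{inv2}, \ref{equitight} and \ref{equicont} (whose hypotheses $(\ref{H1})$--$(\ref{H7})$ are collected into $(\ref{H8})$) and then invoking the Frechet-Kolmogorov-Riesz-Weil theorem, with the set built as the nested chain $\cE^{M,K,\delta_0}_{u_\nu,p,A}\subset\cE^{M}_{u_\nu,p,A}\subset\cE_{u_\nu,p,A}$, which is the same as your intersection. The only superfluous step is your treatment of $|\delta|\ge\delta_0$ in the equicontinuity check, since the compactness theorem only requires the limit as $|\delta|\to 0$.
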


The operator $\cR_\beta$ is a continuous map of $\cE^{M,K,\delta_0}_{u_\nu,p,A}$ into itself, therefore, under hypothesis $(\ref{H8})$, by Tychonoff fixed point theorem $\cR_\beta$ has a fixed point $\vartheta_\beta \in  \cE^{M,K,\delta_0}_{u_\nu,p,A}$. By Remark $\ref{trivial}$ this fixed point is unique and trivial.

\section{Non-trivial a-priori bounds}\label{nontrivapb}

\subsection{Existence of an invariant bounded convex set of non-trivial azimuthal fields} \label{nontriv_bounded_set}
Unlike in the previous Section, we will now consider azimuthal fields whose only non-zero component is the azimuthal one: $\vartheta= i \psi$, and
$$\psi(\eta) = \psi_{d-1}(\eta) e_{d-1}(\eta).$$

We will now derive a more specific form of the action of the renormalization operator on the azimuthal component of the field.

We project equation $(\ref{operatorR3})$ on the direction $e_{d-1}(\eta)$:
\begin{align}
  \nonumber\cR_\beta[\psi]_{d \shortminus 1}(\eta)  &  = \beta^c e^{|\eta|^\gamma \left(1\shortminus {1 \over \beta^\gamma}  \right)} \psi_{d \shortminus 1} \hspace{-0.7mm}   \left(\eta \over \beta \right) \hspace{-0.5mm}   - \hspace{-0.5mm}   \gamma \hspace{-1.3mm}   \int \displaylimits_{1}^{1 \over \beta} \hspace{-0.5mm}   { e^{|\eta|^\gamma \left(1 \shortminus t^\gamma\right)} \over  t^{c} } \hspace{-2mm}    \int \displaylimits_{\RR^d} \hspace{-1.3mm}   \eta \hspace{-0.3mm}   \cdot \hspace{-0.3mm}    \psi  \hspace{-0.5mm}  \left(\eta t \hspace{-0.5mm}   - \hspace{-0.5mm}    x \right) \psi(x) \hspace{-0.3mm}   \cdot \hspace{-0.3mm}   e_{d\shortminus 1}(\eta)   \ d x   \ d t \\
  \nonumber  &  =  \beta^c e^{|\eta|^\gamma \left(1\shortminus {1 \over \beta^\gamma}  \right)} \psi_{d \shortminus 1} \hspace{-0.7mm}   \left(\eta \over \beta \right) \hspace{-0.5mm}   - \hspace{-0.5mm}   \gamma \hspace{-1.3mm}   \int \displaylimits_{1}^{1 \over \beta} \hspace{-0.5mm}   { e^{|\eta|^\gamma \left(1 \shortminus t^\gamma\right)} \over  t^{c+ 1} } \hspace{-2mm}    \int \displaylimits_{\RR^d} \hspace{-1.3mm}   \eta t \hspace{-0.3mm}   \cdot \hspace{-0.3mm}    \psi  \hspace{-0.5mm}  \left(\eta t \hspace{-0.5mm}   \shortminus \hspace{-0.5mm}    x \right) \psi_{d\shortminus 1}(x)   \cos \hat{x}_{d\shortminus 1}   \ d x   \ d t \\
   \nonumber &=  \beta^c e^{|\eta|^\gamma \left(1\shortminus {1 \over \beta^\gamma}  \right)} \psi_{d \shortminus 1} \hspace{-0.7mm}   \left(\eta \over \beta \right) \hspace{-0.5mm}   - \hspace{-0.5mm}   \gamma \hspace{-1.3mm}   \int \displaylimits_{1}^{1 \over \beta} \hspace{-0.5mm}   { e^{|\eta|^\gamma \left(1 \shortminus t^\gamma\right)} \over  t^{c+ 1} } \hspace{-2mm}    \int \displaylimits_{\RR^d} \hspace{-1.3mm}   x \hspace{-0.3mm}   \cdot \hspace{-0.3mm}    \psi  \hspace{-0.5mm}  \left(\eta t \hspace{-0.5mm}   \shortminus \hspace{-0.5mm}    x \right) \psi_{d\shortminus1}(x)   \cos \hat{x}_{d\shortminus 1}   \ d x   \ d t, 
\end{align}
where we have used that $\eta t \cdot \psi(\eta t -x)= (\eta t -x) \cdot \psi(\eta t -x) +x \cdot \psi(\eta t -x)=x \cdot \psi(\eta t -x)$.

Denote $x_a$ the projection of vector $x$ on the azimuthal plane. Then 
\begin{align}
  \nonumber\cR_\beta[\psi]_{d-1}(\eta)  & =  \beta^c e^{|\eta|^\gamma \left(1- {1 \over \beta^\gamma}  \right)} \psi_{d-1} \left(\eta \over \beta \right) -\gamma  \int \displaylimits_{1}^{1 \over \beta}  { e^{|\eta|^\gamma \left(1-t^\gamma\right)} \over  t^{c+1} }   \int \displaylimits_{\RR^d}  \psi_{d-1}(x) \cos\hat x_{d-1}   \times \\
  \nonumber & \phantom{=} \times  \psi_{d-1}(\eta t  -   x)   {x_a \cdot   (|x_a| \sin \hat x_{d-1}, |\eta_a| t   -  |x_a| \cos \hat x_{d-1})    ) \hspace{-0.5mm}   \over \sqrt{|x_a|^2+|\eta_a|^2 t^2 -2 |x_a| |\eta_a| t \cos \hat x_{d-1}    }   }   \ d x   \ d t \\
   \nonumber & = \beta^c e^{|\eta|^\gamma \left(1- {1 \over \beta^\gamma}  \right)} \psi_{d-1} \left(\eta \over \beta \right) -\gamma  \int \displaylimits_{1}^{1 \over \beta}  { e^{|\eta|^\gamma \left(1-t^\gamma\right)} \over  t^{c+1} }   \int \displaylimits_{\RR^d}  \psi_{d-1}(x) \cos\hat x_{d-1}   \times \\
   \nonumber & \phantom{=}\phantom{ \beta^c e^{|\eta|^\gamma \left(1- {1 \over \beta^\gamma}  \right)} \psi_{d-1} \left(\eta \over \beta \right) } \hspace{3mm}  \times  \psi_{d-1}(\eta t -x) { |x_a| |\eta_a| t \sin \hat x_{d-1}     \over |x_a- \eta_a t | }   \ d x   \ d t.
\end{align}
Since $|x_a| |\eta_a| t \sin \hat x_{d-1} = \left(\eta_a t \pmb{\times} x_a \right) \cdot e_z =\left( \eta_a t \pmb{\times} (x_a - \eta_a t) \right)_z$, where $\pmb{\times}$ is the cross-product in the three-dimensional subspace spanned by the azimuthal plane and the polar axis, and $( \cdot)_z$ is the projection on the polar axis. Therefore,
\begin{eqnarray}
  \nonumber\cR_\beta[\psi]_{d-1}(\eta)   \hspace{-1.5mm} &   \hspace{-1.5mm}   =  \hspace{-1.5mm}  &  \hspace{-1.5mm}  \beta^c e^{|\eta|^\gamma \left(1- {1 \over \beta^\gamma}  \right)} \psi_{d-1} \left(\eta \over \beta \right) +  \gamma  \int \displaylimits_{1}^{1 \over \beta}  \hspace{-0.8mm}  { e^{|\eta|^\gamma \left(1-t^\gamma\right)}  |\eta| \prod_{k=1}^{d-2} \sin \hat \eta_k \over  t^{c} }   \times \\
 \label{operatorR4} && \hspace{15mm}   \times \int \displaylimits_{\RR^d}  \psi_{d-1}(x) \psi_{d-1}(\eta t  \hspace{-0.2mm} -  \hspace{-0.2mm}x) \cos\hat x_{d-1} \sin {\reallywidehat{(\eta t -x)}}_{d-1}   \ d x    \ d  t.
\end{eqnarray}

We obtain, therefore, that renormalization is defined on the component $\psi_{d-1}$ through $(\ref{operatorR4})$ by setting $\cR_\beta[\psi_{d-1}] :=\cR_\beta[\psi]_{d-1}$.

In what follows we will omit the subscript $d-1$ on the azimuthal component of $\psi$, and we will think of $\psi$ as a function from $\RR^d$ to $\RR$.

\begin{proposition} \label{inv3}
  Let $d \ge 2$.  For any $\sigma$ and $\gamma$, satisfying
  \begin{equation}
    \label{H9}\tag{H9} \sigma \in (\max\{-d,-3\},0) \setminus \left\{-2,-{d \over 2} \right\}, \hspace {3mm}  \gamma-d-1 \ge \sigma,
  \end{equation}
  there exist $k>0$ and $b>0$ such that  the  set of functions
  \begin{equation}
    \label{setM}  \cM:=\left\{\psi \ - \ {\rm measurable}: |\psi(\eta)| < \pmb{\psi}(\eta):=k |y|^{\sigma} e^{-b |\eta|}  \right \}
  \end{equation}
  is invariant under $\cR_\beta$ defined in $(\ref{operatorR4})$ for all $\beta \in (0,1)$.
\end{proposition}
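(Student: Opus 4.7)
The approach is to split $\cR_\beta[\psi] = L + N$ into the linear and quadratic parts of $(\ref{operatorR4})$ and propagate the bound $|\psi|\le\pmb\psi$ through each. For the linear part, substituting $|\psi(\eta/\beta)|\le k|\eta/\beta|^\sigma e^{-b|\eta|/\beta}$ into the first term of $(\ref{operatorR4})$ yields directly
\[
|L(\eta)| \le \beta^{c-\sigma}\exp\!\bigl\{-(\beta^{-\gamma}-1)|\eta|^\gamma-b(\beta^{-1}-1)|\eta|\bigr\}\,\pmb\psi(\eta) =: \lambda_\beta(\eta)\,\pmb\psi(\eta).
\]
The hypothesis $c=\gamma-d-1\ge\sigma$ together with $\beta\in(0,1)$ gives $\lambda_\beta\le 1$, strictly so off the origin. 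It remains to exhibit $k,b>0$, independent of $\beta$, such that $|N(\eta)| \le (1-\lambda_\beta(\eta))\,\pmb\psi(\eta)$ for every $\eta\in\RR^d$.

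For the nonlinear term I would bound the three trigonometric factors by $1$ to obtain
\[
|N(\eta)| \le \gamma|\eta|\int_1^{1/\beta}\frac{e^{-|\eta|^\gamma(t^\gamma-1)}}{t^c}\,(\pmb\psi*\pmb\psi)(\eta t)\,dt,
\]
so the crucial step is a pointwise estimate on $(\pmb\psi*\pmb\psi)(z)$. Using $|x|+|z-x|\ge|z|$, one extracts the full factor $e^{-b|z|}$ and is left with $\int|x|^\sigma|z-x|^\sigma e^{-b((|x|+|z-x|)-|z|)}\,dx$, concentrated in a tubular neighborhood of $[0,z]$. Writing $x=s\hat z + x_\perp$ and Taylor-expanding the arguments of the exponential produces a Gaussian of transverse width $\sim(b|z|)^{-1/2}$, and integrating it against $|x|^\sigma|z-x|^\sigma$ yields $(\pmb\psi*\pmb\psi)(z)\le C k^2|z|^{\mu(d,\sigma)}e^{-b|z|}$ for an explicit exponent $\mu(d,\sigma)$. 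The exponent changes regime at $\sigma=-d/2$, depending on whether $\pmb\psi\in L^2$ or a Riesz-type singularity of $\widehat{\pmb\psi}$ appears; handling the regions $|x|\le|z|/2$ and $|x-z|\le|z|/2$ separately treats the two power singularities of the integrand at $0$ and $z$. After substituting back and changing variables $s=|\eta|t$, one obtains $|N(\eta)|\le C k^2|\eta|^{1+\sigma+\mu}e^{-b|\eta|}\,J_\beta(|\eta|)$ with $J_\beta$ bounded on $\RR_+$.

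The target inequality $|N|\le(1-\lambda_\beta)\pmb\psi$ is then verified in two complementary regimes. For $\beta$ bounded away from $1$, $(1-\lambda_\beta)$ admits a positive lower bound while $|N|$ is quadratic in $k$, so choosing $k$ small suffices. For $\beta\to 1$ both sides vanish linearly in $1/\beta-1$: the right-hand side from the Taylor expansion $1-e^{-\alpha(\eta)(1/\beta-1)-\cdots}\sim\alpha(\eta)(1/\beta-1)$, and the left because the $t$-integral runs over an interval of length $1/\beta-1$ with bounded integrand. Their ratio therefore admits a $\beta$-uniform bound, which can be made strictly less than $1$ by choosing $k$ small (with $b$ taken large enough to compensate the loss of exponential decay incurred in the convolution splitting). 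Near $\eta=0$ the overall $|\eta|$ factor in front of $N$, combined with $(1-\lambda_\beta(\eta))\sim c_\beta|\eta|^{\min(\gamma,1)} = c_\beta|\eta|$ (since $\gamma>d\ge 2>1$), closes the inequality locally provided $\sigma>-d$.

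The main obstacle is the convolution estimate, which must be uniform in $\eta$ and $\beta$ while producing the right power of $|\eta|$ to absorb against $\pmb\psi$. The excluded value $\sigma=-d/2$ is precisely the $L^2$ borderline for $\pmb\psi$, where $(\pmb\psi*\pmb\psi)(0)$ acquires a logarithmic divergence; the exclusion $\sigma=-2$ and the lower bound $\sigma>\max\{-d,-3\}$ similarly rule out marginal values at which auxiliary integrals in the linear/nonlinear balance (for example a critical $t^{-1}$ behavior in the $t$-integration against $e^{-b|\eta|(t-1)}$, which would spoil the $(1-\beta)$ cancellation) become logarithmic. With these cases excluded, a joint choice of $k$ small and $b$ depending on $\sigma,\gamma,d$ closes the invariance statement uniformly in $\beta\in(0,1)$.
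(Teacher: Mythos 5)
Your overall architecture coincides with the paper's: split $\cR_\beta[\psi]$ into the linear part $L$ and the quadratic part $N$, note that $|L|\le\lambda_\beta\,\pmb{\psi}$ with $\lambda_\beta\le\beta^{c-\sigma}\le 1$ by the hypothesis $c\ge\sigma$, reduce everything to a pointwise convolution bound $(\pmb{\psi}\star\pmb{\psi})(z)\le Ck^2|z|^{\mu}e^{-b|z|}$, and close by taking $k$ small. The paper organizes the closing step differently and more cleanly: the difference of boundary terms $\beta^c e^{-|\eta|^\gamma/\beta^\gamma}\pmb{\psi}(\eta/\beta)-e^{-|\eta|^\gamma}\pmb{\psi}(\eta)$ is rewritten as an integral over $y\in(|\eta|,|\eta|/\beta)$ of an explicit derivative (plus a correction proportional to $\beta^c-1$, also written as such an integral), so that the nonlinear term --- already an integral over the same interval after the substitution $y=|\eta|t$ --- is absorbed by requiring a single integrand to be pointwise nonpositive. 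This yields uniformity in $\beta\in(0,1)$ for free and replaces your two-regime analysis in $\beta$; your version can be made to work, since the factors $\min\{(\beta^{-\gamma}-1)|\eta|^\gamma,1\}$ appearing in $1-\lambda_\beta$ and in the $t$-integral match, but it is the same estimate carried out in a different order.

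The genuine gap is the convolution bound itself, which is where the paper spends essentially all of its effort and which is what decides whether $(\ref{H9})$ suffices. You assert $(\pmb{\psi}\star\pmb{\psi})(z)\le Ck^2|z|^{\mu(d,\sigma)}e^{-b|z|}$ ``for an explicit exponent $\mu(d,\sigma)$'' without computing $\mu$, yet every condition in $(\ref{H9})$ is read off from it: the paper obtains the exponent $\min\{0,2\sigma+d\}$ for small $|z|$ and $2\sigma+d$ for large $|z|$, whence $\sigma\ge-d$ (domination by $-b\,y^{\sigma}$ near the origin), $\gamma-1\ge\sigma+d$, i.e. $c\ge\sigma$ (domination by $-\gamma y^{\sigma+\gamma-1}$ at infinity), while the exclusions $\sigma\ne-2,-d/2$ and the bound $\sigma>\max\{-3,-d\}$ come from the factors $|\sigma+2|^{-1}$, $|2\sigma+d|^{-1}$, $|\sigma+3|^{-1}$, $|\sigma+d|^{-1}$ that blow up in the hypergeometric and incomplete-Gamma estimates of Lemma \ref{radial} and the ${}_2F_1$ expansions. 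Your Laplace-tube sketch also contains a scaling error: the level sets of $|x|+|z-x|-|z|=\epsilon$ are prolate ellipsoids of transverse semi-axis $\sim\sqrt{|z|\epsilon}$, so the effective Gaussian width is $\sim(|z|/b)^{1/2}$, not $(b|z|)^{-1/2}$; carried out correctly, this route produces an extra factor $b^{-(d-1)/2}$ and a different power of $|z|$, which would then have to be re-fed into the small-$|z|$ and large-$|z|$ comparisons before one could conclude anything about $(\ref{H9})$. Until the exponent is actually computed --- by your method or by the paper's exact radial reduction --- the invariance claim is not established.
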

\begin{proof}
We demonstrate the bound from above: 
$$     e^{-|\eta|^\gamma} \pmb{\psi}(\eta) \ge  e^{-|\eta|^\gamma} |\cR_\beta[\psi](\eta)| $$
for any $\psi \in \cM$. The bound from below,
$$     -e^{-|\eta|^\gamma} \pmb{\psi}(\eta) \le  e^{-|\eta|^\gamma} |\cR_\beta[\psi](\eta)|,$$
is obtain by changing the signs in all the inequalities.

It is sufficient to show that 
\begin{equation}
\label{eq0}   0 \ge  \beta^c e^{-{|\eta|^\gamma \over \beta^\gamma}} \pmb{\psi} \left(\eta \over \beta \right) -  e^{-{|\eta|^\gamma }} \pmb{\psi} (\eta ) +  \gamma  \hspace{-1.0mm}  \int \displaylimits_{1}^{1 \over \beta}  \hspace{-0.6mm}  { e^{-|\eta t|^\gamma}  \over  t^{c} }   \hspace{-1.0mm}  \int \displaylimits_{\RR^d}  \hspace{-0.6mm} \pmb{\psi}(x)  \pmb{\psi}(\eta t -  x) \   d x  \  d |\eta| t.
\end{equation}
Our objective will be to show that the right hand side of $(\ref{eq0})$ is smaller than an expression of the form  $\int_{|\eta|}^{|\eta| \over \beta} U(y) \ d y$ with the integrand $U(y)$ strictly negative.

The inequality $(\ref{eq0})$ is implied by
\begin{eqnarray}
  \nonumber   0 &\ge & {|\eta|^{\sigma} \over \beta^{\sigma} } e^{-{|\eta|^\gamma \over \beta^\gamma} - b {|\eta| \over \beta}} - |\eta|^{\sigma} e^{-{|\eta|^\gamma } - b |\eta|} + (\beta^c-1) {|\eta|^{\sigma} \over \beta^{\sigma}} e^{-{|\eta|^\gamma \over \beta^\gamma} - b {|\eta| \over \beta}}    +  \\
  \label{eq1} && \phantom{ {|y|^{\sigma} \over \beta^{\sigma} }  e^{-{|\eta|^\gamma \over \beta^\gamma} - b {|\eta| \over \beta}}} + k \gamma  \hspace{-1.0mm}  \int \displaylimits_{1}^{1 \over \beta}  \hspace{-0.5mm}  { e^{-|\eta t|^\gamma}  \over  t^{c} }   \hspace{-0.5mm}  \int \displaylimits_{\RR^d}    |x|^{\sigma} e^{- b {|x|}}  |\eta t -x|^{\sigma} e^{- b {|\eta t - x|}}     d x  \   d |\eta| t.
\end{eqnarray}
We notice that
\begin{eqnarray}
  \nonumber (\beta^c-1) {|\eta|^{\sigma} \over \beta^{\sigma}} e^{-{|\eta|^\gamma \over \beta^\gamma} - b {|\eta| \over \beta}} & = &\sigma {\beta^c -1 \over 1-\beta^{\sigma}} \  e^{-{|\eta|^\gamma \over \beta^\gamma} - b {|\eta| \over \beta}}  \int_{|\eta|}^{|\eta| \over \beta}  y^{\sigma-1} \ d y.
\end{eqnarray}
Therefore $(\ref{eq1})$ is implied by 
\begin{align}
  \nonumber   0 &\ge  \int_{|\eta|}^{|\eta| \over \beta} \left\{ y^{\sigma} (-\gamma y^{\gamma-1} - b ) +\sigma y^{\sigma-1} \right\}  e^{-{y^\gamma} - b {y}}   + \sigma {\beta^c -1 \over 1-\beta^{\sigma}}  y^{\sigma-1}  e^{-{|\eta|^\gamma \over \beta^\gamma} - b {|\eta| \over \beta}}   \  d y    +  \\
  \label{eq2} & \phantom{ \int_{|\eta|}^{|\eta| \over \beta} \hspace{3mm} y^{\sigma} (-\gamma y^{\gamma-1}  } \hspace{6.0mm}+ k \gamma  \hspace{-0.5mm}  \int \displaylimits_{1}^{1 \over \beta}  \hspace{-0.4mm}  { e^{-|\eta t|^\gamma}  \over  t^{c} }   \hspace{-0.5mm}  \int \displaylimits_{\RR^d}  \hspace{-0.5mm}  |x|^{\sigma} e^{- b {|x|}}  |\eta t -x|^{\sigma} e^{- b {|\eta t - x|}}     d x  \   d |\eta| t.
\end{align}
Consider the convolution
$$\int \displaylimits_{\RR^d}  \hspace{-0.5mm}  |x|^{\sigma} e^{- b {|x|}}  |y -x|^{\sigma} e^{- b {|y - x|}}d x.$$
According to Lemma $\ref{radial}$, this convolutions is a radial function of $|y|$ only, denoted $I(|y|)$:
\begin{eqnarray}
  \nonumber I(|y|) \hspace{-2mm}  &\le& \hspace{-2mm}  C  \hspace{-1.5mm}  \int \displaylimits_{0}^\infty  \hspace{-1.2mm}  |x|^{\sigma} e^{- b {|x|}}  \hspace{-2.3mm}  \int \limits_{-1}^1 \hspace{-0.5mm}  (|y|^2 \hspace{-0.5mm} + \hspace{-0.5mm} |x|^2 \hspace{-0.5mm} - \hspace{-0.5mm} 2 |y| |x| t)^{\sigma \over 2} e^{- b (|y|^2+|x|^2-2 |y| |x| t)^{1 \over 2}} (1\hspace{-0.5mm} \shortminus \hspace{-0.5mm}  t^2)^{d-3 \over 2} d t  |x|^{d-1} d |x| \\
  \nonumber \hspace{-2mm}   &\le& \hspace{-2.0mm} C \int \displaylimits_{0}^{\infty}  \hspace{-0.5mm}  |x|^{\sigma} e^{- b {|x|}}  {1 \over |x| |y|} \int \displaylimits_{(|y|-|x|)^2}^{(|y|+|x|)^2}  z^{\sigma \over 2} e^{- b z^{1 \over 2}} d z  \ |x|^{d-1} d |x| \\
  \nonumber \hspace{-2mm}   &\le& \hspace{-2.0mm} {C \over |\sigma +2|} \int \displaylimits_{0}^{\infty}  \hspace{-0.5mm}  r^{\sigma+d-2} e^{- b r-b||y|-r|}  {1 \over |y|} \left| ||y|-r|^{\sigma+2} -(|y|+r)^{\sigma+2} \right|  \  d r \\
    \label{eq00} \hspace{-2mm}   &=& \hspace{-2.0mm} I_1(|y|)+I_2(|y|),
\end{eqnarray}
where $I_1$ an $I_2$ are the integrals over ranges $(0,|y|)$ and $(|y|,\infty)$, respectively. In the integral above we have showed the singularity $(\sigma+2)^{-1}$ explicitly, rather than including it in a constant. We will continue doing this below.

\vspace{2mm}

\noindent \underline{\it $1)$ Case $\sigma <-2$}.  We start by evaluating $I_1$:
\begin{eqnarray}
  \nonumber I_1(|y|) \hspace{-2mm}   &\le& \hspace{-2.0mm} {C \over |\sigma +2|} \int \displaylimits_{0}^{|y|}  \hspace{-0.5mm}  r^{\sigma+d-2} e^{- b r-b||y|-r|}  {1 \over |y|} \left( ||y|-r|^{\sigma+2} -(|y|+r)^{\sigma+2} \right)  \  d r \\
  \nonumber \hspace{-2mm}   &\le& \hspace{-2.0mm} {C \over |\sigma +2|} |y|^{2 \sigma+d} e^{-b |y|} \int \displaylimits_{0}^{1}  \hspace{-0.5mm}  z^{\sigma+d-2}  \left( (1-z)^{\sigma+2} -(1+z)^{\sigma+2} \right)  \  d z.
\end{eqnarray}
To evaluate this integral, we will use that for $|z|<1$,
\begin{eqnarray}
  \nonumber   (1-z)^{n+1}-(1+z)^{n+1}&=& \sum_{k=0}^\infty \left(n+1 \atop k  \right) \left( (-z)^k-z^k \right)=-2z  \sum_{k=0}^\infty \left(n+1 \atop 2 k+1  \right) z^{2 k} \\
  \label{2F1} &=&-2(n+1)z \phantom{}_2F_1\left({1\over 2} -{n \over 2},-{n \over 2}, {3\over 2},z^2\right).
\end{eqnarray}
\begin{eqnarray}
  \nonumber I_1(|y|) \hspace{-2mm}   &\le& \hspace{-2.0mm} C |y|^{2 \sigma+d} e^{-b |y|} \int \displaylimits_{0}^{1}  \hspace{-0.5mm}  z^{\sigma+d-1} \phantom{}_2F_1\left({1 \over 2} -{\sigma+1 \over 2},-{\sigma+1 \over 2}, {3 \over 2}, z^2  \right)  d z.
\end{eqnarray}
Assume that $\sigma+2<0$. Then we can use the expansion of the Gauss hypergeometric function $\phantom{ }_2F_1 \left(a, b, c,x  \right)$ in the case $c-a-b<0$ (in our case $c-a-b=\sigma+2<0$ by $(\ref{H9})$):
\begin{equation}
\label{2F1neg} \phantom{ }_2F_1 \left(-{\sigma \over 2}, -{\sigma +1\over 2}, {3 \over 2},z^2  \right) \le C { \Gamma(-\sigma-2) \Gamma\left({3 \over 2} \right) \over  \Gamma\left(-{\sigma \over 2} \right)  \Gamma\left(-{\sigma+1 \over 2} \right)} \left(1-z^2 \right)^{\sigma+2}.
\end{equation}
For $\sigma<-2$ all $\Gamma$-functions in the constant are bounded, except $ \Gamma(-\sigma-2)$ which is proportional to $(\sigma+2)^{-1}$ for $\sigma$ close to $-2$, therefore,
\begin{equation}
\nonumber I_1(|y|)   \le  {C \over |2+\sigma|} |y|^{2 \sigma+d} e^{-b |y|} \int \displaylimits_{0}^{1}    z^{\sigma+d-1} \left(1-z^2  \right)^{\sigma+2}  d z.
\end{equation}
The last integral converges under the condition
\begin{equation}
\label {cond-3} \sigma>\max\{-d,-3\},
\end{equation}
and
\begin{equation}
\label{condd12} I_1(|y|)   \le  {C \over |\sigma+2||\sigma+3||\sigma+d|} |y|^{2 \sigma+d} e^{-b |y|}.
\end{equation}
We now turn to $I_2$:
\begin{align}
\nonumber I_2(|y|)     &\le  {C \over |\sigma+2|}  \int \displaylimits_{|y|}^{\infty}  \hspace{-0.5mm}  r^{\sigma+d-2} e^{- b r-b||y|-r|}  {1 \over |y|} \left( ||y|-r|^{\sigma+2} -(|y|+r)^{\sigma+2} \right)  \  d r \\
\nonumber   &\le {C \over |\sigma+2| |y|}  \int \displaylimits_{|y|}^{\infty}  \hspace{-0.5mm}  r^{\sigma+d-2} e^{- 2 b r+b |y|} r^{\sigma+2} \left( \left(1-{|y| \over r} \right)^{\sigma+2} -\left(1+{|y| \over  r} \right)^{\sigma+2} \right)  \  d r \\
\nonumber   &\le C   \int \displaylimits_{|y|}^{\infty}  \hspace{-0.5mm}  r^{2 \sigma+d-1} e^{- 2 b r+b |y|} \phantom{}_2F_1\left({1 \over 2} -{\sigma+1 \over 2},-{\sigma+1 \over 2}, {3 \over 2}, {|y|^2 \over r^2}   \right)  \  d r \\
\nonumber   &\le {C \over |\sigma+2|} \int \displaylimits_{|y|}^{\infty}  \hspace{-0.5mm}  r^{2 \sigma+d-1} e^{- 2 b r+b |y|}\left(1 -{|y|^2 \over r^2}   \right)^{\sigma+2}  \  d r \\
\nonumber   &\le  {C \over |\sigma+2|}  e^{b|y|} |y|^{2 \sigma+d} \int \displaylimits_{1}^{\infty}  \hspace{-0.5mm}  z^{\sigma+d-3} (z-1)^{\sigma+2} e^{- 2 b|y| z}  \  d z \\
\nonumber   &\le  {C \over |\sigma+2|}  e^{b|y|} |y|^{2 \sigma+d} \left( e^{- 2 b|y|}  \int \displaylimits_{1}^{2}  \hspace{-0.5mm}  z^{\sigma+d-3} (z-1)^{\sigma+2}   \  d z +\int \displaylimits_{2}^{\infty}  \hspace{-0.5mm}  z^{2 \sigma+d-1} e^{- 2 b|y| z}  \  d z     \right).
\end{align}
The first integral in the parenthesis converges for $\sigma > -3$, therefore, under this condition
\begin{equation}
  \nonumber I_2(|y|) \le  {C \over |\sigma+3| |\sigma+2|}  e^{b|y|} |y|^{2 \sigma+d} \left( e^{- 2 b|y|}  + |y|^{-d-2 \sigma} \Gamma(d+2 \sigma, 4 b |y|)   \right).
\end{equation}
We, have, therefore, for small $|y|$:
\begin{align}
  \nonumber I_2(|y|) & \le  {C \over |\sigma+3| |\sigma+2|}  e^{b|y|} |y|^{2 \sigma+d} \left( e^{- 2 b|y|}  + |y|^{-d-2 \sigma} \left(\Gamma(d+2 \sigma)- C {|y|^{d+2 \sigma}  \over |d+2 \sigma | }  \right)   \right) \\
  \nonumber  & \le  {C \over |\sigma+3|  |\sigma+2| |2 \sigma +d|}  e^{- b|y|} (1+|y|^{2 \sigma+d}),
\end{align}
and for large $|y|$:
\begin{align}
  \nonumber I_2(|y|) & \le  {C \over |\sigma+3| |\sigma+2|}  e^{b|y|} |y|^{2 \sigma+d} \left( e^{- 2 b|y|}  + |y|^{-1} e^{-4 b|y|} \right)\\
  \nonumber  & \le  {C \over |\sigma+3| |\sigma+2|} |y|^{2 \sigma+d}   e^{- b|y|}.
\end{align}
We summarize:
\begin{equation}
  \label{I} I(|y|) \le  {C \over |\sigma+3| |\sigma+d| |\sigma+2| |2 \sigma+d|} e^{-b |y|}  \left\{  |y|^{ \min\{0,2 \sigma +d\}}, \quad |y| < (2 b)^{-1}, \atop   |y|^{2 \sigma +d}, \hspace{14.6mm} |y|  \ge  (2 b)^{-1}. \right. . 
\end{equation}
Finally, we obtain that $(\ref{eq2})$ is implied by 
\begin{align}
  \nonumber    0   & \ge   \hspace{-0.8mm}  \int \displaylimits_{|\eta|}^{|\eta| \over \beta}  \left\{ y^{\sigma} (-\gamma y^{\gamma-1} \hspace{-1mm} - b ) \hspace{-0.5mm} +  {\hspace{-5mm} C k \over |\sigma+2| |\sigma+3||\sigma+d| |2 \sigma+d|} y^{l(y)} \right\} e^{-{y^\gamma} - b {y}}  + \\
  \label{eq22} & \hspace{31.5mm} +   \sigma {1 -\beta^{c-\sigma} \over 1 -   \beta^{-\sigma}} y^{\sigma-1} e^{-{|\eta|^\gamma \over \beta^\gamma}-b {|\eta| \over \beta} }  \ d y,
\end{align}
where $l(y)=\min\{0,2 \sigma +d\}$ for small $y$ and $l(y)=2 \sigma +d$ for large $y$.

The first of the two sufficient conditions for the integrand in $(\ref{eq22})$ to be non-positive for small $y$ is the non-positivity of the last term, which means that
$$c \ge \sigma,$$
the condition included in $(\ref{H9})$. The second of the two sufficient conditions is the dominance of the term  $ -b y^{\sigma}$ over the last term in the first line of $(\ref{eq22})$ for small $y$, or 
$$\sigma \le \min\{0,2 \sigma +d\}$$
which implies that
$$-d \le \sigma \le 0,$$
also included in $(\ref{H9})$.

A sufficient condition for the integrand in $(\ref{eq22})$ to be negative for large $y$  is the dominance of the term $-\gamma y^{\sigma+\gamma-1}$ over the last term in the first line of $(\ref{eq22})$  which is achieved if 
$$\sigma+\gamma-1 \ge 2 \sigma +d\,$$
which is included in $(\ref{H9})$.

Finally, for any $\sigma$ and $\gamma$ as in $(\ref{H9})$, the negativity of the integrand for all $y$ can be insured by a choice of constants $k>0$ and $b>0$.

\vspace{2mm}

\noindent \underline{\it $2)$  Case $\sigma>-2$}. We have in this case
\begin{eqnarray}
  \nonumber I(|y|) \hspace{-2mm} &\le& \hspace{-2.0mm} {C \over |\sigma +2|} \int \displaylimits_{0}^{\infty}  \hspace{-0.5mm}  r^{\sigma+d-2} e^{- b r-b||y|-r|}  {1 \over |y|} \left( ||y|+r|^{\sigma+2} -||y|-r|^{\sigma+2} \right)  \  d r \\
  \label{eq00} \hspace{-2mm}   &=& \hspace{-2.0mm} I_1(|y|)+I_2(|y|),
\end{eqnarray}

The calculation of $I_1$ is very similar to the case $\sigma+2<0$, up to  $(\ref{2F1neg})$. We substitute $(\ref{2F1neg})$ by an upper bound for the case $c-a-b>0$:
\begin{equation}
\label{2F1pos} \phantom{ }_2F_1 \left(-{\sigma \over 2}, -{\sigma +1\over 2}, {3 \over 2},z^2  \right) \le { \Gamma(\sigma+2) \Gamma\left({3 \over 2} \right) \over  \Gamma\left({\sigma +3 \over 2} \right)  \Gamma\left({\sigma+4 \over 2} \right)} \le {C \over |\sigma +2|}.
\end{equation}
Then
\begin{equation}
\nonumber I_1(|y|)   \le  {C \over |\sigma+2| } |y|^{2 \sigma+d} e^{-b |y|} \int \displaylimits_{0}^{1}    z^{\sigma+d-1}   d z.
\end{equation}
The last integral converges under the condition
\begin{equation}
\label {cond-d} \sigma>-d,
\end{equation}
and
\begin{equation}
\label{condd13} I_1(|y|)   \le  {C \over |2+\sigma| |\sigma+d|} |y|^{2 \sigma+d} e^{-b |y|}.
\end{equation}

The calculation of the integral $I_2$ is similar to the case $\sigma<-2$ with the exception that the hypergeometric function appearing under the integral is now bounded as
$$ \phantom{}_2F_1\left({1 \over 2} -{\sigma+1 \over 2},-{\sigma+1 \over 2}, {3 \over 2}, {|y|^2 \over r^2}   \right) \le {C \over |\sigma+2|}.$$
Therefore,
\begin{eqnarray}
  \nonumber I_2(|y|)
  \nonumber  \hspace{-2mm}   &\le& \hspace{-2.0mm}  {C \over |\sigma+2|} \int \displaylimits_{|y|}^{\infty}  \hspace{-0.5mm}  r^{2 \sigma+d-1} e^{- 2 b r+b |y|}  \  d r \\
  \nonumber  \hspace{-2mm}   &\le& \hspace{-2.0mm}   {C \over |\sigma+2|} e^{b |y|}   \Gamma \left(2 \sigma+d, -2 b |y|\right) \\
  \label{I21}  \hspace{-2mm}   &\le& \hspace{-2.0mm}   {C \over |\sigma+2|} \  e^{-b |y|}  \left\{ \Gamma(2 \sigma+d) - C { |y|^{2 \sigma +d }\over |2 \sigma+d|} , \hspace{4mm} |y|<{(2 b)^{-1}}, \atop |y|^{2 \sigma +d-1}, \hspace{24.8mm} |y|\ge{(2 b)^{-1}}.  \right.  
\end{eqnarray}
We combine bounds  $(\ref{I21})$ with  $(\ref{condd13})$ to get 
\begin{equation}
  \label{II} I(|y|) \le   {C \over |\sigma +2||\sigma+d| |2 \sigma+d|}  e^{-b|y|} \left\{  |y|^{ \min\{0,2 \sigma +d\}}, \quad |y| < (2 b)^{-1}, \atop   |y|^{2 \sigma +d}, \hspace{14.6mm} |y|  \ge  (2 b)^{-1}. \right. 
\end{equation}
Notice, that the powers of $y$ in  bound $(\ref{II})$ coincide in this case with the bound for $\sigma <-2$. Therefore, we obtain exactly the same conditions for the non-positivity of the integrand as in the case  $\sigma<-2$.
\end{proof}

The set $\cM$ clearly contains zero. It seems prohibitively complicated to introduce a renormalization invariant bound on the size of the functions in $\cM$ from below. We, therefore, will take a different approach and consider the integral
$$\cI(\psi):=\int \displaylimits_{\RR^d}   e^{-|\eta|^\gamma} \psi(\eta) |\eta|^\alpha  \ d \eta,$$
which may not be bounded for all functions in $\cM$ due to their possible singularity at $0$, but $\cM$ clearly contains a subset of functions with a bounded $\cI$.

Set
\begin{equation}
  \label{w} w(\eta)=|\eta|^\alpha.
\end{equation}
Notice that $\cM \cap L^1_w(\RR^d,\RR)$ is non-empty, as, e.g. any function $f(\eta) e^{-b |\eta|}$ with a measurable $f$,
\begin{equation}
\label{fM} |f(\eta)| < \min\{ |\eta|^\sigma, |\eta|^{-d-\alpha+\epsilon}\}
\end{equation}
for any $\epsilon>0$, belongs to this intersection.

Given $\mu>0$, consider the following set
\begin{equation}
\label{setMmu}  \cM_\mu^\alpha:=\left\{ \psi \in \cM \cap L^1_w(\RR^d,\RR):  \cI(\psi) \ge \mu \right\}.
\end{equation}
This set is convex and does not contain the trivial function.

\begin{proposition}
  For any $b>0$, $\epsilon>0$ and  $\alpha$ satisfying
\begin{eqnarray}
  \label{alpha_cond1}  \epsilon-d-\alpha &\in&   (\max\{-3,-d\},\infty) \setminus \ZZ/2, \\
  \label{alpha_cond2}  -2 d -1 < \alpha  <\epsilon-{d \over 2} \quad &{\rm or}& \quad  \epsilon+{1-d \over 2} \le  \alpha<1+2 \epsilon,\\
  \label{alpha_cond3}  \alpha&<&1-\gamma,
\end{eqnarray}
  there is a constant $\mu$,
  $$\mu \asymp k^2,$$
  where the commensurability constant is independent of $b$, such that  the set $\cM_\mu^\alpha$ is non-empty and invariant under $\cR_\beta$ for all $(0,1)$.
\end{proposition}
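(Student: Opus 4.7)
My plan is to decompose $\cI(\cR_\beta[\psi])$ into the contributions $L$ from the linear part and $N$ from the nonlinear part of $\cR_\beta[\psi]$ (as displayed in $(\ref{operatorR4})$), and to show that the linear amplification dominates any loss from the nonlinear term by enough to preserve the lower bound $\cI(\,\cdot\,)\ge\mu$.

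For $L$, after the change of variable $z=\eta/\beta$ one obtains $L=\beta^{c+\alpha+d}\cI(\psi)=\beta^{\gamma+\alpha-1}\cI(\psi)$ (using $c=\gamma-d-1$); since $\alpha<1-\gamma$ by $(\ref{alpha_cond3})$ and $\beta\in(0,1)$, the exponent is negative, so $\beta^{\gamma+\alpha-1}>1$ and $L$ strictly amplifies $\cI(\psi)$. For $N$, substituting $y=\eta t$ in the nonlinear term separates the $t$-variable completely (since $|\eta|=|y|/t$ and $d\eta=dy/t^d$), and the remaining $t$-integral factors out as
$$\int_1^{1/\beta}\frac{dt}{t^{\gamma+\alpha}} \;=\; \frac{\beta^{\gamma+\alpha-1}-1}{1-\gamma-\alpha},$$
reproducing precisely the factor $(\beta^{\gamma+\alpha-1}-1)$ that appears in $L-\mu=(\beta^{\gamma+\alpha-1}-1)\mu$ under the hypothesis $\cI(\psi)\ge\mu$. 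Bounding $|\psi(x)\psi(y-x)|\le k^2|x|^\sigma|y-x|^\sigma e^{-b(|x|+|y-x|)}$ and reusing the radial convolution estimate on $I(|y|)$ established in the proof of Proposition $\ref{inv3}$, the remaining $y$-integral becomes $\int|y|^{\alpha+1}e^{-|y|^\gamma}I(|y|)\,dy$; by $(\ref{I})$--$(\ref{II})$ this is majorized by a finite, $b$-independent constant $J$ (the $e^{-b|y|}$ factor inside the bound on $I$ only shrinks the integral, so one may bound by its $b=0$ value). Convergence at the origin and at infinity is precisely what the conditions $(\ref{alpha_cond1})$--$(\ref{alpha_cond2})$ encode, with the two ranges of $\alpha$ in $(\ref{alpha_cond2})$ corresponding to the sign of $2\sigma+d$, i.e.\ to which of the two scalings of $I(|y|)$ for small $|y|$ is operative.

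Combining the two estimates gives
$$\cI(\cR_\beta[\psi]) \;\ge\; L - |N| \;\ge\; \mu + (\beta^{\gamma+\alpha-1}-1)\left(\mu - \frac{\gamma J k^2}{1-\gamma-\alpha}\right),$$
so setting $\mu:=\gamma J k^2/(1-\gamma-\alpha)$ both achieves the invariance $\cI(\cR_\beta[\psi])\ge\mu$ and yields the commensurability $\mu\asymp k^2$ with the ratio $\gamma J/(1-\gamma-\alpha)$ manifestly independent of both $\beta$ and $b$. For non-emptiness, the explicit candidate $\psi_0(\eta):=\tfrac{k}{2}\min\bigl\{|\eta|^\sigma,\,|\eta|^{\epsilon-d-\alpha}\bigr\}e^{-b|\eta|}$ lies in $\cM$ (since $|\psi_0|\le \tfrac{k}{2}|\eta|^\sigma e^{-b|\eta|}<\pmb\psi$), lies in $L^1_w$ (its $L^1_w$-integral near the origin reduces to a $|\eta|^{-d+\epsilon}$-type integrand, finite by $(\ref{alpha_cond1})$), and has $\cI(\psi_0)$ equal to a positive multiple of $k$; since Proposition $\ref{inv3}$ admits any smaller $k$ (a smaller $k$ only shrinks $\cM$, preserving its invariance), the condition $\cI(\psi_0)\ge\mu$ is met for $k$ sufficiently small.

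The main obstacle I anticipate is the case analysis required to bound $J$ uniformly in $b$: the pointwise bound on $I(|y|)$ changes form depending on whether $2\sigma+d\gtrless 0$ and whether $|y|$ is smaller or larger than the threshold $(2b)^{-1}$ appearing in $(\ref{I})$ and $(\ref{II})$, and these cases must be matched against the two ranges of $\alpha$ in $(\ref{alpha_cond2})$ so that the integrand $|y|^{\alpha+1+m}e^{-|y|^\gamma-b|y|}$ is integrable at both ends uniformly. A secondary but routine verification is that $\cR_\beta[\psi]$ itself lies in $L^1_w$: the linear part inherits $L^1_w$-integrability from $\psi$ via the same change of variable $z=\eta/\beta$ combined with $e^{-|z|^\gamma(1-\beta^\gamma)}\le 1$, while for the nonlinear part one reruns the $y=\eta t$ substitution with absolute values inside, using $e^{|\eta|^\gamma(1-t^\gamma)}\le 1$ together with the same bound on $J$.
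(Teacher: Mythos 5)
Your overall strategy coincides with the paper's: the linear part contributes exactly $\beta^{c+\alpha+d}\cI(\psi)=\beta^{\gamma+\alpha-1}\cI(\psi)\ge\beta^{\gamma+\alpha-1}\mu>\mu$, the nonlinear part is $O(k^2)$ times a factor vanishing as $\beta\to 1$, $\mu\asymp k^2$ closes the invariance, and non-emptiness comes from a small positive profile with $\cI\asymp k\gg k^2$. Your exact separation of variables $y=\eta t$, which factors the nonlinear contribution as $\gamma\,\frac{\beta^{\gamma+\alpha-1}-1}{1-\gamma-\alpha}\int_{\RR^d}|y|^{\alpha+1}e^{-|y|^\gamma}\Phi(y)\,dy$ and matches the linear deficit $(\beta^{\gamma+\alpha-1}-1)\mu$ identically, is cleaner than the paper's piecewise-in-$\beta$ estimates, and that computation is correct.

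The gap is in the assertion that $(\ref{alpha_cond1})$--$(\ref{alpha_cond2})$ ``precisely encode'' the convergence of $J=\int_{\RR^d}|y|^{\alpha+1}e^{-|y|^\gamma}I(|y|)\,dy$. In the branch $\alpha<\epsilon-\tfrac{d}{2}$ one has $2(\epsilon-d-\alpha)+d>0$, so the convolution $I(|y|)$ tends to the finite positive constant $I(0)=\int_{\RR^d}|x|^{2(\epsilon-d-\alpha)}e^{-2b|x|}\,dx$ as $y\to 0$; the integrand of $J$ near the origin is therefore comparable to $|y|^{\alpha+1}$, and $\int_0^1 r^{\alpha+1}\,r^{d-1}\,dr$ converges iff $\alpha>-d-1$. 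The hypothesis only supplies $\alpha>-2d-1$, so for $\alpha\in(-2d-1,-d-1]$ (a non-empty part of the admissible range, e.g. $d=2$, $\gamma=7/2$, $\alpha\in(-5,-3]$) your $J$ is infinite and the prescription $\mu=\gamma Jk^2/(1-\gamma-\alpha)$ collapses. For what it is worth, the paper's own small-$|\eta|$ estimate in this branch uses an integrand $|\eta|^{d+1+\alpha}$ whose extra power $|\eta|^{d}$ I cannot reproduce from the convolution bound with $m=\min\{0,2\sigma+d\}=0$; the honest exponent appears to be $|\eta|^{1+\alpha}$, which again forces $\alpha>-d-1$, so this is a difficulty your cleaner computation exposes rather than one you introduced --- but as written your proof asserts convergence where your own bound diverges. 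Separately, the $b$-uniformity of $J$ is not settled by ``setting $b=0$ in the exponential'': the constants in $(\ref{I})$--$(\ref{II})$ carry powers of $b$ (e.g. the factor $(2b)^{-(2\sigma+d)}$ hidden in $\int_{|y|}^{\infty}r^{2\sigma+d-1}e^{-2br}\,dr$), and it is precisely to tame these that the paper imposes $\alpha\ge\epsilon+\tfrac{1-d}{2}$ and $\alpha>-2d-1$ and obtains uniformity only for large $b$; you correctly flag this as the main obstacle, but the proposal does not overcome it.
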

\begin{proof}
  We consider the integral $\cI(\cR_\beta[\psi])$ for $\psi \in \cM_\mu^\alpha$. Notice that the the function
\begin{equation}
\pmb{\phi_{\alpha,\epsilon}}(\eta)=  k|\eta|^{-d-\alpha+\epsilon} e^{-b|\eta|}
\end{equation}
  is a conservative upper bound on the absolute values of functions in $\cM_\mu^\alpha$ (in fact it is an overestimate, as the bound $k f(\eta) e^{-b |\eta|}$ with $f$ as in $(\ref{fM})$ would do, but we keep a constant power of $|\eta|$ for simplicity of calculations). 
  \begin{eqnarray}
    \nonumber    \cI(\cR_\beta[\psi])  \hspace{-1mm} &  \hspace{-1mm}  \ge  \hspace{-1mm}  &   \hspace{-1mm}  \beta^c  \hspace{-1mm}  \int \displaylimits_{\RR^d}   \hspace{-1mm}  e^{-{|\eta|^\gamma \over \beta^\gamma}} \psi \left({\eta \over \beta} \right) |\eta|^\alpha  \ d \eta -  \gamma   \hspace{-1mm}  \int \displaylimits_{\RR^d}   \hspace{-1mm}   |\eta|^\alpha  \hspace{-0.5mm}  \int \displaylimits_{1}^{1 \over \beta}  \hspace{-0.8mm}  { e^{-|\eta t|^\gamma} \hspace{-0.5mm}  \over  t^{c} }  \left( \pmb{\phi_{\alpha,\epsilon}} \star \pmb{\phi_{\alpha,\epsilon}} \right)(\eta t)     d |\eta| t \ d \eta.
\end{eqnarray}
  Computation of the convolution integral is identical to that in Proposition $\ref{inv3}$ with $\sigma$ substituted by $-d-\alpha+\epsilon$. Similarly to the hypothesis of Proposition $(\ref{inv3}$ for $\sigma$ which guarantees existence of the convolution integral, conditions $(\ref{alpha_cond1})$ must be satisfied (but not the the analogues of the conditions $c \ge \sigma \ge -d$ and $\sigma<0$ which were the conditions on the invariance of the set $\cM$).
  \begin{equation}
    \nonumber  \cI(\cR_\beta[\psi]) \ge   \beta^{c+\alpha+d}   \int \displaylimits_{\RR^d}   \hspace{-1mm}  e^{-|\eta|^\gamma} \psi(\eta) |\eta|^\alpha  \ d \eta - k^2 \ C \hspace{-1mm}  \int \displaylimits_{\RR^d}   \hspace{-1mm}    |\eta|^\alpha  \hspace{-0.5mm}  \int \displaylimits_{|\eta|}^{|\eta|  \over \beta} z^{m(z)} e^{-z^\gamma -b z }  d z  \ d \eta,
  \end{equation}
  where either $\epsilon-d-\alpha<-2$ and
  \begin{equation}
\nonumber m(\eta)=2(\epsilon-d-\alpha) +d,
  \end{equation}
  or $\epsilon-d-\alpha>-2$ and 
  \begin{equation}
\nonumber m(\eta)=\left\{ 2(\epsilon-d-\alpha)+d, \hspace{20.0mm} |\eta|> (2 b)^{-1}, \atop  \min\{0,2 (\epsilon-d-\alpha)+d\}, \quad |\eta| \le (2 b)^{-1}   \right.,
  \end{equation}

  That is,  when
\begin{equation}
\label{alpha1}  \epsilon-{d \over 2} \le \alpha <\epsilon -d +\min\{ d, 3\},
\end{equation}
    the integral becomes
  \begin{equation}
    \nonumber  \cI(\cR_\beta[\psi]) \ge  \beta^{c+\alpha+d} \mu  - k^2 \ C \hspace{-0.5mm} \left(1 \hspace{-0.5mm} + \hspace{-0.5mm}  b^{\epsilon-\alpha+{1-d \over 2} }  \right) \hspace{-0.5mm}  \left|{1 \over \beta^{2 \epsilon -d -2 \alpha+1}}-1  \right|    \int \displaylimits_{\RR^d}   \hspace{-1mm}  |\eta|^{2 \epsilon-d-\alpha+1}  e^{-|\eta|^\gamma -b |\eta| } \  d \eta.
  \end{equation}

  The last integral exists if
  \begin{equation}
    \label{alpha_bound1} 1-d-\alpha +2 \epsilon> -d \implies 1+2 \epsilon > \alpha.
  \end{equation}
  Therefore, under  $(\ref{alpha1})$ and $(\ref{alpha_bound1})$ 
  we have that
  \begin{equation}
    \label{bound11}  \cI(\cR_\beta[\psi])   \ge   \beta^{c+\alpha+d}  \mu  - k^2 \ C  \left(1-\beta  \right)  \hspace{-0.5mm} \left(1 \hspace{-0.5mm} + \hspace{-0.5mm}  b^{\epsilon-\alpha+{1-d \over 2} }  \right) \hspace{-0.5mm}.
  \end{equation}
  We will also impose the condition
  \begin{equation}
    \label{alpha22} \alpha > \epsilon +{1 -d  \over 2}. 
  \end{equation}
Under this condition the factor  $\left(1+b^{\epsilon-\alpha+{1-d \over 2} }  \right)$ is bounded for large $b$.

A similar computation (with the nonlinear term added rather than subtracted) shows that
  \begin{equation}
\|\cR_\beta[\psi] \|_w \le  \beta^{c+\alpha+d}  \mu  + k^2 \ C  \left(1-\beta  \right).
  \end{equation}
  with $w$ as in  $(\ref{w})$.

 Next, consider the case
\begin{equation}
  \label{alpha3}  \alpha < \epsilon -{d \over 2}.
  \end{equation}
  The lower bound on the integral $\cI(\cR_\beta[\psi])$ in this case is:
  \begin{eqnarray}
    \nonumber  \cI(\cR_\beta[\psi]) \ge  \beta^{c+\alpha+d} \mu  \hspace{-1.2mm}   &   \hspace{-1.2mm} -  \hspace{-1.2mm} &  \hspace{-1.2mm} k^2 \ C  \left|{1 \over \beta^{d+1}}-1  \right|   \int \displaylimits_{|\eta| < (2 b)^{-1}}   \hspace{-1mm}  |\eta|^{d+1+\alpha}  e^{-|\eta|^\gamma -b |\eta| } \  d \eta - \\
    \nonumber \hspace{-1.2mm}   &   \hspace{-1.2mm} -  \hspace{-1.2mm} &k^2 \ C  \left|{1 \over \beta^{2 \epsilon -d -2 \alpha+1}}-1  \right|    \int \displaylimits_{|\eta| \ge (2 b)^{-1}}   \hspace{-1mm}  |\eta|^{2 \epsilon-d-\alpha+1}  e^{-|\eta|^\gamma -b |\eta| } \  d \eta.
  \end{eqnarray}

  The second integral always converges, the first one does so under the condition 
  \begin{equation}
    \label{alpha4} d+1+\alpha> -d \implies \alpha>-2 d -1,
  \end{equation}
  and together with $(\ref{alpha3})$ this gives another range for $\alpha$:
 \begin{equation}
    \label{alpha5}  -2 d -1 < \alpha < \epsilon -{d \over 2}.
  \end{equation}
The lower bound becomes
 \begin{eqnarray}
   \nonumber  \cI(\cR_\beta[\psi])  \ge  \beta^{c+\alpha+d} \mu  \hspace{-1.2mm}   &   \hspace{-1.2mm} -  \hspace{-1.2mm} &  \hspace{-1.2mm} k^2 C  (1 \hspace{-0.5mm} - \hspace{-0.5mm} \beta)   \hspace{-0.5mm}  \left( \hspace{-0.8mm} \gamma \hspace{-0.5mm} \left({2 d \hspace{-0.5mm} + \hspace{-0.5mm} \alpha \hspace{-0.5mm}  + \hspace{-0.5mm} 1 \over \gamma},(2 b)^{-\gamma}  \right) \hspace{-0.5mm} +  \hspace{-0.5mm}  \Gamma \hspace{-0.5mm} \left( \hspace{-0.5mm} {2 \epsilon \hspace{-0.5mm}  - \hspace{-0.5mm}  \alpha \hspace{-0.5mm} +  \hspace{-0.5mm} 1\over \gamma},(2 b)^{-\gamma}  \hspace{-0.8mm} \right) \hspace{-0.5mm} \right).
  \end{eqnarray}
   For large $b$ this expression can be bounded as follows:
    \begin{equation}
      \nonumber  \cI(\cR_\beta[\psi])  \ge  \beta^{c+\alpha+d} \mu  -  k^2 \ C  (1-\beta)    \left( b^{-2 d -\alpha -1}  +   \Gamma \left({2 \epsilon -\alpha +1\over \gamma} \right) \right).
  \end{equation}
A sufficient condition for this bound o be independent of $b$ as $b$ is taken large, is 
\begin{equation}
  \nonumber 2 d + \alpha +1 >0,
\end{equation}
  which coincides with the lower bound in $(\ref{alpha5})$.

  We obtain that under $(\ref{alpha5})$,
  \begin{equation}
    \label{bound11}  \cI(\cR_\beta[\psi])   \ge   \beta^{c+\alpha+d}  \mu  - k^2 \ C  \left(1-\beta  \right).
  \end{equation}
  A similar computation (with the nonlinear term added rather than subtracted) shows that
  \begin{equation}
\|\cR_\beta[\psi] \|_w \le  \beta^{c+\alpha+d}  \mu  + k^2 \ C  \left(1-\beta  \right).
  \end{equation}
  with $w$ as in  $(\ref{w})$.

  The set $\cM_\mu^\alpha$ is renormalization invariant if $(\ref{bound11})$ is larger than $\mu$, that is if 
  \begin{equation}
    \label{alpha_bound2} c+\alpha+d<0 \implies \alpha < 1-\gamma,
  \end{equation}
 which is stronger than $(\ref{alpha_bound1})$,  and
  \begin{equation}
  \label{mu_bound} \mu >   {k^2 \ C  (1-\beta)  \over \beta^{c+\alpha+d} -1} \ge C k^2.
\end{equation} 

  Consider the function $\psi_{a,f}(\eta)=a k f(|\eta|) e^{-b |\eta|}$ with some measurable $f$. This function is in $\cM$ if  $f(|\eta|) \le |\eta|^\sigma$ and $0<a<1$, and, additionally, $\cI(\psi_{a,f})$ exists if $f$ has a mild singularity at $0$:, i.e. $ \psi_{a,f} \in L^1_{w}(\RR^d,\RR)$, with $w(\eta)=|\eta|^\alpha$, as before. Then
  \begin{eqnarray}
    \nonumber    \cI(\psi_{a,f})  \hspace{-1mm} &  \hspace{-1mm}  \ge  \hspace{-1mm}  &   \hspace{-1mm} a k   \hspace{-0.5mm}  \int \displaylimits_{\RR^d}   e^{-|\eta|^\gamma-b |\eta|} f(|\eta|) |\eta|^\alpha \ d \eta > C a k, 
  \end{eqnarray}
  where the constant $C$ depends on $f$. Clearly, $I(\psi_{a,f})$ can be made larger than $\mu$ in $\ref{mu_bound})$ by a choice of $k$.
\end{proof}

Notice that conditions $(\ref{alpha_cond1})-(\ref{alpha_cond3})$ imply for $d \ge 2$ that $\alpha$ belongs to the following set:
\begin{align}
 \label{H10}  \tag{H10} \alpha &\in S_1 \cap S_2, \quad {\rm where}  \\
  \nonumber S_1 &=\left( \left( -\infty, \min\{1-\gamma,  \epsilon-d+\min\{ 3,d\} \} \right)  \hspace{-0.5mm} \setminus (\ZZ/2 - d +  \epsilon)  \right), \\
\nonumber S_2 &= \left(-2 d -1, \epsilon -{d \over 2} \right)  \cup \left[\epsilon+{1-d \over 2},1+2 \epsilon  \right).
\end{align}

\subsection{Existence of an invariant bounded convex precompact set}  \label{nontriv_precompactness}
Let
\begin{equation}
\label{weight} v_\nu(\eta)=|\eta|^\nu e^{{b \over 2} |\eta|}.
\end{equation}

Set  $\cM$ defined in $(\ref{setM})$ is a subset of $L^p_{v_\nu}(\RR^d,\RR)$ as long as
\begin{equation}
  \label{MinLp} p (\sigma+\nu)>-d.
\end{equation}
$\cM$ is clearly equitight in $L^p_{v_\nu}$ since all its members are exponentially bounded.

We will now turn to the issue of equicontinuity. 

We start by giving an example of a function in $\cM_\mu^\alpha$, such that $\psi-\tau_\delta \psi$ is bounded in norm by $C |\delta|$.

\begin{lemma}\label{Nnonempty}
  Fix $\delta_0>0$ and set
  $$\psi_{a,f}(\eta)=a k f(|\eta|) e^{-b |\eta|},$$
  where $f$ is any Lipschitz-continuous function defined over the non-negative real line such that  $\ 0<f(x) < \min\{x^\sigma,x^\zeta\}$ for some fixed $\zeta>\max\{0,-d-\alpha \}$ and  $\sigma \in (\max\{-3,-d\},0) \setminus \ZZ/2$ satisfying $(\ref{MinLp})$. Then there exists $K>0$, such that 
  $$\|\psi_{a,f}-\tau_\delta \psi_{a,f}  \|_{v_\nu,p} \le K |\delta|$$
  whenever $|\delta| < \delta_0$.
\end{lemma}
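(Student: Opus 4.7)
The plan is to derive a pointwise bound of the form $|\psi_{a,f}(\eta) - \psi_{a,f}(\eta-\delta)| \le C|\delta|\,e^{-b|\eta|}$, valid uniformly for $|\delta|<\delta_0$ and all $\eta\in\RR^d$, and then integrate against the weight $v_\nu^p$. First I would observe that since $\sigma<0<\zeta$, the envelope $x\mapsto\min\{x^\sigma,x^\zeta\}$ decays to zero at both $0$ and $\infty$ and attains its maximum $1$ at $x=1$, so $f$ is globally bounded on $[0,\infty)$ by some constant $M$. Writing $g(x):=f(x)e^{-bx}$ so that $\psi_{a,f}(\eta)=ak\,g(|\eta|)$, the Lipschitz constant $L$ of $f$ and the uniform bound $M$ combine via Rademacher's theorem and the product rule to give $|g'(x)|\le(L+bM)e^{-bx}$ almost everywhere.

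Next, using the reverse triangle inequality $\bigl||\eta|-|\eta-\delta|\bigr|\le|\delta|$, the interval $I_\eta$ joining $|\eta|$ and $|\eta-\delta|$ lies inside $[\max(0,|\eta|-|\delta|),\,|\eta|+|\delta|]$, and the mean value theorem applied to $g$ yields
$$|g(|\eta|)-g(|\eta-\delta|)|\le(L+bM)\,|\delta|\sup_{x\in I_\eta}e^{-bx}\le(L+bM)e^{b\delta_0}|\delta|\,e^{-b|\eta|}.$$
The last inequality follows by a short case split: when $|\eta|\ge|\delta|$ the supremum equals $e^{-b(|\eta|-|\delta|)}=e^{b|\delta|}e^{-b|\eta|}$; when $|\eta|<|\delta|$ the supremum is at most $1$, while $e^{b\delta_0}e^{-b|\eta|}\ge e^{b(\delta_0-|\eta|)}\ge 1$. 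Setting $C:=ak(L+bM)e^{b\delta_0}$ then gives the desired pointwise bound.

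Finally, raising to the $p$-th power, multiplying by $v_\nu(\eta)^p=|\eta|^{p\nu}e^{bp|\eta|/2}$, and integrating yields
$$\|\psi_{a,f}-\tau_\delta\psi_{a,f}\|_{v_\nu,p}^p\le C^p\,|\delta|^p\int_{\RR^d}|\eta|^{p\nu}e^{-bp|\eta|/2}\,d\eta,$$
and the remaining integral is finite thanks to exponential decay at infinity and the local condition $p\nu+d>0$, which is implied by the hypothesis $(\ref{MinLp})$ (namely $p(\sigma+\nu)>-d$) together with $\sigma<0$. Taking $K$ equal to $C$ times the $p$-th root of that integral completes the proof. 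The only real subtlety is the uniform control of $\sup_{I_\eta}e^{-bx}$ across the transition region $|\eta|\approx|\delta|$, which is resolved cleanly by the case split above; all constants depend only on $a$, $k$, $b$, $f$, $d$, $p$, $\nu$ and $\delta_0$, as required.
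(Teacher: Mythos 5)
Your proof is correct, and it rests on the same underlying Leibniz-rule idea as the paper's, but it is packaged differently enough to be worth comparing. The paper splits $\psi_{a,f}-\tau_\delta\psi_{a,f}$ into $e\,(f-\tau_\delta f)+(e-\tau_\delta e)\,\tau_\delta f$ with $e(\eta)=e^{-b|\eta|}$ and estimates the two $L^p_{v_\nu}$-norms separately, invoking the power bound $f(x)\le x^\zeta$ to control the integrand of the second term; you instead establish a single uniform pointwise estimate $|\psi_{a,f}(\eta)-\psi_{a,f}(\eta-\delta)|\le C|\delta|\,e^{-b|\eta|}$ via the a.e.\ derivative of $g(x)=f(x)e^{-bx}$ and integrate once. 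Your route is slightly more economical: it uses only the boundedness $f\le 1$ (which, as you observe, follows from $\sigma<0<\zeta$), so the exponent $\zeta$ plays no role, and the only integrability condition needed is $p\nu>-d$, which indeed follows from $(\ref{MinLp})$ together with $\sigma<0$; the paper's version additionally records the conditions $(\zeta+\nu)p>-d$ and $\zeta p>-d$, which your argument renders unnecessary. One small point of rigor: since $g$ is merely Lipschitz, the mean value theorem should be replaced by the identity $g(s)-g(t)=\int_t^s g'$ for absolutely continuous functions, giving $|g(s)-g(t)|\le|s-t|\cdot\operatorname{ess\,sup}_{I_\eta}|g'|$ --- your appeal to Rademacher's theorem shows you are aware of this, and the case split on $|\eta|$ versus $|\delta|$ correctly handles the transition region. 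Both arguments yield the stated conclusion with constants depending only on the admissible data.
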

\begin{proof}
  Denote $e(\eta)=e^{-b |\eta|}$ and write
  $$\|\psi -\tau_\delta \psi \|_{v_\nu,p} \le \|e (f- \tau_\delta f)\|_{v_\nu,p}+\|(e-\tau_\delta e) \tau_\delta  f  \|_{v_\nu,p}.$$
  
Since   $f$ is Lipschitz continuous,
$$\|e (f- \tau_\delta f)\|_{v_\nu,p} \le L k  |\delta| \left(\int_{\RR^d}  e^{-b p |\eta|} v_\nu(\eta)^p d \eta \right)^{1 \over p} \le C k |\delta|.$$

For the second norm we have
\begin{eqnarray}
  \nonumber \|(e-\tau_\delta e) \tau_\delta  f  \|_{v_\nu,p} &\le& a k \left( \int_{\RR^d} |e^{-b |\eta|} - e^{-b |\eta-\delta|}  |^p   f(|\eta-\delta|)^p v_\nu(\eta)^p d \eta \right)^{1 \over p} \\
  \nonumber &\le & a  k  \left( \int_{\RR^d} e^{-p {b \over 2} |\eta|} \left|1- e^{-b |\eta-\delta|+b |\eta|} \right|^p f(\eta-\delta)^p  |\eta|^{\nu p} d \eta \right)^{1 \over p}\\
  \label{nonempt} &\le & C k  |\delta|  \left(  \int_{\RR^d}   e^{-p {b \over 2} |\eta|} |\eta-\delta|^{\zeta p}  |\eta|^{\nu p} d \eta \right)^{1 \over p}.
\end{eqnarray}
Since $\zeta>0>\sigma$, we have that
$$( \sigma + \nu) p>-d, \quad (\zeta + \nu) p>-d, \quad \nu  p > -d, \quad \zeta p > -d.$$
Therefore the integral in $(\ref{nonempt})$ is convergent and the claim follows.
\end{proof}

We will require another version of Young's Convolution inequality for weighted $L^p$-spaces for non-radial functions (see Proposition $2.2$ and Example $3.2$ in \cite{BiSwa}).

\begin{proposition} \label{Young2}
  Suppose $1 < t \le \min\{p,q\}$, $r \ge 1$ and
  $${1 \over p} +{1 \over q}={1 \over r} +{1 \over t}.$$
  Let
  $$\nu, \mu < {d \over t'}, \quad \nu+\mu > {d \over t'},$$
  where $t'$ is the conjugate of $t$.  Then for any two $f \in L^p_{v_\nu}$ and $g \in  L^q_{v_\mu}$, the convolution $f \star g \in  L^r_{v_\kappa}$,
  $$\kappa=\mu+\nu-{d \over t'},$$
  with
  $$ \|f \star g \|_{v_{\kappa},r} \le C \| f \|_{v_\nu,p}  \| g \|_{v_\mu,q}.$$
\end{proposition}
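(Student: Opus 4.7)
The plan is to split the weighted convolution estimate into two independent pieces: one handling the exponential factor $e^{(b/2)|\eta|}$ and one handling the power factor $|\eta|^\nu$ in the weight $v_\nu$.

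First I would reduce the exponential weight using subadditivity of $|\cdot|$. Since $|x| \le |y| + |x-y|$, monotonicity of the exponential gives $e^{(b/2)|x|} \le e^{(b/2)|y|} e^{(b/2)|x-y|}$. Setting $F(y) := |f(y)| e^{(b/2)|y|}$ and $G(y) := |g(y)| e^{(b/2)|y|}$, the pointwise bound
\[
e^{(b/2)|x|} |(f \star g)(x)| \le \int |f(y)| |g(x-y)|\, e^{(b/2)|y|} e^{(b/2)|x-y|}\, dy = (F \star G)(x)
\]
holds, and the definition of $v_\nu$ yields $\||\cdot|^\nu F\|_{L^p} = \|f\|_{v_\nu, p}$ and $\||\cdot|^\mu G\|_{L^q} = \|g\|_{v_\mu, q}$. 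Multiplying the pointwise inequality by $|x|^\kappa$ and taking $L^r$-norms shows that it is enough to establish the purely power-weighted inequality
\[
\||\cdot|^\kappa (F \star G)\|_{L^r} \le C \||\cdot|^\nu F\|_{L^p} \||\cdot|^\mu G\|_{L^q}, \qquad \kappa = \nu + \mu - d/t',
\]
for nonnegative measurable $F, G$.

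The second step is precisely Proposition~2.2 of Bilyk--Swanson \cite{BiSwa}, whose hypotheses match ours term by term: $1 < t \le \min\{p,q\}$, $r \ge 1$, $1/p + 1/q = 1/r + 1/t$, $\nu, \mu < d/t'$, and $\nu + \mu \ge d/t'$, with the particular parameter regime considered here being covered by Example~3.2 therein. Invoking this result closes the proof.

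The main obstacle is, of course, the power-weighted step; the exponential reduction is the easy half. A self-contained proof of the power-weighted inequality would proceed by decomposing the integration region according to the relative sizes of $|y|$, $|x-y|$, and $|x|$ and applying H\"older's inequality with the exponents $(p, q, t)$ on each piece: the condition $\nu, \mu < d/t'$ controls the local singularities at the origin of each variable, while $\nu + \mu > d/t'$ controls the decay at infinity. We prefer, however, to simply invoke the optimal result of \cite{BiSwa}.
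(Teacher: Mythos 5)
Your proposal is correct and follows essentially the same route as the paper, which states this proposition without proof as a direct citation of Proposition~2.2 and Example~3.2 of Bilyk--Swanson; the only added content is your explicit reduction of the exponential factor via $e^{(b/2)|x|}\le e^{(b/2)|y|}e^{(b/2)|x-y|}$, which is sound (and is in any case already built into the cited results through the subadditivity hypothesis $\phi_3(|x|)\le K+\phi_1(|y|)+\phi_2(|x-y|)$ that the paper records in its radial version of the inequality). No gap.
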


 We can now prove the following result.

 \begin{proposition} \underline{(\it Equicontinuity.)} \label{non_triv_equicont}
   Let
   \begin{equation}
     \label{H11} \tag{H11}
     \begin{cases}
          & \hspace{-3mm}  c \ge \sigma \ge  -d, \hspace{8mm}  \sigma \in \left(-{p-1 \over p},0 \right) \setminus \ZZ/2, \hspace{6.5mm}  \gamma > d-2,    \hspace{14.2mm} p>2, \\
       &  \hspace{-3mm} c \hspace{-0.5mm}   +  \hspace{-0.5mm}  \nu  \hspace{-0.5mm} +  \hspace{-0.5mm}  {d \over p}>\theta, \hspace{4mm}  (2 \sigma  \hspace{-0.5mm}  +  \hspace{-0.5mm} d  \hspace{-0.5mm}  +  \hspace{-0.5mm} \nu   \hspace{-0.5mm} -  \hspace{-0.5mm} 1)p > -d, \hspace{5mm} d {p  \hspace{-0.5mm} -  \hspace{-0.5mm} 2 \over 2 p} < \nu < d {p  \hspace{-0.5mm} -  \hspace{-0.5mm}  2 \over p}, \hspace{4.7mm} \kappa < d, \\
       &  \hspace{-3mm} (\sigma + \nu) p > -d, \hspace{3.0mm}   \theta -c - \kappa+1>0, \hspace{10.5mm}  -d <(\nu - \kappa) p, \hspace{6.7mm} \gamma > d+\sigma .
     \end{cases}
   \end{equation}
  Fix $\delta_0>0$. For any $\beta_0 \in (0,1)$ there exists, $A>0$, $K>0$, $1>\theta>0$, $k>0$, $\tau>0$ and $b>0$ such that the subset $\cN$ of $\cM$ satisfying
$$| \psi(\eta) -\tau_\delta \psi(\eta) |  \le \omega_\psi(\eta)  |\delta|^\theta$$
   for all vectors $\delta \in \RR^d$, $|\delta|<\delta_0$, and $\omega_\psi \in L_{v_\nu}^p(\RR^d,\RR_+)$  such that $\| \omega_\psi \|_{v_\nu,p} \le K$, and
   \begin{equation}
\label{omegabound}    \omega_\psi(\eta) \le  A |\eta|^\tau e^{-b |\eta|}
   \end{equation}
whenever $|\eta|>1$,  is non-empty and invariant under the operator $\cR_\beta$ for all $\beta \in (\beta_0,1)$.
\end{proposition}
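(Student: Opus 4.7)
The plan is to reduce first to non-emptiness (immediate from Lemma \ref{Nnonempty}, picking $K$ at least as large as the Lipschitz constant produced there and observing that the explicit example $\psi_{a,f}$ constructed in its proof has $\omega_{\psi_{a,f}}(\eta) \lesssim e^{-b|\eta|}$), and then, given $\psi\in\cN$, to construct a modulus function $\omega_{\cR_\beta[\psi]}$ satisfying all three of the conditions defining $\cN$: the pointwise H\"older bound, $\|\omega_{\cR_\beta[\psi]}\|_{v_\nu,p}\le K$, and the exponential decay $(\ref{omegabound})$. The core of the argument is the modulus estimate, which I would organise along the same lines as the strategy outlined (but deliberately omitted) in Proposition \ref{equicont}.

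Splitting $\cR_\beta[\psi]=L+N$ with $L(\eta)=\beta^c e^{-(1/\beta^\gamma-1)|\eta|^\gamma}\psi(\eta/\beta)$, I would write $L(\eta)-L(\eta-\delta)$ as a telescoping sum: a piece $\psi(\eta/\beta)-\psi((\eta-\delta)/\beta)$ bounded by $\omega_\psi(\eta/\beta)(|\delta|/\beta)^\theta$, plus a piece using the Lipschitz bound $|e^{-a|\eta|^\gamma}-e^{-a|\eta-\delta|^\gamma}|\le a\gamma(|\eta|^{\gamma-1}+|\delta|^{\gamma-1})|\delta|$ times the pointwise envelope $\pmb{\psi}((\eta-\delta)/\beta)$ from $(\ref{setM})$. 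After the change of variables $\eta\mapsto\beta\eta$ in the $L^p_{v_\nu}$-integral, this contributes $\beta^{c+\nu+d/p-\theta}K$ plus a lower-order term carrying an extra $|\delta|^{1-\theta}$ factor which can be absorbed into the $|\delta|^\theta$-bound since $\theta<1$.

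For the nonlinear part $N$ in the form $(\ref{operatorR4})$, I would exploit the identity $\eta\cdot\psi(\eta t-x)=x\cdot\psi(\eta t-x)$ from the derivation of $(\ref{operatorR4})$ so that the $\eta$-dependence inside the double integral is confined to (i) the Gaussian $e^{|\eta|^\gamma(1-t^\gamma)}$, (ii) the unit vector $e_{d-1}(\eta)$ together with the prefactor $|\eta|\prod_k\sin\hat\eta_k$, and (iii) the shifted argument $\eta t-x$ of one copy of $\psi$. Distributing the $\delta$-translation across these slots produces three contributions: (i) and (ii) are Lipschitz in $\eta$ and test against a convolution $|\psi|\star|\psi|$, controlled pointwise by $\pmb{\psi}\star\pmb{\psi}$ and giving the radial convolution bounds $(\ref{I})$--$(\ref{II})$ of Proposition \ref{inv3}; (iii) produces $\omega_\psi\star(|x|\pmb{\psi})$ multiplied by $t^\theta$. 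Each $L^p_{v_\nu}$-norm is then estimated by Proposition \ref{Young2}, verifying that the Young exponents required simultaneously for both $\omega_\psi\star(|\cdot|\pmb{\psi})$ and $\pmb{\psi}\star\pmb{\psi}$ are admissible under $(\ref{H11})$, and the $t$-integral is evaluated against the Gaussian exactly as in Lemma \ref{Ik} (with the exponent of $t$ shifted by $\theta$ in contribution (iii)).

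Combining linear and nonlinear pieces would yield an inequality of the form
\[
\|\omega_{\cR_\beta[\psi]}\|_{v_\nu,p} \le \beta^{c+\nu+d/p-\theta}K + C_1 k K \rho(\beta) + C_2 k^2 \rho(\beta),
\]
with $\rho(\beta)=(1/\beta^\gamma-1)^{m/p}\to 0$ as $\beta\nearrow 1$. One can then close for $K$ uniformly in $\beta\in(\beta_0,1)$ via an inequality of type $(\ref{Kineqq})$, after first taking $k$ small enough that $C_1 k\rho(\beta)<(1-\beta^{c+\nu+d/p-\theta})/2$; this is possible because $c+\nu+d/p>\theta$ is part of $(\ref{H11})$. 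The exponential decay $(\ref{omegabound})$ on $\{|\eta|>1\}$ is automatic from the Gaussian in $L$ and the $e^{-b|\eta|}$ factors in $(\ref{I})$--$(\ref{II})$, since $e^{-|\eta|^\gamma}$ dominates any polynomial multiple of $e^{-b|\eta|}$ for $|\eta|>1$. The main obstacle I anticipate is contribution (iii): the coupling of the H\"older gain $t^\theta$ with the kernel factor $t^{-c-1}$ forces a careful choice of Young exponents so that the modified incomplete-Gamma constant analogous to $w_k$ in Lemma \ref{Ik} remains finite and controlled as $\beta\to 1$, and simultaneously verifying admissibility of $(\ref{H11})$ for both $\omega_\psi\star(|\cdot|\pmb{\psi})$ and $\pmb{\psi}\star\pmb{\psi}$ under Proposition \ref{Young2} is the chief bookkeeping burden.
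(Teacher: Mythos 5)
Your overall architecture --- split $\cR_\beta[\psi]$ into $L+N$, distribute the translation across the $\eta$-dependent slots, control the convolutions via Proposition \ref{inv3} and Proposition \ref{Young2}, and close the inequality for $K$ as $\beta\to1$ using $c+\nu+d/p>\theta$ --- is the same as the paper's, and your treatment of the linear term and of the ``exponential'', ``prefactor'' and ``$\omega_\psi$'' slots matches the paper's $N_1$, $N_2$ and $N_4$. There is, however, one genuine gap and one overstated claim.

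The gap is the angular factor. After projecting onto $e_{d-1}(\eta)$, the nonlinear integrand in $(\ref{operatorR4})$ carries the factor $\sin\reallywidehat{(\eta t-x)}_{d-1}$, i.e.\ the azimuthal direction evaluated at the shifted point $\eta t-x$. Its $\eta$-translation difference is covered neither by your slot (ii) (it is not Lipschitz in $\eta$: the best pointwise bound is $C\,|\delta t|/|\eta t-x|$, which blows up on the diagonal $x=\eta t$) nor by your slot (iii) (the H\"older hypothesis defining $\cN$ is imposed on the scalar component $\psi_{d-1}$ only, so $\omega_\psi$ does not control the rotation of the unit vector field $e_{d-1}$). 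This is the paper's term $N_3$ and it is the most laborious part of the proof: one must split the convolution into the region $|\eta t-x|>|\delta|/2$, where the $|\delta t|/|\eta t-x|$ bound inserts an extra singular weight and forces a re-derivation of the radial convolution estimates of Proposition \ref{inv3} with $\sigma$ lowered by one in one slot, and the region $|\eta t-x|\le|\delta|/2$, where the sine difference is only $O(1)$ and the smallness must come from the volume of the region, producing the exponents $|\delta|^{\sigma+d}$ and $|\delta|^{2\sigma+d+\nu+d/p}$ that enter the definition $(\ref{def_theta})$ of $\theta$. Without this term your list of admissible exponents, and hence the resulting hypothesis set, is incomplete.

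The overstated claim is that the decay $(\ref{omegabound})$ is ``automatic''. For $|\eta|>1$ the Gaussian contributes nothing useful to the $N_4$ bound (the $t$-integral of $e^{|\eta-\delta|^\gamma(1-t^\gamma)}$ is merely $O(1)$); the decay of $\omega_{\cR_\beta[\psi]}$ is inherited from the decay of $\pmb{\psi}$ and of $\omega_\psi$ itself through the convolution $|\psi|\star\omega_\psi$, which is precisely why $(\ref{omegabound})$ must be built into the definition of $\cN$ so that it can be propagated, rather than deduced afterwards. Propagating it produces a competing power $|\eta|^{\sigma+d+\tau-\gamma}$ that must be dominated by $|\eta|^\tau$; this is where the hypothesis $\gamma\ge d+\sigma$ of $(\ref{H11})$ is actually used, together with a constraint of type $(\ref{condonconstants})$ linking $b$, $k$, $\delta_0$, $\theta$ and $\tau$. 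Your proposal never invokes $\gamma\ge d+\sigma$, which is a sign that this step is missing.
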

\begin{proof}
  Fix $\beta_0 \in (0,1)$. Throughout the proof we will use the fact that for all $\beta \in (\beta_0,1)$ the factors $1-\beta^m$ and $1/\beta^m-1$ for a given positive $m$ are commensurate with  $1-\beta$.

  \bigskip
  
\noindent \textit{\textbf{1)  Linear terms, case $\bm{|\delta|<  \beta \delta_0}$. } }    First, setting $I(\eta,\delta)=  \left|L(\eta)-L(\eta-\delta)\right|$, for all $|\delta| < \beta \delta_0$,
  \begin{eqnarray}
    \nonumber  I(\eta,\delta) \hspace{-1.2mm} &  \hspace{-1.2mm}  \le   \hspace{-1.2mm}  &   \hspace{-1.2mm}  \beta^c \left| e^{ -(1-\beta^\gamma) {|\eta|^\gamma \over \beta^\gamma}} \psi\left( {\eta \over \beta} \right) -  e^{ -(1-\beta^\gamma) {|\eta-\delta|^\gamma \over \beta^\gamma}} \psi\left( {\eta -\delta \over \beta} \right) \right|  \\
    \nonumber  \hspace{-1.2mm} &  \hspace{-1.2mm}  \le   \hspace{-1.2mm}  &   \hspace{-1.2mm} \beta^c \left| e^{ -{1-\beta^\gamma \over \beta^\gamma} {|\eta|^\gamma}} \hspace{-1.2mm} -  \hspace{-0.8mm}  e^{ -{1-\beta^\gamma \over \beta^\gamma} {|\eta-\delta|^\gamma}} \right| \left| \psi\left( {\eta  \over \beta} \right) \right|  \hspace{-0.5mm} +  \hspace{-0.5mm}  \beta^c  e^{ -{1-\beta^\gamma \over \beta^\gamma} {|\eta-\delta|^\gamma}} \left| \psi\left( {\eta \over \beta} \right)  \hspace{-0.8mm} -  \hspace{-0.8mm}  \psi\left( {\eta -\delta \over \beta} \right) \right|  \\
    \nonumber  \hspace{-1.2mm} &  \hspace{-1.2mm}  \le   \hspace{-1.2mm}  &   \hspace{-1.2mm} \beta^c \left| e^{ -{1-\beta^\gamma \over \beta^\gamma} {|\eta|^\gamma}} \hspace{-1.2mm} -  \hspace{-0.8mm}  e^{ -{1-\beta^\gamma \over \beta^\gamma} {|\eta-\delta|^\gamma}} \right| \left| \psi\left( {\eta  \over \beta} \right) \right|  \hspace{-0.3mm} +  \hspace{-0.3mm}  \beta^{c-\theta}  e^{ -{1-\beta^\gamma \over \beta^\gamma} {|\eta-\delta|^\gamma}}   \omega_\psi \left({\eta \over \beta} \right) |\delta|^\theta \\
 \nonumber  \hspace{-1.2mm} &  \hspace{-1.2mm}  =   \hspace{-1.2mm}  &   \hspace{-1.2mm} I_1(\eta,\delta)+I_2(\eta,\delta).
  \end{eqnarray}
Suppose $|\eta|>|\eta-\delta|$ then 
\begin{equation}
  \nonumber   \left| e^{ -(1-\beta^\gamma) {|\eta|^\gamma \over \beta^\gamma}} -  e^{ -(1-\beta^\gamma) {|\eta-\delta|^\gamma \over \beta^\gamma}} \right| \le   \left|1- e^{ -{ 1-\beta^\gamma \over \beta^\gamma} \left( |\eta|^\gamma -|\eta-\delta|^\gamma\right) } \right|.
\end{equation}
Otherwise, if $|\eta| \le |\eta-\delta|$ then 
\begin{equation}
  \nonumber   \left| e^{ -(1-\beta^\gamma) {|\eta|^\gamma \over \beta^\gamma}} -  e^{ -(1-\beta^\gamma) {|\eta-\delta|^\gamma \over \beta^\gamma}} \right| \le  \left|1- e^{ -{ 1-\beta^\gamma \over \beta^\gamma} \left( |\eta-\delta|^\gamma -|\eta|^\gamma \right) } \right|.
\end{equation}
In both cases, if $|\eta| > 2 |\delta|$, the following very conservative estimate holds:
\begin{equation}
  \label{expdiff}   \left| e^{ -(1-\beta^\gamma) {|\eta|^\gamma \over \beta^\gamma}} -  e^{ -(1-\beta^\gamma) {|\eta-\delta|^\gamma \over \beta^\gamma}} \right|\le  \left|1- e^{ -{ 1-\beta^\gamma \over \beta^\gamma} D |\eta|^{\gamma-1} |\delta| } \right| \le  { 1-\beta^\gamma \over \beta^\gamma} D |\eta|^{\gamma-1} |\delta|
\end{equation}
for some constant $D>0$. Therefore,
\begin{eqnarray}
  \nonumber \| I_1 \|^p_{v_\nu,p} & \le & \beta^{p c} \hspace{-3mm}  \int \displaylimits_{|\eta| \le 2 |\delta|  }  \hspace{-1mm} \left| e^{ -{ 1-\beta^\gamma \over \beta^\gamma} {|\eta|^\gamma}} -  e^{ -{ 1-\beta^\gamma \over \beta^\gamma} {|\eta-\delta|^\gamma}} \right|^p  \left| \psi\left( {\eta  \over \beta} \right) \right|^p v_\nu^p(\eta)  \  d \eta +  \\
  \nonumber && \hspace{20mm} +\beta^{p c}  \hspace{-3mm}  \int \displaylimits_{|\eta| > 2 |\delta|  }  \hspace{-1mm} \left| e^{ -{ 1-\beta^\gamma \over \beta^\gamma} {|\eta|^\gamma }} -  e^{ -{ 1-\beta^\gamma \over \beta^\gamma} {|\eta-\delta|^\gamma}} \right|^p  \left| \psi\left( {\eta  \over \beta} \right) \right|^p v_\nu^p(\eta)   \ d \eta \\
  \nonumber  & \le & \beta^{p c} \left(1-e^{-{1-\beta^\gamma  \over \beta^\gamma } |3 \delta|^\gamma   }    \right)^p \hspace{-2mm} \int \displaylimits_{|\eta| \le 2 |\delta|  }  \hspace{-1mm}   \left| \psi\left( {\eta  \over \beta} \right) \right|^p v_\nu^p(\eta)  \  d \eta + \\
  \nonumber && \hspace{20mm} + C k  (1-\beta^\gamma)^p |\delta|^p \hspace{-3mm} \int \displaylimits_{|\eta| > 2 |\delta|  }  \hspace{-1mm} |\eta|^{p(\sigma+\nu+\gamma-1)+d-1} e^{-{b \over 2} p |\eta|} \ d |\eta| \\
  \nonumber  & \le & C k  (1 \hspace{-0.5mm}  - \hspace{-0.5mm} \beta)^p \hspace{-1mm} \left( \hspace{-0.5mm}  |\delta|^{p \gamma}  \hspace{-5mm}  \int \displaylimits_{|\eta| \le 2 |\delta|  }  \hspace{-3.5mm}    |\eta|^{p(\sigma+\nu)+d-1}  \  d |\eta| \hspace{-0.5mm} + \hspace{-0.5mm}  |\delta|^{p} \hspace{-5mm}  \int \displaylimits_{|\eta| > 2 |\delta|  }  \hspace{-3.5mm} |\eta|^{p(\sigma+\nu+\gamma-1)+d-1} e^{-{b \over 2} p |\eta|} \ d |\eta| \hspace{-1mm} \right) \\
   \nonumber  & \le & C k  (1-\beta)^p \left( |\delta|^{p (\sigma+\nu)+d+p \gamma} +|\delta|^p \hspace{-3mm}  \int \displaylimits_{|\eta| > 2 |\delta|  }  \hspace{-3mm} |\eta|^{p(\sigma+\nu+\gamma-1)+d-1} e^{-{b \over 2} p |\eta|} \ d |\eta| \right).
\end{eqnarray}
The second integral is proportional to the incomplete $\Gamma$-function $\Gamma(p(\sigma+\nu+\gamma-1)+d, b |\delta|)$, and since  $p (\sigma + \nu) +d >0$ by $(\ref{H11})$, this $\Gamma$-function is bounded by a constant.  We have, therefore, 
\begin{equation} \label{Lestimate2}
  \| L-\tau_\delta L \|_{v_\nu,p}   \le   C k (1-\beta)  |\delta|^{\min\{1,\gamma\}} + \beta^{c-\theta+\nu+{d \over p}}  K  |\delta|^\theta.
  \end{equation}
At the same time, the bound on $I(\eta,\delta)$  itself for $|\eta|>1$ can be straightforwardly obtain using $(\ref{expdiff})$, the bound $(\ref{setM})$ on $\psi \in \cM$ and the assumption $(\ref{omegabound})$ 
\begin{equation}
 \label{Lterms} I(\eta,\delta) \le C k  (1-\beta) |\eta|^{\sigma+\gamma-1}  |\delta| e^{-b|\eta|} + A \beta^{c-\theta-\tau}  e^{- {1-\beta^\gamma \over \beta^\gamma} |\eta-\delta|^\gamma} |\eta|^\tau e^{-b {|\eta| \over \beta}} |\delta|^\theta. 
\end{equation}

\bigskip

 \noindent \textit{\textbf{2)  Non-linear terms, case $\bm{|\delta|<  \beta \delta_0}$. } }    First,  for all $|\delta| < \beta \delta_0$,
 Consider the nonlinear terms of the renormalization operator $(\ref{operatorR4})$:
 \begin{eqnarray}
  \nonumber N(\eta)&=&  \gamma  \int \displaylimits_{1}^{1 \over \beta}  \hspace{-0.8mm}  { e^{|\eta|^\gamma \left(1-t^\gamma\right)}  |\eta_a|  \over  t^{c} }  \int \displaylimits_{\RR^d}  \psi(x) \psi(\eta t  \hspace{-0.2mm} -  \hspace{-0.2mm}x) \cos\hat x_{d-1} \sin {\reallywidehat{(\eta t -x)}}_{d-1}   \ d x    \ d  t.
 \end{eqnarray}
 We have
 $$|N(\eta)-N(\eta-\delta)| \le |N_1(\eta,\delta)|+|N_2(\eta,\delta)|+|N_3(\eta,\delta)|+|N_4(\eta,\delta)|,$$
 where
 \begin{eqnarray}
   \nonumber |N_1(\eta,\delta)|  & \le  &  C  \int \displaylimits_{1}^{1 \over \beta}   { \left| e^{|\eta|^\gamma \left(1-t^\gamma\right)}  - e^{|\eta-\delta|^\gamma \left(1-t^\gamma\right)}\right|  |\eta_a|  \over  t^{c} } \int \displaylimits_{\RR^d}   |\psi(x)| |\psi(\eta t  -  x)|   d x    d  t \\
   \nonumber |N_2(\eta,\delta)|  & \le & \int \displaylimits_{1}^{1 \over \beta}   {e^{|\eta-\delta|^\gamma \left(1-t^\gamma\right)} |  |\eta_a|-|\eta_a -\delta|  | \over  t^{c} }   \int \displaylimits_{\RR^d} |\psi(x)| |\psi(\eta t-x)|   d x    d  t \\
   \nonumber |N_3(\eta,\delta) | &\le& C \int \displaylimits_{1}^{1 \over \beta}  {e^{|\eta-\delta|^\gamma \left(1-t^\gamma\right)} |\eta_a - \delta|  \over  t^{c} }   \int \displaylimits_{\RR^d} |\psi(x)| |\psi(\eta t  - x)| \times \\
   \nonumber && \hspace{+30mm} \times \left| \sin {  \reallywidehat{(\eta t  -  x)}}_{d-1} -  \sin {  \reallywidehat{((\eta-\delta) t - x)}}_{d-1} \right|   d x    d  t \\
   \nonumber |N_4(\eta,\delta)|  &\le& C   \int \displaylimits_{1}^{1 \over \beta}   {e^{|\eta-\delta|^\gamma \left(1-t^\gamma\right)} |\eta_a- \delta|  \over  t^{c} }  \int \displaylimits_{\RR^d}  \hspace{-1mm} |\psi(x)| \left| \psi(\eta t -x)-\psi((\eta-\delta) t -x) \right|   d x    d  t
 \end{eqnarray}

 We consider each $N_i$ separately

 \bigskip
 
 \noindent \underline{\it $a)$ Calculation of $N_1$.}  We use $(\ref{I})$ to bound the convolution:
 \begin{eqnarray}
   \nonumber |N_1(\eta,\delta)|  & \le &  C k^2 \int \displaylimits_{1}^{1 \over \beta}  { \left| e^{|\eta|^\gamma \left(1-t^\gamma\right)}  -  e^{|\eta-\delta|^\gamma \left(1-t^\gamma\right)}\right|  |\eta_a|  \over  t^{c} }  e^{-b |\eta t|} |\eta t|^{2 \sigma+d}  d  t.
 \end{eqnarray}

 We will first evaluate the difference of the exponentials.
\medskip

\begin{lemma}
For every $\gamma > 1$, there exist a constant $C$, such that
\begin{equation}
  \label{L5}   \left| e^{ -(1-\beta^\gamma) {|\eta|^\gamma \over \beta^\gamma}} -  e^{ -(1-\beta^\gamma) {|\eta-\delta|^\gamma \over \beta^\gamma}} \right| < C \ \left({1 \over \beta^\gamma}-1 \right)^{1 \over \gamma} |\delta|.
\end{equation}
\end{lemma}
\begin{proof}
We have in the case $|\eta|>|\eta-\delta|$ and $|\eta|>|\delta|$
\begin{eqnarray}
  \nonumber   \left| e^{ -{1-\beta^\gamma \over \beta^\gamma} {|\eta|^\gamma}} -  e^{ -{1-\beta^\gamma \over \beta^\gamma} {|\eta-\delta|^\gamma }} \right| & <  & e^{ -{1-\beta^\gamma \over \beta^\gamma} {|\eta-\delta|^\gamma}}  \left| 1 -  e^{ {1-\beta^\gamma \over \beta^\gamma} \left( |\eta-\delta|^\gamma - |\eta|^\gamma    \right)} \right| \\
  \nonumber & < &   e^{ -{1-\beta^\gamma \over  \beta^\gamma}  (|\eta|-|\delta|)^\gamma}  \left( 1 -  e^{ -{ 1-\beta^\gamma \over \beta^\gamma } D  |\eta|^{\gamma-1} |\delta|} \right) \\
\label{bound18} & < &   e^{ {1-\beta^\gamma \over  \beta^\gamma}  |\delta|^\gamma}   e^{ -{2 \over 2^\gamma} {1-\beta^\gamma \over  \beta^\gamma}  |\eta|^\gamma}  \left( 1 -  e^{ -{ 1-\beta^\gamma \over \beta^\gamma } D  |\eta|^{\gamma-1} |\delta|} \right)
\end{eqnarray}
for some constant $D>0$. To get a bound on this expression, proportional to a power of $|\delta|$, we first notice that
\begin{equation}
  \label{bound9} e^{-x^\gamma}  \left(1 - e^{-a x^{\gamma-1}} \right)<{ 2 a x^{\gamma-1} \over (1 + a x^{\gamma-1}) (1 +  x^\gamma )  } < { 2 a x^{\gamma-1} \over  (1 + x^\gamma )  } \le 2 a  { (\gamma -1)^{\gamma-1 \over \gamma} \over \gamma }.
\end{equation}
We can now use this estimate on $(\ref{bound18})$ with
$$x={2^{1 \over \gamma} \over 2} \left({1 \over \beta^\gamma}-1\right)^{1\over \gamma} |\eta|, \quad a= D |\delta| 2^{(\gamma-1)^2 \over \gamma} \left({1 \over \beta^\gamma}-1 \right)^{1\over \gamma},$$
\begin{equation}
  \label{L1}   \left| e^{ -{1-\beta^\gamma \over \beta^\gamma} {|\eta|^\gamma}} -  e^{ -{1-\beta^\gamma \over \beta^\gamma} {|\eta-\delta|^\gamma}} \right|  <  C \ e^{ {1-\beta^\gamma \over  \beta^\gamma} |\delta|^\gamma}    \left({1 \over \beta^\gamma} - 1 \right)^{1\over \gamma}   \ |\delta|.
\end{equation}
In the case $|\eta|>|\eta-\delta|$ and $|\eta|<|\delta|$,
\begin{equation}
  \label{L2}   \left| e^{ -{1-\beta^\gamma \over \beta^\gamma} {|\eta|^\gamma}} -  e^{ -{1-\beta^\gamma \over \beta^\gamma} {|\eta-\delta|^\gamma }} \right|  <     \left( 1 -  e^{ -{ 1-\beta^\gamma \over \beta^\gamma } D |\delta|^{\gamma-1} |\eta|} \right) \le  C \   \left({1 \over \beta^\gamma} - 1 \right)  |\delta|^\gamma .
\end{equation}
In the case $|\eta-\delta|>|\eta|>|\delta|$, for some constant $D>0$,
\begin{equation}
  \nonumber   \left| e^{ -{1-\beta^\gamma \over \beta^\gamma} {|\eta|^\gamma }} -  e^{ -{1-\beta^\gamma \over \beta^\gamma} {|\eta-\delta|^\gamma}} \right|  <   e^{ -{1-\beta^\gamma \over \beta^\gamma} {|\eta|^\gamma }} \left(1-e^{-{1-\beta^\gamma \over \beta^\gamma} D|\delta| |\eta|^{\gamma-1}} \right).
\end{equation}
We can again use estimate $(\ref{bound9})$ in this case:
\begin{equation}
  \label{L3}   \left| e^{ -(1-\beta^\gamma) {|\eta|^\gamma \over \beta^\gamma}} -  e^{ -(1-\beta^\gamma) {|\eta-\delta|^\gamma \over \beta^\gamma}} \right| <  C\  \left({1 \over \beta^\gamma}-1 \right)^{1\over \gamma}   \ |\delta|.
\end{equation}
In the case $|\eta-\delta|>|\eta|$ and $|\delta|>|\eta|$,
\begin{eqnarray}
  \nonumber   \left| e^{ -{1-\beta^\gamma \over \beta^\gamma} {|\eta|^\gamma}} -  e^{ -{1-\beta^\gamma \over \beta^\gamma} {|\eta-\delta|^\gamma}} \right| &< & e^{ -{1-\beta^\gamma \over \beta^\gamma} {|\eta|^\gamma }} \left(1-e^{-{1-\beta^\gamma \over \beta^\gamma} D|\eta| |\delta|^{\gamma-1}} \right) \\
  \label{bound10}  & <&  e^{ -{1-\beta^\gamma \over \beta^\gamma} {|\eta|^\gamma }} \left(1-e^{-{1-\beta^\gamma \over \beta^\gamma} D|\delta|^{\gamma}} \right).
\end{eqnarray}
\begin{equation}
  \label{L4}   \left| e^{ -(1-\beta^\gamma) {|\eta|^\gamma \over \beta^\gamma}} -  e^{ -(1-\beta^\gamma) {|\eta-\delta|^\gamma \over \beta^\gamma}} \right| <  C \ \left({1 \over \beta^\gamma} -1\right)  |\delta|^{\gamma}.
\end{equation}
We can now combine bound $(\ref{L1})$, $(\ref{L2})$,  $(\ref{L3})$,  $(\ref{L4})$ to claim  the result. 
\end{proof}

\medskip

We use the bound $(\ref{L5})$ on the difference of exponentials in $N_1$ as follows:
   \begin{equation*}
  \nonumber   \left| e^{ -(t^\gamma-1) {|\eta|^\gamma}} -  e^{ -(t^\gamma-1) {|\eta-\delta|^\gamma}} \right|  <    C (t^\gamma-1)^{1 \over \gamma} |\delta|.
\end{equation*}
Therefore,
  \begin{eqnarray}
    \nonumber |N_1(\eta,\delta)|   &   \le  &  C k^2 |\eta|^{2 \sigma+d+1  }   |\delta|  \int \displaylimits_{1}^{1 \over \beta}  (t-1)^{1 \over \gamma} e^{-b |\eta|}  d  t \\
    \label{N1terms}  &\le &   C k^2 ( 1 -\beta)^{1+\gamma \over \gamma} |\eta|^{2 \sigma+d+1} e^{-b |\eta| }   |\delta|.
\end{eqnarray}
  The condition for finiteness of $\|N_1\|_{v_\nu,p}$ is
  \begin{equation} \label{condd1}
    (2 \sigma+d+1+\nu) p>-d,
  \end{equation}
  while the norm itself can be bounded as
  \begin{equation}
\label{N1}
 \| N_1\|_{v_\nu,p}   \le C k^2 (1 -\beta )^{\gamma +1 \over \gamma}    |\delta|.  
\end{equation}
  
 \bigskip
 
 \noindent \underline{\it $b)$ Calculation of $N_2$.} 
 \begin{eqnarray}
   \nonumber |N_2(\eta,\delta)|  & \le &  C k^2  \int \displaylimits_{1}^{1 \over \beta}  { e^{|\eta-\delta|^\gamma \left(1-t^\gamma\right)}  |\delta|  \over  t^{c} }  e^{-b |\eta t|} |\eta t|^{2 \sigma+d}  d  t  \\
   \nonumber &\le&   C k^2 |\eta|^{2 \sigma +d} |\delta| \int \displaylimits_{1}^{1 \over \beta} e^{-b |\eta t|} |t|^{2 \sigma+d-c}  d  t \\
   \nonumber &\le&   C k^2 \left({1 \over \beta^{2 \sigma +d-c+1}}-1  \right)  e^{-b |\eta|} |\eta|^{2 \sigma +d} |\delta|  \\
      \label{N2terms} &\le&   C k^2 (1- \beta) |\eta|^{2 \sigma +d}  e^{-b |\eta|}  |\delta|.
 \end{eqnarray}
The conditions of finiteness of  $\|N_2\|_{v_\nu,p}$ is
\begin{equation}
\label{condd2} (2\sigma+d+\nu) p >-d,
 \end{equation}
while the norm itself is bounded as
 \begin{equation}
   \label{N2} \|N_2\|_{v_\nu,p} \le   C k^2 (1-\beta)   |\delta|.
 \end{equation}

 \bigskip
 
 \noindent  \underline{\it $c)$ Calculation of $N_3$.}
 
 \noindent {\bf i) Case $\pmb{|\eta t-x| > |\delta|/2}$}.

   In this case,
 \begin{eqnarray}
   \nonumber \left| \sin { \hspace{-0.5mm} \reallywidehat{(\eta t \hspace{-0.4mm} - \hspace{-0.4 mm} x)}}_{d-1} -  \sin { \hspace{-0.5mm} \reallywidehat{((\eta-\delta) t \hspace{-0.4mm} - \hspace{-0.4 mm} x)}}_{d-1} \right| &\le & C \sin \left({|  \reallywidehat{((\eta-\delta) t \hspace{-0.4mm} - \hspace{-0.4 mm} x)}_{d-1}  -  \reallywidehat{(\eta t \hspace{-0.4mm} - \hspace{-0.4 mm} x)}_{d-1}      | \over 2 }  \right) \\
       \nonumber  &\le & C { |\delta t|  \over |\eta t- x|}.  
 \end{eqnarray}
 Therefore, setting $\eta t=y$ and largely repeating the calculations for the convolution in Proposition $\ref{inv3}$ for
 $$I(|y|)=\int \displaylimits_{|y-x|>{|\delta| \over 2}}  \hspace{-0.5mm}  |x|^{\sigma} e^{- b {|x|}}  |y -x|^{\sigma-1} e^{- b {|y - x|}} d x =\int \displaylimits_{|x|>{|\delta| \over 2}}  \hspace{-0.5mm}  |x|^{\sigma-1} e^{- b {|x|}}  |y -x|^{\sigma} e^{- b {|y - x|}}d x,$$
 we get
\begin{eqnarray}
  \nonumber I(|y|) \hspace{-2mm}   &\le& \hspace{-2.0mm} C \int \displaylimits_{{|\delta| \over 2 }}^{\infty}  \hspace{-0.5mm}  r^{\sigma+d-3} e^{- b r-b||y|-r|}  {1 \over |y|} \left| ||y|-r|^{\sigma+2} -(|y|+r)^{\sigma+2} \right| \  d r.
\end{eqnarray}
We first look at {\bf the case $\pmb{|y|>|\delta|/2}$}. Then
\begin{equation}
  \label{eq00}  I(|y|)=  I_1(|y|)+I_2(|y|),
\end{equation}
where $I_1$ an $I_2$ are the integrals over ranges $(|\delta| / 2,|y|)$ and $(|y|,\infty)$, respectively. We start by evaluating $I_1$:
\begin{eqnarray}
  \nonumber I_1(|y|)  \hspace{-2mm}   &\le& \hspace{-2.0mm}  C |y|^{2 \sigma+d-1} e^{-b |y|} \int \displaylimits_{{|\delta| \over 2 |y|}}^{1}  \hspace{-0.5mm}  z^{\sigma+d-3}  \left| (1-z)^{\sigma+2} -(1+z)^{\sigma+2} \right|  \  d z \\
  \nonumber \hspace{-2mm}   &\le& \hspace{-2.0mm} C |y|^{2 \sigma+d-1} e^{-b |y|} \int \displaylimits_{{|\delta| \over 2 |y|} }^{1}  \hspace{-0.5mm}  z^{\sigma+d-2} \ \phantom{}_2F_1\left({1 \over 2} -{\sigma +1 \over 2},-{\sigma +1 \over 2}, {3 \over 2}, z^2  \right)  d z.
\end{eqnarray}
We will consider only the case $\sigma >-1$ as this is a weaker condition than $\sigma >-(p-1)/p$ appearing in the hypothesis. If $\sigma>-2$ then the Gauss hypergeometric function $\phantom{}_2F_1(a,b,c,z)$ can be bounded by a constant (since $c-a-b>0$). Additionally, for all $d \ge 2$ and $\sigma >-1$, $\sigma+d-1>0$, and
\begin{equation}
  \nonumber I_1(|y|)  \le  C |y|^{2 \sigma+d-1} e^{-b |y|} \int \displaylimits_{|\delta| \over 2 |y|}^{1}  \hspace{-0.5mm}  z^{\sigma+d-2}  d z\le  C  |y|^{2 \sigma+d-1} e^{-b |y|}   \left(1-\left({|\delta| \over 2 |y|   } \right)^{\sigma+d-1} \right).
\end{equation}
We now turn to $I_2$:
\begin{eqnarray}
  \nonumber I_2(|y|)    \hspace{-2mm}   &\le& \hspace{-2.0mm} C  e^{b|y|} |y|^{2 \sigma+d-1} \int \displaylimits_{1}^{\infty}  \hspace{-0.5mm}  z^{\sigma+d-3} ((z+1)^{\sigma+2}-(z-1)^{\sigma+2}) e^{- 2 b|y| z}  \  d z \\
  \nonumber  \hspace{-2mm}   &\le& \hspace{-2.0mm} C  e^{b|y|} |y|^{2 \sigma+d-1} \int \displaylimits_{1}^{\infty}  \hspace{-0.5mm}  z^{2 \sigma+d-1} ((1+z^{-1})^{\sigma+2}-(1-z^{-1})^{\sigma+2}) e^{- 2 b|y| z}  \  d z \\
  \nonumber  \hspace{-2mm}   &\le& \hspace{-2.0mm} C  e^{b|y|} |y|^{2 \sigma+d-1} \int \displaylimits_{1}^{\infty}  \hspace{-0.5mm}  z^{2 \sigma+d-2} \phantom{}_2F_1 \left( {1 \over 2} -{\sigma+1 \over 2  },-{ \sigma+1 \over 2}, {3 \over 2}, z^{-2} \right) e^{- 2 b|y| z}  \  d z \\
  \nonumber  \hspace{-2mm}   &\le& \hspace{-2.0mm} C  e^{b|y|} |y|^{2 \sigma+d-1} \int \displaylimits_{1}^{\infty}  \hspace{-0.5mm}  z^{2 \sigma+d-2} e^{- 2 b|y| z}  \  d z \\
  \nonumber  \hspace{-2mm}   &\le& \hspace{-2.0mm} C \ e^{b|y|} \ \Gamma( 2 \sigma+d-1,2 b|y|) \\
\label{I22}  \hspace{-2mm}   &\le& \hspace{-2.0mm} C \  \left\{1-{|2 b y|^{2 \sigma +d -1} \over 2 \sigma+d-1}, \hspace{5.4mm}  |2 b y|<1, \atop   e^{-b|y|} |y|^{2 \sigma+d-2}, \quad |2 b y| \ge 1.     \right.
\end{eqnarray}

We summarize:
\begin{equation}
  \label{II3}I(|y|) \le C e^{-b|y|} \left( 1-{|y|^{2 \sigma +d -1} \over 2 \sigma+d-1} + |y|^{2 \sigma+d-1}  \left(1-{|\delta|^{\sigma+d-1}  \over 2^{\sigma+d-1} |y|^{\sigma+d-1}}  \right) \right)
  \end{equation}
for small $|y|$, and
\begin{equation}
  \label{II4}I(|y|) \le C e^{-b|y|} \left( |y|^{2 \sigma+d-2}+|y|^{2 \sigma+d-1}  \left(1-{|\delta|^{\sigma+d-1}  \over 2^{\sigma+d-1} |y|^{\sigma+d-1}}  \right) \right)
\end{equation}
for large $|y|$. 

Now we turn to {\bf the case} $\pmb{|y| \le |\delta|/2}$. In this case the computation of $I$ is similar to to $(\ref{I22})$ with the lower bound substituted by $|\delta|/2 |y|$, and, eventually, can be bounded by $I_2$:
\begin{equation}
\nonumber I(|y|)\le  C \left\{1-{|2 b y|^{2 \sigma +d -1} \over 2 \sigma+d-1}, \hspace{6mm}  |2 by|<1, \atop   \ e^{-b|y|} |y|^{2 \sigma+d-2}, \quad |2 b y| \ge 1,     \right.
\end{equation}
however, since $|y| \le |\delta|/2$, we simply get that
\begin{equation}
\nonumber I(|y|)\le C-C {| y|^{2 \sigma +d -1} \over 2 \sigma+d-1}.
\end{equation}

To summarize, in the case $|\eta t-x| >  |\delta|/2$,
\begin{equation}
  \label{II5}I(|y|) \le C e^{-b|y|} \left\{ 1-{|y|^{2 \sigma +d -1} \over 2 \sigma+d-1},  \hspace{5mm} |y| < |\delta|/2, \atop  |y|^{2 \sigma+d-1}, \hspace{9mm} |y| \ge  |\delta|/2\right.
\end{equation}

\noindent {\bf ii) Case $\pmb{|\eta t-x| \le  |\delta|/2}$}.

  We now turn to the case  $|\eta t -x| \le  |\delta|/2$. The difference of two sines can be now bounded by a constant, and we get that, as in Proposition $\ref{inv3}$,
\begin{eqnarray}
\nonumber J(|y|) &:= & \int \displaylimits_{|x|<{|\delta| \over 2}}  \hspace{-0.5mm}  |x|^{\sigma} e^{- b {|x|}}  |y -x|^{\sigma} e^{- b {|y - x|}} d x \\
\nonumber    &\le&  C \int \displaylimits_{0}^{|\delta| \over 2}   r^{\sigma+d-2} e^{- b r-b||y|-r|}  {1 \over |y|} \left| ||y|-r|^{\sigma+2} -(|y|+r)^{\sigma+2} \right|  \  d r.
\end{eqnarray}
{\bf If $\pmb{|y| < |\delta| /2}$}, we can bound $J$ as
\begin{eqnarray}
  \nonumber   J(|y|) &\le&  C \int \displaylimits_{0}^{|y|}   r^{\sigma+d-2} e^{- b r-b||y|-r|}  {1 \over |y|} \left| ||y|-r|^{\sigma+2} -(|y|+r)^{\sigma+2} \right|  \  d r +\\
  \nonumber  && \hspace{10mm} +  C \int \displaylimits_{|y|}^{|\delta| \over 2}   r^{\sigma+d-2} e^{- b r-b||y|-r|}  {1 \over |y|} \left| ||y|-r|^{\sigma+2} -(|y|+r)^{\sigma+2} \right|  \  d r \\
  \nonumber &=& J_1(|y|)+J_2(|y|).
\end{eqnarray}
$J_1$ is nothing but the integral $I_1$ from Proposition  $\ref{inv3}$:
\begin{equation}
 \label{J1} J_1(|y|) \le C |y|^{2 \sigma +d} e^{-b |y|}. 
\end{equation}
Bounding integral $J_2$ is similar to $I_2$ in the previous subsection:
\begin{eqnarray}
  \nonumber J_2(|y|)   \hspace{-2mm}   &\le& \hspace{-2.0mm} C  e^{b|y|} |y|^{2 \sigma+d} \int \displaylimits_{1}^{{|\delta| \over 2 |y|} }  \hspace{-0.5mm}  z^{\sigma+d-2} \left|(z-1)^{\sigma+2} - (z+1)^{\sigma+2} \right| e^{- 2 b|y| z}  \  d z \\
  \nonumber   \hspace{-2mm}   &\le& \hspace{-2.0mm} C  e^{b|y|} |y|^{2 \sigma+d} \int \displaylimits_{1}^{{|\delta| \over 2 |y|} }  \hspace{-0.5mm}  z^{2 \sigma+d} \left((1+z^{-1})^{\sigma+2} - (1-z^{-1})^{\sigma+2} \right) e^{- 2 b|y| z}  \  d z \\
  \nonumber  \hspace{-2mm}   &\le& \hspace{-2.0mm} C  e^{b|y|} |y|^{2 \sigma+d} \int \displaylimits_{1}^{{|\delta| \over 2 |y|} }  \hspace{-0.5mm}  z^{2 \sigma+d-1} e^{- 2 b|y| z}  \  d z \\
   \nonumber  \hspace{-2mm}   &\le& \hspace{-2.0mm} C  e^{-b|y|}  \left({|\delta|^{2 \sigma+d} \over 2^{2 \sigma+d}}-|y|^{2 \sigma+d}   \right).
\end{eqnarray}

{\bf In case} $\pmb{|y| \ge |\delta| /2}$, we bound $J$ again using the fact that the hypergeometric function for its values of the arguments is bounded by a constant:
\begin{eqnarray}
  \nonumber J(|y|)    &\le&  C |y|^{2 \sigma+d} e^{-b |y|} \int \displaylimits_{0}^{{|\delta| \over 2 |y|}}    z^{\sigma+d-1} \phantom{}_2F_1 \left({1 \over 2} -{\sigma+1 \over 2}, -{\sigma+1 \over 2}, {3 \over 2}, z^2  \right)  d z \\
    \label{J2}    &\le&  C |y|^{\sigma} e^{-b |y|} |\delta|^{d +\sigma}.
\end{eqnarray}
Therefore, in the case  $|\eta t-x| \le  |\delta|/2$ we get the same bound for all $y$:
\begin{equation}
 \label{J3} J(|y|) \le C  e^{-b |y|}  |y|^{\sigma} |\delta|^{d +\sigma}.
\end{equation}

We conclude that the bound on the convolution integral in $N_3$ is the sum of  $(\ref{II5})$ and $(\ref{J3})$:
\begin{eqnarray}
 \nonumber  \int \displaylimits_{\RR^d} |\psi(x)| |\psi(y \hspace{-1.5mm}  & \hspace{-1.5mm} - \hspace{-1.5mm} & \hspace{-1.5mm} x)| \left| \sin {  \reallywidehat{(y  -  x)}}_{d-1} -  \sin {  \reallywidehat{((y-\delta t) - x)}}_{d-1} \right|   d x  \\
  \nonumber  \hspace{-1.5mm}  & \hspace{-1.5mm} \le \hspace{-1.5mm} & \hspace{-1.5mm} C e^{-b|y|} \left\{ { \left(1-{|y|^{2 \sigma+d-1} \over 2 \sigma +d-1} \right) |\delta t| + |y|^\sigma|\delta|^{\sigma +d} , \quad \hspace{6.1mm}  |y| \le { |\delta t| \over 2}, \atop  |y|^{2 \sigma+d-1} |\delta t| \hspace{-0.5mm} + \hspace{-0.5mm} |y|^{\sigma} |\delta t|^{d +\sigma},\hspace{20.5mm} |y|>{|\delta t| \over 2}. } \right.
\end{eqnarray}

We get eventually for $|\eta| \le |\delta|/2$:
\begin{eqnarray}
  \nonumber |N_3(\eta,\delta)|  \hspace{-0.5mm} &  \hspace{-0.5mm} \le  \hspace{-0.5mm} &  \hspace{-0.5mm}  C k^2 \int \displaylimits_1^{1 \over \beta} { e^{|\eta-\delta|(1-t^\gamma)} |\eta| \over t^c}  e^{-b |\eta t|}  \left(
  \left(1-{|\eta t|^{2 \sigma+d-1} \over 2 \sigma +d-1} \right) |\delta t| + |\eta t|^\sigma|\delta|^{\sigma +d} 
  \right) d t \\
  \nonumber  \hspace{-0.5mm} &  \hspace{-0.5mm} \le  \hspace{-0.5mm} &  \hspace{-0.5mm}  C k^2 (1-\beta) \  e^{-b |\eta|}   \left(
  \left(1-{|\eta|^{2 \sigma+d-1} \over 2 \sigma +d-1} \right) |\delta| + |\eta|^\sigma|\delta|^{\sigma +d} 
  \right),
\end{eqnarray}
and for $|\eta| > |\delta|/2$:
\begin{eqnarray}
  \nonumber |N_3(\eta,\delta)|  \hspace{-0.5mm} &  \hspace{-0.5mm} \le  \hspace{-0.5mm} &  \hspace{-0.5mm}  C k^2 \int \displaylimits_1^{1 \over \beta} { e^{|\eta-\delta|(1-t^\gamma)} |\eta| \over t^c}  e^{-b |\eta t|}  \left(
 |\eta t |^{2 \sigma+d-1} |\delta t| \hspace{-0.5mm} + \hspace{-0.5mm} |\eta t|^{\sigma} |\delta t|^{d +\sigma}
  \right) d t \\
  \label{N3terms}  \hspace{-0.5mm} &  \hspace{-0.5mm} \le  \hspace{-0.5mm} &  \hspace{-0.5mm}  C k^2 (1-\beta) e^{-b |\eta|}  \left( |\eta|^{2 \sigma+d-1} |\delta| \hspace{-0.5mm} + \hspace{-0.5mm} |\eta|^{\sigma} |\delta|^{d +\sigma} \right).
\end{eqnarray}
Therefore the condition
\begin{eqnarray}
  \label{condd3}  (\sigma+\nu)p&>&-d,\\
  \label{condd4}  (2 \sigma+d-1+\nu)p&>&-d,
\end{eqnarray}
are sufficient for 
$N_3$ to be in $L_{v_\nu,p}$:
\begin{align}
  \nonumber \| N_3\|_{v_\nu,p}^p   &  \le   C k^{2 p}  (1 -\beta  )^p  \hspace{-1mm} \int  \displaylimits_{0}^{|\delta| \over 2} \hspace{-1mm} e^{-{b \over 2} p |\eta|} |\eta|^{\nu p+d-1}  \hspace{-1mm}  \left( \hspace{-1.0mm}  \left(1\hspace{-0.5mm} - \hspace{-0.5mm} {|\eta|^{2 \sigma+d-1} \over 2 \sigma \hspace{-0.3mm}  +  \hspace{-0.3mm}  d  \hspace{-0.3mm}   -  \hspace{-0.3mm}   1} \right) |\delta|  \hspace{-0.5mm}   +   \hspace{-0.5mm}  |\eta|^\sigma|\delta|^{\sigma +d} 
  \right)^p  \hspace{-1.0mm}  d |\eta| +\\
  \nonumber    &   \phantom{ \le  C k^{2 p}  (1}  \hspace{1.5mm}  + C k^{2 p}  \left(1 -\beta  \right)^p  \hspace{-1mm}  \int  \displaylimits_{|\delta| \over 2}^\infty e^{-{b \over 2} p |\eta|}  |\eta|^{\nu p +d -1}  \left( |\eta|^{2 \sigma+d-1} |\delta| \hspace{-0.5mm} + \hspace{-0.5mm} |\eta|^{\sigma} |\delta|^{d +\sigma} \right)^p  d |\eta| \\
  \label{N3} & \le   C k^{2 p}  \left( 1 -\beta  \right)^p \left( |\delta|^{(2 \sigma+d +\nu)p+d} +|\delta|^{(\nu+1)p +d } + |\delta|^p+|\delta|^{(d+\sigma)p} \right).
\end{align}

\bigskip
 
\noindent  \underline{\it $d)$ Calculation of $N_4$.} 
We use Young's Convolution Inequality $\ref{Young2}$ with $p=q$, $r=\infty$, i.e. with
\begin{equation}
\label{kappap}\kappa=2 \nu -d {p -2 \over p},
\end{equation}
to get:
\begin{eqnarray}
  \nonumber | N_4(\eta,\delta)|  \hspace{-1mm} & \hspace{-1mm}  \le \hspace{-1mm} & \hspace{-2mm}  \gamma  \hspace{-1mm} \int \displaylimits_{1}^{1 \over \beta}  \hspace{-0.8mm}  {e^{|\eta-\delta|^\gamma \left(1-t^\gamma\right)} |\eta_a \hspace{-0.4mm}- \hspace{-0.3mm}\delta_a|  \over  t^{c} }  \hspace{-1.0mm}  \int \displaylimits_{\RR^d}  \hspace{-1mm} |\psi(x)|  \omega_\psi(\eta t -x ) |\delta t|^\theta   d x    d  t \\
  \nonumber  \hspace{-1mm} & \hspace{-1mm}  \le \hspace{-1mm} & \hspace{-2mm}  C  \  |\delta|^\theta  |\eta|^{c-\theta  } \ e^{|\eta-\delta|^\gamma } { |\eta-\delta| \over |\eta|} \hspace{-3mm} \int \displaylimits_{B_{|\eta| \over \beta} \setminus B_{|\eta|}}   \hspace{-5.0mm}  e^{-{|\eta-\delta|^\gamma \over |\eta|^\gamma} |y|^\gamma} |y|^{\theta-c+1-d}  \int \displaylimits_{\RR^d}  \hspace{-1mm} |\psi(x)|  \omega_\psi(y -x )   d x    d y \\
  \nonumber  \hspace{-1mm} & \hspace{-1mm}  \le \hspace{-1mm} & \hspace{-2mm}  C    |\delta|^\theta  e^{|\eta-\delta|^\gamma }  \hspace{-0.5mm}  { |\eta \shortminus \delta| \over |\eta|^{1+\theta-c}}  \hspace{-4.0mm} \int \displaylimits_{B_{|\eta| \over \beta} \setminus B_{|\eta|}}   \hspace{-5.5mm}  e^{\shortminus {|\eta \shortminus \delta|^\gamma \over |\eta|^\gamma} |y|^\gamma} |y|^{\theta \shortminus c + 1 \shortminus d \shortminus \kappa} e^{\shortminus {b \over 2}|y|} \hspace{-0.7mm}  \left( \hspace{-0.5mm}  (|\psi| \star \omega_\psi)(y) |y|^{\kappa} e^{{b \over 2}|y|} \right)  d y \\
  \nonumber  \hspace{-1mm} & \hspace{-1mm}  \le \hspace{-1mm} & \hspace{-2mm}  C  \  |\delta|^\theta  \ e^{|\eta-\delta|^\gamma }   { |\eta \shortminus \delta| \over |\eta|^{1+\theta-c}}  \hspace{-4.0mm} \int \displaylimits_{B_{|\eta| \over \beta} \setminus B_{|\eta|}}   \hspace{-4.0mm}  e^{-{|\eta-\delta|^\gamma \over |\eta|^\gamma} |y|^\gamma} |y|^{\theta-c+1-d-\kappa} e^{-{b \over 2} |y|} d y   \  \| \psi \|_{v_\nu,p} \| \omega_\psi\|_{v_\nu,p} \\
  \nonumber  \hspace{-1mm} & \hspace{-1mm}  \le \hspace{-1mm} & \hspace{-2mm}  C  \  |\delta|^\theta   e^{-{b \over 2} |\eta|} e^{|\eta-\delta|^\gamma}   { |\eta \shortminus \delta| \over |\eta|^{1+\theta-c}}  \hspace{-1.0mm} \int \displaylimits_{|\eta|^\gamma}^{|\eta|^\gamma \over \beta^\gamma}  \hspace{-1.0mm}   e^{-{|\eta-\delta|^\gamma \over |\eta|^\gamma} z} z^{{\theta-c-\kappa+1 \over \gamma} -1}  d z   \  \| \psi \|_{v_\nu,p} \| \omega_\psi\|_{v_\nu,p} \\
  \nonumber  \hspace{-1mm} & \hspace{-1mm}  \le \hspace{-1mm} & \hspace{-2mm}  C  \  |\delta|^\theta { |\eta|^{-\kappa}  \over |\eta-\delta|^{\theta-c-\kappa}}   e^{-{b \over 2} |\eta| -|\eta-\delta|^\gamma}  \left|  \hspace{0.2mm} \Gamma\hspace{-0.6mm}\left(\hspace{-0.6mm} {\theta \hspace{-0.6mm} - \hspace{-0.6mm} c \hspace{-0.6mm} -\hspace{-0.6mm} \kappa \hspace{-0.6mm} +\hspace{-0.6mm} 1 \over \gamma}, |\eta-\delta|^\gamma \right)  \hspace{-0.6mm} - \right.\\
  \nonumber  && \left. \hspace{50mm} - \hspace{-0.6mm}  \Gamma\hspace{-0.6mm}\left(\hspace{-0.6mm}{\theta \hspace{-0.6mm} -\hspace{-0.6mm} c\hspace{-0.6mm} -\hspace{-0.6mm}\kappa\hspace{-0.6mm}+\hspace{-0.6mm}1 \over \gamma},{ |\eta-\delta|^\gamma \over \beta^\gamma} \right) \hspace{-0.5mm}   \right| \hspace{-0.6mm}  \| \psi \|_{v_\nu,p} \| \omega_\psi\|_{v_\nu,p}.
\end{eqnarray}
For small values of $|\eta-\delta|$ the above can be bounded as follows:
\begin{eqnarray}
\nonumber | N_4(\eta,\delta)|  \hspace{-1mm} & \hspace{-1mm}  \le \hspace{-1mm} & \hspace{-1mm}  C  \  |\delta|^\theta  { |\eta|^{-\kappa}  e^{-{b \over 2} |\eta|}  \over |\eta-\delta|^{\theta-c-\kappa}}    \| \psi \|_{v_\nu,p} \| \omega_\psi\|_{v_\nu,p} \left| 1-\beta^{\theta-c-\kappa+1} \right| |\eta-\delta|^{\theta-c-\kappa+1} \\
\nonumber  \hspace{-1mm} & \hspace{-1mm}  \le \hspace{-1mm} & \hspace{-1mm}  C   (1-\beta) |\delta|^\theta   |\eta|^{-\kappa} |\eta-\delta|   e^{-{b \over 2} |\eta|}     \| \psi \|_{v_\nu,p} \| \omega_\psi\|_{v_\nu,p} \\
\label{N4termssmall}  \hspace{-1mm} & \hspace{-1mm}  \le \hspace{-1mm} & \hspace{-1mm}  C K k  (1-\beta) |\delta|^\theta   |\eta|^{-\kappa} (|\eta|+|\delta|)   e^{-{b \over 2} |\eta|}.
\end{eqnarray}
For $|\eta|>1$ we revisit the calculation from start, and use the assumption $(\ref{omegabound})$ to compute the convolution:
\begin{eqnarray}
  \nonumber  (|\psi| \star \omega_\psi)(\eta t )   \hspace{-1.2mm} &  \hspace{-1.2mm} \le  \hspace{-1.2mm}&   \hspace{-1.2mm}  \int \displaylimits_{|x|\le 1}  \hspace{-1mm} |\psi(\eta t -x)|  \omega_\psi(x )    \  d x  +   \int \displaylimits_{|x|>1}   \hspace{-1mm} |\psi(\eta t - x)| \  \omega_\psi(x)   \  d x \\
  \nonumber   \hspace{-1.2mm} &  \hspace{-1.2mm} \le  \hspace{-1.2mm}&   \hspace{-1.2mm}   k  \hspace{-2mm} \int \displaylimits_{|x|\le 1}  \hspace{-1mm} |\eta t-x|^\sigma e^{-b |\eta t -x|} \omega_\psi(x )   \   d x+  A k \hspace{-2mm}  \int \displaylimits_{|x|>1}   \hspace{-2mm} |\eta t -x|^\sigma e^{-b |\eta t -x|}  e^{-b |x|} |x|^\tau  \  d x  \\
  \nonumber   \hspace{-1.2mm} &  \hspace{-1.2mm} \le  \hspace{-1.2mm}&   \hspace{-1.2mm}   k  \left( \hspace{2mm}  \int \displaylimits_{|x|\le 1}  \hspace{-1mm} |\eta t-x|^{q \sigma} e^{-q b |\eta t -x|} |x|^{- q \nu} e^{-q {b \over 2} |x|} d x \right)^{1 \over q} \|\omega_\psi \|_{v_\nu,p} +  \\
  \nonumber  \hspace{-1.2mm} &  \hspace{-1.2mm} \phantom{\le}  \hspace{-1.2mm}&   \hspace{-1.2mm} \hspace{48.2mm} +A k \hspace{-4mm}  \int \displaylimits_{1<|x|<|\eta t|}   \hspace{-4mm} |\eta t-x|^\sigma e^{-b |\eta t -x|}  e^{-b |x|} |x|^\tau    d x +\\
  \nonumber  \hspace{-1.2mm} &  \hspace{-1.2mm} \phantom{\le}  \hspace{-1.2mm}&   \hspace{-1.2mm} \hspace{48.2mm} + A k \hspace{-2mm}  \int \displaylimits_{|x|\ge |\eta t|}   \hspace{-2mm} |\eta t-x|^\sigma e^{-b |\eta t -x|}  e^{-b |x|} |x|^\tau    d x  \\
 \nonumber   \hspace{-1.2mm} &  \hspace{-1.2mm} \le  \hspace{-1.2mm}&   \hspace{-1.2mm}   C K k   e^{-b |\eta| t}  \hspace{-1mm} \left( \hspace{1mm}  \int \displaylimits_{|x|\le 1}  \hspace{-2mm} |\eta t \hspace{-0.5mm} \shortminus \hspace{-0.5mm} x|^{q \sigma} |x|^{- q \nu}  d x \hspace{-1mm} \right)^{1 \over q} \hspace{-3mm}  +\hspace{-0.5mm}A k e^{-b |\eta t|}  \hspace{-6.0mm}  \int \displaylimits_{1<|x|<|\eta t|}   \hspace{-5.5mm} |\eta t\hspace{-0.5mm} - \hspace{-0.5mm} x|^\sigma |x|^\tau    d x +\\
 \label{N4bound1}  \hspace{-1.2mm} &  \hspace{-1.2mm} \phantom{\le}  \hspace{-1.2mm}&   \hspace{-1.2mm}  \phantom{ C K k   e^{-b |\eta| t}  \hspace{-1mm} \left( \hspace{1mm}  \int \displaylimits_{|x|\le 1}  \hspace{-2mm} |\eta t \hspace{-0.5mm} \shortminus \hspace{-0.5mm} x|^{q \sigma} |x|^{- q \nu}  d x \hspace{-1mm} \right)^{1 \over q} } \hspace{-3mm}   +\hspace{-0.5mm} A k \hspace{-4mm}  \int \displaylimits_{|x|\ge |\eta t|}   \hspace{-4mm} |\eta t \hspace{-0.7mm} \shortminus \hspace{-0.7mm} x|^\sigma e^{-b (2 |x|\shortminus |\eta t|)} |x|^\tau    d x.
\end{eqnarray}
Since $\sigma<0$ and $|\eta t|> 1$, the first integral is bounded by
\begin{equation}
\label{bound1}  \int \displaylimits_{|x|\le 1}  |\eta t - x|^{q \sigma} |x|^{- q \nu}  d x  \le C  \int \displaylimits_{z\le 1}  (1- r)^{q \sigma} r^{- q \nu+d-1}  d r,
\end{equation}
and the convergence condition for the first integral in $(\ref{N4bound1})$ is 
\begin{eqnarray}
  \label{qunudcond}  -q \nu +d & > & 0 \Leftrightarrow -\nu {p \over p-1}>-d \Leftrightarrow \kappa <d, \\
  \label{sigmacond} \sigma&>&-{p-1 \over p}.
\end{eqnarray}
The second integral, under the condition $\sigma>-1$, can be bounded by 
\begin{equation}
\label{bound2}  \int \displaylimits_{1<|x|<|\eta t|}  |\eta t - x|^\sigma |x|^\tau    d x \le C |\eta t|^{ \sigma+\tau+d}  \hspace{-4mm}  \int \displaylimits_{{1 \over |\eta t| } < |x|<1}   \hspace{-4mm}  |1 - z|^\sigma z^{\tau}    d z \le C |\eta t|^{\sigma+d+\tau}.
\end{equation}
The third  integral can be estimated as follows:
\begin{eqnarray}
  \nonumber  \int \displaylimits_{|x|\ge |\eta t|}  |\eta t - x|^\sigma e^{-b (2 |x|\shortminus |\eta t|)} |x|^\tau    d x   & \le& e^{|\eta t|}   \int \displaylimits_{r\ge |\eta t|}  (r-|\eta t|)^\sigma  r^{\tau+d-1}  e^{-2 b r}   d r \\
  \nonumber &\le& C |\eta t|^{ \tau +d +\sigma-1 \over 2}  W_{{\sigma+1-\tau-d \over 2},-{\tau+d+\sigma \over 2}}(2 b |\eta t|).
\end{eqnarray}
The asymptotic of the Whittaker function appearing in the last bound is
$$|\eta t|^{\sigma+1-\tau-d \over 2} e^{ -b |\eta t|},$$
therefore,
\begin{equation}
  \label{bound3}  \int \displaylimits_{|x|\ge |\eta t|}  |\eta t - x|^\sigma e^{-b (2 |x|\shortminus |\eta t|)} |x|^\tau    d x  \le C |\eta t|^{\sigma} e^{-b |\eta t|}.
\end{equation}
We collect bounds $(\ref{bound1})$, $(\ref{bound2})$ and $(\ref{bound3})$  to get 
\begin{equation}
  \nonumber  (|\psi| \star \omega_\psi)(\eta t )   \le     C K k   e^{-b |\eta| t} +  A k |\eta t|^{\sigma+d+\tau} e^{-b |\eta t|} + A k |\eta t|^\sigma e^{- b |\eta t|},
\end{equation}
therefore, taking into account that $\sigma+d+\tau>\sigma$, we get for $|\eta|>1$,
\begin{eqnarray}
  \nonumber | N_4(\eta,\delta)|  &  \le &  C |\delta|^\theta |\eta| \hspace{-1mm} \int \displaylimits_{1}^{1 \over \beta}  \hspace{-0.8mm}  e^{|\eta-\delta|^\gamma (1-t^\gamma) - b |\eta t|} \left( C K k +  A k |\eta t|^{\sigma+d+\tau} + A k |\eta t|^\sigma  \right) t^{\theta -c}     d  t \\
  \nonumber   & \le &  C K k (1 \hspace{-0.6mm}  - \hspace{-0.6mm}  \beta)  |\eta|  e^{- b |\eta|} |\delta|^\theta \hspace{-0.5mm}  +  \hspace{-0.5mm}  C A k e^{- b |\eta|+|\eta-\delta|^\gamma} |\delta|^\theta {|\eta|^{\sigma+d+\tau} \over  |\eta-\delta|^{\gamma} } \hspace{-1mm}  \int \displaylimits_{|\eta-\delta|}^{|\eta-\delta| \over \beta} \hspace{-2mm}  e^{-y^\gamma }  d y^\gamma  \\
  \nonumber  & \le &  C K k (1 \hspace{-0.6mm}  -  \hspace{-0.6mm} \beta)  |\eta|  e^{- b |\eta|} |\delta|^\theta \hspace{-0.5mm}  +  \hspace{-0.5mm}  C A k e^{- b |\eta|} |\delta|^\theta {|\eta|^{\sigma+d+\tau} \over  |\eta-\delta|^{\gamma} } \left(1 \hspace{-0.6mm}  - \hspace{-0.6mm} e^{-{1 -\beta^\gamma \over \beta^\gamma} |\eta-\delta|^\gamma} \right).
\end{eqnarray}
Since for $|\eta|>1$, $|\eta-\delta|$ and $|\eta|$ are comparable, we can simplify the above as
\begin{equation}
  \label{N4termslarge} | N_4(\eta,\delta)| \le  C k \left( K (1 \hspace{-0.6mm}  -  \hspace{-0.6mm} \beta)  |\eta| \hspace{-0.5mm}  +  \hspace{-0.5mm}  A |\eta|^{\sigma+d+\tau-\gamma} \left(1 \hspace{-0.6mm}  - \hspace{-0.6mm} e^{-{1 -\beta^\gamma \over \beta^\gamma} |\eta-\delta|^\gamma} \right) \right) e^{-b|\eta| } |\delta|^\theta.
\end{equation}
 Bounds $(\ref{N4termssmall})$ and $(\ref{N4termslarge})$ imply that the condition $|N_4| \in L^p_{v_\nu}(\RR^d,\RR_+)$ is
\begin{equation}
  \label{condd3} (\nu-\kappa) p >-d.
\end{equation}
Together with
\begin{equation}
  \label{condd4} \theta-c-\kappa+1 \not \in \ZZ_-.
 \end{equation}
Under conditions $(\ref{condd2})$ and $(\ref{condd3})$,
\begin{equation}
 \label{N4} \|N_4 \|_{v_\nu,p} \le   C (K+A)  (1-\beta )  |\delta|^\theta.
\end{equation}

\bigskip

\noindent \textit{\textbf{3)  A bound on ${\bm A}$}. } We collect estimates  $(\ref{Lterms})$, $(\ref{N1terms})$, $(\ref{N2terms})$, $(\ref{N3terms})$ and $(\ref{N4termslarge})$, and get the following condition for $\omega_{\cR_\beta[\psi]}(\eta) \le A |\eta|^\tau e^{-b |\eta|}$:
\begin{align}
\nonumber  A |\eta|^\tau   \ge  {A \over \beta^{\tau+\theta-c}} e^{ -{1-\beta^\gamma \over \beta^\gamma } |\eta-\delta|^\gamma -b {1-\beta \over \beta } |\eta| } |\eta|^\tau  \hspace{-0.7mm}  &+  \hspace{-0.4mm}  C k (1 \hspace{-0.5mm} - \hspace{-0.5mm}  \beta) \hspace{-0.7mm} \left(|\eta|^{\sigma+\gamma-1} \delta_0^{1-\theta} \hspace{-0.7mm} + \right. \\
\nonumber & + \hspace{-0.4mm}   k (1 \hspace{-0.5mm}  - \hspace{-0.5mm}  \beta)^{1 \over \gamma} |\eta|^{2 \sigma+d+1} \delta_0^{1-\theta} \hspace{-0.4mm} +\hspace{-0.4mm}  k |\eta|^{2 \sigma+d} \delta_0^{1-\theta} \hspace{-0.4mm}  + \\
\nonumber  &+ \left.\hspace{-0.8mm}  k |\eta|^{2 \sigma+d-1} \delta_0^{1-\theta} \hspace{-0.4mm} + \hspace{-0.5mm}  k |\eta|^{\sigma} \delta_0^{d+\sigma-\theta} \hspace{-0.4mm}   +  \hspace{-0.6mm}   K  |\eta| \hspace{-0.4mm} \right)  \\
 &+  C A k |\eta|^{\sigma+d+\tau-\gamma} \left(1-e^{-{1 -\beta^\gamma \over \beta^\gamma}|\eta-\delta|^\gamma  }   \right).  \label{Acond1}
\end{align}
The following conditions guarantee that $|\eta|^\tau$ dominates all powers of $|\eta|$ in the right hand side:
\begin{equation}
  \nonumber \tau \ge \sigma+\gamma-1, \hspace{5mm} \tau \ge 2 \sigma+d+1,  \hspace{5mm} \tau \ge \sigma, \hspace{5mm} \sigma+d -\gamma \le 0.
\end{equation}
The first three of these conditions can be satisfied by a choice of $\tau$. The last one is essential:
\begin{equation}
\label{gammadsigma} \gamma \ge d+\sigma.
\end{equation}
Under $(\ref{gammadsigma})$, a sufficient condition for $(\ref{Acond1})$ is
\begin{equation}
  \label{Acond2} A \hspace{-0.7mm} \left(1 \hspace{-0.5mm}  -  \hspace{-0.5mm} \beta^{c-\theta-\tau} e^{\shortminus{1\hspace{-0.2mm}  - \hspace{-0.2mm} \beta^\gamma \over \beta^\gamma } (1 \shortminus |\delta|)^\gamma \shortminus b  {1\hspace{-0.2mm} -\hspace{-0.2mm}  \beta \over \beta  }}  \hspace{-0.7mm}  -   \hspace{-0.5mm}C k \hspace{-0.2mm} \left(\hspace{-0.2mm} 1 \hspace{-0.6mm}  -  \hspace{-0.5mm}  e^{\shortminus {1 \shortminus \beta^\gamma \over \beta^\gamma}(1\shortminus |\delta|)^\gamma  }  \hspace{-0.5mm}  \right) \hspace{-0.5mm} \right) \hspace{-0.7mm} \ge  \hspace{-0.4mm}  C k \hspace{-0.1mm}  (1  \hspace{-0.6mm} -  \hspace{-0.6mm} \beta) \hspace{-0.5mm} \left( \delta_0^{1-\theta}  \hspace{-0.8mm} +  \hspace{-0.5mm}  K  \hspace{-0.4mm} \right).
\end{equation}
As $\beta \rightarrow 1$, the condition $(\ref{Acond2})$ is asymptotic to
\begin{equation}
  \label{Acond3} A \left(\gamma (1-|\delta|)^\gamma (1-C k)+b+c -\theta -\tau\right) \ge   C k \left(\delta_0^{1-\theta} +   K \hspace{-0.4mm} \right).
\end{equation}
A necessary condition for the existence of such $A$ is
\begin{equation}
\label{condonconstants} \gamma(1-\delta_0)^\gamma (1- C k)+b+\gamma>d+1+\theta +\tau.
\end{equation}

\bigskip

\noindent \textit{\textbf{4)  A bound on ${\bm K}$}. } We can now combine $(\ref{Lestimate2})$, $(\ref{N1})$, $(\ref{N2})$, $(\ref{N3})$ and $(\ref{N4})$ to get the following condition of invariance of the set:
\begin{align}
  \nonumber K |\delta|^\theta \ge K  \left( \beta^{c-\theta+\nu+{d \over p}}+ C k(1 -  \beta) \right)  |\delta|^\theta &+ Ck A (1-\beta) |\delta|^\theta+ C (1-\beta) k  |\delta|^{\min\{1,\gamma\}} + \\
 \nonumber  & + \hspace{-0.5mm} C(1-\beta) k^2 \left( (1 \hspace{-0.5mm} - \hspace{-0.5mm}  \beta)^{1 \over \gamma}|\delta| \hspace{-0.5mm}  + \hspace{-0.5mm} |\delta|^{2 \sigma +d +\nu+{d \over p}}   +  \right. \\
  \label{Kcond1}  &+\left.\hspace{-0.5mm}  |\delta|^{\sigma+d}  \hspace{-0.5mm}   +  \hspace{-0.5mm}  |\delta|^{\nu+1+{d \over p}  }  \hspace{-0.5mm}  + \hspace{-0.5mm}  |\delta| \right).
\end{align}
Choosing $\theta$ as the minimal of the exponents of $|\delta|$:
\begin{equation}
\label{def_theta} \theta \le \min\left\{1,\gamma, \sigma+d,2 \sigma +d +\nu+1+ {d \over p}, \nu +1 +{d \over p}   \right\},  
\end{equation}
we get that $(\ref{Kcond1})$ is implied by
\begin{align}
  \nonumber  K \left((c-\theta+\nu+{d \over p} -C k   \right) \ge C k A &+ C k \delta_0^{\gamma-\theta} + \\
  \label{Kcond2}&+ C k^2 \left(\delta_0^{1-\theta}+\delta_0^{\sigma+d-\theta}+\delta_0^{2 \sigma+d+\nu+{d \over p}-\theta}+\delta_0^{\nu+{d \over p} -\theta }   \right).
\end{align}
For large $A$ and $K$, the conditions $(\ref{Acond3})$ and $(\ref{Kcond2})$ are asymptotic to
\begin{eqnarray}
  \nonumber A \left(\gamma (1-\delta_0)^\gamma (1-C k)+b+c -\theta -\tau\right) &\ge &   C k K, \\
  \nonumber  \label{Kcond3} K \left(c-\theta+\nu+{d \over p} -C k   \right) &\ge& C k A,
\end{eqnarray}
where the two constants $C$ are not necessarily the same. A sufficient condition for  existence of a pair of constants $A$ and $K$ is
\begin{equation}
{ \gamma (1-\delta_0)^\gamma (1-C k)+b+c -\theta -\tau  \over C k} \ge {C k \over  c- \theta +\nu +{d \over p} -C k }
\end{equation}
which can be guaranteed by a choice of $k$ and $b$.

\bigskip

 \noindent \textit{\textbf{5)  Case $\bm{\beta \delta_0 \le  |\delta|<  \delta_0}$. } }
 We notice that since for every $\psi \in \cM$,
 $$\|\psi \|_{v_\nu,p} \le B$$
 for some $B$, we get that for $\beta \delta_0 \le  |\delta|<  \delta_0$,
 $$\|\psi - \tau_\delta \psi \|_{v_\nu,p} \le 2 B.$$
Now, take $K$ as the maximum of $K$ from the previous part, and $2 B /\beta^\theta \delta_0^\theta$.

\medskip

The fact that the subset $\cN$ is non-empty follows from Lemma $\ref{Nnonempty}$.
\end{proof}

\medskip

\begin{remark}
  The modulus of continuity of $\psi \in \cN$ over the whole of $\RR^d$ can be described as follows:
  \begin{equation}\label{modcont}
  |\psi(\eta)-\tau_\delta \psi(\eta)| \le \lambda(\eta,\delta):=\left\{\omega_\psi(\eta) |\delta|^\theta, \hspace{3mm} |\delta| \le \delta_0, \atop {\rm const}, \hspace{7.5mm} |\delta|>\delta_0.   \right.  
  \end{equation}
  
\end{remark}

\subsection{A-priori bounds}\label{apriori_subsection}

We would like to collect $(\ref{MinLp})$ and  hypotheses  $(\ref{H9})-(\ref{H11})$ again.

 \begin{equation}
     \label{H12} \tag{H12}
     \begin{cases}
       &  \hspace{-3mm} ({\bf 1}) \ p>2, \hspace{34.4mm} ({\bf 2})  \ \sigma \in \left(-{p-1 \over p},0 \right) \setminus \ZZ/2, \\
        &   \hspace{-3mm}  ({\bf 3})  \ (\sigma + \nu) p > -d, \hspace{18.0mm} ({\bf4}) \ d {p-2 \over 2 p} < \nu < d {p-2 \over p}, \\
        &  \hspace{-3mm} ({\bf 5}) \  \nu+{d \over p}>\theta -c>\kappa-1, \hspace{4.7mm} ({\bf 6}) \ -d < (\nu - \kappa) p \\
        &  \hspace{-3mm} ({\bf 7})   \ (2 \sigma \hspace{-0.5mm} + \hspace{-0.5mm}  d \hspace{-0.5mm}  +  \hspace{-0.5mm} \nu \hspace{-0.5mm}  -  \hspace{-0.5mm} 1) p > -d,  \hspace{4.6mm} ({\bf 8})  \ -2 d -1 < 1-\gamma \ {\rm or}  \ \epsilon+{ 1-d \over 2} < 1-\gamma  \\
       &  \hspace{-3mm} ({\bf 9})   \ c \ge \sigma \ge -d,  \hspace{22.9mm} ({\bf 10})   \ \gamma > d+\sigma, \\
       &  \hspace{-3mm} ({\bf 11})   \ \kappa < d.
      \end{cases}
   \end{equation}

We have proved the following Theorem.
\begin{napthm} \label{naptheorem}
Let $d \ge 2$, $\gamma$, $p$, $\nu$, $\sigma$ an  $\theta$ satisfy the hypothesis $(\ref{H12})$. Set
\begin{equation}
  v_\nu(x)=e^{-{b \over 2}|x|} |x|^\nu.
\end{equation}    
Then for every $\beta_0 \in (0,1)$ and $\delta_0>0$ there are constants $b>0$,$K>0$, $ \theta>0$, $B >0$  and a convex set $\cN_\mu^\alpha$ of azimuthal vector fields $\psi(\eta)=\psi_{d-1}(\eta) e_{d-1}(\eta)$ with  $\psi_{d-1} \in L^p_{v_\nu}(\RR^d,\RR)$  satisfying for all $\beta \in (\beta_0,1)$:
\begin{itemize}
\item[$1)$] $\cN_\mu^\alpha$ is $\cR_\beta$-invariant;
\item[$2)$] $\| \psi_{d-1} \|_{v_\nu,p} \le B$ for all $\psi_{d-1} \in \cN_\mu^\alpha$;
\item[$3)$] $\psi_{d-1} \in \cN_\mu^\alpha$ is H\"older-continuous:
  $$|\psi_{d-1}(\eta)-\psi_{d-1}(\eta-\delta)| \le \omega_\psi(\eta) |\delta|^\theta,$$
for all $|\delta|<\delta_0$,  with $\omega_\psi \in L^p_{v_\nu}$ satisfying  $\| \omega_\psi \|_{v_\nu,p} \le K$;
\item[$4)$] $\psi_{d-1} \in \cN_\mu^\alpha$ satisfies
  $$\int \displaylimits_{|\eta|>r}| \psi_{d-1}(\eta)|^p v_\nu(\eta)^p \ d \eta  \rightarrow 0$$
uniformly in $\cN^\alpha_\mu$:
\item[$5)$] the $L^p_{v_\nu}$-closure of $\cN_\mu^\alpha$ is compact;
\item[$6)$] $0 \not \in  \cN_\mu^\alpha$.
\end{itemize}
\end{napthm}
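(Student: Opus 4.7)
The plan is to assemble the set $\cN^\alpha_\mu$ as the intersection of the three convex renormalization-invariant sets already constructed in this section, namely
\[
\cN^\alpha_\mu \;:=\; \cN \,\cap\, \cM^\alpha_\mu,
\]
where $\cM^\alpha_\mu = \cM \cap \{\psi : \cI(\psi)\ge \mu\}$ is the set from the previous subsection (giving properties (1), (2), (6)), and $\cN$ is the set of H\"older-controlled functions from Proposition \ref{non_triv_equicont} (giving property (3)). The key preliminary step is to verify that hypothesis $(\ref{H12})$ is a simultaneous tightening of $(\ref{H9})$, $(\ref{H10})$, $(\ref{H11})$ and the integrability condition $(\ref{MinLp})$, so that all three invariance propositions apply with one common choice of $(b,k,\sigma,\nu,\alpha,\theta,\kappa,\tau,p)$. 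This is essentially bookkeeping: conditions $({\bf 2}),({\bf 9}),({\bf 10})$ match $(\ref{H9})$ for $\cM$; $({\bf 8})$ with $({\bf 9})$ secures a valid $\alpha$ in $(\ref{H10})$ for $\cM^\alpha_\mu$; and $({\bf 1}),({\bf 3}){-}({\bf 7}),({\bf 10}),({\bf 11})$ are exactly the combined hypothesis $(\ref{H11})$ for $\cN$. The constant $b$ should be chosen large enough to meet $(\ref{condonconstants})$ in Proposition \ref{non_triv_equicont}.

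With these propositions available, properties (1)--(6) follow by direct assembly. Property (1) is the intersection of invariant sets. Property (2): every $\psi_{d-1}\in\cN^\alpha_\mu\subset\cM$ is dominated by $k|\eta|^\sigma e^{-b|\eta|}$, so
\[
\|\psi_{d-1}\|_{v_\nu,p}^p \le k^p\!\int_{\RR^d}|\eta|^{p(\sigma+\nu)}e^{-\frac{b}{2}p|\eta|}\,d\eta =: B^p<\infty,
\]
where convergence at the origin is exactly $({\bf 3})$ of $(\ref{H12})$. Property (3) is the defining property of $\cN$. Property (6) is the defining condition $\cI(\psi)\ge\mu>0$ of $\cM^\alpha_\mu$.

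Property (4), equitightness, follows uniformly from the pointwise bound $|\psi_{d-1}(\eta)|\le k|\eta|^\sigma e^{-b|\eta|}$: the tail integral $\int_{|\eta|>r}|\psi_{d-1}|^p v_\nu^p\,d\eta$ is dominated by a tail of a fixed convergent integral and hence tends to $0$ as $r\to\infty$ uniformly on $\cN^\alpha_\mu$. Property (5), precompactness in $L^p_{v_\nu}$, then follows by applying the Frechet--Kolmogorov--Riesz--Weil compactness theorem: property (2) supplies uniform boundedness, property (3) together with the extension to all $\delta$ given in $(\ref{modcont})$ supplies uniform equicontinuity (indeed, $\sup_{\psi\in\cN^\alpha_\mu}\|\tau_\delta\psi-\psi\|_{v_\nu,p}\le K|\delta|^\theta$ for $|\delta|<\delta_0$, which vanishes as $|\delta|\to 0$), and property (4) supplies equitightness. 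The convexity of $\cN^\alpha_\mu$ is inherited from the convexity of each factor ($\cM$ and the H\"older condition are closed under convex combinations with a fixed bounding profile; $\cI$ is linear, so $\{\cI\ge\mu\}$ is convex).

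The one subtle point, and the potential obstacle, is the compatibility of $\cN$ with $\cM^\alpha_\mu$ as a non-empty intersection, i.e. to show $\cN^\alpha_\mu \ne \emptyset$. This is handled by Lemma \ref{Nnonempty}: the explicit family $\psi_{a,f}(\eta)=a k f(|\eta|)e^{-b|\eta|}$ with $f$ Lipschitz and satisfying $0<f(x)<\min\{x^\sigma,x^\zeta\}$ for some $\zeta>\max\{0,-d-\alpha\}$ belongs to $\cN$ and, as shown in the proof of the invariance of $\cM^\alpha_\mu$, has $\cI(\psi_{a,f})>Ca k$, which can be made $\ge \mu$ by taking $k$ large (and $a$ small enough to preserve $\psi_{a,f}\in\cM$). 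The hardest bookkeeping is ensuring that the $k$ required to make $\cM^\alpha_\mu$ non-empty through $\cI(\psi_{a,f})\ge\mu$ is compatible with the smallness of $k$ forced on us by $(\ref{condonconstants})$ and by the comparison inequalities $(\ref{Acond3})$ and $(\ref{Kcond2})$ controlling $A$ and $K$; this is arranged by first fixing $\sigma,\nu,\theta,p$ via $(\ref{H12})$, then choosing $b$ large (which costs nothing in the lower bound $(\ref{mu_bound})$ on $\mu$ since the bound is $b$-independent), and finally calibrating $k$ small and the amplitude $a$ within the remaining freedom so that both $\cI(\psi_{a,f})\ge\mu$ and the $\cN$-invariance inequalities hold simultaneously.
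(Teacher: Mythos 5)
Your proposal matches the paper's own argument: the theorem is obtained exactly as the assembly $\cN^\alpha_\mu=\cN\cap\cM^\alpha_\mu$ of the invariance propositions for $\cM$, $\cM^\alpha_\mu$ and $\cN$, with boundedness and equitightness from the pointwise bound $k|\eta|^\sigma e^{-b|\eta|}$, equicontinuity from the H\"older condition, precompactness from the Frechet--Kolmogorov--Riesz--Weil theorem, non-triviality from $\cI(\psi)\ge\mu$, and non-emptiness from Lemma \ref{Nnonempty}. The only slip is the remark that $\cI(\psi_{a,f})\ge\mu$ is achieved ``by taking $k$ large'': since $\mu\asymp k^2$ while $\cI(\psi_{a,f})\gtrsim ak$, the compatible choice is $k$ small, as you in fact state correctly in your closing sentence.
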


We refer to this result as {\it nontrivial bounds} because, unlike in Theorem $\ref{aptheorem}$,  $\cN_\mu^\alpha$ does not contain the trivial fixed point.

\subsection{Hypothesis $(\ref{H12})$}

We proceed to rewrite the conditions $(\ref{H12})$.

\medskip

Condition  $(\ref{H12}) ({\bf 6})$ is equivalent to
\begin{equation}
\label{H126a} \tag{H12) ({\bf 6a}}  \kappa <d.
\end{equation}
which is identical to  $(\ref{H12}) ({\bf 11})$. Since according to  $(\ref{H12}) ({\bf 9})$, $c \ge \sigma \implies \gamma\ge \sigma+d+1$,  by $(\ref{H12}) ({\bf 7})$  $2+(2 \sigma+d+\nu-1+d/p)>2$ and by $(\ref{H12}) ({\bf 4})$ $1+\nu+d/p>1$, we have that $\theta$, defined in $(\ref{def_theta})$ is given by  $ \theta \le \min\{1,\sigma+d \}$. Furthermore, since $\sigma > -(p-1)/p>-1$ for any $d \ge 2$, $\sigma+d>1$ and 
  \begin{equation}
    \label{theta_new}  \theta \le 1.
  \end{equation}
  The condition $(\ref{H12}) ({\bf 9})$, $\gamma-1 \ge \sigma+d$, is stronger than  $(\ref{H12}) ({\bf 10})$.
  
Next, condition  $(\ref{H12}) ({\bf 3})$ together with $(\ref{kappap})$  is equivalent to
\begin{equation}
\label{H123a} \tag{H12) ({\bf 3a}} \kappa > -2 \sigma -d,
\end{equation}
while condition $(\ref{H12})  ({\bf 7})$ is equivalent to
\begin{equation}
\label{H127a} \tag{H12) ({\bf 7a}}  \kappa > -4 \sigma -3 d+2,
\end{equation}
and $(\ref{H127a})$ is weaker than  $(\ref{H123a})$  for all $d \ge 2$ and  $\sigma \ge -1$.

At the same time condition  $(\ref{H12}) ({\bf 5})$ is equivalent to
\begin{equation}
  \label{kappa2}  {\kappa +d \over 2} > \theta-c > \kappa-1,
\end{equation}  
which simplifies to
\begin{equation}
  \label{H125a} \tag{H12) ({\bf 5a}}  {d - \kappa \over 2}+\theta+1< \gamma < 2+d+\theta-\kappa.
\end{equation}
Condition $(\ref{H12}) ({\bf 4})$ is equivalent to
\begin{equation}
 \label{H124a} \tag{H12) ({\bf 4a}} 0 <\kappa < d {p-2 \over p}.
\end{equation}
The upper bound here is stronger than the upper bound in $(\ref{H126a})$.

Consider the inequalities $(\ref{H12}) ({\bf 8})$. The second inequality has an empty solution for all $\gamma > d$ and $d \ge 2$. The first one becomes
\begin{equation}
\label{H128a} \tag{H12) ({\bf 8a}} \gamma <  2d+2.
\end{equation}
At the same time, $c \ge \sigma$  is equivalent to $\gamma \ge \sigma+d+1$. Therefore the bound on $\gamma$ becomes
$$\max\left\{ {d -\kappa \over 2}+\theta+1, \sigma+d+1  \right\} < \gamma < \min\{2 d+2, 2+d +\theta -\kappa\}.$$
Additionally, for all $d \ge 2$ and $\theta \le 1$, then
$$2 d+2  > 2 +d +\theta -k.$$
The lower bound similarly simplifies: for all $d \ge 2$
$$\sigma +d +a  \ge {d -\kappa \over 2} +\theta+4 \Leftrightarrow d \ge 2 \theta - \kappa - 2 \sigma$$

The only remaining bound for $\kappa$ becomes
$$\max\{-d-2 \sigma, 0 \} <\kappa < d{p-2 \over p}.$$
The lower bound in this expression can be simplified further, taking in to account that $\sigma>-(p-1)/p>-1$, and for any $d \ge 2$, $\max\{-d-2 \sigma, 0 \}=0$.

To summarize,
  \begin{equation}
    \label{H13} \tag{H13}
    \begin{cases}
   &  ({\bf 1}) \ \max\left\{ {d -\kappa \over 2}+\theta+1, \sigma+d+1  \right\} < \gamma < 2 d+2,
   \\
   &    ({\bf 2})  \   \sigma  \in \left(-{p-1 \over p},0 \right) \setminus  \ZZ/2, \hspace{28.0mm}  ({\bf 3})  \ p > 2,  \\
   & ({\bf 4})   \  0  <  \kappa  <   d{p -    2 \over p}, \hspace{41.3mm}  ({\bf 5})\  0 < \theta \le 1.  
\end{cases}
  \end{equation}  
In particular we have
  \begin{lemma}\label{gammabounds}
    Conditions $(\ref{H13})$  have a non-empty solution for
    \begin{equation}
    \label{gammaall} \quad d < \gamma <   2 d+2.
    \end{equation}
  \end{lemma}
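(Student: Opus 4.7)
The plan is to show that for each $\gamma \in (d, 2d+2)$ the open system (H13) admits a solution in $(p, \sigma, \kappa, \theta)$, by choosing the parameters in an appropriate order with $p$ taken large. The upper bound $\gamma < 2d+2$ in (H13)(1) is exactly the hypothesis, so only the two lower bounds
\[
\sigma + d + 1 < \gamma \quad \text{and} \quad \tfrac{d-\kappa}{2} + \theta + 1 < \gamma
\]
need to be arranged, in tandem with (H13)(2)--(5).

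First I would fix $\theta$ small, e.g.\ $\theta = \min\{1,(\gamma-d)/4\}$, so that (H13)(5) holds and the second lower bound in (H13)(1) rearranges to $\kappa > d - 2(\gamma - \theta - 1)$; since $d \ge 2$ and $\gamma > d$, the right-hand side is strictly negative for such $\theta$, so any $\kappa>0$ satisfying (H13)(4) automatically meets this inequality. Next I would take $p>2$ large. This opens up the interval $(0, d(p-2)/p)$ for $\kappa$ toward $(0, d)$, and the interval $(-(p-1)/p, 0)$ for $\sigma$ toward $(-1, 0)$, giving ample room.

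For the remaining bound $\sigma + d + 1 < \gamma$, i.e.\ $\sigma < \gamma - d - 1$: when $\gamma \ge d+1$ any $\sigma < 0$ works; when $d < \gamma < d+1$ one needs $\sigma \in (-(p-1)/p,\ \gamma - d - 1)$, which is non-empty once $1/p < \gamma - d$, i.e.\ $p > 1/(\gamma - d)$. Fixing such $p$, I then pick $\sigma$ in this interval avoiding the countable set $\mathbb{Z}/2$ (trivially possible) and $\kappa$ anywhere in $(0, d(p-2)/p)$. All conditions of (H13) are then simultaneously met.

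I do not expect a real obstacle here: every constraint is open, and the large-$p$ limit provides abundant parameter freedom (four continuous unknowns against a handful of linear inequalities plus one countable exclusion). The only mildly delicate step is the boundary regime $d < \gamma < d+1$, where the admissible window for $\sigma$ shrinks as $\gamma \to d^+$; this is resolved simply by requiring $p > 1/(\gamma - d)$, keeping the argument entirely elementary.
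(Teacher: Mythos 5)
Your argument is correct, and since the paper states this lemma without supplying any proof (the preceding subsection only reduces (H12) to (H13) and then asserts non-emptiness), your direct parameter selection — small $\theta$ to make the $\kappa$-lower-bound in (H13)(1) vacuous, then $p>\max\{2,1/(\gamma-d)\}$ to open a window for $\sigma<\gamma-d-1$ avoiding $\ZZ/2$, then any $\kappa\in(0,d(p-2)/p)$ — is exactly the elementary verification the statement calls for. The only delicate regime, $\gamma\to d^+$, is handled properly by your requirement $p>1/(\gamma-d)$.
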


\section{Embeddings: Non-trivial {\it a-priori} bounds in $L^t$} \label{properties}

We will now investigate if a fixed point of $\cR_\beta$ in $\cN_\mu^\alpha$  belongs to $L^l(\RR^d,\RR)$ for  $1 \le l \le 2$. 

\subsection{Embedding of $L^p_{v_\nu}$ into  $L^t$.} The space  $L^p_{v_\nu}(\RR^d,\RR)$  is not well suited for a prove of existence of the inverse Fourier transform and its quadratic integrability: after all, ideally, we would like to use Plancherel's theorem in $L^2$ to demonstrate boundedness of the energy for times $0\le t < T$ and its blowup for time $T$.
However, it is possible to show that there exists a $t_0$ such that  $\cN_\mu^\alpha \subset L^t(\RR^d,\RR)$ for all $1 \le t \le t_0$.

We start by considering the following bound. Let $t>1>\tilde t>  0$, and set $r=1/\tilde t$, and $r'=r/(r-1)$, then
  \begin{eqnarray}
    \nonumber  \int_{ \RR^d } |\psi(\eta)|^t d \eta & \le &  \int_{ \RR^d } \left( |\psi(\eta)|^{\tilde t} |\eta|^{\alpha \tilde t} \right) \left( |\psi(\eta)|^{t-\tilde t} |\eta|^{-\alpha \tilde t}  e^{{b \over 2} l  |\eta|}  \right) \ d \eta  \\
    \nonumber   & \le &  \left( \int_{ \RR^d } \left( |\psi(\eta)|^{\tilde t} |\eta|^{\alpha \tilde t} \right)^{r} \ d \eta \right)^{1 \over r}   \left( \int_{ \RR^d }  \left(  |\psi(\eta)|^{t-\tilde t} |\eta|^{-\alpha \tilde t}  e^{{b \over 2} l  |\eta|}  \right)^{r'} \ d \eta \right)^{1 \over r'} \\
    \nonumber   & \le &  \left( \int_{ \RR^d } |\psi(\eta)| |\eta|^{\alpha} \ d \eta \right)^{1 \over r}   \left( \int_{ \RR^d }  \left(  |\psi(\eta)|^{t-\tilde t} |\eta|^{-\alpha \tilde t}  e^{{b \over 2} l  |\eta|}  \right)^{r'} \ d \eta \right)^{1 \over r'}.
  \end{eqnarray}
  We set
  $$p=(t- \tilde t) r' \implies \tilde t={p-t \over p-1},$$
  and
\begin{equation}
 \nonumber -\alpha \tilde t r'\ge \nu p \implies -\alpha { p- t  \over t-1 } \ge \nu p, \quad l r'=p.
  \end{equation}
The condition for $\alpha$ can be rewritten as 
\begin{equation}
   \label{alphacond}   \kappa  < -2 \alpha { p -t \over p(t-1)} - d {p-2 \over p}.  
\end{equation}
Under $(\ref{alphacond})$ and with the choice of $l$ such that $l r'=p$,
\begin{eqnarray}
  \nonumber  \int_{ \RR^d } |\psi(\eta)|^t d \eta   & \le &  \left( \int_{ \RR^d } |\psi(\eta)| |\eta|^{\alpha} \ d \eta \right)^{1 \over r}   \left( \int_{ \RR^d }    |\psi(\eta)|^{p} |\eta|^{\tilde \nu p}  e^{{b \over 2} p  |\eta|} \ d \eta \right)^{1 \over r'},
\end{eqnarray}
for some $\tilde \nu \ge \nu$, both integrals being bounded for a function in $\cN_\mu^\alpha$.

Recall that  by $(\ref{H10})$
$$-\alpha > \max\{ d-\epsilon+\max\{-d,-3 \}, \gamma-1\}$$
for any fixed $\epsilon>0$. At the same time, by Lemma $\ref{gammabounds}$,
$$\gamma-1>d-1> d-\epsilon+\max\{-d,-3 \}$$
for any $\epsilon>0$ and all $d \ge 2$. We have, therefore, that
$$-\alpha> \gamma-1.$$
Additionally, by  $(\ref{H10})$,
$$-\alpha \in \left(-1-2 \epsilon, {d -1 \over 2} -\epsilon  \right] \cup \left({d \over 2} -\epsilon, 2 d+1  \right).$$
  Now, notice that since $\gamma > d$ according to Lemma $\ref{gammabounds}$, then
  $$\gamma-1>d-1>{d \over 2 } -\epsilon$$
  for all $d \ge 2$. Therefore, we get that
  $$-\alpha \in \left( \gamma-1, 2 d +1 \right).$$

We remark, that the derivative of the right hand side of $(\ref{alphacond})$ with respect to $t$ is equal to
$$2 \alpha { p-1 \over p (t-1)^2},$$
and is negative for all $1\le t \le 2$. Thus, the strongest condition $(\ref{alphacond})$ is realized by $t=2$:
\begin{equation}
  \label{t2}  \kappa  < -(2 \alpha+d) {p-2 \over p}.
\end{equation}
We conclude that a sufficient condition for $(\ref{t2})$ is
$$\alpha \le - d.$$
Indeed, under this condition, $(\ref{t2})$ is weaker than  $(\ref{H13}){\bf (4)}$.

We have, therefore, the following result:
\begin{napthmLt} \label{L1andL2}
  For any $d \ge 2$ and  $d<\gamma < 2 d+2$ there exists $0<\beta_0<1$ and a  set $\cN_\mu^\alpha \subset L^t(\RR^d,\RR)$, $1 \le t \le 2$, which is convex, $L^t$-precompact and $\cR_\beta$-invariant for any $\beta_0<\beta <1$. 
\end{napthmLt}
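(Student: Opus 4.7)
The plan is to derive the $L^t$-containment, the uniform $L^t$-bounds, and the $L^t$-precompactness of $\cN_\mu^\alpha$ by interpolating between two pieces of information already at our disposal: the $L^1_w$-bound (with $w(\eta)=|\eta|^\alpha$) coming from membership in $\cM_\mu^\alpha$, and the $L^p_{v_\nu}$-structure from the Non-Trivial A-priori Bounds Theorem. Concretely, I would apply H\"older's inequality with exponents $r=1/\tilde t$ and $r'=r/(r-1)$ exactly as in the passage preceding the theorem statement, splitting $\int |\psi|^t$ into a factor controlled by $\|\psi\|_{L^1_w}$ and a factor controlled by a weighted $L^p$ integral. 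The parameter choice $\tilde t=(p-t)/(p-1)$ together with $lr'=p$ makes the exponents balance, reducing the feasibility question to the single inequality $\kappa \le -(2\alpha+d)(p-2)/p$. A sufficient condition is $\alpha \le -d$, which is compatible with hypothesis $(\ref{H10})$: Lemma $\ref{gammabounds}$ gives $\gamma>d$, so $-\alpha>\gamma-1>d-1$, leaving room in the admissible interval $(\gamma-1, 2d+1)$ to take $-\alpha \ge d$.

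Next, to control the second factor uniformly on $\cN_\mu^\alpha$, I would use the pointwise exponential envelope $|\psi(\eta)|\le k|\eta|^\sigma e^{-b|\eta|}$ afforded by membership in $\cM$, rather than the $L^p_{v_\nu}$-norm itself. This yields $\int |\psi|^p|\eta|^{\tilde\nu p}e^{(b/2)p|\eta|}\,d\eta \le k^p\int |\eta|^{(\sigma+\tilde\nu)p}e^{-(b/2)p|\eta|}\,d\eta<\infty$ whenever $(\sigma+\tilde\nu)p>-d$, which is ensured by the choice $\tilde\nu\ge \nu$ together with $(\ref{H13})$. The first factor is uniformly bounded since $\cN_\mu^\alpha\subset L^1_w$ with a norm controlled by the explicit majorant $\pmb\psi$ on $\cM$. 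Combining the two gives $\|\psi\|_t \le C$ uniformly on $\cN_\mu^\alpha$ for all $t\in[1,2]$, i.e.\ the embedding $\cN_\mu^\alpha\hookrightarrow L^t$ with uniform bound.

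For $L^t$-precompactness, I would take any sequence $(\psi_n)\subset \cN_\mu^\alpha$. By the Non-Trivial A-priori Bounds Theorem a subsequence converges in $L^p_{v_\nu}$ to some $\psi_\ast$ in the $L^p_{v_\nu}$-closure of $\cN_\mu^\alpha$. Applying the same H\"older splitting to the differences $\psi_n-\psi_\ast$ — where Fatou preserves the $L^1_w$-bound on $\psi_\ast$ and the majorant $|\psi_n-\psi_\ast|\le 2k|\eta|^\sigma e^{-b|\eta|}$ keeps the exponentially weighted factor uniformly finite — I get $\|\psi_n-\psi_\ast\|_t^t \le C\,\|\psi_n-\psi_\ast\|_{L^p_{v_\nu}}^{p/r'}\to 0$. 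Hence any sequence in $\cN_\mu^\alpha$ has an $L^t$-convergent subsequence. Convexity and $\cR_\beta$-invariance for $\beta\in(\beta_0,1)$ are inherited directly from the $L^p_{v_\nu}$-statement, since the defining conditions (the pointwise majorant, the lower bound on $\cI$, the H\"older-type modulus of continuity) do not depend on the norm used.

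The main technical point — and in my view the only real obstacle — is the bookkeeping needed to verify that the exponent restrictions from H\"older are simultaneously compatible with hypothesis $(\ref{H13})$ across the whole interval $t\in[1,2]$, and that the sufficient condition $\alpha \le -d$ is indeed admissible in the set $(\ref{H10})$. Once those are checked (as done in the material just above the theorem), the passage from $L^p_{v_\nu}$-convergence to $L^t$-convergence is a routine consequence of the interpolation inequality and the uniform $L^1_w$- and $\cM$-bounds.
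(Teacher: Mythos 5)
Your proposal follows the paper's own argument essentially verbatim: the same H\"older splitting with $r=1/\tilde t$, $r'=r/(r-1)$, the same parameter balancing $\tilde t=(p-t)/(p-1)$, $lr'=p$ reducing feasibility to $\kappa<-(2\alpha+d)(p-2)/p$ at the worst case $t=2$, and the same sufficient condition $\alpha\le-d$ checked against $(\ref{H10})$ and Lemma $\ref{gammabounds}$. One small correction: the uniform bound on the first factor $\int|\psi(\eta)|\,|\eta|^\alpha\,d\eta$ over $\cN_\mu^\alpha$ cannot come from the pointwise majorant $\pmb\psi$ as you claim, since $\int|\eta|^{\sigma+\alpha}e^{-b|\eta|}\,d\eta$ diverges at the origin when $\alpha\le-d$ and $\sigma<0$; it comes instead from the membership $\psi\in L^1_w$ built into the definition of $\cM_\mu^\alpha$ together with the invariance estimate $\|\cR_\beta[\psi]\|_w\le\beta^{c+\alpha+d}\mu+Ck^2(1-\beta)$ --- with that replaced, your precompactness transfer via the interpolation inequality is the natural completion of what the paper leaves implicit.
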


\section{Existence of blow ups in $L^m$, $m \ge 2$} \label{blowups}
We will now identify a function  in the set $\overline{\cN^\alpha_\mu}$, where the closure is taken in $L^p_{u_\nu}$, such that it is fixed by all operators $\cR_\beta$ for all $\beta_0<\beta <1$.

Consider any family of fixed points $\{ \psi_{\beta} \}_{\beta \in (\beta_0,1)}$.  By compactness of $\overline{\cN^\alpha_\mu}$, there is a converging subsequence $\{ \psi_{\beta_i} \}_{i=0}^\infty$ as $\beta_i \rightarrow 1$, to a function $\psi_*$. The function $\psi_*$ is in  $\overline{\cN^\alpha_\mu}$, and, therefore, is non-trivial.

Next, for any $\beta \in (\beta_0,1)$, there exist a subsequence $\{\beta_{i_k} \}_{k=0}^\infty $ and a diverging sequence of integers $n_k$, such that $\beta_{i_k}^{n_k} \rightarrow \beta$. Indeed, for any $\epsilon>0$, a sufficient condition for $\beta_{i_k}^{n_k} \in (\beta-\epsilon, \beta +\epsilon)$ is
$$n_k \in \left( {\ln (\beta+\epsilon )  \over \ln \beta_{i_k} },   {\ln (\beta-\epsilon )  \over \ln \beta_{i_k} } \right).$$
The length of this interval is of the order $-\epsilon / \beta \ln \beta_{i_k}$. Therefore, for any $\epsilon$ there exists $k \in \NN$, such that this length is larger than $1$, and the interval contains some positive integer $n_k$.

To conclude that $\psi_*$ is fixed by all $\cR_\beta$ we require the following
\begin{lemma}
A fixed point $\psi_\beta$ of the operator $\cR_\beta$ is also fixed by  $\cR_{\beta^n}$ for all $n \in \NN$.
\end{lemma}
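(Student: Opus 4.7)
The plan is to establish the functional identity
$$\cR_{\beta_1 \beta_2}[\vartheta] = L_{\beta_1}\bigl[\cR_{\beta_2}[\vartheta]\bigr] + N_{\beta_1}[\vartheta,\vartheta]$$
valid for every admissible $\vartheta$, where, reading off from $(\ref{operatorR2})$, the linear and quadratic parts of $\cR_\beta$ are
$$L_\beta[\vartheta](\eta) = \beta^c e^{|\eta|^\gamma(1-\beta^{-\gamma})}\vartheta(\eta/\beta), \qquad N_\beta[\vartheta,\vartheta](\eta) = i\gamma\int_1^{1/\beta}\frac{e^{|\eta|^\gamma(1-t^\gamma)}}{t^c}Q_\vartheta(\eta,t)\,dt,$$
with $Q_\vartheta(\eta,t)=\int_{\RR^d}\eta\cdot\vartheta(\eta t-x)P_\eta\vartheta(x)\,dx$. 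Once this identity is in hand, the lemma follows by an immediate induction on $n$: assuming $\cR_{\beta^n}[\psi_\beta]=\psi_\beta$ and applying the identity with $(\beta_1,\beta_2)=(\beta^n,\beta)$ gives
$$\cR_{\beta^{n+1}}[\psi_\beta] = L_{\beta^n}\bigl[\cR_\beta[\psi_\beta]\bigr] + N_{\beta^n}[\psi_\beta,\psi_\beta] = L_{\beta^n}[\psi_\beta] + N_{\beta^n}[\psi_\beta,\psi_\beta] = \cR_{\beta^n}[\psi_\beta] = \psi_\beta,$$
using first that $\psi_\beta$ is fixed by $\cR_\beta$, and then the inductive hypothesis.

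To prove the identity I would verify separately that the linear parts compose and that the nonlinear parts split in a compatible way. For the linear parts, the relation $L_{\beta_1}\circ L_{\beta_2} = L_{\beta_1\beta_2}$ follows by direct substitution, since the exponents $|\eta|^\gamma(1-\beta_1^{-\gamma})+|\eta/\beta_1|^\gamma(1-\beta_2^{-\gamma})$ telescope to $|\eta|^\gamma(1-(\beta_1\beta_2)^{-\gamma})$ and the prefactors multiply to $(\beta_1\beta_2)^c$. For the nonlinear part, the key step is to split the $t$-integral defining $N_{\beta_1\beta_2}[\vartheta,\vartheta]$ at $t=1/\beta_1$: the lower portion is by definition $N_{\beta_1}[\vartheta,\vartheta]$, and I would show that the upper portion $\int_{1/\beta_1}^{1/(\beta_1\beta_2)}\cdots\,dt$ coincides with $L_{\beta_1}\bigl[N_{\beta_2}[\vartheta,\vartheta]\bigr]$ via the substitution $t=s\beta_1$ (where $s$ plays the role of the integration variable inside $N_{\beta_2}$). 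The Jacobian $\beta_1$, the homogeneity $Q_\vartheta(\eta/\beta_1,s\beta_1)=\beta_1^{-1}Q_\vartheta(\eta,s)$ of the convolution kernel (using $P_{\eta/\beta_1}=P_\eta$ and the change $x\mapsto x/\beta_1$), and the prefactor $\beta_1^c e^{|\eta|^\gamma(1-\beta_1^{-\gamma})}$ supplied by $L_{\beta_1}$ combine so that the exponent telescopes cleanly, producing
$$N_{\beta_1\beta_2}[\vartheta,\vartheta] = N_{\beta_1}[\vartheta,\vartheta] + L_{\beta_1}\bigl[N_{\beta_2}[\vartheta,\vartheta]\bigr].$$
Adding the linear identity recovers the desired functional equation.

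The only real subtlety is integrability: the splitting of the $t$-integral and the change of variables inside $Q_\vartheta$ must be justified for the $\vartheta$'s under consideration. Since $\psi_\beta$ belongs to $\cN_\mu^\alpha\subset\cap_{1\le l\le 2}\bL^l$, the convolution estimates of Section $\ref{invset}$ and the exponential weight assure absolute convergence of each piece of each integral, so Fubini and the substitutions are legitimate. Conceptually, the identity is the algebraic reflection of the semigroup property of the Duhamel formulation $(\ref{eq:integral:NSgamma})$ in self-similar coordinates: two consecutive renormalization steps of scale $\beta$ compose to a single step of scale $\beta^2$, but the nonlinear part only collapses exactly in this way when evaluated on a profile that the inner operator fixes — which is precisely what the fixed-point hypothesis supplies, and what the induction then propagates to every power $\beta^n$.
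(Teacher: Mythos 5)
Your proposal is correct and is essentially the paper's own argument: the paper likewise iterates the fixed-point equation and merges the two adjacent integration ranges (over $[\beta^{k+1}|\eta|,\beta^k|\eta|]$ and $[\beta^k|\eta|,|\eta|]$) using exactly the telescoping of the exponential prefactors that you package into the operator identity $\cR_{\beta_1\beta_2}[\vartheta]=L_{\beta_1}\bigl[\cR_{\beta_2}[\vartheta]\bigr]+N_{\beta_1}[\vartheta,\vartheta]$. The only slip is the direction of the change of variables in the upper portion of the $t$-integral, which should read $t=s/\beta_1$ (so that $s$ ranges over $[1,1/\beta_2]$), consistent with the homogeneity relation for $Q_\vartheta$ you correctly state.
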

\begin{proof}  We have at the base of induction.
  \begin{eqnarray}
    \nonumber     e^{-\beta^{2 \gamma} |\eta|^\gamma} \psi_\beta(\beta^{2} \eta)&=& \beta^c e^{-\beta^{\gamma} \eta^\gamma} \psi_\beta(\beta \eta)+ \\
    \nonumber && + i \gamma \beta^{c} | \beta \eta|^{c-1} \int_{\beta^{2} |\eta|}^{\beta |\eta|} { e^{-r^\gamma} \over r^c} \int \displaylimits_{\RR^d} \beta \eta \cdot \psi_\beta\left({ \eta \over |\eta|} r-x  \right) P_\eta \psi_\beta(x) \ d x \ d r \\
    \nonumber     &=&  \beta^{2c} e^{-|\eta|^\gamma} \psi_\beta(\eta) + \\
    \nonumber && + i \gamma \beta^{2 c} | \eta|^{c-1} \int_{\beta |\eta|}^{ |\eta|} { e^{-r^\gamma} \over r^c} \int \displaylimits_{\RR^d} \eta \cdot \psi_\beta\left({ \eta \over |\eta|} r-x  \right) P_\eta \psi_\beta(x) \ d x \ d r \\
    \nonumber &&  + i \gamma \beta^{2 c} | \eta|^{c-1} \int_{\beta^{2} |\eta|}^{\beta |\eta|} { e^{-r^\gamma} \over r^c} \int \displaylimits_{\RR^d} \eta \cdot \psi_\beta\left({ \eta \over |\eta|} r-x  \right) P_\eta \psi_\beta(x) \ d x \ d r 
 \end{eqnarray}
  \begin{eqnarray}
    \nonumber  \phantom{   e^{-\beta^{2 \gamma} |\eta|^\gamma} \psi_\beta(\beta^{2} \eta)}   &=&  \beta^{2c} e^{-|\eta|^\gamma} \psi_\beta(\eta) + \\
    \nonumber &&  + i \gamma \beta^{2 c} | \eta|^{c-1} \int_{\beta2 |\eta|}^{ |\eta|} { e^{-r^\gamma} \over r^c} \int \displaylimits_{\RR^d} \eta \cdot \psi_\beta\left({ \eta \over |\eta|} r-x  \right) P_\eta \psi_\beta(x) \ d x \ d r.
  \end{eqnarray}
  Assume the result for $n=k$. Then
  \begin{eqnarray}
    \nonumber     e^{-\beta^{ \gamma (k+1)} |\eta|^\gamma} \psi_\beta(\beta^{k+1} \eta) \hspace{-2mm} & =& \hspace{-2mm} \beta^c e^{-\beta^{\gamma k} |\eta|^\gamma} \psi_\beta(\beta^{k} \eta) +  \\
    \nonumber && + i \gamma \beta^{c} |\beta^k \eta|^{c-1}  \hspace{-3mm}  \int  \displaylimits_{\beta |\beta^k \eta|}^{|\beta^k \eta|}  \hspace{-3mm}  { e^{-r^\gamma} \over r^c} \int \displaylimits_{\RR^d} \beta^k \eta \cdot \psi_\beta\left({ \eta \over |\eta|} r-x  \right) P_\eta \psi_\beta(x) \ d x \ d r   \\
    \nonumber     & =& \hspace{-2mm} \beta^{c(k+1)} e^{-|\eta|^\gamma} \psi_\beta( \eta) +  \\
    \nonumber &&  + i \gamma \beta^c \beta^{ck} | \eta|^{c-1}  \hspace{-3mm}  \int  \displaylimits_{\beta^k |\eta|}^{|\eta|}  \hspace{-3mm}  { e^{-r^\gamma} \over r^c} \int \displaylimits_{\RR^d} \eta \cdot \psi_\beta\left({ \eta \over |\eta|} r-x  \right) P_\eta \psi_\beta(x) \ d x \ d r\\
    \nonumber && + i \gamma \beta^{c(k+1)} | \eta|^{c-1}  \hspace{-5mm} \int  \displaylimits_{\beta^{k+1} |\eta|}^{\beta^k |\eta|}  \hspace{-3mm}  { e^{-r^\gamma} \over r^c} \int \displaylimits_{\RR^d} \eta \cdot \psi_\beta\left({ \eta \over |\eta|} r-x  \right) P_\eta \psi_\beta(x) \ d x \ d r
  \end{eqnarray}
\begin{eqnarray}
    \nonumber     & =& \hspace{-2mm} \beta^{c(k+1)} e^{-|\eta|^\gamma} \psi_\beta( \eta) +  \\
    \nonumber &&  + i \gamma \beta^{c(k+1)} | \eta|^{c-1} \hspace{-3mm}  \int \displaylimits_{\beta^{k+1} |\eta|}^{|\eta|} \hspace{-3mm}   { e^{-r^\gamma} \over r^c} \int \displaylimits_{\RR^d} \eta \cdot \psi_\beta\left({ \eta \over |\eta|} r-x  \right) P_\eta \psi_\beta(x) \ d x \ d r.
  \end{eqnarray}
\end{proof}
Therefore, according to the preceding Lemma,
\begin{equation}
\label{Rbetaik} \cR_{\beta_{i_k}^{n_k}}[\psi_{\beta_{i_k}}]=\psi_{\beta_{i_k}}.
\end{equation}
We conclude that the right and the left hand sides $(\ref{Rbetaik})$ converge to $\psi_*$ and  $\cR_{\beta}[\psi_*]$, respectively.

We are now ready to finish the proof of Theorem B.

As before, we denote
$$\vartheta(\eta)=i \psi(\eta) e_{d-1}(\eta)$$
{\it Proof of Theorem B.}  Part  $\mathbf{(1)}$  is immediate. To prove property $(\ref{decay})$ we use $(\ref{modcont}$ in the following standard computation:
  \begin{equation}
    \nonumber   \left| \cF^{-1}[\psi_*](y) \right| =  {1 \over 2} \left| \int_{\RR^d} \left( \psi_*(\eta)  -\tau_{\pi y \over |y|^2} \psi_*(\eta) \right) e^{i y \eta} d \eta \right| \le C \ \int_{\RR^d} \omega_{\psi_*}(\eta) \lambda \left( \pi { y \over |y|^2}  \right) d \eta.
  \end{equation}
  Notice that $L_{v_\nu}^p$ embeds continuously into $L^1$: for any $\omega \in L_{v_\nu}^p$ and $1=1/p+1/q$, 
  \begin{equation}
    \nonumber  \| \omega \|_1  =  \int \displaylimits_{\RR^d} | \omega(\eta)| v_\nu(\eta)  v_\nu(\eta)^{-1} d \eta \le \left(   \hspace{2mm}  \int \displaylimits_{\RR^d} | \omega(\eta)|^p v_\nu(\eta)^p d \eta \right)^{1 \over p} \hspace{-2mm}  \left( \hspace{2mm}  \int \displaylimits_{\RR^d} |\eta|^{-\nu q} e^{-{b \over 2} q  |\eta|} d \eta \right)^{1 \over q}.
  \end{equation}
  The condition of convergence of the last integral is
 $$ -\nu p/(p-1)>-d \implies \kappa < d,$$ 
which is weaker than $(\ref{H13}) ({\bf 4})$.
    
We conclude that for  $|y|>\pi/\delta_0$,
  $$\left| \cF^{-1}[\psi_*](y) \right| < C |y|^{-\theta}.$$ 

  \vspace{2mm}

 \noindent \underline{\it Part $\mathbf{(3)}$.} Consider the integral
  \begin{equation}
  \nonumber   \int \displaylimits_{\RR^d} \psi_*(\eta) (1+|\eta|)^k \ d \eta
  \end{equation}
  for $k \in \NN$. For $1=1/q+1/p$,
  \begin{equation}
    \label{psiweight} \   \int \displaylimits_{\RR^d} | \psi_*(\eta)| (1+|\eta|)^k \ d \eta  \le   \left( \hspace{2mm} \int \displaylimits_{\RR^d} (1+|\eta|)^{k q} v_{\nu}(\eta)^{-q} \right)^{1 \over q} \| \psi_* \|_{v_\nu,p}.
  \end{equation}
 Similarly to the proof of the property $(\ref{decay})$, the first integral in the last line converges for any $k \in \NN$ under the condition  $- \nu p /(p-1) > -d$ which is weaker than $(\ref{H13}) ({\bf 4})$. Now apply the Riemann-Lebesgue Lemma to $\cF^{-1}[ (i \eta)^\alpha \psi_*]$, where $\alpha$ is a multi-index, taking into account $(\ref{psiweight})$.

\vspace{2mm}
  
\noindent \underline{\it Part $\mathbf{(2)}$.} Since $\psi_*$ is a fixed point of $\cR_\beta$ for all $\beta \in (0,(1-\beta_0^\gamma)T)$, the function $u_*$ satisfies the mild Navier-Stokes system for all times $t \in (0,(1-\beta_0^\gamma) T)$. Since $\psi_* \in L^t(\RR^d,\RR)$ for any $1 \le t \le 2$, we have that $\cF^{-1}[\psi_*] \in  L^m(\RR^d,\RR)$ for any $m \ge 2$, and, therefore, so is $u_*$ for all $0 \le t < (1-\beta_0^\gamma) T$.

We can now invoke the following existence and uniqueness result by Giga (see Theorems $1$ and $2$ in \cite{Giga}), which in turn are based on the works of Kato \cite{Kato} and Kato and Fujita \cite{KaFu}:

\begin{theorem} \underline{\it (Local uniqueness and existence in $L^m$.)} \label{Giga}
  Let
  $$m_0={d \over \gamma - \chi}, \quad  m \ge l \ge m_0$$
  and
  $$ \rho={d \over \gamma} \left({1 \over l} -{1 \over m}  \right), \quad \chi=1 +\gamma \rho.$$
 Let $u_0 \in \bPL^l$.

  \vspace{2mm}
  
\noindent \underline{Existence.}  Then there exists $T_0$ and a unique solution $u$ to the mild Navier-Stokes system with the initial condition $u_0$, such that
\begin{eqnarray}
  \label{trho}  t^\rho u \hspace{-1mm} & \hspace{-1mm} \in \hspace{-1mm} & \hspace{-1mm} BC\left([0,T_0): \bPL^m   \right), \quad {\rm for} \quad l \le m < \infty,\\
    \label{T0} if \hspace{3mm} l \hspace{-1mm} &  \hspace{-1mm} > \hspace{-1mm} &\hspace{-1mm} m_0, \hspace{3mm} then \hspace{3mm}   T_0 \asymp C \|u_0 \|_l^{- {\gamma \over \gamma-\chi -{d \over l}}},
\end{eqnarray}
with $C$ independent of $u_0$.
\vspace{2mm}

\noindent \underline{Uniqueness}. Suppose that either
$$0 <\rho < {1 \over 2}, \quad m>2$$ 
or
$$l>\max\{m_0,2\}.$$ 
Then the solutions with the property $(\ref{trho})$ are unique.
\end{theorem}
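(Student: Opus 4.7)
The plan is to prove this using the classical Kato-Fujita contraction scheme, adapted to fractional dissipation. The first step is to establish the basic semigroup bounds for $S(t) := e^{-t \bP \Lambda^\gamma}$. Since $\bP$ is an $L^p$-bounded Fourier multiplier for $1<p<\infty$ and the kernel of $e^{-t \Lambda^\gamma}$ scales like $t^{-d/\gamma}$ in $L^1$, one obtains the Young-type estimates
\begin{equation*}
\|S(t) f\|_q \le C\, t^{-\frac{d}{\gamma}(\frac{1}{p}-\frac{1}{q})} \|f\|_p,
\qquad
\|\nabla S(t) f\|_q \le C\, t^{-\frac{d}{\gamma}(\frac{1}{p}-\frac{1}{q})-\frac{1}{\gamma}} \|f\|_p
\end{equation*}
for $1 \le p \le q \le \infty$ (with $p,q \ne 1,\infty$ where $\bP$ fails to be bounded). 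The exponent $\rho=(d/\gamma)(1/l-1/m)$ is chosen precisely so that $t^\rho\|S(t)u_0\|_m$ is uniformly bounded for $u_0 \in \bPL^l$.

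Next, I would write the mild equation as the fixed-point problem $u = S(t)u_0 + B(u,u)$, where
\begin{equation*}
B(u,v)(t) = -\int_0^t S(t-\tau)\,\bP\bigl(u(\tau)\cdot \nabla v(\tau)\bigr)\,d\tau,
\end{equation*}
and work in the Banach space
$X_T := \{u : t^\rho u \in BC([0,T);\bPL^m)\}$
with norm $\|u\|_{X_T}=\sup_{0<t<T} t^\rho\|u(t)\|_m$. The central estimate is the bilinear bound $\|B(u,v)\|_{X_T}\le C\,T^\alpha\|u\|_{X_T}\|v\|_{X_T}$ (with $\alpha=0$ in the critical case $l=m_0$, and $\alpha>0$ in the subcritical case $l>m_0$), obtained by writing
\begin{equation*}
t^\rho\|B(u,v)(t)\|_m \le C\int_0^t (t-\tau)^{-\frac{d}{\gamma}(\frac{2}{m}-\frac{1}{m})-\frac{1}{\gamma}} \tau^{-2\rho}\,d\tau \cdot \|u\|_{X_T}\|v\|_{X_T},
\end{equation*}
and using the definition of $\chi=1+\gamma\rho$ to verify that the beta-function integral converges to $C\,t^{1-\frac{d}{\gamma m}-\frac{1}{\gamma}-\rho}$ — which, together with the hypothesis $l\ge m_0=d/(\gamma-\chi)$, gives precisely the power $t^{-\rho}T^\alpha$ with $\alpha\ge 0$. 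A standard Picard iteration then gives existence on $[0,T_0)$ with $T_0$ as in $(\ref{T0})$: solving $C\, T_0^\alpha \|u_0\|_l^2 \le \|u_0\|_l$ and expressing $\alpha$ in terms of $l, \gamma, d$ yields the quoted scaling $T_0\asymp\|u_0\|_l^{-\gamma/(\gamma-\chi-d/l)}$.

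For uniqueness, given two solutions $u_1,u_2\in X_T$ their difference satisfies the linear equation $u_1-u_2=B(u_1,u_1-u_2)+B(u_1-u_2,u_2)$, and the same bilinear estimate gives
\begin{equation*}
\|u_1-u_2\|_{X_{T'}} \le C\,(T')^\alpha \bigl(\|u_1\|_{X_{T'}}+\|u_2\|_{X_{T'}}\bigr)\|u_1-u_2\|_{X_{T'}}
\end{equation*}
for any $T'\le T$. Choosing $T'$ small, the prefactor becomes $<1$, forcing $u_1=u_2$ on $[0,T']$; a connectedness argument in $T'$ extends this to all of $[0,T)$ in the subcritical range $l>\max\{m_0,2\}$. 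The critical case $l=m_0$ (equivalently $\rho=0$ with $\rho<1/2$, $m>2$) is the delicate one: since $\alpha=0$, one cannot gain smallness from $T'$, and uniqueness must instead be obtained by exploiting that $\sup_{0<t<T'} t^\rho\|S(t)u_0\|_m \to 0$ as $T'\to 0^+$ for $u_0\in\bPL^{m_0}$, i.e., one works in a slightly smaller space of functions vanishing at $t=0$ in the $X_T$-norm. This is the main technical obstacle, and is precisely the reason for the two separate uniqueness hypotheses in the statement; I would handle it by the truncation argument of Kato–Fujita, approximating $u_0\in\bPL^{m_0}$ by $\bPL^{m_0}\cap\bPL^{m_0+\varepsilon}$ data and passing to the limit using the stability of the bilinear estimate.
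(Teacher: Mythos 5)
This statement is not proved in the paper at all: it is quoted as an external result, attributed to Giga (Theorems 1 and 2 of \cite{Giga}), which in turn builds on Kato \cite{Kato} and Kato--Fujita \cite{KaFu}. So there is no internal proof to compare against; your task was effectively to reconstruct the argument of the cited literature. Your sketch does follow the right architecture --- the $L^p$--$L^q$ smoothing estimates for $e^{-t\Lambda^\gamma}$ combined with boundedness of the Leray projector, the bilinear operator $B(u,v)=-\int_0^t \nabla S(t-\tau)\PP(u\otimes v)\,d\tau$, the beta-function computation in the weighted-in-time space $X_T$, Picard iteration, and uniqueness via the same bilinear bound with a smallness-of-$T'$ or smallness-at-$t=0$ argument in the critical case. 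This is exactly the Kato--Fujita--Giga scheme, and it is the correct route.

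Two bookkeeping points deserve correction. First, with the paper's definitions one computes, for the single-exponent estimate you wrote, $\alpha = 1-\tfrac{d}{\gamma m}-\tfrac{1}{\gamma}-\rho = \tfrac{1}{\gamma}\left(\gamma-1-\tfrac{d}{l}\right)$, whereas the condition $l\ge m_0$ is equivalent to $\alpha\ge\rho$; hence $l=m_0$ gives $\alpha=\rho$, which vanishes only when $m=l$. Your assertion that ``$\alpha=0$ in the critical case $l=m_0$'' is therefore not accurate as stated, and relatedly the exponent in \eqref{T0} corresponds to $1/(\alpha-\rho)=\gamma/(\gamma-\chi-d/l)$ rather than $1/\alpha$: the iteration must simultaneously control the plain $L^l$ component of the solution (whose bilinear estimate carries the exponent $\alpha-\rho$), not only the weighted $L^m$ component, and it is the $L^l$ estimate that dictates the existence time. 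Second, convergence of your beta-function integral already requires $2\rho<1$ and $\tfrac{d}{\gamma m}+\tfrac{1}{\gamma}<1$ in the existence step, not only in the uniqueness step; Giga circumvents this by iterating through a chain of intermediate exponents rather than jumping from $L^{m/2}$ to $L^m$ in one step. Neither issue affects the way the theorem is actually used in this paper (there $l=m>\max\{m_0,2\}$, so $\rho=0$ and $\chi=1$), but a self-contained proof would need both repaired.
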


We use this theorem with $l=m>\max\{m_0,2\}$, which also implies $\rho=0$.

Since $u_*$ is bounded in $\bPL^m$  (by the corresponding norms of $\pmb{\psi}$)  independently of $\beta \in (\beta_0,1)$, the Giga blow up time $T_0$ is independent of $\beta$, in fact, a careful reading of the proof of Theorem $\ref{Giga}$ reveals that a sufficient condition for existence is 
$$\nonumber T_0  \le   C \|u_*(\cdot, 0) \|_m^{ - {\gamma \over \gamma-1 - {d \over m}}}$$
for a specific constant $C$. Taking into account the definition $(\ref{ubeta})$  of $u_*$, this sufficient condition becomes
\begin{equation}
\nonumber T_0 \le  C \left( \tau(0)^{1+{d \over m} -\gamma} \| \pmb{\psi} \|_{m'} \right)^{ - {\gamma \over \gamma-1 - {d \over m} }} =  C  T \| \pmb{\psi} \|_{m'}^{ - {\gamma \over \gamma-1 - {d \over m} }},
\end{equation}
where $\pmb{\psi}$ is the bound in the set $\cN$ and $m'$ is the H\"older conjugate of $m$. We can now choose $T$ sufficiently large, and $\beta_1$ sufficiently close to $1$ so that
$$(1-\beta_1^\gamma) T <T_0.$$
Then, the function $u_*(x,t)$ that satisfies the mild Navier-Stokes system  $(\ref{eq:integral:NSgamma})$ for  all $t \in (0,(1-\beta_1^\gamma)T)$,  has to coincide with the unique solution described in Theorem $\ref{Giga}$ for the initial data   $u_*(x,0)$.

To summarize, we have argued that $u_*$ is the unique solution of  $(\ref{eq:integral:NSgamma})$ in $\bPL^m$, $m>\max\{d/(\gamma-1),2\}$, for times $[0,(1-\beta_1^\gamma)T]$.

Next, we prove that $u_*$ is the unique solution corresponding to the initial data $u_*(\cdot,0)$ for times $[0,(1-\beta_1^{n \gamma}) T$ for all $n \in \NN$. We will do this by induction. We assume the result for $n=k$ and prove it for $n=k+1$, i.e. we solve the initial value problem $(\ref{mildNS})$ with the initial condition $u_*(\cdot, t_k)$, $t_k=(1-\beta_1^{k \gamma} T)$, on $[(1-\beta_1^{k \gamma} T),(1-\beta_1^{(k+1) \gamma}) T]$. Equivalently, we solve
\begin{equation}
\label{tkproblem1}v(y,t) = \ee^{-|y|^\gamma (t-t_k)} v_*(y,t_k) - i \int_{t_k}^t \int_{\RR^d} (y \cdot v(y-z,s)) P_y v(z,s)\ee^{-|y|^\gamma (t-s)} d z  \ d s\,,
\end{equation}
with
$$v_*(y,t_k) \hspace{-0.6mm} = \hspace{-0.6mm}  i \tau(t_k)^{d\hspace{-0.2mm} +\hspace{-0.2mm} 1 \hspace{-0.2mm} \shortminus \hspace{-0.2mm} \gamma} \psi_*(y \tau(t_k))\hspace{-0.6mm}  = \hspace{-0.6mm}  i \beta_1^{(d\hspace{-0.2mm}+\hspace{-0.2mm}1 \hspace{-0.2mm}  \shortminus \hspace{-0.2mm}  \gamma)k}  \tau(0)^{d\hspace{-0.2mm} +\hspace{-0.2mm} 1\hspace{-0.2mm} \shortminus \hspace{-0.2mm} \gamma} \psi_*(\beta_1^k \hspace{-0.5mm}  y \tau(0)) \hspace{-0.6mm}  = \hspace{-0.6mm}  \beta_1^{(d \hspace{-0.2mm} + \hspace{-0.2mm} 1 \hspace{-0.2mm}  \shortminus \hspace{-0.2mm} \gamma) k}  v_*(\beta_1^k \hspace{-0.5mm}  y,0)$$
on $[(1-\beta_1^{k \gamma} T),(1-\beta_1^{(k+1) \gamma}) T]$. After the time change $t=\beta_1^{k \gamma} t'+t_k$, and the domain variable change $y'=\beta_1^k y$, the initial value problem $(\ref{tkproblem1})$ becomes
\begin{eqnarray}
  \nonumber v\hspace{-3.2mm} & \hspace{-3.2mm} &\hspace{-3.2mm} \left({y'\over \beta_1^k} , \beta_1^{\gamma k} t'+t_k \right)  \hspace{-0.8mm} =  \hspace{-0.8mm}   \beta_1^{(d+1-\gamma) k}  \ee^{-|y'|^\gamma t'} v_*(y',0) - \\
  \label{tkproblem2}  && \hspace{4mm} - i  \beta_1^{(\gamma \hspace{-0.2mm} \shortminus \hspace{-0.2mm} 1 \hspace{-0.2mm} \shortminus \hspace{-0.2mm}d)k} \hspace{-1.2mm}  \int \displaylimits_{0}^{t'} \hspace{-0.8mm} \int \displaylimits_{\RR^d} \hspace{-1mm} (y' \cdot v\left({y' \hspace{-0.6mm}  \shortminus \hspace{-0.6mm}  z' \over \beta_1^k},\beta_1^{k \gamma} s'\hspace{-0.6mm}  + \hspace{-0.6mm}  t_k \right) ) P_y v\left({z' \over \beta_1^k},\beta_1^{k \gamma} s'\hspace{-0.6mm}  +\hspace{-0.6mm}  t_k \right)\ee^{\shortminus |y'|^\gamma (t'\shortminus s')} d z' \hspace{+0.5mm} d s'\hspace{-0.5mm} ,
\end{eqnarray}
on  $[0,(1-\beta_1^{ \gamma}) \beta_1^{k \gamma}T]$.  The initial value problem $(\ref{tkproblem2})$ for the function
$$\tilde v(y',t') :=  \beta_1^{-(d+1-\gamma) k}  v \left({y'\over \beta_1^k}, \beta_1^{\gamma k} t'+t_k \right)$$
coincides with the initial value problem  $(\ref{eq:integral:NSgamma})$ on the subset $[0,(1-\beta_1^{ \gamma}) \beta_1^{k \gamma}T]$ of $[0,(1-\beta_1^{ \gamma}) T]$ with the initial condition $v_*(y',0)$. We have already argued that it is uniquely solved on $[0,(1-\beta_1^{ \gamma}) \beta_1^{k \gamma}T]$  by  $\tilde v(y',t')= v_*(y',t')$.
  
We conclude that $u_*$ is the unique  $\bPL^m$-solution for times $[0,(1-\beta_1^{n \gamma}) T)$ for any natural $n$, therefore, it is the unique $\bPL^m$-solution in $[0,T)$.

When $d<\gamma<2 d+2$ and $d \ge 2$,  we have that $2>d/(\gamma-1)$, and $u_*$ is the unique solution in  $BC\left([0,T_0): \bPL^m   \right)$, for any $m>2$.

Additionally, we see that the norms
$$\int_{\RR^d} \left( 1+|\eta|^{2} \right)^{k \over 2} |\psi(\eta)|^2 d \eta$$
are bounded for any $k>0$ and any $\psi \in \overline{\cN^\alpha_\mu}$ (since these functions are exponentially  bounded). This implies that $u_* \in \bH^k$ for any positive $k$.

\vspace{2mm}
  
  \noindent \underline{\it Part $\mathbf{(4)}.$}  This is an instance of Paley-Wiener theorem for functions with the exponential decay. 

  \vspace{2mm}

  \noindent \underline{\it Part $\mathbf{(5)}.$}   Plancherel's theorem implies 
\begin{eqnarray}
  \nonumber  \|u(\cdot,t)_*\|_2^2 &=& \int_{\RR^d} |v_*(y,t)|^2 \dif y = {C \over \tau(t)^{2 c}} \int_{\RR^d} {|\psi_*(y \tau(t) )|^2} \dif y \\
  \nonumber &=&  {C \over \tau(t)^{2 c+d}}  \int_{\RR^d} {|\psi_*(\xi)|^2}\dif \xi= {C \over \tau(t)^{2 \gamma-2-d}}  \int_{\RR^d} {|\psi_*(\xi)|^2 }\dif \xi.
\end{eqnarray}

More generally, since 
\begin{eqnarray}
  \nonumber  \|u(\cdot,t)_*\|_m &=& {1 \over \tau(t)^{\gamma-1-{d \over m}}} \|\cF^{-1}[\vartheta_*]\|_m. 
\end{eqnarray}
Since $d/(\gamma-1)<2$ for all $\gamma>d$ and $d \ge 2$, we get that the $L^m$-norm diverges in finite time for all $m \ge 2$, $\gamma>d$ and $d \ge 2$. 

{\flushright $\Box$}

\begin{remark} \label{notstrong}  We recall that a  mild solution $u$ of a generalized Navier-Stokes system is  strong if
$$\partial_t u, \ (u \cdot \nabla) u, \  (-\Delta)^{\gamma \over 2} u \in L^2((0,T); \bL^2).$$
According to the definition of $u_*$,
$$ \|(-\Delta)^{\gamma \over 2} u_* \|_2 = C   \tau^{-2 \gamma+1+{d \over 2}} \|(1+|\eta|^2)^{\gamma \over 2} \psi_* \|_2.$$
Therefore, $(-\Delta)^{\gamma \over 2} u_* \in L^2\left((0,T),\bL^2 \right)$ iff
\begin{equation}
\label{gd} -4 \gamma +2 +d > -1 \implies \gamma <{d +3 \over 4}.
\end{equation}

However, for $d \ge 2$, the conditions $d < \gamma < 2 d+2$ and $(\ref{gd})$ have an empty solution. Therefore, the mild solutions that we have found are not strong, specifically, $(-\Delta)^{\gamma \over 2} u_*$ is not in $L^2((0,T),\bL^2)$.

For completeness, we will analyze $ (u_* \cdot \nabla) u_*$ and $\partial_t u_*$ as well

The non-linear term can be bounded as follows:
\begin{eqnarray}
  \nonumber \int_0^T \hspace{-2.5mm} \|( u_* \cdot \nabla) u_*\|^2_2 \hspace{0.6mm}  d t \hspace{-1.0mm}&\hspace{-1.0mm} \le \hspace{-1.0mm} &  \hspace{-1.0mm}  \int_0^T  \int_{\RR^d} |u_*(x,t)|^2 |\nabla u_*(x,t)|^2 d x \ d t \\
  \nonumber  \hspace{-1.0mm}&\hspace{-1.0mm} \le \hspace{-1.0mm} &  \hspace{-1.0mm}   \int_0^T \left(  \int_{\RR^d} |u_*(x,t)|^{2 l}  d x\right)^{1 \over l} \left( \int_{\RR^d}|\nabla u_*(x,t)|^{2 l'} d x \right)^{1 \over l'} \ d t.
\end{eqnarray}
where $1/l+1/l'=1$.
\begin{eqnarray}
  \nonumber \int_0^T \hspace{-2.5mm} \|( u_* \cdot \nabla) u_*\|^2_2 d t \hspace{-1.0mm}&\hspace{-1.0mm} \le \hspace{-1.0mm} &  \hspace{-1.0mm}   C \hspace{-1mm} \int_0^T \hspace{-3.5mm} \tau(t)^{2 -4 \gamma+d} \hspace{-0.7mm} \left( \hspace{1.0mm}  \int \displaylimits_{\RR^d} \hspace{-1.0mm} |\cF^{-1}[\vartheta_*](y)|^{2 l} d y\right)^{\hspace{-1.5mm}{1 \over l}} \hspace{-2mm} \left( \hspace{1.0mm} \int \displaylimits_{\RR^d} \hspace{-1.0mm} |\nabla \cF^{-1}[\vartheta_*](y)|^{2 l'} d y \right)^{\hspace{-1.5mm} {1 \over l'}} \hspace{-2.0mm} d t.
\end{eqnarray}
By Part $(\mathbf{2})$, for any $t \in (0,T)$, $\cF^{-1}[\psi_*] \in L^m(\RR^d,\RR)$ for any $m \ge 2$, in particular, for $m=2 l$ for any $l>1$. Furthermore, 
$$ \left( \int_{\RR^d} |\eta  \psi_*(\eta)|^{2l' \over 2l'-1} d \eta \right)^{3 \over 4} \le \left(  \int_{\RR^d} |\eta|^q u_\nu^{-q}(\eta)   d \eta  \right)^{1 \over q} \| \psi_* \|_{u_\nu,p},$$
where $1/q+1/p=(2 l'-1) /2 l'$. The first integral in this inequality is bounded if $1-\nu>-d/q$ which is equivalent to  
$$\kappa< 2+d {l'-1 \over l'}.$$
This inequality is weaker than in $(\ref{H13}) ({\bf 4})$ if one makes a choice $l'=p/2$, for example. We obtain that $\eta \psi_* \in L^{2 l'-1 \over 2 l'}$, which implies that $\nabla \cF^{-1} [\psi_*] \in L^{2 l'}$. We conclude that $(u \cdot \nabla) u \in L^2((0,T),\bL^2)$ if, again,  $(\ref{gd})$ holds.

Finally, consider $\partial_t u_*$:
\begin{equation}
\nonumber \partial_t u_*(x,t)={\gamma -1 \over \gamma} \tau(t)^{1-2\gamma} \cF^{-1}[\vartheta]_*(x \tau(t)^{-1})+ {1 \over \gamma} \tau(t)^{-2 \gamma} ( x \cdot \nabla) \cF^{-1}[\vartheta_*](x \tau(t)^{-1}),
\end{equation}
and
\begin{eqnarray}
  \nonumber \| \partial_t u_*( \cdot, t) \|_2^2 &=& \tau(t)^{2-4\gamma+d} \int_{\RR^d} \left| {\gamma-1 \over \gamma} \cF^{-1}[\vartheta_*](y)+ {1 \over \gamma} (y \cdot \nabla ) \cF^{-1}[\vartheta_*](y) \right|^2 d y \\
  \nonumber &=&C  \tau(t)^{2-4\gamma+d}, 
\end{eqnarray}
and $\partial_t u_*$ is not in $L^2((0,T),\bL^2)$ either.
\end{remark}

\section{Appendix A: Proof of Proposition $\ref{symmetries}$}\label{appendixA}

\noindent {\it Proof of Proposition  $\ref{symmetries}$.}   We omit trivial proofs of parts $i)$ and $ii)$.

  \vspace{2mm}

  \noindent $iii)$ Consider
  \begin{align}
    \nonumber      \cR_\beta[\psi](R \eta) &= \beta^c e^{|R \eta|^\gamma \left(1- {1 \over \beta^\gamma}  \right)} \psi \left(R \eta \over \beta \right) -\\
    \nonumber & \hspace{5mm}-  \gamma  \int \displaylimits_{1}^{1 \over \beta}  { e^{|R \eta|^\gamma \left(1-t^\gamma\right)} \over  t^{c} }   \int \displaylimits_{\RR^d}  R \eta \cdot \psi  \hspace{-0.5mm}  \left(R \eta t - x \right)  \left( \psi(x) - {R \eta \cdot \psi(x) \over |R \eta|^2   } R \eta  \right)   \ d x   \ d t \\
    \nonumber      &= \beta^c e^{|\eta|^\gamma \left(1- {1 \over \beta^\gamma}  \right)} R \psi \left(\eta \over \beta \right)- \gamma  \int \displaylimits_{1}^{1 \over \beta}  { e^{|\eta|^\gamma \left(1-t^\gamma\right)} \over  t^{c} }   \int \displaylimits_{\RR^d}  \eta \cdot R^{-1} \psi  \hspace{-0.5mm}  \left(R  \left(\eta t - R^{-1} x \right)  \right) \times \\
    \nonumber & \hspace{30mm} \times  \left( R R^{-1} \psi(R R^{-1} x) - {\eta \cdot R^{-1} \psi(R R^{-1} x) \over |\eta|^2   } R \eta  \right)  \ d x   \ d t.
  \end{align}
Performing the change of variables $\tilde x= R^{-1} x$ in the convolution integral and using the assumption $\psi=R^{-1} \psi \circ R$, 
  \begin{align}
    \nonumber      \cR_\beta[\psi](R \eta) &= \beta^c e^{|\eta|^\gamma \left(1- {1 \over \beta^\gamma}  \right)} R \psi \left(\eta \over \beta \right)- \\
    \nonumber & \hspace{5mm} - \gamma  \int \displaylimits_{1}^{1 \over \beta}  { e^{|\eta|^\gamma \left(1-t^\gamma\right)} \over  t^{c} }   \int \displaylimits_{\RR^d}  \eta \cdot \psi  \hspace{-0.5mm} \left(\eta t - \tilde x \right)  \left( R \psi(\tilde x) - {\eta \cdot \psi(\tilde  x) \over |\eta|^2   } R \eta  \right)  \ d \tilde x   \ d t \\
    \nonumber      &= R \left(  \beta^c e^{|\eta|^\gamma \left(1- {1 \over \beta^\gamma}  \right)} \psi \left(\eta \over \beta \right)- \gamma  \int \displaylimits_{1}^{1 \over \beta}  { e^{|\eta|^\gamma \left(1-t^\gamma\right)} \over  t^{c} }   \int \displaylimits_{\RR^d}  \eta \cdot \psi  \hspace{-0.5mm} \left(\eta t - \tilde x \right) P_\eta \psi(\tilde x)  \ d \tilde x   \ d t \right) \\
    \nonumber &=  R  \cR_\beta[\psi](\eta)
 \end{align}
  \vspace{2mm}
 \noindent $iv)$  The claim follows from the fact that
  $$\eta \cdot \psi \left(\eta \over \beta \right)= \beta \left( \eta \over \beta \right) \cdot \psi \left(\eta \over \beta \right)=0$$ 
 whenever $\eta \cdot \psi(\eta)=0$ for all $\eta \in \RR^d$, together with the fact that
 $$\eta \cdot P_\eta \psi(x)=0$$
 for any $x \in \RR^d$. 

 \vspace{2mm}
 \noindent $v)$ Assume that $\eta$ is not col linear with none of the polar axes. Consider  a vector
 $$x=(\rho,\hat x_p, \hat x_{d-1})$$
 where $\hat x_p=(\hat x_1, \ldots, \hat x_{d-2})$. Let $G \approx  SO(2)$ be the subgroup of rotations that fixes the azimuthal plane $W$ of the angle $\hat x_{d-1}$. Since $e_k(x) \cdot \psi(x)=0$, $1 \le k \le d-2$,  and $e_x(x) \cdot \psi(x)=0$, we have that $\psi(x) \in W$ for any $x$.

Let $R_x \in G$ be a rotation such that
 $$ \reallywidehat{(R_x x)}_{d-1} = \hat \eta_{d-1}.$$
 We have:
 \begin{eqnarray}
   \nonumber    \psi(x)+\psi(R_x^2 x)= \psi(R_x^{-1} R_x x) + \psi(R_x R_x x) = R_x \psi(R_x x) + R_x^{-1} \psi(R_x x).
 \end{eqnarray}
By symmetry this vector field is orthogonal to the subspace spanned by the polar vectors and $\eta$: 
\begin{equation}
\label{orto1} R_x \psi(R_x x) +  R_x^{-1} \psi(R_x x) \perp {\rm span}\{\eta,e_1(\eta), \ldots , e_{d-2}(\eta)\}.
\end{equation}
Consider
\begin{eqnarray}
  \nonumber  e_k(\eta) \cdot N(\eta) &=& - \gamma  e_k(\eta) \cdot \int \displaylimits_{1}^{1 \over \beta}  { e^{|\eta|^\gamma \left(1-t^\gamma\right)} \over  t^{c} }   \int \displaylimits_{\RR^d}  \eta \cdot \psi  \hspace{-0.5mm}  \left(\eta t - x \right)  P_\eta \psi(x)   \ d x   \ d t \\
  \nonumber   &=& - \gamma   \int \displaylimits_{1}^{1 \over \beta}  { e^{|\eta|^\gamma \left(1-t^\gamma\right)} \over  t^{c} }   \int \displaylimits_{\RR^d}  \eta \cdot \psi  \hspace{-0.5mm}  \left(\eta t - x \right)  \left( e_k(\eta) \cdot \psi(x)  \right)  \ d x   \ d t,
\end{eqnarray}
$1 \le k \le d-2$. We will now consider only the integral over $S_{2}$ in the convolution part of the product $e_k \cdot N$, denoted as $\cC$. Notice that as the azimuthal angle $\hat x_{d-1}$  ranges from $ \hat \eta_{d-1}- \pi$ to $\hat \eta_{d-1}$ (counterclockwise), the angle $ \reallywidehat{\left(R_x^2 x \right)}_i$ ranges from  $\hat \eta_{d-1}+\pi$ to $\hat \eta_{d-1}$ (clockwise), therefore,
\begin{eqnarray}
  \nonumber \cC& :=&   \int \displaylimits_{\hat \eta_{d-1}- \pi}^{\hat \eta_{d-1}+\pi}  \hspace{-3mm}   \eta \cdot \psi  \hspace{-0.5mm}  \left(\eta t - x \right)  \left( e_k(\eta) \cdot \psi(x)  \right) \ d \hat{x}_{d-1}  \\
  \nonumber &=&   \int \displaylimits_{\hat \eta_{d-1}- \pi}^{\hat \eta_{d-1}}  \hspace{-3mm}   \eta \cdot \psi  \hspace{-0.5mm}  \left(\eta t \hspace{-0.5mm}- \hspace{-0.5mm} x \right)  e_k(\eta) \cdot \psi(x) -  \eta \cdot \psi  \hspace{-0.5mm}  \left(\eta t  \hspace{-0.5mm} -  \hspace{-0.5mm} R_x^2 x \right)  e_k(\eta) \cdot \psi(R_x^2 x)  \ d \hat{x}_{d-1}.
\end{eqnarray}
Again, by symmetry considerations,
$$\eta \cdot \psi  \hspace{-0.5mm}  \left(\eta t - R_x^2 x \right)=-\eta \cdot \psi  \hspace{-0.5mm}  \left(\eta t - x \right),$$
and
\begin{equation}
  \nonumber \cC:=   \int \displaylimits_{\hat \eta_{d-1}- \pi}^{\hat \eta_{d-1}}    \eta \cdot \psi  \hspace{-0.5mm}  \left(\eta t - x \right)  e_k(\eta) \cdot \left(\psi(x) + \psi(R_x^2 x) \right) \ d \hat{x}_{d-1} .
\end{equation}
By $(\ref{orto1})$, $\cC=0$.

To demonstrate $(\ref{Ronphi})$ we remark that a $SO(2)$-invariant azimuthal field $\psi$ is of the form
$$\psi(\eta)=\phi(|\eta|,\hat{\eta}_p) \  e_{d-2}(\eta).$$
Therefore, counting the azimuthal angle in the integration from the polar plane of $\eta$,
\begin{eqnarray}
  \nonumber \cC&:=&  \int \displaylimits_{- \pi}^{\pi}    \eta \cdot \psi  \hspace{-0.5mm}  \left(\eta t - x \right)  e_{d-1}(\eta) \cdot \psi(x) \ d \hat{x}_{d-1}  \\
  \nonumber &=& -|\eta|  \int \displaylimits_{ -\pi}^{\pi} \left( \prod_{i=1}^{d-2}  \sin \hat \eta_i \right) \sin \reallywidehat{(t \eta-x)}_{d-1} \  \phi  \hspace{-0.5mm}  \left(|\eta t - x|,  \reallywidehat{(t \eta-x)}_p  \right) \times \\
  \nonumber   &\phantom{=}& \hspace{ 80mm}   \times   \cos \hat{x}_{d-1} \ \phi(|x|,\hat{x}_p) \ d \hat{y}_{d-1},
\end{eqnarray}
where we have used the fact that the projection of $\eta$ onto the azimuthal plane has length $|\eta|   \prod_{i=1}^{d-2}  \sin \widehat{\eta}_i$.

Set $z:=(|x|, \hat{x}_p, -\hat{x}_{d-1})$ (note, that $x=R^2_z z$), then
\begin{eqnarray}
  \nonumber \cK&:=& \hspace{-2mm} \int \displaylimits_{- \pi}^{0} \sin  \widehat{(t \eta-x)}_{d-1} \  \phi  \hspace{-0.5mm}  \left(|\eta t - x|,  \widehat{(t \eta-x)}_p  \right) \  \cos \hat{x}_{d-1} \ \phi(|x|,\hat{x}_p) \ d \hat{x}_{d-1}  \\
  \nonumber && - \int \displaylimits_{0}^{- \pi} \sin  \reallywidehat{(t \eta-R^2_z z)}_{d-1} \  \phi  \hspace{-0.5mm}  \left(|\eta t - R^2_z z|,  \reallywidehat{(t \eta-R^2_z z)}_p  \right) \  \cos \hat{z}_{d-1} \ \phi(|z|,\hat{z}_p) \ d \hat{z}_{d-1}.
\end{eqnarray}
Since
\begin{eqnarray}
  \nonumber  \reallywidehat{(t \eta-R^2_z z)}_{d-1} &=& -\reallywidehat{(t \eta-z)}_{d-1}, \\
  \nonumber \reallywidehat{(t \eta-R^2_z z)}_p &=& \reallywidehat{(t \eta-z)}_p, \\
  \nonumber |t \eta-R^2_z z| &=&  |t \eta-z|,
\end{eqnarray}
we get
\begin{eqnarray}
  \nonumber \cK&:=& \hspace{-2mm} \int \displaylimits_{- \pi}^{0} \sin  \widehat{(t \eta-x)}_{d-1} \  \phi  \hspace{-0.5mm}  \left(|\eta t - x|,  \widehat{(t \eta-x)}_p  \right) \  \cos \hat{x}_{d-1} \ \phi(|x|,\hat{x}_p) \ d \hat{x}_{d-1}  \\
  \nonumber && -\int \displaylimits_{-\pi}^{0} \sin  \reallywidehat{(t \eta-z)}_{d-1} \  \phi  \hspace{-0.5mm}  \left(|\eta t - z|,  \reallywidehat{(t \eta-z)}_p  \right) \  \cos \hat{z}_{d-1} \ \phi(|z|,\hat{z}_p) \ d \hat{z}_{d-1} \\
  \nonumber &=&0.
\end{eqnarray}

{\flushright $\square$}

\section{Appendix B: Proof of Proposition $\ref{equicont}$} \label{appA}

\noindent {\it Proof of Proposition $\ref{equicont}$} $\phantom{aa}$ \\
\noindent \textit{\textbf{1)  Linear terms, case $\bm{|\delta|<  \beta \delta_0}$. } } Similarly to $(\ref{Lestimate2})$,
  \begin{equation} \label{Lestimate5}
  \| L-\tau_\delta L \|_{v_\nu,p}   \le   C A (1-\beta)  |\delta|^{\min\{1,\gamma\}} + \beta^{c-\theta+\nu+{d \over p}}  K  |\delta|^\theta.
  \end{equation}
\medskip

\noindent \textit{\textbf{ 2)  Nonlinear terms, case $|\eta|<3 |\delta|$ and $\bm{|\delta|<  \beta \delta_0}$. } }  We proceed to estimate the nonlinear terms. Again, all calculations in this part are made for $|\delta|< \beta \delta_0$. We  consider the case $|\eta| \le 3 |\delta|$.

Denote for brevity $I=\int \displaylimits_{|\eta| \le 3 |\delta|}  \left| N(\eta) -N(\eta-\delta) \right|^p u_\nu^p(\eta) \ d \eta$.
\begin{eqnarray}
  \nonumber I &\le & C  \int \displaylimits_{|\eta| \le 3 |\delta|}   | N(\eta)|^p u_\nu^p(\eta) \ d \eta  + C  \int \displaylimits_{|\eta| \le 3 |\delta|}  \hspace{-3mm} |N(\eta-\delta)|^p u_\nu^p(\eta) \ d \eta  \\
  \nonumber   &\le  & C \hspace{-4mm} \int \displaylimits_{|\eta| \le 3 |\delta|}  | N(\eta)|^p u_\nu^p(\eta) \ d \eta  +  C \ e^{|\delta|p}  \int \displaylimits_{|z| \le 4 |\delta|}   |N(z)|^p e^{|z|p} |\delta|^{p \nu} \ d \eta.
\end{eqnarray}

\medskip

\noindent   \textit{ \textbf{ a)}.} Consider the case  \textit{ \textbf{ $4 |\delta| < \left({1 \over \beta^\gamma}-1 \right)^{-{1 \over \gamma}}$.}}
\begin{equation}
  \nonumber I  \le  C \  \| \mt{\psi} \|_{u_\nu,p}^{2 p} \left({1 \over \beta^\gamma}-1   \right)^{p \over s} \left( e^{3 p |\delta|} \int \displaylimits_{|\eta| \le 3 |\delta|}  \hspace{-3mm} |\eta|^{p (l(c)+c+\nu)} d \eta +  e^{4 p |\delta|} |\delta|^{p \nu}  \int \displaylimits_{|\eta| \le 4 |\delta|}  \hspace{-3mm} |\eta|^{p (l(c)+c)} d \eta  \right).
\end{equation}
According to $(\ref{H5})$,
 \begin{eqnarray}
   \nonumber p (l(c)+c+\nu) +d &=&{d (r-p)\over r} +p(\nu + \kappa +1) > p>0,\\
   \nonumber p (l(c)+c) +d & >& (p-p \nu)>0,
 \end{eqnarray}
and  under hypothesis $(\ref{H5})$, both integrals converge:
\begin{eqnarray}
  \nonumber \int \displaylimits_{|\eta| \le 3 |\delta|} \hspace{-3mm}  \left| N(\eta) -N(\eta-\delta) \right|^p u_\nu^p(\eta) \ d \eta  & \le &  C \  \| \mt{\psi} \|_{u_\nu,p}^{2 p} \left({1 \over \beta^\gamma}-1   \right)^{p \over s} |\delta|^{d{r-p \over r} +p(1+\kappa +\nu)} \\
  \nonumber   & \le &  C \  \| \mt{\psi} \|_{u_\nu,p}^{2 p} \left({1 \over \beta^\gamma}-1   \right)^{p \over s}  |\delta|^{p}.
 \end{eqnarray}
\medskip

\noindent   \textit{ \textbf{ b)}.}  In the case,   \textit{ \textbf{ $4 |\delta| > \left({1 \over \beta^\gamma}-1 \right)^{-{1 \over \gamma}}$ and  $3 |\delta| < \left({1 \over \beta^\gamma}-1 \right)^{-{1 \over \gamma}}$}},
\begin{eqnarray}
  \nonumber I &\le & C \  \| \mt{\psi} \|_{u_\nu,p}^{2 p} \left({1 \over \beta^\gamma}-1   \right)^{p \over s} \left( \hspace{1mm} \int \displaylimits_{|\eta| \le 3 |\delta|}  \hspace{-3mm} |\eta|^{p (l(c)+c+\nu)} d \eta +  |\delta|^{p \nu} \hspace{-6mm} \int \displaylimits_{|\eta| \le  \left({1 \over \beta^\gamma}-1  \right)^{-{1 \over \gamma}} }  \hspace{-3mm} |\eta|^{p (l(c)+c)} d \eta \right) +\\
  \nonumber && \hspace{25mm} + \ C \  \| \mt{\psi} \|_{u_\nu,p}^{2 p} |\delta|^{p \nu}  \hspace{-8mm} \int \displaylimits_{ \left({1 \over \beta^\gamma}-1  \right)^{-{1 \over \gamma}}< |\eta| \le 4 |\delta|}  \hspace{-8mm} |\eta|^{p (l(c)+c)-{ \gamma p \over s}} d \eta \\
  \nonumber  &\le &   C \  \| \mt{\psi} \|_{u_\nu,p}^{2 p} \left( \hspace{-2mm} \left({1 \over \beta^\gamma}-1   \right)^{p \over s}  \hspace{-2mm} \left(|\delta|^p  \hspace{-0.5mm} +   \hspace{-0.5mm}  \left({1 \over \beta^\gamma}-1   \right)^{m-{p \over s} +p{\nu \over \gamma}} \hspace{-2mm} |\delta|^{p  \nu }  \right)\hspace{-0.5mm} +   \hspace{-0.5mm} |\delta|^{p \nu}  \hspace{-10mm} \int \displaylimits_{ \left({1 \over \beta^\gamma}-1  \right)^{-{1 \over \gamma}}< |\eta|}  \hspace{-9mm} |\eta|^{p (l(c)+c)-{ \gamma p \over s}} d \eta \hspace{-1mm} \right)\\
  \nonumber  &\le &   C \  \| \mt{\psi} \|_{u_\nu,p}^{2 p}  \left(  \left({1 \over \beta^\gamma}-1   \right)^{p \over s}  \left( \hspace{-1mm} |\delta|^p +  \left({1 \over \beta^\gamma}-1   \right)^{m-{p \over s}+p{\nu \over \gamma}}  \hspace{-2mm}   |\delta|^{p \nu} \right) + \left({1 \over \beta^\gamma}-1   \right)^{m+{p \nu \over \gamma}}   \hspace{-1mm}  |\delta|^{p \nu} \hspace{-1mm} \right).
\end{eqnarray}
According to hypothesis $(\ref{H3})$,
$${m+p{\nu \over \gamma}}=-{1 \over \gamma} \left( p(1+\kappa)+d\ {r-p \over r}-{\gamma p \over s} \right)>1+{p \nu \over \gamma }$$
and 
we get
\begin{eqnarray}
  \nonumber  I & \le &   C \  \| \mt{\psi} \|_{u_\nu,p}^{2 p}  \left(   \left({1 \over \beta^\gamma}-1   \right)^{p \over s}  |\delta|^p +  \left({1 \over \beta^\gamma}-1   \right)^{1+{p \nu \over \gamma}} |\delta|^{p \nu}\right).
\end{eqnarray}

\medskip

\noindent  \textit{ \textbf{ c)}.}  Finally, in the case,   \textit{ \textbf{ $3 |\delta| > \left({1 \over \beta^\gamma}-1 \right)^{-{1 \over \gamma}}$}}, going through estimates very similar to the case $b)$,
\begin{align}
  \nonumber I \hspace{-1mm}  & \le   C \  \| \mt{\psi} \|_{u_\nu,p}^{2 p} \left({1 \over \beta^\gamma}-1   \right)^{p \over s}  \hspace{-1mm}\left( \hspace{-8mm} \int \displaylimits_{ \hspace{10mm}|\eta| \le  \left({1 \over \beta^\gamma}-1  \right)^{-{1 \over \gamma}} }   \hspace{-8mm} |\eta|^{p (l(c)+c+\nu)} d \eta +  |\delta|^{p \nu} \hspace{-10mm} \int \displaylimits_{|\eta| \le  \left({1 \over \beta^\gamma}-1  \right)^{-{1 \over \gamma}} }  \hspace{-8mm} |\eta|^{p (l(c)+c)} d \eta \right) +\\
  \nonumber & \hspace{3mm} +  \ C \  \| \mt{\psi} \|_{u_\nu,p}^{2 p} |\delta|^{p \nu}  \hspace{-12mm} \int \displaylimits_{ \left({1 \over \beta^\gamma}-1  \right)^{-{1 \over \gamma}}< |\eta| \le 3 |\delta|}  \hspace{-12mm} |\eta|^{p (l(c)+c)-{ \gamma p \over s}} d \eta + C \  \| \mt{\psi} \|_{u_\nu,p}^{2 p} |\delta|^{p \nu}  \hspace{-12mm} \int \displaylimits_{ \left({1 \over \beta^\gamma}-1  \right)^{-{1 \over \gamma}}< |\eta| \le 4 |\delta|}  \hspace{-12mm} |\eta|^{p (l(c)+c)-{ \gamma p \over s}} d \eta \\
  \nonumber   & \le    C \  \| \mt{\psi} \|_{u_\nu,p}^{2 p} \left({1 \over \beta^\gamma}-1   \right)^{p \over s} \hspace{-1mm} \left( \hspace{1mm} \int \displaylimits_{|\eta| \le  3 |\delta| }   \hspace{-3mm} |\eta|^{p (l(c)+c+\nu)} d \eta +  |\delta|^{p \nu} \hspace{-8mm} \int \displaylimits_{|\eta| \le  \left({1 \over \beta^\gamma}-1  \right)^{-{1 \over \gamma}} }  \hspace{-5mm} |\eta|^{p (l(c)+c)} d \eta \right) +\\
  \nonumber  &  \hspace{3mm} +  \ C \  \| \mt{\psi} \|_{u_\nu,p}^{2 p} |\delta|^{p \nu}  \hspace{-8mm} \int \displaylimits_{ \left({1 \over \beta^\gamma}-1  \right)^{-{1 \over \gamma}}< |\eta| }  \hspace{-8mm} |\eta|^{p (l(c)+c)-{ \gamma p \over s}} d \eta  \\
     \label{J0estimate}    & \le    C \  \| \mt{\psi} \|_{u_\nu,p}^{2 p}  \left( \left({1 \over \beta^\gamma}-1   \right)^{p \over s} |\delta|^p + \left({1 \over \beta^\gamma}-1   \right)^{1+{p \nu\over \gamma}} |\delta|^{p \nu}  \right).
\end{align}

\medskip

\noindent \textit{\textbf{ 3)  Nonlinear terms, case $|\eta|>3 |\delta|$ and $\bm{|\delta|<  \beta \delta_0}$. }} We will proceed to estimate the difference  $N(\eta-\delta) -  N(\eta)$ in the case $|\eta| > 3 |\delta|$.
\begin{eqnarray}
  \nonumber \left| N(\eta)-N(\eta-\delta) \right|  \hspace{-2.0mm} &  \hspace{-2.0mm}  =   \hspace{-2.0mm} & \hspace{-1.0mm}  \gamma \left| \int \displaylimits_{1}^{1 \over \beta}  \hspace{-1.0mm}{ e^{|\eta|^\gamma \left(1-t^\gamma \right)} \over  t^{c} } \hspace{-1.5mm}  \int \displaylimits_{\RR^d} \hspace{-1.0mm} \eta \cdot \psi  \hspace{-0.5mm}  \left( \eta t-x \right) P_\eta \psi(x)   \ d x \ d t \right.\\
  \nonumber && \hspace{5mm} \left. -   \hspace{0.2mm}  \hspace{-1.0mm} \int \displaylimits_{1}^{1 \over \beta}     { e^{|\eta-\delta|^\gamma \left(1 - t^\gamma \right)} \over  t^{c} } \hspace{-1.5mm}  \int \displaylimits_{\RR^d} \hspace{-1.0mm} (\eta-\delta) \cdot \psi  \hspace{-0.5mm}  \left( (\eta-\delta) t - x \right) P_{\eta-\delta} \psi(x)   \ d x   \ d t   \ \right| \\
  \nonumber & \le & J_1(\eta,\delta) + J_2(\eta,\delta) + J_3(\eta,\delta)+ J_4(\eta,\delta),
\end{eqnarray}
where
\begin{equation}
  \nonumber     J_1(\eta,\delta) =  \hspace{0.2mm}  \gamma  \hspace{0.2mm}  \left|  \int \displaylimits_{1}^{1 \over \beta} \int \displaylimits_{\RR^d}   \hspace{-1.0mm}{ e^{|\eta|^\gamma \left( 1-t^\gamma \right)} \over  t^{c} }  \ \eta \cdot \left( \psi  \hspace{-0.5mm}  \left( \eta t - x \right) - \psi  \hspace{-0.5mm}  \left((\eta-\delta) t-x \right) \right) P_{\eta} \psi(x)   \ d x \ d t \phantom{  \int \displaylimits_{1}^{1 \over \beta}} \hspace{-3mm} \right|,
\end{equation}
\begin{eqnarray}
  \nonumber     J_2(\eta,\delta) &=&   \hspace{0.2mm}  \gamma  \hspace{0.2mm}  \left|  \int \displaylimits_{1}^{1 \over \beta} \int \displaylimits_{\RR^d}   \ { e^{|\eta|^\gamma \left( 1-t^\gamma \right)} \over  t^{c} }  \delta \cdot \psi  \hspace{-0.5mm}  \left((\eta-\delta) t-x \right) P_{\eta-\delta} \psi(x) \ d x \ d t  \right|, \\
  \nonumber     J_3(\eta,\delta) &=&  \hspace{-1.5mm}   \hspace{0.2mm} \gamma  \hspace{0.2mm}  \left| \int \displaylimits_{1}^{1 \over \beta}   \int \displaylimits_{\RR^d}  \hspace{-0.5mm} { e^{|\eta|^\gamma \left( 1 -t^\gamma \right)} - e^{ |\eta-\delta|^\gamma\left( 1 - t^\gamma \right)} \over  t^{c} }  (\eta-\delta) \cdot \psi  \hspace{-0.5mm}  \left( ( \eta-\delta ) t -x \right) P_{\eta-\delta} \psi(x)   \ d x \ d t \right|, \\
   \nonumber     J_4(\eta,\delta) &=&   \hspace{0.2mm}  \gamma  \hspace{0.2mm}  \left|  \int \displaylimits_{1}^{1 \over \beta} \int \displaylimits_{\RR^d}   \hspace{-1.0mm}{ e^{|\eta|^\gamma \left( 1-t^\gamma \right)} \over  t^{c} }  \ \eta \cdot \psi  \hspace{-0.5mm}  \left((\eta-\delta) t-x \right) \left( P_{\eta-\delta} \psi(x) -  P_{\eta} \psi(x)\right)  \ d x \ d t \phantom{  \int \displaylimits_{1}^{1 \over \beta}} \hspace{-3mm} \right|.
\end{eqnarray}
We continue with estimates on $J_i$ separately.

\vspace{2mm}

\noindent \underline{\it Calculation of $J_1$}. We first consider $J_1(\eta,\delta)$. Since $|\delta| t< \delta_0$ in the integral, we get
\begin{eqnarray}
  \nonumber  J_1(\eta,\delta) & \le & \hspace{0.3mm}  C  \hspace{0.5mm}  |\eta|  \int \displaylimits_{1}^{1 \over \beta}  { e^{|\eta|^\gamma \left( 1 -t^\gamma \right) } \over t^c }  \int \displaylimits_{\RR^d} \hspace{-0.5mm} \left| \psi  \hspace{-0.5mm}  \left( \eta t -x \right) - \psi  \hspace{-0.5mm}  \left(( \eta-\delta) t - x  \right) \right| | \psi(x) |  \ d x \ d t  \\
  \nonumber  & \le & \hspace{0.3mm}  C  \hspace{0.5mm}  |\eta|  \int \displaylimits_{1}^{1 \over \beta}  { e^{|\eta|^\gamma \left( 1 -t^\gamma \right) } \over t^c }  \ |\delta t|^\theta   \int \displaylimits_{\RR^d}   \omega_\psi(\eta t - x)  \ | \mt{\psi}(x) |  \ d x \ d t \\
  \nonumber  & \le & \hspace{0.3mm}  C  \hspace{0.5mm}  |\delta|^\theta \ |\eta|^{c-\theta} \  e^{ |\eta|^\gamma }  \ \int \displaylimits_{1}^{ 1 \over \beta }  { e^{-|\eta t|^\gamma} \over |\eta t|^{c-\theta} }    \left(   \omega_\psi  \star | \mt{\psi} | \right)(\eta t) \ d |\eta| t,
\end{eqnarray}
where $\mt{\psi} \in R_{u_\nu}^p(\RR^d,\RR^d)$ is the radial function that bounds $\psi$. We remark that the calculation of the norm $\|J_1 \|_{u_\nu,p}$ is identical to the calculation in Proposition $(\ref{inv2})$ (up to a substitution of $\| \mt{\psi}\|^2_{u_\nu,p}$ by  $\| \mt{\psi}\|_{u_\nu,p} \|\omega_\psi\|_{u_\nu,p}$ ), therefore
\begin{equation}
\label{J1estimate} \| J_1\|_{u_\nu,p} \le C_2 A K \left( {1 \over \beta^\gamma} -1\right)^{m \over p} |\delta|^\theta, 
\end{equation}
with the constant $C_2$ identical to that in $(\ref{bound8})$.

\vspace{5mm}

We proceed to bound $J_2$. We will use the shorthand notation $\zeta=\eta-\delta$. 

\vspace{2mm}

\noindent \underline{\it Calculation of $J_2$}.  We proceed with the change of variables $\zeta =\eta-\delta$:
\begin{eqnarray}
 \nonumber    J_2(\eta,\delta) & \le &   \hspace{0.2mm}  \gamma  \hspace{0.2mm} |\delta|   \int \displaylimits_{1}^{1 \over \beta} \int \displaylimits_{\RR^d}   \ { e^{|\zeta+\delta|^\gamma \left( 1-t^\gamma \right)} \over  t^{c} } | \psi  \hspace{-0.5mm}  \left(\zeta t-x \right)|  |\psi(x)| \ d x \ d t \\
   \label{J2first}       & \le &   \hspace{0.2mm}  \gamma  \hspace{0.2mm} |\delta|  e^{|\eta|^\gamma} |\zeta|^{c-1}  \int \displaylimits_{1}^{1 \over \beta} \int \displaylimits_{\RR^d}   \ { e^{ -(|\zeta t + \delta t|^\gamma-|\zeta  t|} \over  |\zeta t|^{c-\kappa} } {e^{|\zeta  t|} \over |\zeta t|^{\kappa  }} \left(|\mt{\psi}|  \star  |\mt{\psi}| \right)( \zeta t) \ d x \ d |\zeta| t.
\end{eqnarray}
We can now use Corollary $\ref{nicebound}$, the H\"older inequality with $1=1/s+1/r$, and Proposition $\ref{inv2}$.
\begin{eqnarray}
  \nonumber    J_2(\eta,\delta)  & \le &   C  \hspace{0.2mm} |\delta|  e^{|\eta|^\gamma} |\zeta|^{c-1}   \int \displaylimits_{B_{|\zeta| \over \beta} \setminus B_{|\zeta|}}  \int \displaylimits_{\RR^d}   \ { e^{ -||y|e_\zeta +\delta t|^\gamma  -|y|} \over  |y|^{c-\kappa +d -1} } {e^{|y|} \over |y|^{\kappa  }} \left(|\mt{\psi}|  \star  |\mt{\psi}| \right)(y) \ d x \ d y \\
\nonumber & \le &  C  \hspace{0.2mm} |\delta|  e^{|\eta|^\gamma-|\zeta|} |\zeta|^{c-1}  \left(  \int \displaylimits_{B_{|\zeta| \over \beta} \setminus B_{|\zeta|}}  { { e^{ -s ||y| e_\zeta+\delta t|^{\gamma}} \over  |y|^{s(c-\kappa +d -1)} }  }  d y\right)^{1 \over s} \| \mt{\psi} \|_{u_\nu}^2.
\end{eqnarray}
We notice that for $|\eta| \ge 3 |\delta|$,
\begin{equation}
\label{comens}{2 \over 3} \le {|\zeta t + \delta t| \over |\zeta t|}= {||y| e_\zeta + \delta t| \over |y|} \le {4 \over 3},
\end{equation}
therefore, after a change of variables $z=\zeta t +\delta t=\eta t$,
\begin{equation}
  \nonumber    J_2(\eta,\delta)  \le   C  \hspace{0.2mm} |\delta|  e^{|\eta|^\gamma-|\zeta|} |\zeta|^{c-1}  \left(  \int \displaylimits_{B_{|\eta| \over \beta} \setminus B_{|\eta|}}  { { e^{ -s |z|^{\gamma}} \over  |z|^{s(c-\kappa +d -1)} }  }  d z \right)^{1 \over s} \| \mt{\psi} \|_{u_\nu}^2.
\end{equation}
We can now proceed as in Lemma $\ref{Ik}$, to get
\begin{equation}
  \nonumber    J_2(\eta,\delta)  \le   C  \hspace{0.2mm} |\delta|  e^{|\eta|^\gamma}  e^{|\eta|-|\zeta|} |\eta|^{c-1}  w_c(\eta) \| \mt{\psi} \|_{u_\nu}^2,
\end{equation}
where $w_c$ has been defined in $(\ref{wk})$. We would like to prove that the function
$$o_{\cR_\beta [\psi]}^2(\eta; \delta):=  e^{|\eta|^\gamma}  e^{|\eta|-|\zeta|} |\eta|^{c-1}  w_c(\eta)$$
belongs to $R_{u_\nu}^p(\RR^d,\RR_+)$ for every fixed $\delta$, under the hypothesis $(\ref{H5})$. We have that for $|\delta| < \delta_0$, $|\eta|-|\zeta| < \delta_0$, and  
$$o_{\cR_\beta [\psi]}^2(\eta; \delta) \le   e^{|\eta|^\gamma}  e^{\delta_0} |\eta|^{c-1}  w_c(\eta) \le C  e^{|\eta|^\gamma} |\eta|^{c-1}  w_c(\eta).$$
Set
\begin{equation}
  \label{omega2}  \tilde \omega^2_{\cR_\beta[\psi]}(\eta) :=    |\eta|^{c-1} \  e^{ |\eta|^\gamma}  w_c(\eta),
\end{equation}
Then
\begin{equation}
  \nonumber    J_2(\eta,\delta)  \le   C  \hspace{0.2mm} |\delta|   \tilde \omega^2_{\cR_\beta[\psi]}(\eta) \| \mt{\psi} \|_{u_\nu}^2,
\end{equation}
where $ \omega^2_{\cR_\beta[\psi]} \in  R_{u_\nu}^p(\RR^d,\RR_+)$ under the hypothesis $(\ref{H5})$. More specifically, using $(\ref{v_c})$, similarly to the computation of $J_1$,
\begin{equation}
 \tilde \omega^2_{\cR_\beta[\psi]}(\eta) = \left({1 \over \beta^\gamma}-1 \right)^{{m \over p} +{1 \over \gamma}}  \omega^2_{\cR_\beta[\psi]}(\eta)
\end{equation}
for some  $\omega^2_{\cR_\beta[\psi]} \in  R_{u_\nu}^p(\RR^d,\RR_+)$ which can be bounded independently of $\beta$. We obtain, eventually,
\begin{equation}
  \label{J2estimate}    J_2(\eta,\delta)  \le   C  \left({1 \over \beta^\gamma}-1 \right)^{{m \over p} +{1 \over \gamma}}  \ \omega^2_{\cR_\beta[\psi]}(\eta) \| \mt{\psi} \|_{u_\nu}^2 \ |\delta|,
\end{equation}

\vspace{2mm}

\noindent \underline{\it Calculation of $J_3$}.
A before, we set $\zeta=\eta-\delta$. Suppose that $|\zeta|>|\eta|$. Then,
\begin{eqnarray}
  \nonumber     J_3(\eta,\delta)    & \le &  C |\zeta|^c  \int \displaylimits_{1}^{1 \over \beta}   \hspace{-0.5mm} \left| e^{|\eta|^\gamma -|\eta t|^\gamma}-  e^{|\zeta|^\gamma - |\zeta t|^\gamma} \right| {1 \over  |\zeta t |^c }  \left( |\mt{\psi}|\star |\mt{\psi}| \right)(\zeta t)  \ d |\zeta| t  \\
  \nonumber       & \le &  C |\zeta|^c   \left(1- e^{(|\eta|^\gamma -|\zeta|^\gamma)\left({1 \over \beta^\gamma} -1\right)}    \right)  \int \displaylimits_{1}^{1 \over \beta}  \hspace{-0.5mm} {e^{|\eta|^\gamma - |\eta t|^\gamma} \over  |\zeta t |^c }  \left( |\mt{\psi}|\star |\mt{\psi}| \right)(\zeta t)  \ d |\zeta| t \\
  \nonumber       & \le &  C |\zeta|^c   \left(1- e^{(|\eta|^\gamma -|\zeta|^\gamma)\left({1 \over \beta^\gamma}-1 \right)}    \right)  e^{|\eta|^\gamma} \int \displaylimits_{B_{|\zeta| \over \beta} \setminus B_{|\zeta|}}  \hspace{-0.5mm} {e^{-\left|y+ \delta {|y| \over |\zeta|  } \right|^\gamma} \over  |y|^{c+d-1}}  \left( |\mt{\psi}|\star |\mt{\psi}| \right)(y)  \ d y \\
  \nonumber       & \le &  C |\zeta|^c   \left(1- e^{(|\eta|^\gamma -|\zeta|^\gamma)\left({1 \over \beta^\gamma} -1 \right)}    \right)  e^{|\eta|^\gamma}  \left( \int \displaylimits_{B_{|\zeta| \over \beta} \setminus B_{|\zeta|}}  \hspace{-0.5mm} {e^{-s\left|y+ \delta {|y| \over |\zeta|  } \right|^\gamma} \over  |y|^{s(c-\kappa+d-1)}}   d y \right)^{1 \over s} \ \| \mt{\psi} \|_{u_\nu,p}^2.
\end{eqnarray}
Since $|\eta|$ is commensurable with $|\eta+\delta|$, see $(\ref{comens})$, for $|\eta|>3 |\delta|$, we get that $|y|$ is commensurable with $\left|y+ \delta {|y| \over |\zeta|  } \right|$. Therefore,
\begin{eqnarray}
  \nonumber     J_3(\eta,\delta)     & \le &  C |\zeta|^c   \left(1- e^{(|\eta|^\gamma -|\zeta|^\gamma)\left({1 \over \beta^\gamma} -1 \right)}    \right)  e^{|\eta|^\gamma}  \left( \int \displaylimits_{B_{|\zeta+\delta| \over \beta} \setminus B_{|\zeta+\delta|}}  \hspace{-0.5mm} {e^{-s|z|^\gamma} \over  |z|^{s(c-\kappa+d-1)} }   d z \right)^{1 \over s} \ \| \mt{\psi} \|_{u_\nu,p}^2 \\
  \nonumber        & \le &  C |\zeta|^c   \left(1- e^{(|\eta|^\gamma -|\zeta|^\gamma)\left({1 \over \beta^\gamma} -1 \right)}    \right)  e^{|\eta|^\gamma}  w_c(|\eta|) \ \| \mt{\psi} \|_{u_\nu,p}^2 \\
      \nonumber        & \le &  C |\eta|^c   \left(1- e^{(|\eta|^\gamma -|\zeta|^\gamma)\left({1 \over \beta^\gamma} -1 \right)}    \right)  e^{|\eta|^\gamma}  w_c(|\eta|) \ \| \mt{\psi} \|_{u_\nu,p}^2.
\end{eqnarray}
Similarly, if  $|\eta|>|\zeta|$,
\begin{eqnarray}
  \nonumber     J_3(\eta,\delta)    & \le & C |\zeta|^c   \left(1- e^{(|\zeta|^\gamma -|\eta|^\gamma)\left({1 \over \beta^\gamma} -1 \right)}    \right)  e^{|\zeta|^\gamma}  \left( \int \displaylimits_{B_{|\zeta| \over \beta} \setminus B_{|\zeta|}}  \hspace{-0.5mm} {e^{-s |y|^\gamma} \over  |y|^{s(c-\kappa+d-1)}}   d y \right)^{1 \over s} \ \| \mt{\psi} \|_{u_\nu,p}^2 \\
     \nonumber        & \le &  C |\zeta|^c   \left(1- e^{(|\zeta|^\gamma -|\eta|^\gamma)\left({1 \over \beta^\gamma} -1 \right)}    \right)  e^{|\zeta|^\gamma}  w_c(|\zeta|) \ \| \mt{\psi} \|_{u_\nu,p}^2.
\end{eqnarray}
Consider
$$\int_{|\eta|> 3 |\delta|}|J_3|^p u_\nu(\eta)^p |\eta|^{ p \nu} d \eta.$$
Since, according to $(\ref{v_c})$,
\begin{equation}
  \label{bound12} |\zeta|^{c+\nu} w_c(|\zeta|) e^{|\zeta|^\gamma+|\eta|} \le e^{|\delta|} |\zeta|^{c+\nu} w_c(|\zeta|) e^{|\zeta|^\gamma+|\zeta|}
  \end{equation}
is bounded by a power function, we get that $(\ref{bound12})$ is commensurable with $v_c(|\eta|)$. Therefore,
\begin{eqnarray}
  \nonumber  \int \displaylimits_{|\eta|> 3 |\delta|}|J_3|^p u_\nu(\eta)^p |\eta|^{ p \nu} d \eta    & \le & C   \ \| \mt{\psi} \|_{u_\nu,p}^{2p} \hspace{-3mm}  \int \displaylimits_{|\eta|>3 |\delta|}  \left( \hspace{1mm} \int \displaylimits_{S_{d-1} \cap |\zeta|>|\eta| } \left(1- e^{(|\eta|^\gamma -|\zeta|^\gamma)\left({1 \over \beta^\gamma} -1 \right)}    \right)^p  d \hat \eta  + \right. \\
  \nonumber && \left. + \int \displaylimits_{S_{d-1} \cap |\zeta| \le |\eta| } \left(1- e^{(|\zeta|^\gamma -|\zeta|^\gamma)\left({1 \over \beta^\gamma} -1 \right)}    \right)^p  d \hat \eta   \right)  \  v_c^p(|\eta|)  \  d  \ |\eta|  \\
\end{eqnarray}
We proceed to compute the integral
$$ I= \int \displaylimits_{S_{d-1} \cap |\zeta|>|\eta| } \left(1- e^{(|\eta|^\gamma -|\zeta|^\gamma)\left({1 \over \beta^\gamma} -1 \right)}    \right)^p  d \hat \eta  + \int \displaylimits_{S_{d-1} \cap |\zeta| \le |\eta| } \left(1- e^{(|\zeta|^\gamma -|\zeta|^\gamma)\left({1 \over \beta^\gamma} -1 \right)}    \right)^p  d \hat \eta.$$
To that end we use Lemma $\ref{radial}$ with $r=|\eta|$ and $\rho=|\delta|$. We also use convexity of the function $x \mapsto x^p$:
\begin{eqnarray}
  \nonumber     I  \hspace{-1mm} &  \hspace{-1mm} \le  \hspace{-1mm} &  \hspace{-1mm}  C \hspace{-1.5mm} \int \displaylimits_{-{\rho \over 2 r}}^{1} \hspace{-1.5mm} \left(1\hspace{-0.5mm}-\hspace{-0.5mm}e^{\left(r^\gamma -\left(r^2+\rho^2+2 r \rho t \right)^{\gamma \over 2 } \right) \left({1 \over \beta^\gamma}-1 \right)}  \right)^p \hspace{-2mm}d t  \hspace{-0.5mm} + \hspace{-0.5mm} C  \hspace{-1.5mm}  \int\displaylimits_{-1}^{-{\rho \over 2 r}} \hspace{-1.5mm}  \left(1\hspace{-0.5mm} - \hspace{-0.5mm} e^{\left(\left(r^2+\rho^2+2 r \rho t \right)^{\gamma \over 2 } -r^\gamma \right) \left({1 \over \beta^\gamma} -1 \right)}  \right)^p \hspace{-2mm} d t  \\
  \nonumber    \hspace{-1mm} &  \hspace{-1mm} \le  \hspace{-1mm} &  \hspace{-1mm}  C \left( \left(1+ {\rho \over 2 r} \right)  \left(1-e^{\left(r^\gamma  -\left(r+\rho \right)^{\gamma } \right) \left({1 \over \beta^\gamma}-1 \right)}  \right)^p+  \left(1-{\rho \over 2 r} \right)  \left(1-e^{\left((r-\rho)^\gamma -r^{\gamma} \right) \left({1 \over \beta^\gamma} -1 \right) }  \right)^p \right) \\
   \nonumber    \hspace{-1mm} &  \hspace{-1mm} \le  \hspace{-1mm} &  \hspace{-1mm}  C \left(1+ {\rho \over 2 r} \right)  \left(1-e^{\left(p r^\gamma  -p \left(r+\rho \right)^{\gamma } \right) \left({1 \over \beta^\gamma}-1 \right)}  \right)+  C \ \left(1-{\rho \over 2 r} \right)  \left(1-e^{p \left((r-\rho)^\gamma -p r^{\gamma} \right) \left({1 \over \beta^\gamma} -1 \right) }  \right).
\end{eqnarray}
We will now use the fact that for $|\eta|>3 |\delta|$, the arguments of both exponentials are bounded from below by
$$-D \rho \left({1 \over \beta^\gamma}-1 \right) r^{\gamma-1}$$
where $D>0$ is some constant. Therefore,
\begin{equation}
  \nonumber     I \le C \left(1-e^{-D \rho \left({1 \over \beta^\gamma}-1 \right) r^{\gamma-1}}  \right),
\end{equation}
and
\begin{eqnarray}
  \nonumber \int \displaylimits_{|\eta| > 3 |\delta|} J_3 (\eta,\delta)^p u_{\nu}(\eta)^p d \eta  &\le&   C  \ \| \mt{\psi} \|_{u_\nu,p}^{2 p} \int \displaylimits_{|\eta| > 3 |\delta|}  \left(1-e^{-D \rho \left({1 \over \beta^\gamma}-1 \right) r^{\gamma-1}}  \right) v_c(\eta)^p d \eta \\
  \nonumber &=& I_1 +I_2,
\end{eqnarray}
where
\begin{eqnarray}
  \nonumber   I_1 &\le&   C  \ \| \mt{\psi} \|_{u_\nu,p}^{2 p} \left({1 \over \beta^\gamma}-1 \right)^{p \over s}  \int \displaylimits_{r< \left({1 \over \beta^\gamma}-1 \right)^{-{1 \over \gamma}}}   \left(1-e^{-D \rho \left({1 \over \beta^\gamma}-1 \right) r^{\gamma-1}}  \right) r^{p (l(c)+c+\nu)+d-1} d r \\
  \nonumber  & \le &   C  \ \| \mt{\psi} \|_{u_\nu,p}^{2 p} \left( {1 \over \beta^\gamma}-1 \right)^{p \over s} \left({\left({1 \over \beta^\gamma}-1 \right)^{m-{p \over s}}  \over -\gamma \left(m-{p \over s} \right) }    - {\left( D \rho  \left( {1 \over \beta^\gamma}-1 \right)  \right)^{{\gamma \over \gamma-1} \left(m-{p \over s}  \right)  } \over \gamma-1 } \times \right.\\
  \nonumber && \hspace{30mm} \left. \times \gamma\left(-{\gamma \over \gamma-1} \left(m-{p \over s}    \right), D \rho  \left( {1 \over \beta^\gamma}-1  \right)^{1 \over \gamma}  \right) \right) \\
   \nonumber  & \le &   C  \ \| \mt{\psi} \|_{u_\nu,p}^{2 p} \left( {1 \over \beta^\gamma}-1 \right)^{p \over s} \left({\left({1 \over \beta^\gamma}-1 \right)^{m-{p \over s}} \over -\gamma \left(m-{p \over s} \right) }     - {\left( D \rho  \left( {1 \over \beta^\gamma}-1 \right)  \right)^{{\gamma \over \gamma-1} \left(m-{p \over s}  \right)  } \over \gamma-1 } \times \right.\\
   \nonumber && \hspace{30mm} \times \left({ \left(D \rho  \left( {1 \over \beta^\gamma}-1 \right)^{1 \over \gamma}  \right)^{-{\gamma \over \gamma -1} \left(m-{p \over s}   \right)   } \over -{\gamma \over \gamma-1} \left(m- {p \over s}\right) } + \right. \\
   \nonumber  && \hspace{55mm} \left. +O\left(\left(D \rho  \left( {1 \over \beta^\gamma}-1 \right)^{1 \over \gamma}  \right)^{-{\gamma \over \gamma -1} \left(m-{p \over s}   \right)   +1} \right) \right) \\
      \nonumber  & \le &   C  \ \| \mt{\psi} \|_{u_\nu,p}^{2 p}  \rho \left( {1 \over \beta^\gamma}-1 \right)^{m+{1 \over \gamma}},
\end{eqnarray}
and
\begin{eqnarray}
  \nonumber   I_2 &\le&   C  \ \| \mt{\psi} \|_{u_\nu,p}^{2 p}  \int \displaylimits_{|\eta| \ge  \left({1 \over \beta^\gamma}-1 \right)^{-{1 \over \gamma}}}   \left(1-e^{-D \rho \left({1 \over \beta^\gamma}-1 \right) r^{\gamma-1}}  \right) |\eta|^{p (l(c)+c+\nu)-{p \gamma \over s}+d-1} d \eta \\
  \nonumber  & \le &   C  \ \| \mt{\psi} \|_{u_\nu,p}^{2 p}  \left({\left({1 \over \beta^\gamma}-1 \right)^{m}  \over \gamma m }  - {\left( D \rho  \left( {1 \over \beta^\gamma}-1 \right)  \right)^{{\gamma \over \gamma-1} m  } \over \gamma-1 } \times \right.\\
  \nonumber && \hspace{40mm} \left. \times \Gamma\left(-{\gamma \over \gamma-1} m, D \rho  \left( {1 \over \beta^\gamma}-1  \right)^{1 \over \gamma}  \right) \right) \\
    \nonumber  & \le &   C  \ \| \mt{\psi} \|_{u_\nu,p}^{2 p}  \left({\left({1 \over \beta^\gamma}-1 \right)^{m}  \over \gamma m }  - {\left( D \rho  \left( {1 \over \beta^\gamma}-1 \right)  \right)^{{\gamma \over \gamma-1} m  } \over \gamma-1 } \times \right.\\
    \nonumber && \hspace{5mm}  \times \left( \Gamma\left(-{\gamma \over \gamma-1} m \right) - {\left(  D \rho  \left( {1 \over \beta^\gamma}-1  \right)^{1 \over \gamma}     \right)^{-{\gamma m \over \gamma-1} } \over  -{\gamma \over \gamma-1} m    }  + \right. \\
    \nonumber && \hspace{55mm} \left. + O\left( \left(  D \rho  \left( {1 \over \beta^\gamma}-1  \right)^{1 \over \gamma}     \right)^{1-{\gamma m \over \gamma-1}}  \right) \right) \\
     \nonumber  & \le &   C  \ \| \mt{\psi} \|_{u_\nu,p}^{2 p}  \left( {1 \over \beta^\gamma}-1  \right)^{\gamma m \over \gamma -1}   \rho^{\gamma m \over \gamma -1}. 
\end{eqnarray}
We obtain therefore, eventually,
\begin{equation}
  \label{J3estimate}   \| J_3(\eta,\delta) \|_{u_\nu,p} \le  C  \ \| \mt{\psi} \|_{u_\nu,p}^{2} \left(   \left( {1 \over \beta^\gamma}-1  \right)^{\vartheta}   |\delta|^{\vartheta} +   \left( {1 \over \beta^\gamma}-1  \right)^{{m \over p} +{1 \over \gamma p}}   |\delta|^{1 \over p}  \right),
 \end{equation}
where  $\omega^3_{\cR_\beta[\psi]} \in  R_{u_\nu}^p(\RR^d,\RR_+)$ and
\begin{equation}
\label{vartheta_def}  \vartheta= { \gamma m \over p (\gamma-1)}.
\end{equation}

\vspace{2mm}

\noindent \underline{\it Calculation of $J_4$}.  The final integral is $J_4$. We have
\begin{eqnarray}
  \nonumber \left| P_{\eta-\delta} \psi(x)-P_\eta \psi(x) \right| &=& \left| { \psi(x) \cdot \eta \over |\eta|^2} \eta- { \psi(x) \cdot (\eta-\delta) \over |\eta-\delta|^2} (\eta-\delta)  \right| \\
  \nonumber & \le & {4 |\psi(x)| |\eta| |\delta| + 2 |\psi(x)| |\delta|^2 \over |\eta-\delta|^2    } \\
   \nonumber & \le & {4 |\eta| + 2 |\delta| \over |\eta|-2 |\delta|    } {1 \over |\eta|} \ |\psi(x)| |\delta|.
\end{eqnarray}
We notice that $(4 |\eta| + 2 |\delta|)/(|\eta|-2 |\delta|)$ is bounded by some constant for all $|\eta| \ge 3 |\delta|$.
\begin{eqnarray}
\nonumber  J_4(\eta,\delta) & \le &  C  \ |\delta|  \ \int \displaylimits_{1}^{1 \over \beta} \int \displaylimits_{\RR^d}   \hspace{-1.0mm}{ e^{|\eta|^\gamma \left( 1-t^\gamma \right)} \over  t^{c} }  |\psi  \hspace{-0.5mm}  \left((\eta-\delta) t-x \right)| \ |\psi(x)|  \ d x \ d t.
\end{eqnarray}
This bound coincides with the bound $(\ref{J2first})$ on the integral $J_2$. Therefore,

\vspace{2mm}

We collect the estimates $(\ref{Lestimate5})$, $(\ref{J0estimate})$, $(\ref{J1estimate})$, $(\ref{J2estimate})$ and $(\ref{J3estimate})$. We have for $|\delta| < \beta \delta_0$:
\begin{align}
  \nonumber  \left| \cR_\beta[\psi](\eta)  - \cR_\beta[\psi](\eta-\delta)  \right|  &\le \lambda_{ \cR_\beta[\psi]}(\eta) |\delta|^\iota + \Omega^1_{ \cR_\beta[\psi]}(\eta) |\delta| + \Omega^2_{ \cR_\beta[\psi]}(\eta) |\delta|^\nu + \\
  \label{Requi} &  \phantom{ \le  \lambda^1_{ \cR_\beta[\psi]}(\eta) |\delta|^\iota}  \hspace{1.1mm} + \Omega^3_{ \cR_\beta[\psi]}(\eta) |\delta|^{1 \over p} +\Omega^4_{ \cR_\beta[\psi]}(\eta) |\delta|^{\vartheta} + \Omega^5_{ \cR_\beta[\psi]}(\eta) |\delta|^{\theta},
\end{align}
where $\iota=\min\{1, \gamma \}$, and
\begin{eqnarray}
    \nonumber  \| \lambda_{ \cR_\beta[\psi]} \|_{u_\nu,p}  & \le&  C A  \left(1-\beta \right),\\
  \nonumber  \|  \Omega^1_{ \cR_\beta[\psi]} \|_{u_\nu,p}  & \le&  C  \left({1 \over \beta^\gamma}-1\right)^{1 \over s} \hspace{-1.5mm} A^2 + C  \left({1 \over \beta^\gamma}-1\right)^{{m \over p} +{1 \over \gamma}}  \hspace{-2mm} A^2, \\
  \nonumber  \|  \Omega^2_{ \cR_\beta[\psi]} \|_{u_\nu,p}  & \le&  C \ \left({1 \over \beta^\gamma}-1\right)^{{1 \over p}+{\nu \over \gamma}} A^2, \\
    \nonumber  \|  \Omega^3_{ \cR_\beta[\psi]} \|_{u_\nu,p}  & \le&  C \ \left({1 \over \beta^\gamma}-1\right)^{{m \over p} +{1 \over \gamma}} A^2, \\
    \nonumber  \|  \Omega^4_{ \cR_\beta[\psi]} \|_{u_\nu,p}  & \le&  C \ \left({1 \over \beta^\gamma}-1\right)^{\vartheta} A^2, \\
    \nonumber  \|  \Omega^5_{ \cR_\beta[\psi]} \|_{u_\nu,p}  & \le& \beta^{c+\nu+{d \over p}-\theta} K + C_2 \ \left({1 \over \beta^\gamma}-1\right)^{m \over p} A K.
\end{eqnarray}
We will simplify the above bounds, by factoring $|\delta| \le |\delta|^\theta |\delta_0|^{1-\theta}$, and absorbing $|\delta_0|^{1-\theta}$ into a constant in the estimate for $\Omega^1$. Then, taking into account $(\ref{bound8})$, we get
\begin{equation}  \label{Requi1}  \left| \cR_\beta[\psi](\eta)  - \cR_\beta[\psi](\eta-\delta)  \right| \le \omega_{ \cR_\beta[\psi]}(\eta)  |\delta|^\theta,
\end{equation}
where
$$\theta=\min\left\{\iota, \vartheta, \nu, {1 \over p}  \right\},$$
and, using the notation, $n=c+\nu+d/p$,
\begin{align}
  \nonumber  \| \omega_{ \cR_\beta[\psi]}(\eta) \|_{u_\nu,p}  & \le C \ A^2 \left\{\left({1 \over \beta^\gamma}-1\right)^{{1 \over s}}  \hspace{-1mm}+ \left({1 \over \beta^\gamma}-1\right)^{{1 \over p}+{\nu \over \gamma}}  + \left({1 \over \beta^\gamma}-1\right)^{{m \over p} +{1 \over \gamma}} + \left({1 \over \beta^\gamma}-1\right)^{\vartheta}  \right\} \\
  \nonumber & \hspace{39.2mm}  + C A (1-\beta) +  \left( \beta^{n-\theta} + C_2 A \left({1 \over \beta^\gamma}-1  \right)^{m \over p} \right) K.
\end{align}
Bound $(\ref{Requi1})$ is less or equal to $K |\delta|^\theta$ if
\begin{equation}
  \label{Kineq}  K \ge   {C A   (1- \beta)^k \over 1  -  \beta^{n \shortminus \theta}  -  C_2 A \left({1 \over \beta^\gamma}  -  1  \right)^{m \over p} },
\end{equation}
for some $C$, and a $k=\min\{1, 1/s,1/p+\nu/\gamma, m/p +1/\gamma, \vartheta \}$.

\medskip

\noindent \textit{\textbf{3)  Case $\bm{\beta \delta_0 \le |\delta|< \delta_0}$.}} 

To extend the conclusion to  $|\delta| \in [\beta \delta_0,\delta_0  )$, we remark that $\|\psi - \tau_\delta \psi \|_{u_\nu} \le 2 A$ for all $\psi \in \cE_{u_\nu,p,A}$. Therefore, for $|\delta| \in [\beta \delta_0,\delta_0  )$,
  $$\|\psi - \tau_\delta \psi \|_{u_\nu} \le 2 {A \over \beta^\theta \delta_0^\theta} |\delta|^\theta,$$
and the claim of the Proposition follows by taking $K$ as the maximum of $K$ satisfying $(\ref{Kineq})$ and $2 A/ \beta^\theta \delta_0^\theta$.  

\medskip

{\flushright $\square$}

\bibliography{biblio}
\bibliographystyle{plain}

\end{document}